\newcommand{\stkout}[1]{\ifmmode\text{\sout{\ensuremath{#1}}}\else\sout{#1}\fi}
\newtheorem{lemma}{Lemma}[section]
\newtheorem{theorem}{Theorem}[section]
\theoremstyle{definition}
\newtheorem{definition}{Definition}[section]
\newtheorem{remark}{Remark}[section]
\numberwithin{theorem}{section}
\numberwithin{equation}{section}
\crefname{section}{Section}{Sections}
\crefname{subsection}{Section}{Sections}
\crefname{condition}{Condition}{Conditions}
\crefname{hypothesis}{Hypothesis}{Conditions}
\crefname{assumption}{Assumption}{Assumptions}
\crefname{lemma}{Lemma}{Lemmas}
\crefname{fact}{Fact}{Facts}
\Crefname{figure}{Figure}{Figures}
\newcommand{\vertiii}[1]{{\left\vert\kern-0.25ex\left\vert\kern-0.25ex\left\vert #1
    \right\vert\kern-0.25ex\right\vert\kern-0.25ex\right\vert}}
\newcommand{\mfz}{\mathfrak{z}}
\newcommand{\mfw}{\mathfrak{w}}
\newcommand{\mcf}{\mathcal{F}}
\newcommand{\aaa}{{\mathcal{A}}}  % generator
\newcommand{\la}{\lambda}
\newcommand{\La}{\Lambda}
\newcommand{\RR}{\mathbb{R}}
\newcommand{\NN}{\mathbb{N}}
\newcommand{\Rn}{{\mathbb{R}^{n}}}
\DeclareMathOperator{\dv}{div}
\newcommand{\apprle}{\lesssim}
\newcommand{\loc}{\mathrm{loc}}
\def\YYint#1#2#3{{\setbox0=\hbox{$#1{#2#3}{\iint}$}
		\vcenter{\hbox{$#2#3$}}\kern-.50\wd0}}
\def\Xint#1{\mathchoice
	{\XXint\displaystyle\textstyle{#1}}%
	{\XXint\textstyle\scriptstyle{#1}}%
	{\XXint\scriptstyle\scriptscriptstyle{#1}}%
	{\XXint\scriptscriptstyle\scriptscriptstyle{#1}}%
	\!\int}
\def\XXint#1#2#3{{\setbox0=\hbox{$#1{#2#3}{\int}$}
		\vcenter{\hbox{$#2#3$}}\kern-.50\wd0}}
\def\dashint{\Xint-}
\definecolor{dmagenta}{rgb}{.4,.1,.4}
\definecolor{dblue}{rgb}{.0,.0,.5}
\definecolor{mblue}{rgb}{.0,.4,.7}
\definecolor{ddblue}{rgb}{.0,.0,.4}
\definecolor{dred}{rgb}{.9,.0,.0}
\definecolor{dgreen}{rgb}{.0,.5,.0}
\definecolor{Eeom}{rgb}{.0,.0,.5}
\definecolor{dbrown}{rgb}{.6,.0,.0}
\newcommand{ \mr }{ \mathbb{R} }
\newcommand{\Norm}[1]{\left|\hspace{-0.3mm}\left| #1 \right|\hspace{-0.3mm}\right|}
\newcommand{\iints}[1]{{\int\hspace{-0.28cm}\int_{#1}}}
\newcommand{\iintss}{{\int\hspace{-0.28cm}\int}}
\newcommand{ \miints }{{\iintss\hspace{-0.56cm} -\hspace{-0.15cm}-}}
\newcommand{\miint}[1]{{\miints_{\hspace{-0.13cm}#1}}}
\newcommand{\ttl}{}
\begin{document}
\title[Higher integrability for parabolic double phase problems]
{\ttl}
\begin{center}
{\bf \Large Gradient higher integrability for degenerate parabolic double phase systems with two modulating coefficients}

%\author{Jehan Oh}
%\address{Department of Mathematics, Kyungpook National University, Daegu, 41566, Republic of Korea. Email: jehan.oh@knu.ac.kr}

\medskip

{\small  Jehan Oh\footnote{jehan.oh@knu.ac.kr} and Abhrojyoti Sen\footnote{sen@math.uni-frankfurt.de}
}
\medskip

{\small
$^1$ Department of Mathematics, Kyungpook National University, Daegu, 41566,\\
Republic of Korea}

\medskip
{\small
$^2$ Goethe-Universit\"{a}t Frankfurt, Institut f\"{u}r Mathematik, Robert-Mayer-Str. 10,\\
D-60629 Frankfurt, Germany} 
 
%\author{Anupam Sen$^2$}
%\address{$^2$Centre for Applicable Mathematics, Tata Institute of Fundamental Research, Post Bag No 6503, Sharadanagar, Bangalore - 560065, India. Email: anupam21@tifrbng.res.in}

%\author{Abhrojyoti Sen$^1$ and Anupam Sen$^2$}

%\address{$^1$Department of Mathematics, Indian Institute of Science Education and Research, Dr. Homi Bhabha Road, Pune 411008, India. Email: abhrojyoti.sen@acads.iiserpune.ac.in.
%\vspace{.05pt}
%$^2$Centre for Applicable Mathematics, Tata Institute of Fundamental Research, Bengaluru, 560065, India. Email: anupam21@tifrbng.res.in}

\begin{abstract} We establish an interior gradient higher integrability result for weak solutions to degenerate parabolic double phase systems involving two modulating coefficients. To be more precise, we study systems of the form
\begin{align*}
    u_t-\dv \left(a(z)|Du|^{p-2}Du+ b(z)|Du|^{q-2}Du\right)=-\dv \left(a(z)|F|^{p-2}F+ b(z)|F|^{q-2}F\right),
\end{align*}
where $2\leq p\leq q < \infty$ and the modulating coefficients $a(z)$ and $b(z)$ are non-negative, with $a(z)$ being uniformly continuous and $b(z)$ being H\"{o}lder continuous. We further assume that the sum of two modulating coefficients is bounded from below by some positive constant. To establish the gradient higher integrability result, we introduce a suitable intrinsic geometry and develop a delicate comparison scheme to separate and analyze the different phases--namely, the $p$-phase, $q$-phase and $(p,q)$-phase. To the best of our knowledge, this is the first regularity result in the parabolic setting that addresses general double phase systems within the framework of weak solutions.
%Recently, the degenerate parabolic double phase problem has been studied by Kim, Kinnunen and Moring \cite{KKM}. We extend their ideas to the problem with two modulating coefficients by introducing appropriate phases and intrinsic scaling.
\end{abstract}
\end{center}
\keywords{parabolic double phase problems, parabolic Poincar\'{e} inequalities, two modulating coefficients, intrinsic geometry, gradient regularity}
\subjclass[2020]{35B65, 35D30, 35K40, 35K55, 35K65.}
\thanks{Jehan Oh is supported by the National Research Foundation of Korea (NRF) grant funded by the Korea government [Grant Nos. RS-2023-00217116, RS-2025-00555316, and RS-2025-25415411]. Abhrojyoti Sen is supported by postdoctoral research grants from the Alexander von Humboldt Foundation, Germany.}

%%%%%%%%%%%%%%%%%%%%%%%%%%%%%%%%%%%%%%%%%%%%%%%%%%%%%%%%%%%%%%%%%%%%%%%%%%%%%%%%
\maketitle
\tableofcontents
%%%%%%%%%%%%%%%%%%%%%%%%%%%%%%%%%%%%%%%%%%%%%%%%%%%%%%%%%%%%%%%%%%%%%%%%%%%%%%%%

\section{Introduction and main result}

In this paper, we study the higher integrability of the spatial gradient of a weak solution to degenerate parabolic double phase systems of the form
\begin{equation}    \label{eq: main equation}
    u_t -\operatorname*{div}\mathcal{A}(z,u,Du)=-\operatorname*{div}\mathcal{B}(z,F) \quad \text{in $\Omega_T$,}
\end{equation}
where $\Omega_T:=\Omega\times (0,T)$ with a bounded open set $\Omega \subset \mr^n$ for $n\geq 2$.
Here, the Carath\'{e}odory vector fields $\aaa=\aaa(z, v, \xi):\Omega_T\times \mathbb{R}^N \times \mathbb{R}^{Nn} \to \mathbb{R}^{Nn}$ and $\mathcal{B}=\mathcal{B}(z, \xi):\Omega_T \times \mathbb{R}^{Nn}\to \mathbb{R}^{Nn}$ with $N\geq 1$ satisfy the following structural assumptions: there exist two constants $0<\nu\leq L$ such that for any $z \in \Omega_T, \; v\in \mr^N$ and $\xi\in \mr^{Nn}$,
\begin{equation}    \label{eq: growth condition of A}
    \nu H(z,|\xi|)\leq \mathcal{A}(z,v,\xi)\cdot \xi, \qquad |\mathcal{A}(z,v,\xi)||\xi|\leq LH(z,|\xi|)
\end{equation}
and
\begin{equation}    \label{eq: growth condition of B}
    |\mathcal{B}(z,\xi)||\xi|\leq LH(z,|\xi|),
\end{equation}
where the function $H:\Omega_T\times  \mr^+\rightarrow \mr^+$ is defined by 
$$
H(z,\kappa)=a(z)\kappa^p+b(z)\kappa^q
$$
for $z\in\Omega_T$ and $\kappa\in\mr^+$.  
In addition, the source term $F:\Omega_T\rightarrow \mr^{Nn}$ satisfies 
\begin{equation}    \label{eq: assumption of F}
    \iints{\Omega_T} H(z,|F(z)|)\,dz < +\infty.
\end{equation}

When $a(\cdot) \equiv 1$ and $b(\cdot) \equiv 0$, the system reduces to the classical parabolic $p$-Laplace one. In this case, a comprehensive regularity theory, including higher integrability and H\"{o}lder continuity, is well-known, see \cite{DiBF84, DiBF85, DiBF_2_85, KL2002, 1993_Degenerate_parabolic_equations_DiBenedetto, phdthesis} and the references given there.

When $a(\cdot) \equiv 1$ and $b(\cdot) \not\equiv 0$, it reduces to the parabolic double phase system with a single modulating coefficient. This classical model, originally introduced by Zhikov for the elliptic case \cite{Zhikov1986, Zhikov1993, Zhikov1995, Zhikov1997}, has been widely used in the study of double phase problems, which arise in the modeling of strongly anisotropic materials. Notably, this model also serves as a prototypical example exhibiting the Lavrentiev phenomenon, as discussed in \cite{Esposito2004, Fonseca2004, Zhikov1993, Zhikov1995}. Such problems have been extensively studied and continue to attract attention. In the elliptic setting, a broad range of regularity results, including $C^{\alpha}/ C^{1, \alpha}$-estimates, Calder\'{o}n-Zygmund estimates and Harnack inequality, have been established in the pioneering works \cite{Baroni2015, Colombo2015, Colombo2016, Colombo2015a, DeFilippis2019, Baasandorj2020, Byun2020, Byun2017}. On the other hand, in the parabolic setting, results such as gradient higher integrability \cite{2023_Gradient_Higher_Integrability_for_Degenerate_Parabolic_Double-Phase_Systems, KS24}, Calder\'{o}n–Zygmund estimates \cite{kim2024calderonzygmundtypeestimateparabolic, kim2025calderonzygmundtypeestimatesingular}, existence and uniqueness of solutions \cite{ST16, Chlebicks2019}, and H\"{o}lder continuity \cite{KIM2025113231} have been obtained only recently.  However, the classical model involves only one modulating coefficient, which is limited in describing materials composed of both $p$-hardening and $q$-hardening components. Although the case involving two modulating coefficients is mathematically more demanding, it provides a more suitable framework for modeling strongly anisotropic composites. For the double phase problems with two modulating coefficients, the elliptic case has been studied in recent works such as \cite{KO24, Baroni2018, KO2025}, but to the best of our knowledge, there are no results for the parabolic case.
This paper addresses the parabolic double phase problem with two modulating coefficients, aiming to prove the regularity of solutions by introducing an appropriate intrinsic geometry and developing a delicate phase separation.

To avoid the simultaneous vanishing of the two modulating coefficients, we assume that their sum is bounded from below by a positive constant, that is, 
\begin{equation}\label{eq:coefficient_sum_condition}
    a(z)+b(z)\geq \delta, \quad \forall z\in \Omega_T
\end{equation}
for some $\delta>0$.
As in the classical model, we assume that the modulating coefficient $b:\Omega_T \to \mathbb{R}^+$ is in $C^{\alpha, \frac{\alpha}{2}}(\Omega_T)$ for some $\alpha \in (0,1]$ and we impose the restriction 
\begin{equation}\label{def_pq}
2\leq p \leq q \leq  p + \frac{2\alpha}{n+2}<\infty.
\end{equation}
In particular, the assumption $b \in C^{\alpha, \frac{\alpha}{2}}(\Omega_T)$ implies that $b \in L^{\infty}(\Omega_T)$ and there exists a constant $[b]_{\alpha}>0$ such that
\begin{equation*}
|b(x_1,t_1)-b(x_2,t_2)|\leq [b]_{\alpha}\left(|x_1-x_2|^{\alpha}+|t_1-t_2|^{\frac{\alpha}{2}}\right) 
\end{equation*}
for any $(x_1,t_1), (x_2,t_2) \in \Omega_T$.

We also assume that the modulating coefficient $a: \Omega_T \to \mathbb{R}^{+}$ is bounded and uniformly continuous, that is, there exists a modulus of continuity $\omega_a$ such that
\begin{equation}   \label{eq:assumption of a}
    |a(z_1)-a(z_2)|\leq \omega_a(d(z_1,z_2))
\end{equation}
for any $z_1, z_2 \in \Omega_T$, where $d$ is the parabolic metric defined in \cref{parabolic metric}.

\vspace{0.1cm}

To state the main theorem, we first introduce the definition of a weak solution to \cref{eq: main equation}.
\begin{definition}\label{weak solution}
	A function $u : \Omega\times (0, T] \to \mathbb{R}^N$ with 
	\begin{align*}
		u \in C_{\loc}\left(0,T; L^2_{\loc}\left(\Omega, \mathbb{R}^N\right)\right)\cap L^q_{\loc}\left(0, T; W^{1, q}_{\loc}\left(\Omega, \mathbb{R}^N\right)\right)
	\end{align*}
 %and 
 %\begin{align*}
  %   \iint_{\Omega_T}|\nabla u|^p+a(z)|\nabla u|^q dz < \infty
 %\end{align*}
	is said to be a weak solution to \eqref{eq: main equation} if for every compact subset $K\subset \Omega$ and for every subinterval $[t_1, t_2]\subset (0, T]$, there holds
	\begin{align*}
		\int_{t_1}^{t_2}\int_K -u\cdot \varphi_t \,dx\,dt+\int_{K}u \cdot \varphi \, dx\Big|_{t_1}^{t_2}+\int_{t_1}^{t_2} \int_K \aaa(z, u, D u)\cdot D \varphi \,dx \,dt=\int_{t_1}^{t_2}\int_{K} \mathcal{B}(z, F)\cdot D\varphi\, dx\, dt
	\end{align*}
	for any test function $\varphi \in C^{\infty}_0 (K \times(0, T], \mathbb{R}^N).$ %$\varphi \in W^{1,2}_{\loc}\left(0,T; L^2(K)\right)\cap L^q _{\loc}\left(0, T;W^{1, q}_0(K)\right).$ \textcolor{red}{ $\varphi \in C^{\infty}_0 (K, \RRN)$}
\end{definition}

We now state the main theorem of the paper.
\begin{theorem}\label{main_thm}
	Let $u$ be a weak solution of \eqref{eq: main equation} with \eqref{eq: growth condition of A}--\eqref{eq:assumption of a} in force. Then there exist positive constants $\varepsilon_0 = \varepsilon_0(\textnormal{\texttt{data}})\in (0,1)$ and $c=c (\textnormal{\texttt{data}})\geq 1$ such that the following estimate
	\begin{align*}
		\miint{Q_{r}(z_0)} H_1(z, |D u|)^{1+\varepsilon}\, dz \leq c\left(\miint{Q_{2r}(z_0)}H_1(z, |D u|)\,dz\right)^{1+\frac{\varepsilon q}{2}}+ c\left(\miint{Q_{2r}(z_0)}H_1(z, |F|)^{1+\varepsilon}\,dz\right)^{\frac{q}{2}}
	\end{align*}
	holds for every $Q_{2r}(z_0)\subset \Omega_T$ and any $\varepsilon \in (0, \varepsilon_0)$, where $Q_{(\cdot)}(z_0)$, $H_1(z, \cdot)$ and $\textnormal{\texttt{data}}$ are defined in Subsection \ref{Subsec:Notation} \ref{not5.5}, \ref{not6} and \ref{not9}, respectively.
\end{theorem}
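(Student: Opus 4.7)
The plan is to prove the theorem by the by-now classical stopping-time/Vitali-covering scheme of Kinnunen--Lewis adapted to the parabolic double phase setting, with the main novelty concentrated in the phase decomposition dictated by the two modulating coefficients $a$ and $b$. First, I would fix a cylinder $Q_{2r}(z_0)\Subset \Omega_T$ and a level $\lambda$ above the average of $H_1(z,|Du|)$ on $Q_{2r}(z_0)$, and build an intrinsic family of parabolic cylinders around each point $z_1$ of a level set $\{H_1(\cdot,|Du|)>\lambda\}$. Concretely, I would choose the scaling exponent of the time direction according to which term among $a(z)|Du|^p$ and $b(z)|Du|^q$ dominates at level $\lambda$ on the cylinder, thereby declaring the cylinder to be in the $p$-phase, the $q$-phase, or the $(p,q)$-phase. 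The bound \cref{eq:coefficient_sum_condition} guarantees that at least one of these phases is active, and the restriction \cref{def_pq} combined with the H\"older continuity of $b$ (and uniform continuity of $a$) will let the modulating coefficients be treated as essentially constant on each intrinsic cylinder (the standard ``freezing'' trick), up to errors absorbable by $\lambda$.

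Next, via a continuity/exit-time argument on the radius, I would select, for each $z_1$ in the super-level set, a maximal intrinsic cylinder $Q_{\rho(z_1)}(z_1)$ where the density of $H_1(\cdot,|Du|)$ equals $\lambda$; on any smaller cylinder the density stays below $\lambda$. On such a cylinder, standard parabolic Caccioppoli inequalities (which in this setting must be stated in terms of the intrinsic $H$-scale) combined with the appropriate parabolic Sobolev--Poincar\'e inequalities yield a reverse H\"older inequality of the form
\begin{equation*}
\miint{Q_{\rho}(z_1)} H_1(z,|Du|)\,dz \le c\left(\miint{Q_{2\rho}(z_1)} H_1(z,|Du|)^{\theta}\,dz\right)^{1/\theta} + c\miint{Q_{2\rho}(z_1)} H_1(z,|F|)\,dz,
\end{equation*}
for some $\theta<1$, provided the phase has been correctly identified. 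The crucial technical step is that the parabolic Poincar\'e inequality, and its $H$-weighted variants, must be established \emph{separately} in each of the three phases since in the $(p,q)$-phase one cannot collapse one modulating coefficient into the other; this is the comparison scheme alluded to in the abstract.

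Once a reverse H\"older inequality on each maximal intrinsic cylinder is available, I would run a Vitali covering argument on $\{H_1(\cdot,|Du|)>\lambda\}$ and convert the reverse H\"older inequality into a super-level set estimate of the form
\begin{equation*}
\iints{\{H_1>\lambda\}\cap Q_r} H_1\,dz \le c\lambda^{1-\theta}\iints{\{H_1>\eta\lambda\}\cap Q_{2r}} H_1^{\theta}\,dz + c\iints{\{H_1(\cdot,|F|)>\eta\lambda\}\cap Q_{2r}} H_1(z,|F|)\,dz,
\end{equation*}
for suitable $\eta\in(0,1)$. A Fubini-type integration in $\lambda$ over $[\lambda_0,\infty)$ together with the standard absorption argument (choosing $\varepsilon_0$ so that the $\varepsilon$-power of the first term can be moved to the left-hand side) then upgrades this to the claimed gradient higher integrability estimate; the exponent $1+\frac{\varepsilon q}{2}$ on the right-hand side is the price paid for rescaling between the intrinsic and the non-intrinsic geometries.

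The main obstacle is the phase separation: unlike the classical double phase problem with a single modulating coefficient, here one cannot always treat $b(z)|Du|^q$ as a perturbation of $a(z)|Du|^p$, because $a$ may be very small (and only uniformly continuous, hence not amenable to the usual H\"older freezing argument). Thus one has to set up the intrinsic geometry so that, in the $q$-phase cylinders, $a$ is allowed to vanish but $b$ is comparable to a constant, while in the $p$-phase cylinders the converse is ensured; the genuinely new difficulty is designing the $(p,q)$-phase and showing that the Caccioppoli and Poincar\'e inequalities survive uniformly across the boundary between the phases. Managing the interplay between the modulus of continuity $\omega_a$, the H\"older exponent $\alpha$, and the restriction \cref{def_pq}, so that coefficient oscillations remain absorbable by $\lambda$ on each intrinsic cylinder, is where the bulk of the technical work will lie.
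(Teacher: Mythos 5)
Your proposal follows essentially the same route as the paper: a stopping-time selection of intrinsic cylinders classified into $p$-, $q$-, and $(p,q)$-phases via the relative size of the modulating coefficients, phase-wise Caccioppoli/Poincar\'e and reverse H\"older inequalities, a Vitali covering converting these into a superlevel-set estimate, and a Fubini integration in the level with absorption for small $\varepsilon$. The only slight divergence is cosmetic: the paper classifies phases not by which term dominates alone but by comparing $a(\mathfrak z)$ and $b(\mathfrak z)$ with $\omega_a(\rho_{\mathfrak z})$ and $[b]_\alpha\rho_{\mathfrak z}^\alpha$ at the stopping scale (ruling out several combinations and allowing both dominance regimes inside the $(p,q)$-phase), which is exactly the ``freezing'' mechanism you describe later, so your plan matches the paper's proof in substance.
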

We make a couple of remarks on the result above.
\begin{remark}
It should be noted that the above theorem is stated under the technical assumption $|Du|\in L^q(\Omega_T).$ However, a more natural assumption would be to consider
\begin{align*}
u \in C_{\loc}\left(0,T; L^2_{\loc}\left(\Omega, \mathbb{R}^N\right)\right)\cap L^1_{\loc}\left(0, T; W^{1, 1}_{\loc}\left(\Omega, \mathbb{R}^N\right)\right)
\end{align*}
along with the integrability condition
\begin{align*}
\iint_{\Omega_T}H(z, |Du|)\,dz=\iint_{\Omega_T} \left[ a(z)|Du|^p+b(z)|Du|^q \right] dz< \infty.
\end{align*}
A parabolic Lipschitz truncation method (see \cite{KIM2025110738} for $a(\cdot)\equiv 1$) can be used to obtain the energy estimate under this assumption. This will be pursued in future work.
\end{remark}

\begin{remark}
This paper focuses on the degenerate regime $2\leq p\leq q < \infty$. Extending the analysis to the singular regime $\frac{2n}{n+2}<p<2$ would be an interesting direction. Recently, in \cite{S2025}, a unified scaling approach was introduced to establish gradient higher integrability for degenerate and singular parabolic multi-phase problems. Applying such an intrinsic scaling technique in our setting would be an important avenue for further exploration.
\end{remark}
We close this discussion by highlighting some of the novelties and strategies to prove \cref{main_thm}.

\smallskip
(i)\,\, A commonly used strategy for establishing higher integrability of gradients in parabolic equations typically involves two main steps: first, proving reverse H\"{o}lder-type inequalities on intrinsic cylinders, and second, demonstrating the existence of such cylinders within certain superlevel sets. The final proof follows from a Vitali covering argument combined with Fubini's theorem.

Although our approach to proving \cref{main_thm} aligns broadly with \cite{2023_Gradient_Higher_Integrability_for_Degenerate_Parabolic_Double-Phase_Systems}, we introduce a refined comparison scheme to appropriately distinguish different phases. The presence of the coefficient $a(\cdot)$ and the parabolic nature of the system \eqref{eq: main equation} introduce additional intricacy, setting our approach apart from \cite{2023_Gradient_Higher_Integrability_for_Degenerate_Parabolic_Double-Phase_Systems} and \cite{KO24}. Specifically, depending on the smallness of $a(\cdot)$ and $b(\cdot)$, we classify the behavior into distinct phases: the $p$-phase, the $q$-phase, and the mixed $(p, q)$-phase, which is one of the main novelties of this paper.
We establish reverse H\"{o}lder inequalities for each phase and implement a Vitali covering argument that addresses nine distinct cases.

\smallskip
(ii)\,\, Now, we describe our phase separation procedure and compare it with that of parabolic double phase systems with a single modulating coefficient, i.e., $a(\cdot)\equiv 1$. First, consider the elliptic $p$-phase, characterized by $b(z_0)\leq [b]_{\alpha}\tau^{\alpha}.$ In this case, two possibilities arise: either $K\lambda^p\geq b(z_0)\lambda^q$ or $K\lambda^p\leq b(z_0)\lambda^q.$ It is shown in \cite{2023_Gradient_Higher_Integrability_for_Degenerate_Parabolic_Double-Phase_Systems} that the conditions $b(z_0)\leq [b]_{\alpha}\tau^{\alpha}$ and $K\lambda^p\leq b(z_0)\lambda^q$ cannot simultaneously hold. Moreover, the following assumption
\begin{align*}
\miint{Q^{\lambda}_{\tau}(z_0)} \left[ H(z, |Du|)+H(z, |F|)\right] dz<\lambda^p
\end{align*}
is imposed to characterize the $p$-phase.
Next, we consider the elliptic $(p, q)$-phase, where $b(z_0)\geq [b]_{\alpha}\tau^{\alpha}.$ Again, two cases emerge: either $K\lambda^p\geq b(z_0)\lambda^q$ or $K\lambda^p\leq b(z_0)\lambda^q.$ Additionally, the assumption in \cite{2023_Gradient_Higher_Integrability_for_Degenerate_Parabolic_Double-Phase_Systems} for $(p, q)$-phase includes
\begin{align*}
    \miint{G^{\lambda}_{\tau}(z_0)} \left[ H(z, |Du|)+H(z, |F|)\right] dz<\lambda^p+b(z_0)\lambda^q.
\end{align*}
Note that, if $K\lambda^p\geq b(z_0)\lambda^q$ then the above condition essentially leads to the analysis of $p$-intrinsic case, reducing the scope of nontrivial distinct phases to the case $K\lambda^p\leq b(z_0)\lambda^q.$

In contrast, our setting is more intricate due to the presence of the coefficient $a(\cdot),$ leading to eight distinct possibilities depending on whether $a(z_0)\geq \omega_a(\tau)$ or $a(z_0)\leq \omega_a(\tau),$ as detailed in \cref{Sec:Higher_Integrability}. These distinctions result in three separate cases, unlike the framework in \cite{2023_Gradient_Higher_Integrability_for_Degenerate_Parabolic_Double-Phase_Systems}, as described in \eqref{eq: p-phase condition}, \eqref{eq: q-phase condition}, and \eqref{Eq 3.8}-\eqref{Eq 3.9}. Depending on the size of the coefficients $a(\cdot)$ and $b(\cdot)$ in the respective intrinsic cylinders, the system \eqref{eq: main equation} can behave as a $p$-system, or a $q$-system or a $(p, q)$-system. Moreover, note that we only need a uniform continuity of $a(\cdot).$

The remaining part of the paper is organized as follows.
In \cref{Sec:Lemmas}, we set our notations,  record some known results and prove the energy estimate.
\cref{Sec:Reverse_H} is dedicated to proving the reverse H\"{o}lder inequalities for the $p$-, $q$-, and $(p,q)$-phases.
In \cref{Sec:Higher_Integrability}, we establish our gradient higher integrability result stated in \cref{main_thm}. The proof begins with a stopping time argument, followed by the use of the Vitali covering lemma. Finally, we employ Fubini's theorem together with reverse H\"{o}lder inequalities to conclude the result.

\section{Preliminaries} \label{Sec:Lemmas}

\subsection{Notations}\label{Subsec:Notation}
Here we collect the standard notation that will be used throughout the paper:
\begin{enumerate}[label=(\roman*),series=theoremconditions]
\item \label{not1} We shall denote by $n$ the space dimension and by $z=(x,t)$ a point in $ \Rn\times (0,T)$. We will always assume that $n \geq 2$.   
\item\label{not2} A ball with center $x_0 \in \mathbb{R}^n$ and radius $r$ is denoted as
 \begin{align*}
     B_r(x_0)=\left\{x \in \Rn : |x-x_0|<r\right\}.
 \end{align*}
\item\label{general cylinder} In general, a parabolic cylinder centered at $z_0=(x_0, t_0)$ is denoted as 
\begin{align*}
    U_{r, \varrho}(z_0):=B_r(x_0)\times \left(t_0-\varrho, t_0+\varrho\right)=:B_r(x_0)\times l_{\varrho}(t_0).
\end{align*}
%\item \label{not4} We shall fix a universal constant $\frac{n}{n+2} < \mu \leq \min\left\{1,\frac{p}{2}\right\}$ which is possible since we always assume $p > \frac{2n}{n+2}$ holds. 
	%	\item\label{not4} We shall use the notation
	%	\begin{equation*}
	%		\begin{array}{ll}
	%			B_{\varrho}(x_0)=\{x\in\RR^n:|x-x_0|<\varrho\}, &
	%			\overline{B}_{\varrho}(x_0)=\{x\in\RR^n:|x-x_0|\leq\varrho\},\\
	%			I_{\tht}(t_0)=\{t\in\RR:t_0-\tht<t<t_0+\tht\},
	%			&Q_{\varrho,\tht}(z_0)=B_{\varrho}(x_0)\times I_\tht(t_0).
	%		\end{array}
	%	\end{equation*} 
\item\label{not5} Let $\lambda\geq 1.$ We shall use the following three types of cylinders for the $p$-, $q$-, and $(p, q)$-phases. 

\noindent The notation $Q_{\varrho, \lambda}(z_0)$ is used to denote $p$-intrinsic cylinders at $z_0=(x_0, t_0),$
\begin{align*}
Q_{\varrho, \lambda}(z_0):=B_{\varrho}(x_0)\times \left(t_0-\frac{\lambda^{2-p}\varrho^2}{a(z_0)}, t_0+\frac{\lambda^{2-p}\varrho^2}{a(z_0)}\right)
=:B_{\varrho}(x_0)\times I_{\varrho, \lambda}(t_0).
\end{align*}
The notation $J_{\varrho, \lambda}(z_0)$ is used to denote $q$-intrinsic cylinders at $z_0=(x_0, t_0),$
\begin{align*}
J_{\varrho, \lambda}(z_0):=B_{\varrho}(x_0)\times \left(t_0-\frac{\lambda^{2-q}\varrho^2}{b(z_0)}, t_0+\frac{\lambda^{2-q}\varrho^2}{b(z_0)}\right)
=:B_{\varrho}(x_0)\times I^q_{\varrho, \lambda}(t_0).
\end{align*}
The notation $G_{\varrho, \lambda}(z_0)$ is used to denote $(p,q)$-intrinsic cylinders at $z_0=(x_0, t_0),$
\begin{align*}
G_{\varrho, \lambda}(z_0):=B_{\varrho}(x_0)\times \left(t_0-\frac{\lambda^{2}\varrho^2}{\Lambda}, t_0+\frac{\lambda^{2}\varrho^2}{\Lambda}\right)
=:B_{\varrho}(x_0)\times I^{(p,q)}_{\varrho, \lambda}(t_0),
\end{align*}
where $\Lambda:=a(z_0)\lambda^p+b(z_0)\lambda^q.$
\item \label{not5.5} In the case $\la =1$ and $a(z_0)=1,$ we denote 
\[
Q_\varrho(z_0) := B_{\varrho}(x_0) \times (t_0 - \varrho^2,t_0+\varrho^2) =: B_\varrho(x_0) \times I_\varrho(t_0).
\]
\item\label{not6} We shall use the notation $H_1(z,\kappa) := a(z) \kappa^p + b(z) \kappa^q + 1$ for $z \in \Omega_T$ and $\kappa\in\mr^+$.

\item\label{not7} Integration with respect to either space or time only will be denoted by a single integral $\int$ whereas integration on $\Omega\times (0, T)$ will be denoted by a double integral $\iint$. 
	%	\item\label{not8} We will  use $\iiint dx\,dy\,dt$ to denote integral over $\RR^n \times \RR^n \times (0,T)$. More specifically, we will use the notation $\iiint_{\mcq}$ and $\iiint_{I \times B}$ to denote the integral over $\iiint_{I\times B \times B }dt\,dx\,dy$ where $\mcq = B\times I$ with the order of integration made precise by the order of the differentials.
	\item\label{not9} The notation $a \lesssim b$ is shorthand for $a\leq c b$ where $c$ is a universal constant which depends on $``\text{\texttt{data}}"$ where 
 \begin{align*}
\text{\texttt{data}}:=\left(n, N, p, q, \alpha, \nu, L, \delta, \omega_a, [b]_{\alpha}, ||a||_{L^{\infty}(\Omega_T)}, ||b||_{L^{\infty}(\Omega_T)}, ||H(z, |D u|)||_{L^1(\Omega_T)}, ||H(z, |F|)||_{L^1(\Omega_T)}\right).
 \end{align*}
 \item \label{M1}
 We denote
 \begin{align*}
     M:=\iint_{\Omega_T} \left[ H(z, |Du|)+H(z, |F|) \right] dz < \infty.
 \end{align*}
 \item \label{integral avarage} We shall denote
 \begin{align*}
     (f)_{\mathcal{D}}:= \frac{1}{|\mathcal{D}|}\iint_{\mathcal{D}} f(z)\, dz= \miint{\mathcal{D}} f(z)\, dz
 \end{align*}
 as the integral average of $f$ over a measurable set $\mathcal{D} \subset \Omega_T$ with $0<|\mathcal{D}|<\infty.$ In particular, when $\mathcal{D}=Q_{\varrho, \lambda}(z_0), J_{\varrho, \lambda}(z_0), G_{\varrho, \lambda}(z_0),$ the integral average of $f$ is denoted by $(f)_{z_0, \varrho, \lambda}, (f)^q_{z_0, \varrho, \lambda}, (f)^{(p,q)}_{z_0, \varrho, \lambda},$ respectively.
%\item\label{not10} Given a domain $\Omega_T= B\times (0,T)$, we denote it's parabolic boundary by $\partial_p \Omega_T = (\partial_p B \times (0,T)) \bigcup (\Omega \times \{t=0\})$.
%	\item\label{not11} For any fixed $t,k\in\RR$ and set $\Om\subset\RR^n$, we denote $A_{\pm}(k,t) := \{x\in \Om: (u-k)_{\pm}(\cdot,t) > 0\}$; for any ball $B_r$ we write $A_{\pm}(k,t) \cap (B_{r}\times I) =: A_{\pm}(k,t,r)$. 
%	\item\label{not12} For any set $\Om \subset \RR^n$, we denote $\mathcal{C}_\Omega:=(\Omega^c\times\Omega^c)^c=\left(\Omega\times\Omega\right) \cup \left( \Omega\times(\RR^n\setminus\Omega)\right) \cup \left((\RR^n\setminus\Omega)\times\Omega\right)$.
\end{enumerate}

\vspace{0.5cm}

We also introduce the definition of the parabolic metric and its scaled version.

\begin{definition}[Parabolic metric]\label{parabolic metric}
Given any two points $z_1=(x_1, t_1)$ and $z_2=(x_2, t_2)$ in $\RR^{n+1},$ we define the parabolic metric $d$ as 
\begin{align*}
d(z_1, z_2):=\max\left\{|x_1-x_2|, \sqrt{|t_1-t_2|}\right\}.
\end{align*}
\end{definition}
\vspace{.3cm}
A suitable adaptation of the above definition in intrinsically scaled cylinders centered at $z_0\in \RR^{n+1}$ is as follows.
\begin{definition}[Scaled parabolic metric]\label{para metric}
Fix $z_0 \in \RR^{n+1}.$ Given any two points $z_1=(x_1, t_1), z_2=(x_2, t_2) \in \RR^{n+1}$ and $\lambda\geq 1$, we define the parabolic metric $d^{\la}_{z_0}$ as

\begin{align*}
\displaystyle
d^{\la}_{z_0}(z_1,z_2):=\left\{\begin{array}{l}
\max\left\{|x_1-x_2|, \sqrt{a(z_0)\lambda^{p-2}|t_1-t_2|}\right\},\quad \text{in $p$-intrinsic case,}\\
\max\left\{|x_1-x_2|, \sqrt{b(z_0)\lambda^{q-2}|t_1-t_2|}\right\},\quad \text{in $q$-intrinsic case,}\\
\max\left\{|x_1-x_2|, \sqrt{\Lambda \lambda^{-2}|t_1-t_2|}\right\},\quad\quad\,\,\,\,\, \text{in $(p,q)$-intrinsic case,}
\end{array}\right.
\end{align*}
where $\La:=a(z_0)\la^p+b(z_0)\la^q.$   
\end{definition}

\subsection{Basic results and useful lemmas}
In this subsection, we introduce several lemmas that will be used later.
We begin by recalling the notations \ref{general cylinder}, \ref{not5.5} and \ref{integral avarage}. Parabolic cylinders
$U_{r,\tau}(z_0)$ and $Q_r(z_0)$ are defined by
    $$ U_{r,\tau}(z_0) := B_r(x_0)\times (t_0-\tau,t_0+\tau), \qquad Q_r(z_0) := B_r(x_0)\times (t_0-r^2,t_0+r^2)$$
for $z_0=(x_0,t_0)$, and the average integral of $u$ over $U_{r, \tau}(z_0)$ is defined as
\[(u)_{U_{r, \tau}(z_0)}:=\miint{U_{r, \tau}(z_0)}u(z)\,dz.\] Furthermore, we recall that
\begin{align*}
    H(z,|Du|)=a(z)|Du|^p+b(z)|Du|^q.
\end{align*}

The following lemma presents a Gagliardo–Nirenberg inequality, which will play a key role in establishing reverse H\"{o}lder's inequalities.
\begin{lemma}[\cite{Hasto_2021}, Lemma 2.12]\label{lem : lemma 2.1}
For an open ball $B_{\rho}\subset \mathbb{R}^n$, let $\psi \in W^{1, s}(B_{\rho})$ and take $\sigma, s, r \in[1, \infty)$ and $\vartheta \in(0,1)$ such that
    $$
    -\frac{n}{\sigma} \leq \vartheta\left(1-\frac{n}{s}\right)-(1-\vartheta) \frac{n}{r}.
    $$
    Then we have
    $$
    \dashint_{B_{\rho}} \frac{|\psi|^\sigma}{\rho^\sigma} \, d x \leq c\left(\dashint_{B_{\rho}}\left(\frac{|\psi|^s}{\rho^s}+|D\psi|^s\right) d x\right)^{\frac{\vartheta \sigma}{s}}\left(\dashint_{B_{\rho}} \frac{|\psi|^r}{\rho^r} d x\right)^{\frac{(1-\vartheta) \sigma}{r}}
    $$
    for some constant $c=c(n, \sigma)$.
\end{lemma}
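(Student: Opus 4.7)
The plan is to reduce the statement to the classical Gagliardo--Nirenberg interpolation by (i) normalizing to a unit ball via scaling, (ii) applying log-convexity of $L^p$-norms (Hölder interpolation), and (iii) invoking the Sobolev--Poincaré inequality, and then rescaling back to obtain the $\rho$-factors in the stated form.

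First, I would perform a change of variables: writing $B = B_\rho(x_0)$ and setting $\phi(y) := \psi(x_0 + \rho y)$ for $y \in B_1(0)$, one has $D\phi(y) = \rho\, D\psi(x_0+\rho y)$, and direct computation shows that the asserted inequality on $B$ is equivalent, after distributing the $\rho$-powers through the exponents $\vartheta\sigma/s$ and $(1-\vartheta)\sigma/r$, to the dimensionless inequality
\begin{equation*}
\dashint_{B_1} |\phi|^{\sigma}\,dy \;\leq\; c \left(\dashint_{B_1}\bigl(|\phi|^s + |D\phi|^s\bigr) dy\right)^{\vartheta\sigma/s} \left(\dashint_{B_1} |\phi|^{r}\,dy\right)^{(1-\vartheta)\sigma/r}.
\end{equation*}
So it is enough to prove the inequality for $\rho = 1$.

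Next, I would choose an intermediate Sobolev exponent $s^{*} \in [1,\infty]$ compatible with $W^{1,s}(B_1) \hookrightarrow L^{s^{*}}(B_1)$: namely $s^{*} = ns/(n-s)$ when $s<n$, any finite $s^{*}$ when $s=n$, and $s^{*} = \infty$ when $s>n$. The hypothesis $-n/\sigma \leq \vartheta(1-n/s) - (1-\vartheta)n/r$ is precisely the statement $1/\sigma \geq \vartheta/s^{*} + (1-\vartheta)/r$ when $s<n$, and is a strictly weaker condition when $s\geq n$; in either case it guarantees existence of $s^{*}$ so that this scaling holds. Hölder's interpolation (log-convexity of $L^p$ norms on $B_1$) then gives $\|\phi\|_{L^\sigma(B_1)} \leq c\,\|\phi\|_{L^{s^{*}}(B_1)}^{\vartheta}\|\phi\|_{L^{r}(B_1)}^{1-\vartheta}$, with an extra harmless $|B_1|$-factor absorbed when the scaling is a strict inequality. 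Applying the Sobolev--Poincaré inequality on the fixed ball $B_1$ in the form $\|\phi\|_{L^{s^{*}}(B_1)} \leq c\,(\|\phi\|_{L^{s}(B_1)} + \|D\phi\|_{L^{s}(B_1)})$, combining, and raising to the power $\sigma$ yields the unit-ball version. Undoing the scaling restores the $\rho^{-s}$, $\rho^{-r}$, $\rho^{-\sigma}$ factors exactly as claimed.

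The main obstacle I expect is the borderline regime $s=n$ together with the equality case $1/\sigma = (1-\vartheta)/r$ in the scaling hypothesis, since $W^{1,n} \not\hookrightarrow L^\infty$ and one cannot simply take $s^{*} = \infty$. I would handle this by exploiting the inequality $\leq$ (rather than $=$) in the scaling assumption: one picks $s^{*}$ finite and large enough that both $1/\sigma \geq \vartheta/s^{*} + (1-\vartheta)/r$ and the corresponding Sobolev embedding constant $c=c(n,s,s^{*})$ remain controlled by the quantity $\sigma$ in the statement. A secondary technicality is keeping track of the interpolation exponents through the $\rho$-rescaling so that the algebra $\rho^{-\vartheta\sigma}$ paired with the power $\vartheta\sigma/s$ produces exactly $\rho^{-s}$ inside the first bracket (and similarly $\rho^{-r}$ in the second); this is a straightforward but bookkeeping-heavy computation that underlies the dimensionally correct form of the inequality.
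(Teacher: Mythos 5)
The paper offers no proof of this lemma to compare against: it is imported verbatim from \cite{Hasto_2021}, Lemma 2.12. So your argument must stand on its own, and as written it does not cover the stated range of parameters. Your scaling reduction to the unit ball is correct (the $\rho$-powers match exactly on both sides), and in the subcritical case $s<n$ the chain ``log-convexity of $L^p$-norms between $L^{s^*}$ and $L^r$ with weights $\vartheta,1-\vartheta$, then Sobolev--Poincar\'e'' does yield the inequality. The genuine gap is the regime $s\ge n$. Your assertion that the hypothesis, being ``strictly weaker'' there, still guarantees an admissible $s^*$ is backwards: plain H\"older interpolation of $L^\sigma$ between $L^{s^*}$ (any $s^*\le\infty$) and $L^r$ with weights exactly $\vartheta$ and $1-\vartheta$ forces $\tfrac1\sigma\ge\tfrac{\vartheta}{s^*}+\tfrac{1-\vartheta}{r}\ge\tfrac{1-\vartheta}{r}$, and this can fail under the hypothesis precisely because $\vartheta(\tfrac1s-\tfrac1n)<0$ when $s>n$. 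Concretely, $n=2$, $s=4$, $r=2$, $\vartheta=\tfrac12$, $\sigma=8$ satisfies the hypothesis with equality, yet $\|\phi\|_{L^8(B_1)}\le c\,\|\phi\|_{L^\infty(B_1)}^{1/2}\|\phi\|_{L^2(B_1)}^{1/2}$ is false (test a spike of height one on a set of measure $\varepsilon$ and let $\varepsilon\to0$), so no choice of $s^*$ makes your steps (ii)--(iii) produce the stated exponents, although the lemma itself is true in this case. Likewise your fix for the borderline case $s=n$, $\tfrac1\sigma=\tfrac{1-\vartheta}{r}$ is circular: with equality, no finite $s^*$ can satisfy $\tfrac1\sigma\ge\tfrac{\vartheta}{s^*}+\tfrac{1-\vartheta}{r}$ once $\vartheta>0$.

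What these cases require is the full Gagliardo--Nirenberg (Nirenberg) interpolation theorem, which for $s\ge n$ interpolates through the Morrey/H\"older seminorm of $\phi$ rather than through an $L^{s^*}$-norm and is valid for every $\vartheta\in(0,1)$ (only the endpoint $\vartheta=1$, $s=n$ is excluded); citing it, or the reference the paper uses, closes the gap. If you want to salvage the elementary route, you could prove the estimate with a smaller weight $\vartheta'<\vartheta$ and upgrade it via $\bigl(\dashint_{B_1}|\phi|^r\,dx\bigr)^{1/r}\le\bigl(\dashint_{B_1}(|\phi|^s+|D\phi|^s)\,dx\bigr)^{1/s}$, but that only works when $r\le s$, which the lemma does not assume. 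A secondary point: through the Sobolev and interpolation steps your constant a priori depends on $s$, $r$, $\vartheta$, whereas the statement claims $c=c(n,\sigma)$; that uniformity also needs an argument.
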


We next record a simple but fundamental iteration lemma from \cite[Lemma 1.1]{GG82}:
\begin{lemma}
	\label{iter_lemma}
	Let $0< r< R<\infty$ be given and $h : [r, R] \to \RR$ be a non-negative bounded function. Furthermore, let $\vartheta \in (0,1)$ and $A,B,\gamma, \geq 0$ be fixed constants and 
	suppose that
	$$
	h(\varrho_1) \leq \vartheta h(\varrho_2) + \frac{A}{(\varrho_2-\varrho_1)^{\gamma}} + B,
	$$
	holds for all $r \leq \varrho_1 < \varrho_2 \leq R$.
    Then the following conclusion holds:
	% Then there exists a positive constant $c = c$ such that
	$$
	h(r) \apprle_{(\vartheta,\gamma)} \frac{A}{(R-r)^{\gamma}} + B.
	$$
\end{lemma}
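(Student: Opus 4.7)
The plan is to iterate the hypothesized inequality along a geometric sequence of radii that increases from $r$ towards $R$. First I would fix a parameter $\tau \in (0,1)$ close enough to $1$ that $\vartheta \tau^{-\gamma} < 1$; such a choice is possible precisely because $\vartheta < 1$. With this $\tau$ fixed, I would set $\varrho_0 := r$ and $\varrho_{i+1} := \varrho_i + (1-\tau)\tau^{i}(R-r)$, so that the consecutive gaps $\varrho_{i+1}-\varrho_i = (1-\tau)\tau^{i}(R-r)$ form a geometric sequence, every $\varrho_i$ lies in $[r,R)$, and $\varrho_i \nearrow R$.

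Next, applying the assumed inequality with $\varrho_1 = \varrho_i$ and $\varrho_2 = \varrho_{i+1}$ and iterating $k$ times, I would obtain by induction on $k$
\begin{equation*}
h(r) \leq \vartheta^{k} h(\varrho_k) + \frac{A}{(1-\tau)^{\gamma}(R-r)^{\gamma}}\sum_{i=0}^{k-1}\left(\frac{\vartheta}{\tau^{\gamma}}\right)^{i} + B\sum_{i=0}^{k-1}\vartheta^{i}.
\end{equation*}
The first geometric series converges as $k \to \infty$ by the choice of $\tau$, and the second converges because $\vartheta < 1$.

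Finally, since $h$ is bounded on $[r,R]$ and $\vartheta^{k} \to 0$, the remainder term $\vartheta^{k} h(\varrho_k)$ drops out in the limit, yielding
\begin{equation*}
h(r) \leq \frac{A}{(1-\tau)^{\gamma}\bigl(1-\vartheta\tau^{-\gamma}\bigr)(R-r)^{\gamma}} + \frac{B}{1-\vartheta},
\end{equation*}
with the multiplicative constants depending only on $\vartheta$ and $\gamma$, which is exactly the claimed estimate. There is essentially no obstacle here: the only point that requires a moment of thought is calibrating $\tau$ so that $\vartheta\tau^{-\gamma} < 1$ to secure convergence of the first geometric series, and invoking the boundedness of $h$ on $[r,R]$ to discard the residual term $\vartheta^{k} h(\varrho_k)$ at infinity.
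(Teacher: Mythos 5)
Your argument is correct and is precisely the classical iteration argument: the paper itself gives no proof of this lemma, citing instead Lemma 1.1 of Giaquinta--Giusti, whose proof is exactly your scheme of radii $\varrho_{i+1}=\varrho_i+(1-\tau)\tau^{i}(R-r)$ with $\tau$ calibrated so that $\vartheta\tau^{-\gamma}<1$, followed by summation of the two geometric series and discarding $\vartheta^{k}h(\varrho_k)$ via boundedness of $h$. Nothing is missing.
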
 

Now we recall the notation \ref{not6} from Subsection \ref{Subsec:Notation}. We define $H_1:\Omega_T\times  \mr^+\rightarrow \mr^+$ by
$$
H_1(z,\kappa):=H(z,\kappa)+1=a(z)\kappa^p+b(z)\kappa^q+1
$$
for $z\in\Omega_T$ and $\kappa\in\mr^+$.
In this paper, we use $H_1$ instead of $H$ for technical convenience as in the following lemma to establish the key inequalities and estimates necessary to prove the main result.

\begin{lemma} \label{lem : bounds of H_1}
     There exist positive constants $c_1 = c_1(\delta)$ and $c_2 = c_2(\|a\|_{L^\infty(\Omega_T)},\,\|b\|_{L^\infty(\Omega_T)})$ such that 
 \begin{equation*}\label{eq:bounds of H_1}
     c_1|\xi|^p \leq H_1(z,|\xi|) \leq c_2 (|\xi|^q+1)
 \end{equation*}
 for all $z \in \Omega_T$ and $\xi \in \mathbb{R}^n$.
\end{lemma}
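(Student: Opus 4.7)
The plan is to split into cases based on whether $|\xi|\le 1$ or $|\xi|\ge 1$, which is the standard trick for handling the interplay of the two powers $p\le q$ in $H_1$.

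For the upper bound, I would argue as follows. When $|\xi|\le 1$, both $|\xi|^p$ and $|\xi|^q$ are bounded by $1$, so
\[
H_1(z,|\xi|) \le \|a\|_{L^\infty(\Omega_T)} + \|b\|_{L^\infty(\Omega_T)} + 1 \le \bigl(\|a\|_{L^\infty(\Omega_T)} + \|b\|_{L^\infty(\Omega_T)} + 1\bigr)(|\xi|^q+1).
\]
When $|\xi|\ge 1$, since $p\le q$ by \eqref{def_pq}, $|\xi|^p\le |\xi|^q$, giving
\[
H_1(z,|\xi|) \le \bigl(\|a\|_{L^\infty(\Omega_T)} + \|b\|_{L^\infty(\Omega_T)}\bigr)|\xi|^q + 1 \le \bigl(\|a\|_{L^\infty(\Omega_T)} + \|b\|_{L^\infty(\Omega_T)}+1\bigr)(|\xi|^q+1).
\]
Taking $c_2 := \|a\|_{L^\infty(\Omega_T)} + \|b\|_{L^\infty(\Omega_T)}+1$ yields the upper bound with the claimed dependence.

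For the lower bound I would again split cases. When $|\xi|\le 1$, the trivial bound $H_1(z,|\xi|)\ge 1\ge |\xi|^p$ does the job. When $|\xi|\ge 1$, the exponent ordering reverses the comparison, so $|\xi|^q\ge |\xi|^p$, and then invoking the non-degeneracy assumption \eqref{eq:coefficient_sum_condition},
\[
H_1(z,|\xi|) \ge a(z)|\xi|^p + b(z)|\xi|^q \ge \bigl(a(z)+b(z)\bigr)|\xi|^p \ge \delta\,|\xi|^p.
\]
Setting $c_1 := \min\{1,\delta\}$ combines the two regimes and depends only on $\delta$, as required.

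There is no real obstacle here; the only subtle point is that the direction of comparison between $|\xi|^p$ and $|\xi|^q$ flips at $|\xi|=1$, which is precisely why the additive $+1$ in $H_1$ is needed to absorb the small-$|\xi|$ regime in the lower bound. This is also the reason the paper prefers working with $H_1$ rather than $H$ in subsequent arguments.
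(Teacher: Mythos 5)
Your proof is correct and follows essentially the same route as the paper: the same case split at $|\xi|=1$, using the bound $H_1\geq 1$ for small $|\xi|$, the assumption \eqref{eq:coefficient_sum_condition} together with $|\xi|^q\geq|\xi|^p$ for $|\xi|\geq 1$, and the $L^\infty$ bounds on $a,b$ for the upper bound, yielding the same constants $c_1=\min\{1,\delta\}$ and $c_2=\|a\|_{L^\infty(\Omega_T)}+\|b\|_{L^\infty(\Omega_T)}+1$. No gaps.
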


\begin{proof}
    Let $z \in \Omega_T$ and $\xi \in \mathbb{R}^n$.
    When $|\xi| \geq 1$, we observe from \eqref{eq:coefficient_sum_condition} that
    \begin{align*}
        \delta |\xi|^p & \leq(a(z)+b(z))|\xi|^p \\
        &\leq a(z)|\xi|^p+b(z)|\xi|^q+1 = H_1(z,|\xi|) \\
        &\leq (a(z)+b(z)+1)|\xi|^q \\
        &\leq (\|a\|_{L^\infty(\Omega_T)}+\|b\|_{L^\infty(\Omega_T)}+1)|\xi|^q.
    \end{align*}
    When $|\xi| \leq 1$, we get
    \begin{align*}
        |\xi|^p\leq 1 &\leq a(z)|\xi|^p+b(z)|\xi|^q+1 = H_1(z,|\xi|) \\
         &\leq a(z)+b(z)+1\\
         &\leq \|a\|_{L^\infty(\Omega_T)}+\|b\|_{L^\infty(\Omega_T)}+1.
     \end{align*}
     Combining these inequalities, we conclude that for any $\xi \in \mathbb{R}^n$,
     \begin{equation*}
         \min\{\delta,1\}|\xi|^p\leq H_1(z,|\xi|)\leq (\|a\|_{L^\infty(\Omega_T)}+\|b\|_{L^\infty(\Omega_T)}+1)(|\xi|^q+1),
     \end{equation*}
     and the lemma follows.
\end{proof}

We now establish the following Caccioppoli inequality. 
\begin{lemma} \label{lem : the Caccioppoli inequality}
    Let $u$ be a weak solution to \eqref{eq: main equation}. Then, for $U_{R,S}(z_0)\subset \Omega_T$, $r\in [R/2,R)$ and $\tau\in[S/2^2,S)$, there exists a constant $c$ depending on $n,p,q,\nu$ and $L$ such that the following inequality holds: 
    \begin{align*}
        &\sup_{t\in(t_0-\tau, t_0+\tau)}\dashint_{B_r(x_0)} \frac{|u-(u)_{U_{r,\tau}(z_0)}|^2}{\tau}\; dx +\miint{U_{r,\tau}(z_0)}H(z,|Du|)\;dz\\
        &\qquad \leq c\miint{U_{R,S}(z_0)}   H\left(z,\frac{|u-(u)_{U_{R,S}(z_0)}|}{R-r}\right) dz\\
        &\quad\qquad + c\miint{U_{R,S}(z_0)}\frac{|u-(u)_{U_{R,S}(z_0)}|^2}{S-\tau}\;dz +c\miint{U_{R,S}(z_0)} H(z,|F|) \;dz.
    \end{align*}
\end{lemma}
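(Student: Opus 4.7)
The plan is to follow the standard parabolic Caccioppoli scheme, adapted to the double phase setting. Since $u$ has no a priori time derivative, I would first regularize the equation in time using Steklov averages (or a smooth mollification), carry out all manipulations on the regularized level, and pass to the limit at the end. I would choose cutoff functions $\eta\in C^\infty_0(B_R(x_0))$ with $\eta\equiv 1$ on $B_r(x_0)$ and $|D\eta|\le c/(R-r)$, together with $\zeta\in C^\infty(\mathbb{R})$ satisfying $\zeta\equiv 1$ on $[t_0-\tau,t_0+\tau]$, $\operatorname{supp}\zeta\subset(t_0-S,t_0+S)$ and $|\zeta'|\le c/(S-\tau)$. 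The key structural choice is to take the power $q$ (not $p$) on $\eta$: testing the Steklov-averaged equation with $\varphi=\eta^q\zeta^2\chi_{(t_0-S,s]}(u-(u)_{U_{R,S}(z_0)})$ for $s\in(t_0-\tau,t_0+\tau)$, the integration by parts in time produces
$$
\tfrac12\int_{B_R(x_0)}\eta^q|u(\cdot,s)-(u)_{U_{R,S}}|^2\zeta^2(s)\,dx + \nu\iintss_{B_R\times(t_0-S,s]}\eta^q\zeta^2 H(z,|Du|)\,dz
$$
on the left, while the right-hand side contains the time-derivative error $\iintss \eta^q|u-(u)_{U_{R,S}}|^2\zeta|\zeta'|\,dz$, the spatial divergence error $\iintss q\eta^{q-1}\zeta^2|\aaa(z,u,Du)||u-(u)_{U_{R,S}}||D\eta|\,dz$, and a source term controlled via \eqref{eq: growth condition of B}.

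The Young's inequality step is the crux of the double phase treatment. Using $|\aaa(z,u,Du)|\le L(a(z)|Du|^{p-1}+b(z)|Du|^{q-1})$ and splitting the spatial error into its $p$- and $q$-pieces, I would apply Young's inequality with exponents $(p,p')$ on the first piece and $(q,q')$ on the second, obtaining for any $\epsilon\in(0,1)$
$$
a(z)|Du|^{p-1}\eta^{q-1}|u-(u)_{U_{R,S}}||D\eta|\le \epsilon\,\eta^q a(z)|Du|^p + c_\epsilon\,\eta^{q-p}a(z)\tfrac{|u-(u)_{U_{R,S}}|^p}{(R-r)^p},
$$
and an analogous bound with $p$ replaced by $q$ on the $b(z)$-piece (which produces a clean $\eta^q$ factor with no residual power). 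Since $0\le\eta\le 1$ and $q-p\ge 0$, the factor $\eta^{q-p}$ in the $p$-piece can be discarded, so the spatial error is bounded by $\epsilon\iintss\eta^q H(z,|Du|)\,dz + c_\epsilon\iintss H\bigl(z,|u-(u)_{U_{R,S}}|/(R-r)\bigr)\,dz$. The source term involving $F$ is handled the same way. Taking the supremum in $s\in(t_0-\tau,t_0+\tau)$ on the left and then replacing $(u)_{U_{R,S}}$ by $(u)_{U_{r,\tau}}$ in the resulting sup term via the $L^2$-mean minimality $\int_{B_r}|u-(u)_{U_{r,\tau}}|^2\,dx\le\int_{B_r}|u-c|^2\,dx$ for any constant $c$, yields an inequality of the schematic form
$$
\Phi(r,\tau)\le \epsilon\,\Phi(R,S) + c_\epsilon\bigl[\text{the three terms on the RHS of the claim}\bigr],
$$
where $\Phi(\varrho,\theta)$ denotes the left-hand side of the asserted estimate on $U_{\varrho,\theta}(z_0)$. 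Restricting to a one-parameter family of intermediate radii joining $(r,\tau)$ to $(R,S)$, the $\epsilon$-term is reabsorbed by the iteration Lemma \ref{iter_lemma}.

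The main obstacle I anticipate is producing $H(z,\cdot)$, rather than a sum $a(z)|\cdot|^p + b(z)|\cdot|^q$ with mismatched cutoff powers, on the right-hand side: this is precisely why the exponent $q$ on $\eta$ is chosen, so that the $q$-phase Young step closes cleanly while the $p$-phase step leaves only a harmless $\eta^{q-p}\le 1$ factor to be absorbed. A secondary technical point is that the two averages $(u)_{U_{r,\tau}}$ and $(u)_{U_{R,S}}$ appearing on the two sides are different; this mismatch is handled by the $L^2$-minimality remark above, which does not cost any additional constant beyond the one already appearing in the iteration.
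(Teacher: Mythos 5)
Your proposal is essentially the paper's argument: Steklov averaging, the test function $\eta^q\zeta^2\times(\text{time truncation})\times\bigl(u-(u)_{U_{R,S}(z_0)}\bigr)$ with the exponent $q$ on the spatial cutoff, the growth conditions \eqref{eq: growth condition of A}--\eqref{eq: growth condition of B}, and Young's inequality split into the $p$- and $q$-pieces exactly as you describe (the residual $\eta^{q-p}\le 1$ being the point of choosing $\eta^q$). Two remarks on where you deviate or slip.

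First, the closing step. Because your $\epsilon$-error terms from Young's inequality carry the same weight $\eta^q\zeta^2$ over the same domain as the coercive term $\nu\iintss\eta^q\zeta^2H(z,|Du|)\,dz$, you can absorb them directly by choosing $\epsilon$ small relative to $\nu$ (this is what the paper does, with $\varepsilon=\nu/(8Lq)$); the detour through ``$\Phi(r,\tau)\le\epsilon\,\Phi(R,S)+\dots$'' and \cref{iter_lemma} is not needed. If you insist on the iteration route, you must keep the subtracted mean fixed as $(u)_{U_{R,S}(z_0)}$ for the whole one-parameter family of intermediate cylinders (otherwise the right-hand side does not have the form $A/(\varrho_2-\varrho_1)^\gamma$ required by the lemma), and you have to reconcile the different homogeneities $(\varrho_2-\varrho_1)^{-p}$, $(\varrho_2-\varrho_1)^{-q}$, $(\theta_2-\theta_1)^{-1}$ under a single exponent $\gamma$; all doable, but it is extra bookkeeping for no gain.

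Second, the replacement of $(u)_{U_{R,S}(z_0)}$ by $(u)_{U_{r,\tau}(z_0)}$ in the sup term is not justified by ``$L^2$-mean minimality'' as you state it: $(u)_{U_{r,\tau}(z_0)}$ is a space--time average, not the spatial mean of $u(\cdot,t)$ over $B_r(x_0)$ at a fixed time $t$, so the inequality $\int_{B_r}|u(\cdot,t)-(u)_{U_{r,\tau}}|^2dx\le\int_{B_r}|u(\cdot,t)-c|^2dx$ is false in general. The correct (and standard) fix is the triangle inequality
\[
|u-(u)_{U_{r,\tau}}|^2\le 2|u-(u)_{U_{R,S}}|^2+2\,\babs{(u)_{U_{r,\tau}}-(u)_{U_{R,S}}}^2,
\]
followed by bounding $\babs{(u)_{U_{r,\tau}}-(u)_{U_{R,S}}}^2\le \sup_{t}\dashint_{B_r}|u-(u)_{U_{R,S}}|^2\,dx$ via Jensen, which costs only a universal constant; this is the comparison the paper records at the end of its proof.
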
  
\begin{proof}
    Consider a cut-off function $\eta\in C^\infty_0(B_R(x_0))$ satisfying
    $$
    0\leq \eta \leq 1, \quad \eta \equiv 1 \ \text{ in }B_r(x_0),\quad \text{and}\quad \|D\eta\|_{L^\infty}\leq \frac{2}{R-r}.
    $$
    For $\tau \in [S/2^2,S)$, we choose a sufficiently small $h_0>0$ such that there exists a cut-off function $\zeta\in C^\infty_0(I_{S-h_0}(t_0))$ satisfying 
    $$
    0\leq \zeta \leq 1,\quad \zeta\equiv 1 \ \text{ in }I_\tau(t_0),\quad \text{and}\quad \|\partial_t\zeta\|_{L^\infty}\leq \frac{3}{S-\tau}.
    $$
    Additionally, select $t_*\in I_\tau(t_0)$ and $\upepsilon \in (0,h_0)$. Define $\zeta_\upepsilon$ as follows:
    $$
    \zeta_\upepsilon(t):=
    \begin{cases}
        1,&\quad t\in(-\infty,t_*-\upepsilon),\\
        1-\frac{t-t_*+\upepsilon}{\upepsilon},&\quad t\in [t_*-\upepsilon,t_*],\\
        0,&\quad t\in (t_*,\infty).
    \end{cases}
    $$

    Fix $h\in(0,h_0)$. We express \eqref{eq: main equation} using Steklov averages as
    \begin{equation}    \label{eq: main equation with Steklov averages}
        \partial_t[u-(u)_{U_{R, S}(z_0)}]_h-\operatorname*{div}[\mathcal{A}(z, u,D u)]_h=-\operatorname*{div}[\mathcal{B}(z,F)]_h
    \end{equation}
    in $B_R(x_0)\times I_{S-h}(t_0)$. Then the function $[u-(u)_{U_{R,S}(z_0)}]_h \eta^q\zeta^2\zeta_\upepsilon$ belongs to 
    $$
    W^{1,2}_0(I_{S-h};L^2(B_R(x_0)))\cap L^q(I_{S-h};W^{1,q}_0(B_R(x_0))).
    $$
    By using $\varphi=[u-(u)_{U_{R, S}(z_0)}]_h \eta^q \zeta^2 \zeta_\upepsilon$ as a test function in \eqref{eq: main equation with Steklov averages}, we obtain
    \begin{equation*}
        \begin{aligned}
            \mathrm{I}+\mathrm{II}&:=\miint{U_{R, S}(z_0)} \partial_t[u-(u)_{U_{R, S}(z_0)}]_h \; \varphi \; d z+\miint{U_{R, S}(z_0)}[\mathcal{A}(z,u, D u)]_h \cdot D \varphi \; d z \\
            &=\miint{U_{R, S}(z_0)}[\mathcal{B}(z,F) ]_h \cdot D \varphi\; d z=: \mathrm{III}.
        \end{aligned}
    \end{equation*}
    We deduce from $|D u| \in L^q(\Omega_T)$ and \eqref{eq: assumption of F} that $\mathrm{II}$ and $\mathrm{III}$ are finite. Indeed, 
    \begin{align}
      \nonumber \mathrm{II} & \leq L \miint{U_{R, S}(z_0)}(a |Du|^{p-1})_h(x,t)|D\varphi(x,t)|\,dx\,dt \\
      &\label{finiteness of II} \qquad +L \miint{U_{R, S}(z_0)}(b |Du|^{q-1})_h(x,t)|D\varphi(x,t)|\,dx\,dt. 
    \end{align}
We will estimate the first term on the right-hand side and the estimation of the second term follows similarly. Using the definition of Steklov averages, the first term can be written as
\begin{align*}
&\miint{U_{R, S}(z_0)}(a |Du|^{p-1})_h(x,t)|D\varphi(x,t)|\,dx\,dt\\
&=\miint{U_{R, S}(z_0)}\dashint_{t}^{t+h}(a(x,s))^{\frac{p-1}{p}}|Du(x,s)|^{p-1}(a(x,s))^{\frac{1}{p}}|D\varphi(x,t)|\,ds\,dx\,dt.
\end{align*}
Now using H\"{o}lder's inequality and properties of Steklov averages, we get
\begin{align*}
&\miint{U_{R, S}(z_0)}|(a |Du|^{p-1})_h(x,t)||D\varphi(x,t)|\,dx\,dt \\
&\leq c \left(\miint{U_{R,S}(z_0)}a(x,t)|Du(x,t)|^p\,dx\,dt\right)^{\frac{p-1}{p}}\left(\miint{U_{R,S}(z_0)}\dashint_{t}^{t+h}a(x,s)|D \varphi(x,t)|^p\,ds\,dx\,dt\right)^{\frac{1}{p}}\\
&=c \left(\miint{U_{R,S}(z_0)}a(x,t)|Du(x,t)|^p\,dx\,dt\right)^{\frac{p-1}{p}}\left(\miint{U_{R,S}(z_0)}a_h(x,t)|D \varphi(x,t)|^p\,dx\,dt\right)^{\frac{1}{p}}.
\end{align*}
Using $||a||_{L^{\infty}(\Omega_T)}<\infty$ and $|Du|\in L^q(\Omega_T),$ we conclude that the above inequality is finite. By the same argument, we see that the second term of \eqref{finiteness of II} is also finite. We then estimate each term separately as follows. Note that estimation of the term $\mathrm{I}$ goes same as \cite[Lemma 2.3]{2023_Gradient_Higher_Integrability_for_Degenerate_Parabolic_Double-Phase_Systems}. We repeat it here for the sake of completeness.

\noindent \textbf{Estimate of $\mathrm{I}$:} Integrating by parts, we find 
    \begin{align}\nonumber
        \mathrm{I}&=\miint{U_{R,S}(z_0)} \frac{1}{2}(\partial_t |[u-(u)_{U_{R, S}(z_0)}]_h|^2)\eta^q\zeta^2\zeta_\upepsilon \; dz\\\label{eq : estimate of I in Lemma 3.1}
        &=-\miint{U_{R,S}(z_0)} |[u-(u)_{U_{R, S}(z_0)}]_h|^2\eta^q\zeta\zeta_\upepsilon\partial_t\zeta \;dz\\\nonumber
        &\qquad-\miint{U_{R,S}(z_0)} \frac{1}{2}|[u-(u)_{U_{R, S}(z_0)}]_h|^2 \eta^q \zeta^2 \partial_t\zeta_\upepsilon \; dz.
    \end{align}
    For the first term on the right-hand side of \eqref{eq : estimate of I in Lemma 3.1}, it follows from the definition of $\zeta$ that
    $$
    -\miint{U_{R,S}(z_0)} |[u-(u)_{U_{R, S}(z_0)}]_h|^2\eta^q\zeta\zeta_\upepsilon\partial_t\zeta \;dz\geq -3 \miint{U_{R,S}(z_0)}\frac{|[u-(u)_{U_{R,S}(z_0)}]_h|^2}{S-\tau} \; dz.
    $$
    Next, by the definition of $\zeta_\upepsilon$, we get 
    \begin{align*}
        &-\miint{U_{R,S}(z_0)} \frac{1}{2} |[u-(u)_{U_{R, S}(z_0)}]_h|^2 \eta^q \zeta^2 \partial_t\zeta_\upepsilon \; dz\\
        &\qquad =\frac{1}{|U_{R, S}|}\dashint_{t_*-\upepsilon}^{t_*}\int_{B_R(x_0)}\frac{1}{2} |[u-(u)_{U_{R, S}(z_0)}]_h|^2 \eta^q \zeta^2 \;dx dt\\
        &\qquad \geq \frac{1}{|U_{R, S}|}\dashint_{t_*-\upepsilon}^{t_*}\int_{B_r(x_0)} \frac{1}{2} |[u-(u)_{U_{R, S}(z_0)}]_h|^2\; dx dt.
    \end{align*}
    Thus, we obtain
    \begin{equation}\label{eq: final estimate of I in Lemma 3.1}
        \begin{aligned}
            \lim_{h\rightarrow 0^+}\lim_{\upepsilon \rightarrow 0^+} \mathrm{I} &\geq -3 \miint{U_{R,S}(z_0)}\frac{|u-(u)_{U_{R,S}(z_0)}|^2}{S-\tau} \; dz\\
            &\quad +\frac{1}{2|U_{R,S}|}\int_{B_r(x_0)} |u(x,t_*)-(u)_{U_{R, S}(z_0)}|^2\; dx.
        \end{aligned}
    \end{equation}

\noindent \textbf{Estimate of $\mathrm{II}$:} It follows from the definition of $\varphi$ that
    \begin{align}\nonumber
        \mathrm{II}&=\miint{U_{R,S}(z_0)} [\mathcal{A}(z,u,Du)]_h\cdot [Du]_h\eta^q\zeta^2\zeta_\upepsilon \;dz\\ \label{eq : estimate of II in Lemma 3.1}
        &\qquad + q\miint{U_{R,S}(z_0)} [\mathcal{A}(z,u,Du)]_h\cdot [u-(u)_{U_{R, S}(z_0)}]_h D\eta \eta^{q-1}\zeta^2\zeta_\upepsilon \; dz.
    \end{align}
For the first term of \eqref{eq : estimate of II in Lemma 3.1}, we deduce from \eqref{eq: growth condition of A} that
    \begin{align*}
        &\lim_{h\rightarrow 0^+}\lim_{\upepsilon \rightarrow 0^+} \miint{U_{R,S}(z_0)} [\mathcal{A}(z,u,Du)]_h\cdot [Du]_h\eta^q\zeta^2\zeta_\upepsilon \;dz\\
        &\qquad \geq \frac{\nu}{|U_{R, S}|}\int_{I_S(t_0)\cap(-\infty,t_*)}\int_{B_R(x_0)} H(z,|Du|)\eta^q\zeta^2 \; dx dt.
    \end{align*}
    Next, to estimate the second term in \eqref{eq : estimate of II in Lemma 3.1}, we apply \eqref{eq: growth condition of A} and the definition of $\eta$ to find that 
    \begin{align*}
        &\lim_{h\rightarrow 0^+}\lim_{\upepsilon \rightarrow 0^+} q\miint{U_{R,S}(z_0)} [\mathcal{A}(z,u,Du)]_h\cdot [u-(u)_{U_{R, S}(z_0)}]_h D\eta \eta^{q-1}\zeta^2\zeta_\upepsilon \; dz\\
        &\quad \geq -\frac{Lq}{|U_{R, S}|}\int_{I_S(t_0)\cap(-\infty,t_*)}\int_{B_R(x_0)} a(z)|Du|^{p-1}\eta^{q-1}\zeta^2\frac{|u-(u)_{U_{R, S}(z_0)}|}{R-r}\; dx dt\\
        &\qquad -\frac{Lq}{|U_{R, S}|}\int_{I_S(t_0)\cap(-\infty,t_*)}\int_{B_R(x_0)} b(z)|Du|^{q-1}\eta^{q-1}\zeta^2\frac{|u-(u)_{U_{R, S}(z_0)}|}{R-r}\; dx dt.
    \end{align*}
    Young's inequality with $\varepsilon=\frac{\nu}{8Ls}$ implies that
    \begin{align*}
        &\lim_{h\rightarrow 0^+}\lim_{\upepsilon \rightarrow 0^+} q\miint{U_{R,S}(z_0)} [\mathcal{A}(z,u,Du)]_h\cdot [u-(u)_{U_{R, S}(z_0)}]_h D\eta \eta^{q-1}\zeta^2\zeta_\upepsilon\; dz\\
        &\quad \geq -\frac{\nu}{4|U_{R, S}|}\int_{I_S(t_0)\cap(-\infty,t_*)} \int_{B_R(x_0)} H(z,|Du|)\eta^q\zeta^2 \;dx dt\\
        &\quad\qquad -c\miint{U_{R,S}(z_0)} H\left(z,\frac{|u-(u)_{U_{R, S}(z_0)}|}{R-r}\right) dz
    \end{align*}
    for some constant $c>0$ depending on $p,q,\nu$ and $L$, since $0\leq \eta \leq 1$ and $2 \leq p \leq q < \infty$. 
    It follows that 
    \begin{equation}\label{eq: final estimate of II in Lemma 3.1}
        \begin{aligned}
            \lim_{h\rightarrow 0^+}\lim_{\upepsilon \rightarrow 0^+} \mathrm{II} &\geq \frac{3\nu}{4|U_{R, S}|} \int_{I_S(t_0)\cap(-\infty,t_*)} \int_{B_R(x_0)} H(z,|Du|)\eta^q\zeta^2 \;dx dt\\
            &\quad -c\miint{U_{R,S}(z_0)} H\left(z,\frac{|u-(u)_{U_{R, S}(z_0)}|}{R-r}\right) dz.
        \end{aligned}
    \end{equation}

   \noindent\textbf{Estimate of $\mathrm{III}$:} As above, using the growth condition \eqref{eq: growth condition of B}, we obtain from Young's inequality that
    \begin{align}\nonumber
        \lim_{h\rightarrow 0^+}\lim_{\upepsilon \rightarrow 0^+} \mathrm{III} & \leq \miint{U_{R,S}(z_0)} H(z,|F|) \; dz\\ \nonumber
        &\quad +\frac{\nu}{2|U_{R, S}|} \int_{I_S(t_0)\cap(-\infty,t_*)} \int_{B_R(x_0)} H(z,|Du|)\eta^q \zeta^2 \;dx dt\\ \label{eq: final estimate of III in Lemma 3.1}
        &\quad +c\miint{U_{R,S}(z_0)} H\left(z,\frac{|u-(u)_{U_{R, S}(z_0)}|}{R-r}\right) dz.
    \end{align}
    Combining \eqref{eq: final estimate of I in Lemma 3.1}, \eqref{eq: final estimate of II in Lemma 3.1} and \eqref{eq: final estimate of III in Lemma 3.1} gives
    \begin{align*}
        &\frac{1}{|U_{R, S}|}\int_{B_r(x_0)} |u(x,t_*)-(u)_{U_{R, S}(z_0)}|^2\; dx\\
        &\qquad +\frac{1}{|U_{R, S}|} \int_{I_S(t_0)\cap(-\infty,t_*)} \int_{B_R(x_0)} H(z,|Du|)\eta^q\zeta^2 \;dx dt\\
        &\quad \leq c\miint{U_{R,S}(z_0)} H\left(z,\frac{|u-(u)_{U_{R, S}(z_0)}|}{R-r}\right) dz\\
        &\qquad + c \miint{U_{R,S}(z_0)}\frac{|u-(u)_{U_{R,S}(z_0)}|^2}{S-\tau} \; dz +c\miint{U_{R,S}(z_0)} H(z,|F|) \; dz,
    \end{align*}
    where $c$ depends only on $p,q,\nu$ and $L$. As $t_*\in I_\tau(t_0)$ is arbitrary, $\frac{R}{2}\leq r <R$ and $\frac{S}{4}\leq \tau < S$, we have
    \begin{align*}
        &\sup_{t\in (t_0-\tau,t_0+\tau)}\dashint_{B_r(x_0)} \frac{|u-(u)_{U_{R, S}(z_0)}|^2}{\tau}\; dx + \miint{U_{r,\tau}(z_0)} H(z,|Du|) \;dz\\
        &\qquad \leq c\miint{U_{R,S}(z_0)} H\left(z,\frac{|u-(u)_{U_{R, S}(z_0)}|}{R-r}\right)\; dz\\
        &\qquad \qquad + c \miint{U_{R,S}(z_0)}\frac{|u-(u)_{U_{R,S}(z_0)}|^2}{S-\tau} \; dz +c\miint{U_{R,S}(z_0)} H(z,|F|) \; dz
    \end{align*}
    for some constant $c>0$ depending on $n,p,q,\nu$ and $L$. Note that
    \begin{equation*}
        \sup_{t\in (t_0-\tau,t_0+\tau)}\dashint_{B_r(x_0)} \frac{|u-(u)_{U_{r, \tau}(z_0)}|^2}{\tau}\; dx \leq 2\sup_{t\in (t_0-\tau,t_0+\tau)}\dashint_{B_r(x_0)} \frac{|u-(u)_{U_{R, S}(z_0)}|^2}{\tau}\; dx.
    \end{equation*}
    This completes the proof.
\end{proof}
\begin{lemma}    \label{lem: gluing lemma}
Let $u$ be a weak solution to \eqref{eq: main equation}, and let $\eta\in C_0^\infty(B_R(x_0))$ be a function such that
\begin{equation*}    \label{eq: definition of eta in Lemma 3.2}
\eta\geq 0,\quad \dashint_{B_R(x_0)} \eta \; dx =1 \quad \text{and} \quad \|\eta\|_{L^\infty}+R\|D\eta\|_{L^\infty} \leq c(n).
\end{equation*}
Then for $U_{R,S}(z_0)\subset \Omega_T$, there exists a constant $c=c(n, L)$ such that 
\begin{align*}
\sup_{t_1,t_2\in(t_0-S,t_0+S)}|(u\eta)_{x_0;R}(t_2)-(u\eta)_{x_0;R}(t_1)|
 \leq c\frac{S}{R}\miint{U_{R,S}(z_0)}[\widetilde{H}(z,|Du|)+\widetilde{H}(z,|F|)]\; dz,
\end{align*}
 where $\widetilde{H}:\Omega_T\times \mr^+ \rightarrow \mr^+$ is denoted by $\widetilde{H}(z,\kappa)=a(z)\kappa^{p-1}+b(z)\kappa^{q-1}$ for $z \in \Omega_T$ and $\kappa\in\mr^+$.
\end{lemma}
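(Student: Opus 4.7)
The plan is to test the weak formulation of \cref{eq: main equation} against the time-independent function $\varphi(x,t) = \eta(x)$, so that $\varphi_t \equiv 0$ and no time-derivative term of $\varphi$ appears. To make this rigorous, I first rewrite \cref{eq: main equation} via Steklov averages as
\begin{equation*}
\partial_t [u]_h - \dv [\mathcal{A}(z, u, Du)]_h = -\dv [\mathcal{B}(z, F)]_h,
\end{equation*}
which is valid pointwise a.e.\ on $B_R(x_0) \times I_{S-h}(t_0)$ for $h>0$ small, exactly as in the proof of \cref{lem : the Caccioppoli inequality}. Testing against $\eta(x)$ (the spatial boundary term vanishes since $\eta \in C_0^\infty(B_R(x_0))$) and integrating in time from $t_1$ to $t_2$ yields
\begin{equation*}
\int_{B_R(x_0)} \bigl([u]_h(\cdot,t_2) - [u]_h(\cdot,t_1)\bigr)\eta\,dx = \int_{t_1}^{t_2}\int_{B_R(x_0)} \bigl([\mathcal{B}(z,F)]_h - [\mathcal{A}(z,u,Du)]_h\bigr)\cdot D\eta\,dx\,dt.
\end{equation*}

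Passing to the limit $h \to 0^+$ is legitimate because $u \in C_{\loc}(0,T;L^2_{\loc})$ and, using \cref{eq: growth condition of A}, \cref{eq: growth condition of B} together with $a, b \in L^\infty(\Omega_T)$, $|Du| \in L^q_{\loc}$, and \cref{eq: assumption of F}, both $\mathcal{A}(z,u,Du)$ and $\mathcal{B}(z,F)$ belong to $L^{q/(q-1)}_{\loc}(\Omega_T)$. The limiting identity reads
\begin{equation*}
\int_{B_R(x_0)} \bigl(u(\cdot,t_2) - u(\cdot,t_1)\bigr)\eta\,dx = \int_{t_1}^{t_2}\int_{B_R(x_0)} \bigl(\mathcal{B}(z,F) - \mathcal{A}(z,u,Du)\bigr)\cdot D\eta\,dx\,dt
\end{equation*}
for every $t_1, t_2 \in (t_0 - S, t_0 + S)$. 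Dividing \cref{eq: growth condition of A} by $|Du|$ and \cref{eq: growth condition of B} by $|F|$ on the sets where these do not vanish (and noting $\widetilde{H}(z,0)=0$) delivers the pointwise bounds
\begin{equation*}
|\mathcal{A}(z,u,Du)| \leq L\,\widetilde{H}(z,|Du|), \qquad |\mathcal{B}(z,F)| \leq L\,\widetilde{H}(z,|F|).
\end{equation*}

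Taking absolute values, using $\|D\eta\|_{L^\infty} \leq c(n)/R$, enlarging the time interval $(t_1, t_2) \subset (t_0 - S, t_0 + S)$ to the full slab, and dividing by $|B_R(x_0)|$ produces
\begin{equation*}
\bigl|(u\eta)_{x_0;R}(t_2) - (u\eta)_{x_0;R}(t_1)\bigr| \leq \frac{c(n)L}{R}\cdot\frac{|U_{R,S}(z_0)|}{|B_R(x_0)|}\miint{U_{R,S}(z_0)}\bigl[\widetilde{H}(z,|Du|) + \widetilde{H}(z,|F|)\bigr]\,dz.
\end{equation*}
Since $|U_{R,S}(z_0)|/|B_R(x_0)| = 2S$ and $t_1, t_2$ are arbitrary, taking the supremum on the left side yields the claimed bound with constant $c(n,L)$. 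The argument is entirely routine; the only mildly technical step is the Steklov-average passage to the limit, which is already handled in \cref{lem : the Caccioppoli inequality} and raises no new obstacle here.
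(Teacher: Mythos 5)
Your proposal is correct and is essentially the same argument the paper relies on by citing the gluing lemma of the single-coefficient reference: test the Steklov-averaged system with the time-independent cutoff $\eta$, pass to the limit, use $|\mathcal{A}(z,u,Du)|\leq L\widetilde{H}(z,|Du|)$ and $|\mathcal{B}(z,F)|\leq L\widetilde{H}(z,|F|)$ together with $\|D\eta\|_{L^\infty}\leq c(n)/R$, and enlarge the time interval to pick up the factor $2S$. The only cosmetic points are that the test function should formally be $\eta\,\mathbf{e}_k$ componentwise and that $\mathcal{A}(z,v,0)=0$ follows from the growth bound plus continuity in $\xi$ (since $p,q\geq 2$), neither of which affects the argument.
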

\begin{proof}
 The proof can be completed following \cite[Lemma 2.4]{2023_Gradient_Higher_Integrability_for_Degenerate_Parabolic_Double-Phase_Systems}.   
\end{proof}
The next lemma presents a parabolic Poincar\'{e} inequality.
\begin{lemma}\label{lem : parabolic poincare inequality in Section 3}
    Let $u$ be a weak solution to \eqref{eq: main equation}. Then for $U_{R,S}(z_0)\subset \Omega_T$, $m\in(1,q]$ and $\theta\in(1/m,1]$, there exists a constant $c$ depending on $n$, $N$, $m$ and $L$ such that 
    \begin{align}\label{eq : parabolic poincare inequality in Section 3}
        \miint{U_{R,S}(z_0)} &\frac{|u-(u)_{U_{R,S}(z_0)}|^{\theta m}}{R^{\theta m}}\, dz \nonumber \\ &\leq c\miint{U_{R,S}(z_0)} |Du|^{\theta m} \,dz +c\left(\frac{S}{R^2}\miint{U_{R,S}(z_0)} [\widetilde{H}(z,|Du|)+\widetilde{H}(z,|F|)]\, dz \right)^{\theta m}.
    \end{align}
\end{lemma}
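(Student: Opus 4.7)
The plan is to adapt the classical scheme for parabolic Poincar\'e inequalities, separating the oscillation of $u$ into a spatial part, controlled by the standard Poincar\'e inequality on each time slice, and a temporal part, controlled by the gluing estimate in \cref{lem: gluing lemma}.

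The first step is to fix a cut-off $\eta \in C_0^\infty(B_R(x_0))$ satisfying the hypotheses of \cref{lem: gluing lemma} and set $\overline{u}(t) := \dashint_{B_R(x_0)} u(x,t)\eta(x)\,dx$. Since $(u)_{U_{R,S}(z_0)}$ is the $L^2$-optimal constant approximation, a standard Jensen-type reduction yields, for any constant $c_0$,
$$\miint{U_{R,S}(z_0)} |u-(u)_{U_{R,S}(z_0)}|^{\theta m}\,dz \leq 2^{\theta m}\miint{U_{R,S}(z_0)}|u(z)-c_0|^{\theta m}\,dz,$$
so it suffices to estimate the right-hand side with the convenient choice $c_0:=\overline{u}(t_0)$. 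I then apply the triangle inequality with the splitting $u(z)-\overline{u}(t_0)=[u(z)-\overline{u}(t)]+[\overline{u}(t)-\overline{u}(t_0)]$ and the elementary bound $(a+b)^{\theta m}\leq 2^{\theta m-1}(a^{\theta m}+b^{\theta m})$.

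For the spatial slice term, I would use the identity $u(x,t) - \overline{u}(t) = \dashint_{B_R(x_0)} \eta(y)[u(x,t) - u(y,t)]\,dy$, employ the bound $\|\eta\|_{L^\infty}\leq c(n)$, and apply the standard Poincar\'e inequality (valid since $\theta m > 1$) at each fixed $t$ to get
$$\dashint_{B_R(x_0)} |u(x,t)-\overline{u}(t)|^{\theta m}\,dx \leq c R^{\theta m}\dashint_{B_R(x_0)}|Du(x,t)|^{\theta m}\,dx.$$
Averaging in $t$ and dividing by $R^{\theta m}$ produces the first term on the right-hand side of \eqref{eq : parabolic poincare inequality in Section 3}. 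For the temporal term, \cref{lem: gluing lemma} applied with $t_1=t,\,t_2=t_0$ gives the $z$-independent estimate
$$|\overline{u}(t)-\overline{u}(t_0)|\leq c\,\tfrac{S}{R}\miint{U_{R,S}(z_0)}\bigl[\widetilde{H}(z,|Du|)+\widetilde{H}(z,|F|)\bigr]\,dz.$$
Raising to the $\theta m$-th power, integrating, and dividing by $R^{\theta m}$ yields the second term, using the scaling identity $R^{-\theta m}(S/R)^{\theta m}=(S/R^2)^{\theta m}$.

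No serious obstacle is expected: the main analytical inputs (spatial Poincar\'e at integrability $\theta m\geq 1$, and the gluing lemma) are already in place. The only technical point is the reduction to the shifted constant $\overline{u}(t_0)$ and the verification that the $\eta$-weighted slice average can replace the standard slice average without loss, both of which are routine consequences of $\dashint \eta = 1$ and $\|\eta\|_{L^\infty}\leq c(n)$.
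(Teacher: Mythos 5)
Your proposal is correct and follows essentially the same route as the paper, which simply defers to the cited reference's parabolic Poincaré argument: split the oscillation into a spatial part handled by the slicewise Poincaré inequality (using $\theta m>1$) and a temporal part handled by the gluing estimate of \cref{lem: gluing lemma}, with the weighted slice average $\overline{u}(t)=(u\eta)_{x_0;R}(t)$ as intermediary. The scaling bookkeeping $R^{-\theta m}(S/R)^{\theta m}=(S/R^{2})^{\theta m}$ and the Jensen reduction to a shifted constant are exactly the routine steps carried out there.
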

\begin{proof}
    In the same way as \cite[\text{Lemma}~2.5]{2023_Gradient_Higher_Integrability_for_Degenerate_Parabolic_Double-Phase_Systems}, the inequality \eqref{eq : parabolic poincare inequality in Section 3} can be derived.
\end{proof}

\section{Reverse H\"{o}lder inequalities}\label{Sec:Reverse_H}

We prove reverse H\"{o}lder type inequalities by dividing it into three cases, namely, $p$-intrinsic case, $q$-intrinsic case and $(p,q)$-intrinsic case.
The constant $K=K(n,p, q, \alpha, \delta, \omega_a,  [b]_\alpha, M)>1$ that appears below will be determined later. Moreover, throughout this section we consider radii $\rho\in(0,\rho_0)$ of the $p$-, $q$-, and $(p,q)$-intrinsic cylinders, where given any number $\eta \in (0,1)$, we choose $\rho_0>0$ sufficiently small such that $\omega_a(\rho)\le \eta$ and $[b]_\alpha\,\rho^\alpha \le \eta$ for every $\rho\in(0,\rho_0)$.

\subsection{Reverse H\"older inequality for $p$-phase}

In this subsection, we prove a reverse H\"{o}lder inequality for the $p$-phase. In the $p$-intrinsic case, we consider the following assumptions:
% We first consider the $p$-phase case.
% \begin{equation}    \label{eq:p-phase condition}
%     \left\{\begin{aligned}
%         &K a(z_0)\lambda^p \geq b(z_0)\lambda^q,\quad a(z_0) \geq \frac{\delta}{2} \quad \text{and}\\
%         &\miint{Q_{4\rho,\lambda}(z_0)} [H(z,|Du|)+H(z,|F|)] \; dz <a(z_0)\lambda^p.
%     \end{aligned}\right.
% \end{equation}
% We note from \eqref{eq:p-phase condition} and \eqref{eq:assumption of a} that
% \begin{equation}    \label{eq:p-phase condition2}
%  \frac{1}{3}a(z)\leq a(z_0)\leq 3a(z)\,\,\, \text{for all}\,\,\, z\in Q_{4\rho, \la}(z_0).
% \end{equation}
\begin{equation}    \label{eq: p-phase condition}
    \left\{
    \begin{aligned}
        &Ka(z_0)\lambda^p \geq b(z_0)\lambda^q,\\
        & a(z_0)\geq \frac{\delta}{2} \quad \text{and}\quad \frac{a(z)}{3}\leq a(z_0)\leq 3a(z) \ \, \text{for all}\,\,\, z\in Q_{\rho, \la}(z_0),\\
        &\miint{Q_{\tau,\lambda}(z_0)} [H(z,|Du|)+H(z,|F|)] \; dz <a(z_0)\lambda^p \ \, \text{ for every }\tau\in(\rho,c_v(2\rho)],\\
        &\miint{Q_{\rho,\lambda}(z_0)} [H(z,|Du|)+H(z,|F|)] \; dz =a(z_0)\lambda^p.
    \end{aligned}
    \right.
\end{equation}

\begin{lemma} \label{lem:p-phase1}
Let $u$ be a weak solution to \eqref{eq: main equation}. Then there exists a constant $c=c(\textnormal{\texttt{data}}) \geq 1$ such that for any $Q_{4\rho,\lambda}(z_0)\subset \Omega_T$ with \eqref{eq: p-phase condition}, $\tau \in [2\rho,4\rho]$ and $\theta \in \left(\frac{q-1}{p},1\right]$,
\begin{align*}
\miint{Q_{\tau,\lambda}(z_0)}[\widetilde{H}(z,|Du|)+\widetilde{H}(z,|F|)]\; d z &\leq c \miint{Q_{\tau,\lambda}(z_0)}\inf_{\tilde{z}\in Q_{\tau, \lambda}(z_0)}a(\tilde{z})(|D u|+|F|)^{p-1}\; dz\\
&\quad +c \lambda^{-1+\frac{p}{q}} \miint{Q_{\tau,\lambda}(z_0)} b(z)^{\frac{q-1}{q}}(|Du|+|F|)^{q-1}\; dz \\
&\quad +c\lambda^{\frac{\alpha p}{n+2}}\left(\miint{Q_{\tau,\lambda}(z_0)}(|Du|+|F|)^{\theta p} \;d z\right)^{\frac{1}{\theta}\left(\frac{p-1}{p}-\frac{\alpha}{n+2}\right)}.
\end{align*}
\end{lemma}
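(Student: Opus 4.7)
The plan is to split $\widetilde H(z,|Du|) = a(z)|Du|^{p-1} + b(z)|Du|^{q-1}$ pointwise and attack each piece using the $p$-phase assumption \eqref{eq: p-phase condition}. For the $a$-piece the bound is immediate: the almost-constancy $a(z) \leq 3a(z_0) \leq 9\inf_{\tilde z \in Q_{\tau,\lambda}(z_0)} a(\tilde z)$ folds the integral directly into the first term on the right-hand side. The substantive work is concentrated on the $b$-piece, where the smallness condition $Ka(z_0)\lambda^p \geq b(z_0)\lambda^q$ yields the crucial algebraic estimate $b(z_0)^{1/q} \lesssim \lambda^{-1+p/q}$.

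For the $b$-piece I would decompose $b(z) = b(z_0) + [b(z)-b(z_0)]$. Writing $b(z_0) = b(z_0)^{1/q} \cdot b(z_0)^{(q-1)/q}$ and invoking subadditivity of $s \mapsto s^{(q-1)/q}$ together with the H\"older continuity of $b$, I would bound $b(z_0)^{(q-1)/q} \leq b(z)^{(q-1)/q} + c\tau^{\alpha(q-1)/q}$. Here the parabolic distance is controlled by $d(z,z_0) \lesssim \tau$ on $Q_{\tau,\lambda}(z_0)$ because $p \geq 2$, $\lambda \geq 1$, and $a(z_0) \geq \delta/2$ force $\lambda^{(2-p)/2}/\sqrt{a(z_0)} \lesssim 1$. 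The residual term $[b(z)-b(z_0)]|Du|^{q-1}$ is directly bounded by $c\tau^\alpha|Du|^{q-1}$. Combining these estimates gives
\begin{align*}
\miint{Q_{\tau,\lambda}(z_0)} b(z)|Du|^{q-1}\,dz &\leq c\lambda^{-1+p/q}\miint{Q_{\tau,\lambda}(z_0)} b(z)^{(q-1)/q}|Du|^{q-1}\,dz\\
&\quad + c\bigl(\tau^\alpha + \lambda^{-1+p/q}\tau^{\alpha(q-1)/q}\bigr)\miint{Q_{\tau,\lambda}(z_0)} |Du|^{q-1}\,dz,
\end{align*}
which produces exactly the second term of the conclusion and leaves two error integrals to control.

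To absorb these errors into the third term I would invoke Jensen's inequality (legal because $\theta p > q-1$) to get $\miint|Du|^{q-1}\,dz \leq (\miint|Du|^{\theta p}\,dz)^{(q-1)/(\theta p)}$ and perform the algebraic identity
\begin{align*}
\frac{q-1}{\theta p} = \frac{1}{\theta}\Bigl(\frac{p-1}{p}-\frac{\alpha}{n+2}\Bigr) + \frac{q-p}{\theta p}+\frac{\alpha}{\theta(n+2)}.
\end{align*}
The intrinsic $L^p$-control $\miint|Du|^p\,dz \lesssim \lambda^p$ (obtained from \eqref{eq: p-phase condition} on $Q_{\tau,\lambda}(z_0)$ with $\tau \in [2\rho, 4\rho]$, using volume comparison and $a(z) \geq a(z_0)/3$) combined with Jensen yields $(\miint|Du|^{\theta p}\,dz)^{(q-p)/(\theta p)+\alpha/(\theta(n+2))} \lesssim \lambda^{(q-p)+\alpha p/(n+2)}$. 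The error is therefore dominated by $c\tau^\alpha\lambda^{q-p}\cdot\lambda^{\alpha p/(n+2)}(\miint|Du|^{\theta p}\,dz)^{(1/\theta)((p-1)/p-\alpha/(n+2))}$, and the prefactor $\tau^\alpha\lambda^{q-p}$ is bounded by a universal constant via the stopping-time volume estimate $\lambda^2\rho^{n+2}\lesssim M$ (whence $\tau^\alpha \lesssim \lambda^{-2\alpha/(n+2)}$) together with the gap constraint $q-p \leq 2\alpha/(n+2)$ in \eqref{def_pq}. The secondary error involving $\lambda^{-1+p/q}\tau^{\alpha(q-1)/q}$ is controlled by the same computation, the compensating factor being $[\tau^\alpha\lambda^{q-p}]^{(q-1)/q}$. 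The $F$-terms are treated identically because the averaging hypothesis in \eqref{eq: p-phase condition} bounds $H(z,|F|)$ in exactly the same way.

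The main obstacle I anticipate is the delicate matching of exponents: the splitting of $(q-1)/(\theta p)$ must land precisely on the target exponent $(p-1)/p-\alpha/(n+2)$ that appears in the third term, and the residual factor $\tau^\alpha\lambda^{q-p}$ can only be tamed by the precise interplay between the stopping-time-induced scaling $\rho\lesssim\lambda^{-2/(n+2)}$ and the structural hypothesis $q-p\leq 2\alpha/(n+2)$. The conditions in \eqref{eq: p-phase condition} thus play a dual role --- algebraic, by forcing $b(z_0)^{1/q}\lesssim\lambda^{-1+p/q}$, and variational, by delivering the intrinsic $L^p$-bound for $|Du|$ that converts the scale $\tau$ into powers of $\lambda$.
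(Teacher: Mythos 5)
Your proposal is correct and follows essentially the same route as the paper's proof: isolate the $a$-part via the (near-)constancy of $a$ on the cylinder, use $Ka(z_0)\lambda^p\geq b(z_0)\lambda^q$ to extract $b(z_0)^{1/q}\lesssim\lambda^{-1+\frac{p}{q}}$ for the middle term, and tame the H\"older-continuity error $\tau^\alpha$ times the $(q-1)$-average by Jensen with the exponent split at $\gamma=\tfrac{\alpha p}{n+2}$, the intrinsic bound on the averaged $p$-energy $\lesssim\lambda^p$, the global mass $M$, and the gap condition $q-p\leq\tfrac{2\alpha}{n+2}$. The remaining differences are only bookkeeping (the paper absorbs the $\omega_a(\tau)$-term into the left-hand side and cancels $\tau^\alpha$ by un-averaging against $M$, while you use comparability of $a$ on $Q_{\tau,\lambda}(z_0)$ and the stopping-time bound $\lambda^2\rho^{n+2}\lesssim M$, plus the harmless extra error $\lambda^{-1+\frac{p}{q}}\tau^{\alpha\frac{q-1}{q}}$ from splitting through $b(z_0)$ rather than $\inf b$), so there is no gap.
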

\begin{proof}
We note that $q-1<p$. Since $b(\cdot)\in C^{\alpha,\frac{\alpha}{2}}(\Omega_T)$, we have
\begin{align*}
&\miint{Q_{\tau,\lambda}(z_0)}[\widetilde{H}(z,|Du|)+\widetilde{H}(z,|F|)]\;dz \\
&\leq c\miint{Q_{\tau,\lambda}(z_0)} \inf_{\tilde{z}\in Q_{\tau,\lambda}(z_0)}a(\tilde{z}) \, (|Du|+|F|)^{p-1} \; dz+c \omega_a(\tau) \miint{Q_{\tau,\lambda}(z_0)} [|Du|^{p-1}+|F|^{p-1}] \; dz\\
&\quad +c\miint{Q_{\tau,\lambda}(z_0)}\inf_{\tilde{z}\in Q_{\tau,\lambda}(z_0)}b(\tilde{z}) \, (|Du|+|F|)^{q-1}\;dz+c\tau^\alpha \miint{Q_{\tau,\lambda}(z_0)} [|Du|^{q-1}+|F|^{q-1}] \; dz\\
& \leq c\miint{Q_{\tau,\lambda}(z_0)} \inf_{\tilde{z}\in Q_{\tau,\lambda}(z_0)}a(\tilde{z}) \, (|Du|+|F|)^{p-1} \; dz+c (\delta) \omega_a(\tau) \miint{Q_{\tau,\lambda}(z_0)} [\widetilde{H}(z,|Du|)+\widetilde{H}(z,|F|)] \; dz\\
&\quad +c\miint{Q_{\tau,\lambda}(z_0)}\inf_{\tilde{z}\in Q_{\tau,\lambda}(z_0)}b(\tilde{z}) \, (|Du|+|F|)^{q-1}\;dz+c\tau^\alpha \miint{Q_{\tau,\lambda}(z_0)} [|Du|^{q-1}+|F|^{q-1}] \; dz
\end{align*}
for some constant $c \geq 1$ depending on $p$, $q$ and $[b]_\alpha$.
We consider $\tau>0$ so small that $\displaystyle c (\delta) \omega_a(\tau) \leq \frac{1}{2}$. Then we have
\begin{align*}
&\miint{Q_{\tau,\lambda}(z_0)}[\widetilde{H}(z,|Du|)+\widetilde{H}(z,|F|)]\;dz\\
&\leq c\miint{Q_{\tau,\lambda}(z_0)} \inf_{\tilde{z}\in Q_{\tau,\lambda}(z_0)}a(\tilde{z}) \, (|Du|+|F|)^{p-1} \; dz+c\miint{Q_{\tau,\lambda}(z_0)}\inf_{\tilde{z}\in Q_{\tau,\lambda}(z_0)}b(\tilde{z}) \, (|Du|+|F|)^{q-1}\;dz\\
&\quad +c\tau^\alpha \miint{Q_{\tau,\lambda}(z_0)} (|Du|^{q-1}+|F|^{q-1}) \; dz.
\end{align*}
We estimate the second term on the right-hand side as follows:
    \begin{align*}
        &\miint{Q_{\tau,\lambda}(z_0)}\inf_{\tilde{z}\in Q_{\tau,\lambda}(z_0)} b(\tilde{z})(|Du|+|F|)^{q-1} \; dz \\
        &\qquad\leq a(z_0)^{\frac{1}{q}} K^{\frac{1}{q}}\lambda^{\frac{p}{q}-1}\miint{Q_{\tau,\lambda}(z_0)} \inf_{\tilde{z}\in Q_{\tau,\lambda}(z_0)}b(\tilde{z})^{\frac{q-1}{q}} (|Du|+|F|)^{q-1}\; dz\\
        &\qquad \leq a(z_0)^{\frac{1}{q}} K^{\frac{1}{q}}\lambda^{\frac{p}{q}-1}\miint{Q_{\tau,\lambda}(z_0)} b(z)^{\frac{q-1}{q}} (|Du|+|F|)^{q-1}\; dz.
    \end{align*}
Note that $|Q_{\tau, \lambda}(z_0)|=c(n)\tau^{n+2}\frac{\lambda^{2-p}}{a(z_0)}$. Then it follows from H\"{o}lder inequality that
\begin{align*}
&\tau^\alpha \miint{Q_{\tau,\lambda}(z_0)}\left(|Du|+|F|\right)^{q-1}\; dz\\
&\; \leq \tau^\alpha\left(\miint{Q_{\tau,\lambda}(z_0)} \left(|Du|+|F|\right)^{\theta p}\; dz\right)^{\frac{q-1}{\theta p}}\\
&\;= \tau^\alpha\left(\miint{Q_{\tau,\lambda}(z_0)} \left(|Du|+|F|\right)^{\theta p}\; dz\right)^{\frac{1}{\theta}\frac{\gamma}{p}}\left(\miint{Q_{\tau,\lambda}(z_0)} \left(|Du|+|F|\right)^{\theta p}\; dz\right)^{\frac{1}{\theta}\frac{q-1-\gamma}{p}}\\
&\; \leq c\tau^{\alpha-\frac{(n+2)\gamma}{p}}\lambda^{\frac{(p-2)\gamma}{p}}\left(\iints{Q_{\tau,\lambda}(z_0)} \left(|Du|+|F|\right)^{p}\; dz\right)^{\frac{\gamma}{p}}\left(\miint{Q_{\tau,\lambda}(z_0)} \left(|Du|+|F|\right)^{\theta p}\; dz\right)^{\frac{1}{\theta}\frac{q-1-\gamma}{p}}.
\end{align*}
Now choosing $0<\gamma=\frac{\alpha p}{n+2}<p-1,$ and using the second and third conditions of \eqref{eq: p-phase condition}, we get
\begin{align*}
\tau^\alpha \miint{Q_{\tau,\lambda}(z_0)}\left(|Du|+|F|\right)^{q-1}\; dz & \leq c a(z_0)^{\frac{q-p}{p}} \lambda^{\frac{\alpha p}{n+2}}\left(\miint{Q_{\tau,\lambda}(z_0)} \left(|Du|+|F|\right)^{\theta p}\; dz\right)^{\frac{1}{\theta}\frac{p-1-\gamma}{p}} \\
& \leq c\lambda^{\frac{\alpha p}{n+2}}\left(\miint{Q_{\tau,\lambda}(z_0)} \left(|Du|+|F|\right)^{\theta p}\; dz\right)^{\frac{1}{\theta}\frac{p-1-\gamma}{p}}.
\end{align*}
This completes the proof.
\end{proof}

\begin{lemma} \label{lem:p-term of parabolic poincare inequality on p-intrinsic cylinders}
Let $u$ be a weak solution to \eqref{eq: main equation}. Then there exists a constant $c=c(\textnormal{\texttt{data}}) \geq 1$ such that for any $Q_{4\rho,\lambda}(z_0)\subset \Omega_T$ with \eqref{eq: p-phase condition}, $\tau \in [2\rho,4\rho]$ and $\theta \in (\frac{q-1}{p},1]$, 
\begin{align*}
& \miint{Q_{\tau,\lambda}(z_0)}\frac{|u-(u)_{z_0;\tau,\lambda}|^{\theta p}}{\tau^{\theta p}}\, dz \\
& \quad \leq c \miint{Q_{\tau,\lambda}(z_0)}H(z,|Du|)^{\theta}\, dz+c \lambda^{\left(2-p+\frac{\alpha p}{n+2}\right)\theta p} \left(\miint{Q_{\tau,\lambda}(z_0)} (|Du|+|F|)^{\theta p}\, dz\right)^{p-1-\frac{\alpha p}{n+2}} \\
&\qquad +c\left(\miint{Q_{\tau,\lambda}(z_0)}H(z,|F|) \, d z\right)^{\theta}.
\end{align*}
\end{lemma}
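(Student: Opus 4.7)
The proof applies the parabolic Poincar\'e inequality \cref{lem : parabolic poincare inequality in Section 3} with $m=p$ on the $p$-intrinsic cylinder $Q_{\tau,\lambda}(z_0)$, and then uses \cref{lem:p-phase1} to control the resulting intrinsic correction term, with the aid of the $p$-phase conditions \eqref{eq: p-phase condition}. The admissibility $\theta>1/p$ in \cref{lem : parabolic poincare inequality in Section 3} is automatic from $\theta>(q-1)/p$ together with $q\geq p\geq 2$. Since the time-scale ratio $S/R^2$ on $Q_{\tau,\lambda}(z_0)$ equals $\lambda^{2-p}/a(z_0)$, the parabolic Poincar\'e inequality with $m=p$ yields
\begin{align*}
\miint{Q_{\tau,\lambda}(z_0)}\frac{|u-(u)_{z_0;\tau,\lambda}|^{\theta p}}{\tau^{\theta p}}\,dz & \leq c\miint{Q_{\tau,\lambda}(z_0)}|Du|^{\theta p}\,dz\\
& \quad + c\biggl(\frac{\lambda^{2-p}}{a(z_0)}\miint{Q_{\tau,\lambda}(z_0)}[\widetilde{H}(z,|Du|)+\widetilde{H}(z,|F|)]\,dz\biggr)^{\theta p}.
\end{align*}

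\textbf{Gradient term and splitting of the correction.} The $p$-phase conditions \eqref{eq: p-phase condition} force $a(z)\geq a(z_0)/3\geq \delta/6$ on $Q_{\tau,\lambda}(z_0)$, hence $|Du|^p\leq c\,H(z,|Du|)$ pointwise, and $|Du|^{\theta p}\leq c\,H(z,|Du|)^\theta$ produces the target's first term. For the correction, I apply \cref{lem:p-phase1} to split $\miint{Q_{\tau,\lambda}(z_0)}[\widetilde{H}(z,|Du|)+\widetilde{H}(z,|F|)]$ into $T_1+T_2+T_3$, and use $\theta p\geq 1$ together with subadditivity so that $(T_1+T_2+T_3)^{\theta p}\leq c(T_1^{\theta p}+T_2^{\theta p}+T_3^{\theta p})$. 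After multiplying each piece by the prefactor $(\lambda^{2-p}/a(z_0))^{\theta p}$ and using $a(z_0)\geq \delta/2$, $T_3$ matches the target's second term directly (its exponent is tailored to that purpose), and $T_1$ is handled via H\"older's inequality, which converts $\miint{Q_{\tau,\lambda}(z_0)}(|Du|+|F|)^{p-1}\,dz$ into $\bigl(\miint{Q_{\tau,\lambda}(z_0)}(|Du|+|F|)^{\theta p}\,dz\bigr)^{(p-1)/(\theta p)}$; splitting $p-1=(p-1-\alpha p/(n+2))+\alpha p/(n+2)$ and absorbing the extra factor by $c\lambda^{\alpha p\theta p/(n+2)}$ via the $p$-phase bound $\miint{Q_{\tau,\lambda}(z_0)}(|Du|+|F|)^p\lesssim \lambda^p$ together with Jensen's inequality again yields the target's second term.

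\textbf{Main obstacle ($T_2$).} The mixed $(p,q)$-term $T_2=\lambda^{-1+p/q}\miint{Q_{\tau,\lambda}(z_0)} b^{(q-1)/q}(|Du|+|F|)^{q-1}\,dz$ is the delicate part. The pointwise identity $b^{(q-1)/q}(|Du|+|F|)^{q-1}=(b(|Du|+|F|)^q)^{(q-1)/q}\leq (H(z,|Du|)+H(z,|F|))^{(q-1)/q}$ combined with H\"older's inequality with conjugate exponents $(q/(q-1),\,q)$ yields
\[
\miint{Q_{\tau,\lambda}(z_0)}b^{(q-1)/q}(|Du|+|F|)^{q-1}\,dz \leq c\biggl(\miint{Q_{\tau,\lambda}(z_0)}[H(z,|Du|)+H(z,|F|)]\,dz\biggr)^{(q-1)/q}.
\]
Separating the $|Du|$- and $|F|$-contributions via subadditivity of $(\cdot)^{(q-1)/q}$ and raising to $\theta p$ with the prefactor, the $|F|$-part is absorbed into the target's third term $c\bigl(\miint{Q_{\tau,\lambda}(z_0)} H(z,|F|)\,dz\bigr)^{\theta}$: writing the exponent as $(q-1)\theta p/q=\theta+\theta(pq-p-q)/q$ and bounding the second factor via $\miint{Q_{\tau,\lambda}(z_0)} H(z,|F|)\,dz\leq a(z_0)\lambda^p$ from \eqref{eq: p-phase condition}, the accumulated $\lambda$-exponent cancels precisely and the residual $a(z_0)^{-\theta(p+q)/q}$ is absorbed by $a(z_0)\geq \delta/2$. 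The $|Du|$-contribution is then matched to the target's second term by an analogous splitting together with the sharp restriction $q-p\leq 2\alpha/(n+2)$ from \eqref{def_pq}: the residual $\lambda$-exponent $\theta p(q-1)(q-p)/q$ is controlled by $\alpha p\theta p/(n+2)$ using $p\geq 2$, so it fits inside the $\lambda^{\alpha p\theta p/(n+2)}$-factor, while the $\bigl(\miint{Q_{\tau,\lambda}(z_0)}(|Du|+|F|)^{\theta p}\,dz\bigr)^{p-1-\alpha p/(n+2)}$ factor is generated by the same Jensen-type manipulation as for $T_1$. Tracking the sharp cancellation of $\lambda$- and $a(z_0)$-exponents in this final step---which simultaneously requires the admissible range of $\theta$, the bound $q\leq p+2\alpha/(n+2)$, and the H\"older continuity of $b$---is the principal technical hurdle.
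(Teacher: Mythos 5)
Your opening is the same as the paper's (parabolic Poincar\'e with $m=p$, then \cref{lem:p-phase1}, then convexity for the power $\theta p\ge 1$), and your handling of $T_3$ and $T_1$ is fine: $T_3$ reproduces the middle term verbatim, and your alternative for $T_1$ (H\"older to $(\miint{Q_{\tau,\lambda}(z_0)}(|Du|+|F|)^{\theta p}dz)^{p-1}$, then splitting $p-1=(p-1-\tfrac{\alpha p}{n+2})+\tfrac{\alpha p}{n+2}$ and using $\miint{Q_{\tau,\lambda}(z_0)}(|Du|+|F|)^{p}dz\lesssim\lambda^p$) legitimately lands on the middle term, where the paper instead converts $T_1$ into $\miint{Q_{\tau,\lambda}(z_0)}(H(z,|Du|)+H(z,|F|))^\theta dz$ via the weight $a^\theta$. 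The genuine gap is in your treatment of $T_2$.

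Once you use $b^{\frac{q-1}{q}}(|Du|+|F|)^{q-1}\le c\,(H(z,|Du|)+H(z,|F|))^{\frac{q-1}{q}}$ and Jensen to pass to $\bigl(\miint{Q_{\tau,\lambda}(z_0)}[H(z,|Du|)+H(z,|F|)]dz\bigr)^{\frac{q-1}{q}}$, the $Du$-contribution you must control is $a(z_0)^{-\theta p}\lambda^{(1-p+\frac pq)\theta p}\bigl(\miint{Q_{\tau,\lambda}(z_0)}H(z,|Du|)dz\bigr)^{\frac{(q-1)\theta p}{q}}$, i.e.\ the sub-unity power now sits \emph{outside} the integral, and this is irreparable. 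The phase condition \eqref{eq: p-phase condition} only gives the upper bound $\miint{Q_{\tau,\lambda}(z_0)}H(z,|Du|)dz\le a(z_0)\lambda^p$, which would produce a stand-alone $c\lambda^{\theta p}$ — not an admissible term in the conclusion — and there is no way to trade a power of $\miint{Q_{\tau,\lambda}(z_0)}H(z,|Du|)dz$ for a positive power of $\miint{Q_{\tau,\lambda}(z_0)}(|Du|+|F|)^{\theta p}dz$, since Jensen gives $\bigl(\miint{Q_{\tau,\lambda}(z_0)}(|Du|+|F|)^{\theta p}dz\bigr)^{1/\theta}\le\miint{Q_{\tau,\lambda}(z_0)}(|Du|+|F|)^{p}dz$, i.e.\ the wrong direction. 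Concretely, for $\theta<1$, $F\equiv0$ and $|Du|$ concentrated on a tiny subset of the cylinder so that $\miint{Q_{\tau,\lambda}(z_0)}H(z,|Du|)dz\simeq a(z_0)\lambda^p$ while $\miint{Q_{\tau,\lambda}(z_0)}H(z,|Du|)^\theta dz$ and $\miint{Q_{\tau,\lambda}(z_0)}|Du|^{\theta p}dz$ are arbitrarily small, your $Du$-contribution stays of order $\lambda^{\theta p}$ while every term on the claimed right-hand side tends to $0$; so the ``analogous splitting'' (and the appeal to $q-p\le\frac{2\alpha}{n+2}$) cannot close the argument. Your Jensen step is harmless only for the $F$-part, because the third target term has the power outside the integral. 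For the $Du$-part you must keep the $\theta$-power inside, which is what the paper does: H\"older at level $\theta$ (valid since $\theta>\frac{q-1}{p}\ge\frac{q-1}{q}$) gives $\miint{Q_{\tau,\lambda}(z_0)}b^{\frac{q-1}{q}}(|Du|+|F|)^{q-1}dz\le\bigl(\miint{Q_{\tau,\lambda}(z_0)}b^{\theta}(|Du|+|F|)^{\theta q}dz\bigr)^{\frac{q-1}{\theta q}}$; raising to $\theta p$, splitting the exponent $\frac{p(q-1)}{q}=1+\frac{p(q-1)-q}{q}$ (using $pq\ge p+q$) and bounding the excess factor by Jensen plus the third condition of \eqref{eq: p-phase condition} cancels $\lambda^{(1-p+\frac pq)\theta p}$ exactly, leaving one full power of $\miint{Q_{\tau,\lambda}(z_0)}b^{\theta}(|Du|+|F|)^{\theta q}dz\le c\miint{Q_{\tau,\lambda}(z_0)}(H(z,|Du|)+H(z,|F|))^{\theta}dz$, which splits into the first and third target terms.
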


\begin{proof}
We first observe from the second condition of \eqref{eq: p-phase condition} that
\begin{equation} \label{eq:bounds_of_a}
\frac{\delta}{6} \leq a(z) \leq \Norm{a}_{L^{\infty}(\Omega_T)}, \quad \forall z \in Q_{4\rho,\lambda}(z_0).
\end{equation}
By \cref{lem : parabolic poincare inequality in Section 3} and \cref{lem:p-phase1}, there exists a constant $c$ depending on $\texttt{data}$ such that 
\begin{align*}
\miint{Q_{\tau,\lambda}(z_0)}\frac{|u-(u)_{z_0;\tau,\lambda}|^{\theta p}}{\tau^{\theta p}}\; dz&\leq c\miint{Q_{\tau,\lambda}(z_0)} |Du|^{\theta p} \; dz + c\underbrace{\left(\lambda^{2-p}\miint{Q_{\tau,\lambda}(z_0)} (|Du|+|F|)^{p-1}\;dz \right)^{\theta p}}_{\mathrm{I}}\\
&\quad + c\underbrace{\left(\lambda^{1-p+\frac{p}{q}}\miint{Q_{\tau,\lambda}(z_0)} b(z)^{\frac{q-1}{q}}(|Du|+|F|)^{q-1}\; dz \right)^{\theta p}}_{\mathrm{II}}\\
&\quad + c\lambda^{(2-p+\frac{\alpha p}{n+2})\theta p}\left(\miint{Q_{\tau,\lambda}(z_0)} (|Du|+|F|)^{\theta p}\; dz \right)^{p-1-\frac{\alpha p}{n+2}}.
\end{align*}

\noindent\textbf{Estimate of $\mathrm{I}$:}
Using H\"{o}lder's inequality, the second condition of  \eqref{eq: p-phase condition} and \eqref{eq:bounds_of_a}, we estimate
\begin{align*}
&\lambda^{(2-p)\theta p}\left(\miint{Q_{\tau,\lambda}(z_0)}(|Du|+|F|)^{p-1} \; dz\right)^{\theta p}\\
&\qquad \leq 3^{p-1}\frac{\lambda^{(2-p)\theta p}}{a(z_0)^{\theta(p-2)}}\left(\miint{Q_{\tau,\lambda}(z_0)}a(z)^{\theta}(|Du|+|F|)^{\theta p} \; dz\right)^{p-2} \miint{Q_{\tau,\lambda}(z_0)}\frac{a^{\theta}(z)}{a^{\theta}(z_0)}(|Du|+|F|)^{\theta p}\, dz\\
&\qquad\leq 3^{p-1}\left(\frac{2}{\delta}\right)^{\theta}\miint{Q_{\tau,\lambda}(z_0)}a(z)^{\theta}(|Du|+|F|)^{\theta p} \; dz\leq c \miint{Q_{\tau,\lambda}(z_0)}\left(H_1(z, |Du|)+H_1(z, |F|)\right)^{\theta}\, dz.
\end{align*}
\textbf{Estimate of $\mathrm{II}$:} Similarly, using H\"{o}lder's inequality and the third condition of  \eqref{eq: p-phase condition}, we get
\begin{align*}
&\lambda^{\left(1-p+\frac{p}{q}\right)\theta p}\left(\miint{Q_{\tau,\lambda}(z_0)} b(z)^{\frac{q-1}{q}}(|Du|+|F|)^{q-1}\; dz \right)^{\theta p}\\
&\leq \lambda^{\left(1-p+\frac{p}{q}\right)\theta p}\left(\miint{Q_{\tau,\lambda}(z_0)} b(z)^{\theta}(|Du|+|F|)^{\theta q}\; dz \right)^{\frac{p(q-1)}{q}}\\
&=\lambda^{\left(1-p+\frac{p}{q}\right)\theta p}\left(\miint{Q_{\tau,\lambda}(z_0)} b(z)^{\theta}(|Du|+|F|)^{\theta q}\; dz \right)^{\frac{p(q-1)-q}{q}} \miint{Q_{\tau,\lambda}(z_0)} b(z)^{\theta}(|Du|+|F|)^{\theta q}\; dz \\
&\overset{\eqref{eq: p-phase condition}}{\leq}\lambda^{\left(1-p+\frac{p}{q}\right)\theta p}\lambda^{-\left(1-p+\frac{p}{q}\right)\theta p}\miint{Q_{\tau,\lambda}(z_0)} b(z)^{\theta}(|Du|+|F|)^{\theta q}\; dz=\miint{Q_{\tau,\lambda}(z_0)} b(z)^{\theta}(|Du|+|F|)^{\theta q}\; dz,
\end{align*}
where we have used
\begin{align*}
   \frac{p+q}{pq}=\frac{1}{p}+\frac{1}{q} \leq 1 \implies p+q\leq pq \implies q\leq p(q-1)
\end{align*}
since $2 \leq p \leq q$.
Finally, using H\"{o}lder inequality, we get the desired estimate.
\end{proof}
\begin{lemma} \label{lem: s-term of parabolic poincare inequality on p-intrinsic cylinders}
Let $u$ be a weak solution to \eqref{eq: main equation}. Then there exists a constant $c=c(\textnormal{\texttt{data}}) \geq 1$ such that for any $Q_{4\rho,\lambda}(z_0)\subset \Omega_T$ with \eqref{eq: p-phase condition}, $\tau \in [2\rho,4\rho]$ and $\theta \in ((q-1)/p,1]$, 
\begin{align*}
&\miint{Q_{\tau,\lambda}(z_0)} \inf_{\tilde{z}\in Q_{\tau,\lambda}(z_0)}b(\tilde{z})^\theta\frac{|u-(u)_{z_0;\tau,\lambda}|^{\theta q}}{\tau^{\theta q}}\; dz \\
&\quad \leq c \miint{Q_{\tau,\lambda}(z_0)}(H(z,|Du|))^{\theta}\; dz +c\left(\miint{Q_{\tau,\lambda}(z_0)}H(z,|F|)\; d z\right)^{\theta} \\
&\qquad +c \lambda^{\left(2-p+\frac{\alpha p}{n+2}\right)\theta p} \left(\miint{Q_{\tau,\lambda}(z_0)} (|Du|+|F|)^{\theta p}\; dz\right)^{p-1-\frac{\alpha p}{n+2}}.
\end{align*}
\end{lemma}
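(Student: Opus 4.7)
The proof parallels that of~\cref{lem:p-term of parabolic poincare inequality on p-intrinsic cylinders} but is based on parabolic Poincar\'e with exponent $\theta q$ rather than $\theta p$. The new ingredient is the first condition of~\eqref{eq: p-phase condition}, which provides $\inf_{\tilde z\in Q_{\tau,\lambda}(z_0)}b(\tilde z)^\theta \leq b(z_0)^\theta \leq K^\theta a(z_0)^\theta \lambda^{(p-q)\theta}$. This ``$q$-to-$p$ conversion'' is what permits the right-hand side of the statement to be expressed entirely in terms of $p$-typed quantities.

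The plan is to apply \cref{lem : parabolic poincare inequality in Section 3} with $m=q$ (admissible because $\theta > (q-1)/p \geq 1/q$ since $q(q-1) \geq q \geq p$), producing
\begin{align*}
\miint{Q_{\tau,\lambda}(z_0)}\frac{|u-(u)_{z_0;\tau,\lambda}|^{\theta q}}{\tau^{\theta q}}\,dz \leq c\miint{Q_{\tau,\lambda}(z_0)}|Du|^{\theta q}\,dz + c\left(\frac{\lambda^{2-p}}{a(z_0)}\miint{Q_{\tau,\lambda}(z_0)}[\widetilde H(z,|Du|)+\widetilde H(z,|F|)]\,dz\right)^{\theta q},
\end{align*}
and then to multiply by $\inf b^\theta$. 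For the first right-hand summand, the pointwise bound $\inf_{\tilde z} b(\tilde z)^\theta|Du|^{\theta q}\leq b(z)^\theta|Du|^{\theta q} = (b(z)|Du|^q)^\theta \leq H(z,|Du|)^\theta$ (the last step using $\theta\leq 1$ together with $b(z)|Du|^q\leq H(z,|Du|)$) gives $\inf b^\theta\miint|Du|^{\theta q}\,dz\leq\miint H(z,|Du|)^\theta\,dz$ cleanly, producing the first summand of the target. For the second right-hand summand, invoke~\cref{lem:p-phase1} to decompose $\miint\widetilde H$ into its three constituents, raise to the $\theta q$-th power, and combine with the prefactor $(\lambda^{2-p}/a(z_0))^{\theta q}$ together with $\inf b^\theta \leq K^\theta a(z_0)^\theta\lambda^{(p-q)\theta}$. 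After interior H\"older splittings that insert $a^\theta$ and $b^\theta$ weights inside the averages (as in the analogous step for the previous lemma), the first two pieces yield the $\miint H(z,|Du|)^\theta\,dz$ and $(\miint H(z,|F|)\,dz)^\theta$ contributions, while the third piece yields the $\lambda^{(2-p+\gamma)\theta p}(\miint|Du|^{\theta p})^{p-1-\gamma}$ contribution (with $\gamma = \alpha p/(n+2)$) after converting the gradient-integral exponent from $q(p-1-\gamma)/p$ down to $p-1-\gamma$ via the a priori bound $\miint|Du|^{\theta p}\leq c\lambda^{\theta p}$ (Jensen plus the $p$-phase condition $\miint a|Du|^p\,dz\leq a(z_0)\lambda^p$ and $a(z)\geq a(z_0)/3$). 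The final $\lambda$-exponent match reduces to the algebraic identity
\[
(p-q)\theta + (2-p)\theta q + \gamma\theta q + \theta(q-p)(p-1-\gamma) = (2-p+\gamma)\theta p.
\]

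The main technical delicacy is arranging the H\"older/Young steps in the second summand so that the $H$-part appears as $\miint H^\theta$ and not merely as $(\miint H)^\theta$, since Jensen's inequality forces $(\miint H)^\theta\geq \miint H^\theta$ and an isolated $(\miint H)^\theta$ cannot be absorbed into the statement. Following the template of~\cref{lem:p-term of parabolic poincare inequality in Section 3}, the resolution is to insert the auxiliary $a^\theta$ weights \emph{before} applying H\"older so that one ultimately averages $a(z)^\theta(|Du|+|F|)^{\theta p} \leq H(z,|Du|)^\theta + H(z,|F|)^\theta$ directly, rather than first producing a single large average of $a(|Du|+|F|)^p$ and then raising that to the $\theta$-th power. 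Once this sequencing is respected, the $q$-to-$p$ conversion via $\inf b^\theta\leq K^\theta a(z_0)^\theta\lambda^{(p-q)\theta}$ absorbs the extra $\lambda^{(p-q)\theta}$ factor cleanly and the remaining computation is a bookkeeping exercise in the exponents already handled in \cref{lem:p-phase1} and \cref{lem:p-term of parabolic poincare inequality on p-intrinsic cylinders}.
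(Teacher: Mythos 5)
Your proposal is correct and follows the route the paper intends: the paper's own ``proof'' is just the citation to \cite[Lemma 3.3]{2023_Gradient_Higher_Integrability_for_Degenerate_Parabolic_Double-Phase_Systems}, and your argument is precisely the natural two-coefficient adaptation of that proof (parabolic Poincar\'e at exponent $\theta q$, the $\widetilde H$-bound of \cref{lem:p-phase1}, and the conversion $\inf b^\theta\leq b(z_0)^\theta\leq K^\theta a(z_0)^\theta\lambda^{(p-q)\theta}$ from the first condition of \eqref{eq: p-phase condition}). I checked the details: the exponent identity you state holds, the Poincar\'e lemma applies with $m=q$ since $\theta>(q-1)/p\geq 1/q$, the interior H\"older splittings need only $\theta\geq(p-1)/p$ and $\theta\geq(q-1)/q$, and the leftover negative powers of $a(z_0)$ produced by the conversion are absorbed via $a(z_0)\geq\delta/2$ together with the third condition of \eqref{eq: p-phase condition} (and $a(z_0)\leq 3a(z)$ for the bound $\miint{Q_{\tau,\lambda}(z_0)}(|Du|+|F|)^{\theta p}\,dz\leq c\lambda^{\theta p}$), so the bookkeeping closes as claimed.
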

\begin{proof}
    The proof follows from \cite[Lemma 3.3]{2023_Gradient_Higher_Integrability_for_Degenerate_Parabolic_Double-Phase_Systems}.
\end{proof}
\begin{lemma}\label{lem : Lemma 5.1}
    Let $u$ be a weak solution to \eqref{eq: main equation}. Then for $Q_{c_v(2 \rho),\lambda}(z_0) \subset \Omega_T$ with \eqref{eq: p-phase condition}, there exists a constant $c=c(\textnormal{\texttt{data}}) \geq 1$ such that
    $$
    S(u)_{z_0 ; 2 \rho,\lambda}=\sup _{I_{2 \rho,\lambda}(t_0)} \dashint_{B_{2 \rho}(x_0)} \frac{\left|u-(u)_{z_0 ; 2 \rho,\lambda}\right|^2}{(2 \rho)^2}\; dx \leq c \lambda^2 .
    $$
\end{lemma}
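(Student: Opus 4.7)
The plan is to combine the Caccioppoli inequality of Lemma~\ref{lem : the Caccioppoli inequality} on the pair of $p$-intrinsic cylinders $Q_{2\rho,\lambda}(z_0)\subset Q_{4\rho,\lambda}(z_0)$ with the $p$-intrinsic estimates of Lemmas~\ref{lem:p-phase1}, \ref{lem:p-term of parabolic poincare inequality on p-intrinsic cylinders} and \ref{lem: s-term of parabolic poincare inequality on p-intrinsic cylinders}, and then unwind the intrinsic scaling. Setting $\sigma:=\lambda^{2-p}\rho^{2}/a(z_0)$ so that $Q_{\rho,\lambda}(z_0)=U_{\rho,\sigma}(z_0)$, the elementary identity $4\sigma/(2\rho)^{2}=\lambda^{2-p}/a(z_0)$ gives
\begin{equation*}
S(u)_{z_0;2\rho,\lambda}=\frac{\lambda^{2-p}}{a(z_0)}\,\sup_{t\in I_{2\rho,\lambda}(t_0)}\dashint_{B_{2\rho}(x_0)}\frac{|u-(u)_{z_0;2\rho,\lambda}|^{2}}{4\sigma}\,dx.
\end{equation*}
Because $a(z_0)\ge\delta/2$ by the second line of \eqref{eq: p-phase condition}, the target $S(u)_{z_0;2\rho,\lambda}\le c\lambda^{2}$ reduces to proving that the supremum on the right is $\lesssim a(z_0)\lambda^{p}$.

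I would then apply Lemma~\ref{lem : the Caccioppoli inequality} with $r=2\rho$, $\tau=4\sigma$, $R=4\rho$ and $S=16\sigma$ (so $U_{R,S}(z_0)=Q_{4\rho,\lambda}(z_0)$, $R-r=2\rho$ and $S-\tau=12\sigma$). This bounds the supremum above, up to the harmless factor $2$ from the last display of the proof of Lemma~\ref{lem : the Caccioppoli inequality}, by
\begin{equation*}
\mathrm{I}:=\miint{Q_{4\rho,\lambda}(z_0)}\!H\!\left(z,\tfrac{|u-(u)_{z_0;4\rho,\lambda}|}{2\rho}\right)dz,\quad \mathrm{II}:=\frac{1}{12\sigma}\miint{Q_{4\rho,\lambda}(z_0)}\!|u-(u)_{z_0;4\rho,\lambda}|^{2}\,dz,\quad \mathrm{III}:=\miint{Q_{4\rho,\lambda}(z_0)}\!H(z,|F|)\,dz.
\end{equation*}
Term $\mathrm{III}$ is already bounded by $a(z_0)\lambda^{p}$ directly from the third line of \eqref{eq: p-phase condition} (with $\tau'=4\rho\le c_{v}(2\rho)$, provided $c_{v}\ge 2$). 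For $\mathrm{I}$, I split $H=a\,(\cdot)^{p}+b\,(\cdot)^{q}$, use $a(z)\lesssim a(z_0)$ from the second line of \eqref{eq: p-phase condition}, and invoke Lemmas~\ref{lem:p-term of parabolic poincare inequality on p-intrinsic cylinders} and \ref{lem: s-term of parabolic poincare inequality on p-intrinsic cylinders} with $\theta=1$; each resulting term is of the form $\miint H(z,|Du|)$, $\miint H(z,|F|)$ or $\lambda^{(2-p+\alpha p/(n+2))p}(\miint(|Du|+|F|)^{p})^{p-1-\alpha p/(n+2)}$, and using \eqref{eq: p-phase condition}(iii) together with the elementary estimate $\miint(|Du|+|F|)^{p}\lesssim a(z_0)^{-1}\miint H(z,|Du|+|F|)\lesssim\lambda^{p}$ each of these reduces to $\lesssim a(z_0)\lambda^{p}$. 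For $\mathrm{II}$, I apply Lemma~\ref{lem : parabolic poincare inequality in Section 3} on $Q_{4\rho,\lambda}(z_0)$ with $m=p$ and $\theta=2/p\in(1/p,1]$; H\"older's inequality reduces $\miint|Du|^{2}$ to $(\miint a(z)|Du|^{p})^{2/p}\lesssim a(z_0)^{2/p}\lambda^{2}$, Lemma~\ref{lem:p-phase1} (with $\theta=1$) controls $\miint\widetilde H$ by a multiple of $\lambda^{p-1}$, and multiplying through by the scaling factor $S/R^{2}=\lambda^{2-p}/a(z_0)$ makes the whole expression collapse to $\lesssim a(z_0)\lambda^{p}$ once $a(z_0)\ge\delta/2$ is used to absorb the residual negative powers of $a(z_0)$.

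The main bookkeeping obstacle is the usual double-phase complication: the $q$-growth contributions arising from $H$ and $\widetilde H$ have to be absorbed into the $p$-scale $a(z_0)\lambda^{p}$. This is the point at which the defining $p$-phase inequality $Ka(z_0)\lambda^{p}\ge b(z_0)\lambda^{q}$, the H\"older continuity of $b$, and the sharp restriction $q\le p+2\alpha/(n+2)$ enter, exactly as in \cite{2023_Gradient_Higher_Integrability_for_Degenerate_Parabolic_Double-Phase_Systems}. The extra coefficient $a(z)$ causes no genuine further difficulty here: the second line of \eqref{eq: p-phase condition} pins it to a fixed multiple of $a(z_0)$ throughout $Q_{4\rho,\lambda}(z_0)$, and the lower bound $a(z_0)\ge\delta/2$ controls all negative powers of $a(z_0)$ that the computation produces. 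Once the three contributions $\mathrm{I}+\mathrm{II}+\mathrm{III}\lesssim a(z_0)\lambda^{p}$ are in place, multiplying by $\lambda^{2-p}/a(z_0)\le 2\lambda^{2-p}/\delta$ yields $S(u)_{z_0;2\rho,\lambda}\le c\lambda^{2}$ and closes the proof.
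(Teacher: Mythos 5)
Your reduction of the statement to the bound $\mathrm{I}+\mathrm{II}+\mathrm{III}\lesssim a(z_0)\lambda^{p}$ is sound, and your treatment of the $L^2$-term $\mathrm{II}$ via Lemma~\ref{lem : parabolic poincare inequality in Section 3} with $\theta m=2$ together with Lemma~\ref{lem:p-phase1} is correct and in fact more direct than the paper's route, which bounds that term by $c\lambda\,(S(u)_{z_0;\rho_2,\lambda})^{1/2}$ via Gagliardo--Nirenberg. However, there is a genuine gap in the estimate of $\mathrm{I}$. After splitting $H$, the $q$-part is $\miint b(z)\,|u-(u)|^{q}/(2\rho)^{q}$, and in the $p$-phase $b$ has neither a positive lower bound nor comparability to $b(z_0)$; the only available move is $b(z)\le \inf_{\tilde z}b(\tilde z)+c[b]_{\alpha}\rho^{\alpha}$. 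Lemma~\ref{lem: s-term of parabolic poincare inequality on p-intrinsic cylinders} covers the $\inf b$-weighted piece, but the oscillation piece $[b]_{\alpha}\rho^{\alpha}\miint |u-(u)|^{q}/\rho^{q}$ is \emph{not} of any of the three forms you list and is not covered by Lemmas~\ref{lem:p-term of parabolic poincare inequality on p-intrinsic cylinders} or \ref{lem: s-term of parabolic poincare inequality on p-intrinsic cylinders}. Since only $|Du|\in L^{p}$ is controlled on the cylinder (no $q$-energy), a $q$-power of $u-(u)$ can only be reached by interpolating against the time-slice $L^{2}$ information, i.e.\ by Lemma~\ref{lem : lemma 2.1}, and this unavoidably produces a factor of $S(u)$ at the \emph{outer} radius (compare the factors $(S(u))^{(1-\theta)q/2}$ in Lemma~\ref{lem : lemma 5.2}).

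This is precisely where your plan of working with the single fixed pair $Q_{2\rho,\lambda}\subset Q_{4\rho,\lambda}$ breaks down: the left-hand side of the Caccioppoli inequality only controls $S(u)_{z_0;2\rho,\lambda}$, so the term involving $S(u)_{z_0;4\rho,\lambda}$ on the right cannot be absorbed. The paper avoids this by running the Caccioppoli estimate on arbitrary radii $2\rho\le\rho_1<\rho_2\le 4\rho$, bounding the problematic contributions by $\varepsilon\,S(u)_{z_0;\rho_2,\lambda}+c(\varepsilon)\lambda^{2}$ via Young's inequality (using $Ka(z_0)\lambda^p\ge b(z_0)\lambda^q$, the H\"older continuity of $b$ and $q\le p+2\alpha/(n+2)$ to tame the $\lambda$- and $\rho^{\alpha}$-powers), and then closing with the iteration Lemma~\ref{iter_lemma}. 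Your appeal to "exactly as in \cite{2023_Gradient_Higher_Integrability_for_Degenerate_Parabolic_Double-Phase_Systems}" implicitly invokes exactly this absorption-plus-iteration mechanism, which is incompatible with your fixed-cylinder setup; either restore the variable radii and the iteration, or supply an argument bounding $\rho^{\alpha}\miint |u-(u)|^{q}/\rho^{q}$ by $c\,a(z_0)\lambda^{p}$ without $S(u)$ --- which the available lemmas do not provide.
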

\begin{proof}
Let $2\rho \leq \rho_1<\rho_2 \leq 4\rho$. By \cref{lem : the Caccioppoli inequality}, there exists a constant $c$ depending on $n,p,q,\nu$ and $L$ such that 
\begin{align}
\lambda^{p-2}S(u)_{z_0;\rho_1,\lambda}&\leq \frac{c\rho_2^q}{(\rho_2-\rho_1)^q}\miint{Q_{\rho_2,\lambda}(z_0)}\left( a(z)\frac{|u-(u)_{z_0;\rho_2,\lambda}|^p}{\rho^p_2}+b(z)\frac{|u-(u)_{z_0;\rho_2,\lambda}|^q}{\rho^q_2} \right) dz \nonumber\\
&\quad + \frac{c\rho_2^2 \lambda^{p-2}}{(\rho_2-\rho_1)^2}\miint{Q_{\rho_2,\lambda}(z_0)}\frac{|u-(u)_{z_0;\rho_2,\lambda}|^2}{\rho_2^2}\; dz +c\miint{Q_{\rho_2,\lambda}(z_0)} H(z,|F|)\; dz. \label{eq : second equation in lemma 3.4}
\end{align}
 We note that the term involving $a(z)$ in the right-hand side can be estimated by using \cref{lem:p-term of parabolic poincare inequality on p-intrinsic cylinders} with $\theta=1,$
 \begin{align*}
     \miint{Q_{\rho_2,\lambda}(z_0)} a(z)\frac{|u-(u)_{z_0;\rho_2,\lambda}|^p}{\rho^p_2}\, dz \leq ||a||_{L^{\infty}(\Omega_T)}\miint{Q_{\rho_2,\lambda}(z_0)}\frac{|u-(u)_{z_0;\rho_2,\lambda}|^p}{\rho^p_2}\, dz \leq c||a||_{L^{\infty}(\Omega_T)}\la^p.
 \end{align*}
 The rest of the proof follows from \cite[Lemma 4.3]{2023_Gradient_Higher_Integrability_for_Degenerate_Parabolic_Double-Phase_Systems}. Indeed, using the H\"{o}lder regularity of $b(\cdot), $ we have
 \begin{align*}
     &\miint{Q_{\rho_2, \lambda}(z_0)}b(z)\frac{|u-(u)_{z_0;\rho_2,\lambda}|^q}{\rho^q_2}\; dz\\
     &\quad \leq \underbrace{\miint{Q_{\rho_2, \lambda}(z_0)} \inf_{\tilde{z}\in Q_{\rho_2, \lambda}(z_0)}b(\tilde{z})\frac{|u-(u)_{z_0;\rho_2,\lambda}|^q}{\rho^q_2}\,dz}_{\mathrm{I}}+ \underbrace{[b]_{\alpha}\rho^{\alpha}_2\miint{Q_{\rho_2, \lambda}(z_0)} \frac{|u-(u)_{z_0;\rho_2,\lambda}|^q}{\rho^q_2}\,dz}_{\mathrm{II}}.
 \end{align*}
 \textbf{Estimate of $\mathrm{I}$:} Directly using \cref{lem: s-term of parabolic poincare inequality on p-intrinsic cylinders} with $\theta =1$ and the second condition of \eqref{eq: p-phase condition}, we obtain
 \begin{align*}
  \miint{Q_{\rho_2, \lambda}(z_0)} \inf_{\tilde{z}\in Q_{\rho_2, \lambda}(z_0)}b(\tilde{z})\frac{|u-(u)_{z_0;\rho_2,\lambda}|^q}{\rho^q_2}\,dz \leq c \lambda^p . 
 \end{align*}
 \textbf{Estimate of $\mathrm{II}$:} Using \cref{lem : lemma 2.1}, we estimate
 \begin{align*}
  [b]_{\alpha}\rho^{\alpha}_2\miint{Q_{\rho_2, \lambda}(z_0)} \frac{|u-(u)_{z_0;\rho_2,\lambda}|^q}{\rho^q_2}\,dz \leq c\lambda^p .  
 \end{align*}
 Moreover, following \cite[Lemma 4.3]{2023_Gradient_Higher_Integrability_for_Degenerate_Parabolic_Double-Phase_Systems}, we estimate the second term of \eqref{eq : second equation in lemma 3.4} as follows:
 \begin{align*}
\miint{Q_{\rho_2,\lambda}(z_0)}\frac{|u-(u)_{z_0;\rho_2,\lambda}|^2}{\rho_2^2}\; dz \leq c \lambda \left(S(u)_{z_0, \rho_2, \lambda}\right)^{\frac{1}{2}}.
 \end{align*}
Combining all the estimates, using the third condition of \eqref{eq: p-phase condition} and Young's inequality, we obtain
\begin{align*}
 S(u)_{z_0, \rho_1, \lambda}\leq \frac{1}{2}S(u)_{z_0, \rho_2, \lambda}+c \left[\frac{\rho^q_2}{(\rho_2-\rho_1)^q}+\frac{\rho^4_2}{(\rho_2-\rho_1)^4}\right]\lambda^2.   
\end{align*}
Finally, applying \cref{iter_lemma}, we conclude the proof.
\end{proof}
\begin{lemma}\label{lem : lemma 5.2}
    Let $u$ be a weak solution to \eqref{eq: main equation}. Then for $Q_{c_v(2 \rho),\lambda}\left(z_0\right) \subset \Omega_T$ with \eqref{eq: p-phase condition}, there exist constants $c=c(\textnormal{\texttt{data}}) \geq 1$ and $\theta_0=\theta_0(n, p, q) \in(0,1)$ such that for any $\theta \in\left(\theta_0, 1\right)$,
    $$
    \begin{aligned}
    &\miint{Q_{2 \rho,\lambda}\left(z_0\right)} H\left(z,\frac{\left|u-(u)_{z_0 ; 2 \rho,\lambda}\right|}{2 \rho}\right) d z \\
    & \quad \leq c \miint{Q_{2 \rho,\lambda}\left(z_0\right)}\left(\inf_{Q_{2\rho, \lambda}(z_0)}a(\tilde{z})^{\theta}\frac{\left|u-(u)_{z_0 ; 2 \rho,\lambda}\right|^{\theta p}}{(2 \rho)^{\theta p}}+\inf_{Q_{2\rho, \lambda}(z_0)}a(\tilde{z})^{\theta}|D u|^{\theta p}\right) d z\;\left(S(u)_{z_0 ; 2 \rho,\lambda}\right)^{\frac{(1-\theta) p}{2}} \\
    &\qquad+c \miint{Q_{2 \rho,\lambda}\left(z_0\right)}\left(\inf _{Q_{2 \rho,\lambda}\left(z_0\right)} b(\tilde{z})^\theta \frac{\left|u-(u)_{z_0 ; 2 \rho,\lambda}\right|^{\theta q}}{(2 \rho)^{\theta q}}+\inf _{Q_{2 \rho,\lambda}\left(z_0\right)} b(\tilde{z})^\theta|D u|^{\theta q}\right) dz \\
    & \qquad \quad \times \lambda^{(p-q)(1-\theta)}\;\left(S(u)_{z_0 ; 2 \rho,\lambda}\right)^{\frac{(1-\theta) q}{2}}.
    \end{aligned}
    $$
\end{lemma}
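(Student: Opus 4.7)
The plan is to follow the strategy of \cite[Lemma 5.2]{2023_Gradient_Higher_Integrability_for_Degenerate_Parabolic_Double-Phase_Systems}: set $\psi := u-(u)_{z_0;2\rho,\lambda}$ and split
\[
H\!\left(z,\frac{|\psi|}{2\rho}\right) = a(z)\frac{|\psi|^p}{(2\rho)^p}+b(z)\frac{|\psi|^q}{(2\rho)^q},
\]
treating each summand by a slicewise application of the Gagliardo--Nirenberg inequality from Lemma \ref{lem : lemma 2.1}.

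For the $a$-summand I apply Lemma \ref{lem : lemma 2.1} to $\psi$ at each fixed $t$ with $\sigma=p$, $s=\theta p$, $r=2$, $\vartheta=\theta$; the admissibility condition reduces to $\theta\geq n/(n+2)$, which dictates the choice of $\theta_0=\theta_0(n,p,q)$. This gives
\[
\dashint_{B_{2\rho}(x_0)}\frac{|\psi|^p}{(2\rho)^p}\,dx \leq c\!\left(\dashint_{B_{2\rho}}\frac{|\psi|^{\theta p}}{(2\rho)^{\theta p}}+|D\psi|^{\theta p}\,dx\right)\!\left(\dashint_{B_{2\rho}}\frac{|\psi|^2}{(2\rho)^2}\,dx\right)^{(1-\theta)p/2},
\]
whose last factor is dominated by $S(u)_{z_0;2\rho,\lambda}^{(1-\theta)p/2}$. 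After integrating in $t$, the second line of \eqref{eq: p-phase condition} yields the comparability $a(z)\sim a(z_0)\sim \inf_{Q_{2\rho,\lambda}(z_0)}a(\tilde z)$, so the coefficient $a(z)$ may be replaced by $\inf a$; writing $\inf a=(\inf a)^\theta(\inf a)^{1-\theta}$ and absorbing $(\inf a)^{1-\theta}\leq\|a\|_{L^\infty(\Omega_T)}^{1-\theta}$ into $c(\texttt{data})$ produces the first term in the conclusion.

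The $b$-summand is handled analogously with $\sigma=q$ and $s=\theta q$, yielding the spatial integrand $|\psi|^{\theta q}/(2\rho)^{\theta q}+|D\psi|^{\theta q}$ together with a factor of $S(u)_{z_0;2\rho,\lambda}^{(1-\theta)q/2}$. The principal obstacle is that, unlike $a$, the coefficient $b$ need not be comparable to a positive constant on the $p$-intrinsic cylinder. To draw out $(\inf b)^\theta$, I split $b(z)=b(z)^\theta b(z)^{1-\theta}$: the H\"older continuity of $b$ bounds $b(z)^\theta$ by $c(\inf b)^\theta$ up to a H\"older remainder that is dominated by the $a$-term via $\|b\|_{L^\infty}$, while the $p$-phase balance $b(z_0)\lambda^q\leq K a(z_0)\lambda^p$ from the first line of \eqref{eq: p-phase condition} (equivalently $b(z_0)\leq K a(z_0)\lambda^{p-q}$) controls $b(z)^{1-\theta}$ and is precisely what generates the factor $\lambda^{(p-q)(1-\theta)}$ in the final bound, with the residual $\|b\|_{L^\infty(\Omega_T)}^{1-\theta}$ absorbed into $c$. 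Summing the $a$- and $b$-estimates gives the claim; the remaining manipulations are routine applications of Lemma \ref{lem : lemma 2.1}, Fubini, and H\"older's inequality.
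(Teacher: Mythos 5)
Your overall route is the same as the paper's: split $H\bigl(z,\tfrac{|\psi|}{2\rho}\bigr)$ into its $a$- and $b$-parts with $\psi=u-(u)_{z_0;2\rho,\lambda}$, treat the $a$-part by the slicewise Gagliardo--Nirenberg inequality of \cref{lem : lemma 2.1} with $\sigma=p$, $s=\theta p$, $r=2$, $\vartheta=\theta$ (your admissibility threshold $\theta\ge n/(n+2)$ is correct), use the comparability $a(z)\sim a(z_0)\sim\inf_{Q_{2\rho,\lambda}(z_0)}a$ coming from the second line of \eqref{eq: p-phase condition}, and generate $\lambda^{(p-q)(1-\theta)}$ from $b(z_0)\le Ka(z_0)\lambda^{p-q}$ together with $(\inf a)^{1-\theta}\le\|a\|_{L^\infty(\Omega_T)}^{1-\theta}$. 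All of this matches the paper's argument.

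The gap is in the $b$-summand, at exactly the step you treat as routine. A first, smaller issue: the balance $Ka(z_0)\lambda^p\ge b(z_0)\lambda^q$ controls $b(z_0)$, not $b(z)$, so passing from $b(z)^{1-\theta}$ to $\lambda^{(p-q)(1-\theta)}$ already requires a Hölder-continuity step; moreover, splitting $b(z)=b(z)^\theta b(z)^{1-\theta}$ \emph{before} using Hölder continuity leaves a remainder of size $[b]_\alpha^\theta(c\rho)^{\alpha\theta}\,b(z)^{1-\theta}\,\frac{|\psi|^q}{(2\rho)^q}$, i.e.\ only $\rho^{\alpha\theta}$ instead of the full $\rho^{\alpha}$ obtained by first writing $b(z)\le\inf b+[b]_\alpha(c\rho)^\alpha$; since the bookkeeping in \cref{lem:p-phase1} (choice $\gamma=\frac{\alpha p}{n+2}$) consumes \emph{all} of $\rho^\alpha$, this loss is fatal at the endpoint $q=p+\frac{2\alpha}{n+2}$ allowed by \eqref{def_pq}. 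The more serious issue is your assertion that the Hölder remainder ``is dominated by the $a$-term via $\|b\|_{L^\infty}$''. The remainder carries $\frac{|\psi|^q}{(2\rho)^q}$ (after your GN step, $\frac{|\psi|^{\theta q}}{(2\rho)^{\theta q}}$, $|Du|^{\theta q}$ and $S(u)^{(1-\theta)q/2}$), and no bound on the coefficient can convert these $q$-power quantities into the $\theta p$-power quantities and the factor $S(u)^{(1-\theta)p/2}$ demanded by the stated estimate. That conversion is the technical core of the lemma: it requires a Gagliardo--Nirenberg interpolation with mismatched exponents ($\sigma=q$, $s=\theta p$, $r=2$), the gap condition $q\le p+\frac{2\alpha}{n+2}$, and the stopping-time information in the third condition of \eqref{eq: p-phase condition} so that the full factor $(2\rho)^\alpha$ exactly compensates the scaling deficit; this is precisely the part the paper imports from \cite[Lemma 4.4]{2023_Gradient_Higher_Integrability_for_Degenerate_Parabolic_Double-Phase_Systems}, and your proposal does not supply it.
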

\begin{proof}
 Using the uniform continuity of $a(\cdot)$ and H\"{o}lder continuity of $b(\cdot), $ we have
\begin{align*}
&\miint{Q_{2 \rho,\lambda}\left(z_0\right)} H\left(z,\frac{\left|u-(u)_{z_0 ; 2 \rho,\lambda}\right|}{2 \rho}\right) dz\\
&\leq \omega_a(2\rho)\miint{Q_{2 \rho,\lambda}\left(z_0\right)}\frac{\left|u-(u)_{z_0 ; 2 \rho,\lambda}\right|^p}{(2 \rho)^p}\, dz + \miint{Q_{2 \rho,\lambda}\left(z_0\right)} \inf_{Q_{2 \rho,\lambda}\left(z_0\right)} a(\tilde{z}) \frac{\left|u-(u)_{z_0 ; 2 \rho,\lambda}\right|^p}{(2 \rho)^p}\, dz\\
&\quad + \miint{Q_{2 \rho,\lambda}\left(z_0\right)} \inf_{Q_{2 \rho,\lambda}\left(z_0\right)} b(\tilde{z})\frac{\left|u-(u)_{z_0 ; 2 \rho,\lambda}\right|^q}{(2 \rho)^q}\, dz +[b]_{\alpha}(2\rho)^{\alpha}\miint{Q_{2 \rho,\lambda}\left(z_0\right)}\frac{\left|u-(u)_{z_0 ; 2 \rho,\lambda}\right|^q}{(2 \rho)^q}\, dz\\
&\overset{\eqref{eq: p-phase condition}}{\leq} c(\delta)\omega_a(2\rho)\miint{Q_{2 \rho,\lambda}\left(z_0\right)} H\left(z,\frac{\left|u-(u)_{z_0 ; 2 \rho,\lambda}\right|}{2 \rho}\right) dz + \miint{Q_{2 \rho,\lambda}\left(z_0\right)} \inf_{Q_{2 \rho,\lambda}\left(z_0\right)} a(\tilde{z}) \frac{\left|u-(u)_{z_0 ; 2 \rho,\lambda}\right|^p}{(2 \rho)^p}\, dz\\
&\quad + \miint{Q_{2 \rho,\lambda}\left(z_0\right)} \inf_{Q_{2 \rho,\lambda}\left(z_0\right)} b(\tilde{z})\frac{\left|u-(u)_{z_0 ; 2 \rho,\lambda}\right|^q}{(2 \rho)^q}\, dz +[b]_{\alpha}(2\rho)^{\alpha}\miint{Q_{2 \rho,\lambda}\left(z_0\right)}\frac{\left|u-(u)_{z_0 ; 2 \rho,\lambda}\right|^q}{(2 \rho)^q}\, dz.
\end{align*}
We choose $\rho>0$ so small that $c(\delta)\omega_a(2\rho)\leq \frac{1}{2}$.
Then it follows that
\begin{align*}
\miint{Q_{2 \rho,\lambda}\left(z_0\right)} H\left(z,\frac{\left|u-(u)_{z_0 ; 2 \rho,\lambda}\right|}{2 \rho}\right) dz &\leq c \underbrace{\miint{Q_{2 \rho,\lambda}\left(z_0\right)} \inf_{Q_{2 \rho,\lambda}\left(z_0\right)} a(\tilde{z}) \frac{\left|u-(u)_{z_0 ; 2 \rho,\lambda}\right|^p}{(2 \rho)^p}\, dz}_{\mathrm{I}}\\
&\quad +c\miint{Q_{2 \rho,\lambda}\left(z_0\right)} \inf_{Q_{2 \rho,\lambda}\left(z_0\right)} b(\tilde{z})\frac{\left|u-(u)_{z_0 ; 2 \rho,\lambda}\right|^q}{(2 \rho)^q}\, dz\\
&\quad +c[b]_{\alpha}(2\rho)^{\alpha}\miint{Q_{2 \rho,\lambda}\left(z_0\right)}\frac{\left|u-(u)_{z_0 ; 2 \rho,\lambda}\right|^q}{(2 \rho)^q}\, dz.
\end{align*}
\textbf{Estimate of $\mathrm{I}$:} Choosing $\sigma=p, s=\theta p, r=2, \vartheta=\theta,$ in  \cref{lem : lemma 2.1}, we estimate
\begin{align*}
&\miint{Q_{2 \rho,\lambda}\left(z_0\right)} \inf_{Q_{2 \rho,\lambda}\left(z_0\right)} a(\tilde{z}) \frac{\left|u-(u)_{z_0 ; 2 \rho,\lambda}\right|^p}{(2 \rho)^p}\, dz \\
&\leq \miint{Q_{2 \rho,\lambda}\left(z_0\right)} \left(\inf_{Q_{2 \rho,\lambda}\left(z_0\right)} a(\tilde{z})^\theta \frac{\left|u-(u)_{z_0 ; 2 \rho,\lambda}\right|^{\theta p}}{(2 \rho)^{\theta p}}+ \inf_{Q_{2 \rho,\lambda}\left(z_0\right)} a(\tilde{z})^\theta |Du|^{\theta p} \right) dz \\
&\quad \times \inf_{Q_{2 \rho,\lambda}\left(z_0\right)} a(\tilde{z})^{1-\theta}\left(S(u)_{z_0, 2\rho, \lambda}\right)^{\frac{(1-\theta)p}{2}}\\
&\leq c \miint{Q_{2 \rho,\lambda}\left(z_0\right)} \left(\inf_{Q_{2 \rho,\lambda}\left(z_0\right)} a(\tilde{z})^\theta \frac{\left|u-(u)_{z_0 ; 2 \rho,\lambda}\right|^{\theta p}}{(2 \rho)^{\theta p}}+ \inf_{Q_{2 \rho,\lambda}\left(z_0\right)} a(\tilde{z})^\theta |Du|^{\theta p} \right) dz \left(S(u)_{z_0, 2\rho, \lambda}\right)^{\frac{(1-\theta)p}{2}},
\end{align*}
where we have used $\displaystyle{\inf_{Q_{2 \rho,\lambda}\left(z_0\right)}} a(\tilde{z})^{1-\theta} \leq \max\left\{1, ||a||^{1-\theta_0}_{L^{\infty}(\Omega_T)}\right\}.$ Now following \cite[Lemma 4.4]{2023_Gradient_Higher_Integrability_for_Degenerate_Parabolic_Double-Phase_Systems}, we have

\begin{align*}
&\miint{Q_{2 \rho,\lambda}\left(z_0\right)} \inf_{Q_{2 \rho,\lambda}\left(z_0\right)} b(\tilde{z})\frac{\left|u-(u)_{z_0 ; 2 \rho,\lambda}\right|^q}{(2 \rho)^q}\, dz\\
&\leq c\miint{Q_{2 \rho,\lambda}\left(z_0\right)} \left(\inf_{Q_{2 \rho,\lambda}\left(z_0\right)} b(\tilde{z})^\theta \frac{\left|u-(u)_{z_0 ; 2 \rho,\lambda}\right|^{\theta q}}{(2 \rho)^{\theta q}}+ \inf_{Q_{2 \rho,\lambda}\left(z_0\right)} b(\tilde{z})^\theta |Du|^{\theta q} \right) dz \, \lambda^{(p-q)(1-\theta)} \left(S(u)_{z_0, 2\rho, \lambda}\right)^{\frac{(1-\theta)p}{2}}
\end{align*}
and 
\begin{align*}
&(2\rho)^{\alpha}\miint{Q_{2 \rho,\lambda}\left(z_0\right)}\frac{\left|u-(u)_{z_0 ; 2 \rho,\lambda}\right|^q}{(2 \rho)^q}\, dz\\
&\leq c \miint{Q_{2 \rho,\lambda}\left(z_0\right)} \left(\inf_{Q_{2 \rho,\lambda}\left(z_0\right)} a(\tilde{z})^\theta \frac{\left|u-(u)_{z_0 ; 2 \rho,\lambda}\right|^{\theta p}}{(2 \rho)^{\theta p}}+ \inf_{Q_{2 \rho,\lambda}\left(z_0\right)} a(\tilde{z})^\theta |Du|^{\theta p} \right) dz \left(S(u)_{z_0, 2\rho, \lambda}\right)^{\frac{(1-\theta)p}{2}},
\end{align*}
where we also used the second condition of \eqref{eq: p-phase condition}. Combining the estimates above proves the lemma.
\end{proof}
Now we are ready to prove the reverse H\"{o}lder inequality for $p$-phase.

\begin{lemma}\label{lem : lemma 5.3}
    Let $u$ be a weak solution to \eqref{eq: main equation}. Then for $Q_{c_v(2 \rho),\lambda}\left(z_0\right) \subset \Omega_T$ with \eqref{eq: p-phase condition}, there exist constants $c=c(\textnormal{\texttt{data}}) \geq 1$ and $\theta_0=\theta_0(n,p,q) \in (0,1)$ such that for any $\theta\in (\theta_0,1),$
    \begin{align*}
        \miint{Q_{\rho,\lambda}(z_0)} H(z,|Du|)\; dz  \leq c \left(\miint{Q_{2\rho,\lambda}(z_0)} [H(z,|Du|)]^\theta\;dz\right)^\frac{1}{\theta}+c\miint{Q_{2\rho,\lambda}(z_0)} H(z,|F|)\; dz.
    \end{align*}
\end{lemma}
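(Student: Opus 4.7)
The overall strategy is to link the Caccioppoli inequality (\cref{lem : the Caccioppoli inequality}) to the parabolic Poincar\'e inequalities of \cref{lem:p-term of parabolic poincare inequality on p-intrinsic cylinders} and \cref{lem: s-term of parabolic poincare inequality on p-intrinsic cylinders} by means of the $L^{\infty}$-energy bound \cref{lem : Lemma 5.1} and the Gagliardo--Nirenberg type splitting \cref{lem : lemma 5.2}; the residual powers of $\la$ produced by the intrinsic scaling are killed by the four structural conditions in \eqref{eq: p-phase condition}.

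Concretely, I would first apply \cref{lem : the Caccioppoli inequality} with $R=2\rho$, $r=\rho$, $S=(2\rho)^{2}\la^{2-p}/a(z_0)$ and $\tau=\rho^{2}\la^{2-p}/a(z_0)$. This reduces the problem on $Q_{\rho,\la}(z_0)$ to the estimation of $\miint_{Q_{2\rho,\la}} H(z,|u-(u)_{z_0;2\rho,\la}|/(2\rho))\,dz$ and $(a(z_0)/(\rho^{2}\la^{2-p}))\miint_{Q_{2\rho,\la}}|u-(u)_{z_0;2\rho,\la}|^{2}\,dz$, plus the harmless source term. The lower order quadratic term is dealt with by a slicewise Poincar\'e inequality (the $m=2$ case of \cref{lem : parabolic poincare inequality in Section 3}) together with Young's inequality; the heart of the proof lies in the $H$-weighted term.

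I would control $\miint H(z,|u-(u)|/(2\rho))\,dz$ through \cref{lem : lemma 5.2}, which produces $\theta$-moments of $|Du|$ and of the quotient $|u-(u)|/(2\rho)$, weighted by $\inf a^{\theta}$ and $\inf b^{\theta}$, multiplied by the factors $S(u)_{z_0;2\rho,\la}^{(1-\theta)p/2}$ and $\la^{(p-q)(1-\theta)}S(u)_{z_0;2\rho,\la}^{(1-\theta)q/2}$ respectively. Invoking \cref{lem : Lemma 5.1} to replace $S(u)_{z_0;2\rho,\la}$ by a multiple of $\la^{2}$, both of these prefactors collapse to the common power $\la^{p(1-\theta)}$. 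The gradient contributions are then subsumed into $\miint H(z,|Du|)^{\theta}\,dz$ by the pointwise bounds $a(z)^{\theta}|Du|^{\theta p}=(a(z)|Du|^{p})^{\theta}\leq H(z,|Du|)^{\theta}$ and $b(z)^{\theta}|Du|^{\theta q}\leq H(z,|Du|)^{\theta}$, while the quotients involving $|u-(u)|/(2\rho)$ are fed into \cref{lem:p-term of parabolic poincare inequality on p-intrinsic cylinders} and \cref{lem: s-term of parabolic poincare inequality on p-intrinsic cylinders}. Each of these last two yields a $\theta$-average of $H(z,|Du|)$, a source term in $H(z,|F|)$, and a single residual of the form $\la^{(2-p+\alpha p/(n+2))\theta p}\bigl(\miint (|Du|+|F|)^{\theta p}\,dz\bigr)^{p-1-\alpha p/(n+2)}$.

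To close the loop, I would use the fourth condition in \eqref{eq: p-phase condition}, $a(z_0)\la^{p}=\miint_{Q_{\rho,\la}}[H(z,|Du|)+H(z,|F|)]\,dz$, combined with $a(z_0)\geq\delta/2$ and the trivial comparison $\miint_{Q_{\rho,\la}}\leq c\miint_{Q_{2\rho,\la}}$, to replace the common prefactor $\la^{p(1-\theta)}$ by a power of $\miint_{Q_{2\rho,\la}}[H(z,|Du|)+H(z,|F|)]\,dz$. A combination of Young's inequality with exponents $(1/(1-\theta),1/\theta)$ and Jensen's inequality $\miint H(z,|Du|)^{\theta}\,dz\leq (\miint H(z,|Du|)\,dz)^{\theta}$ then yields the right-hand side of the claim together with an extra term $c\,\miint_{Q_{2\rho,\la}}H(z,|Du|)\,dz$. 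This last term is absorbed by applying the whole scheme on an intermediate family of radii $\rho_1,\rho_2\in[\rho,2\rho]$ and invoking \cref{iter_lemma}, producing finally the reverse H\"{o}lder inequality as stated.

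The main obstacle is the precise bookkeeping of the powers of $\la$ that appear at every step: one has to verify that the $p$- and $q$-pieces coming out of \cref{lem : lemma 5.2}, after one invocation of \cref{lem : Lemma 5.1}, balance to the single power $\la^{p(1-\theta)}$ (which uses $(p-q)(1-\theta)+(1-\theta)q=(1-\theta)p$), and that the residual exponent $p-1-\alpha p/(n+2)$ coming from the parabolic Poincar\'e inequalities interacts correctly with the intrinsic measure scaling $|Q_{2\rho,\la}|\simeq \rho^{n+2}\la^{2-p}$ so that the Young's inequality step is admissible. A secondary delicate point is the choice of $\theta_0\in(0,1)$, which must simultaneously exceed the threshold $(q-1)/p$ required by \cref{lem:p-term of parabolic poincare inequality on p-intrinsic cylinders} and the Gagliardo--Nirenberg threshold of \cref{lem : lemma 5.2}, while remaining close enough to $1$ for the final absorption into $\miint H(z,|Du|)$ to succeed.
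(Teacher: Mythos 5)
Your plan follows the paper's proof in all essentials: Caccioppoli (\cref{lem : the Caccioppoli inequality}) with the pair $(\rho,2\rho)$, then \cref{lem : lemma 5.2} combined with \cref{lem : Lemma 5.1} to collapse the two prefactors to the single power $\la^{p(1-\theta)}$ (the balance $(p-q)(1-\theta)+q(1-\theta)=p(1-\theta)$ you note is exactly what the paper uses), then \cref{lem:p-term of parabolic poincare inequality on p-intrinsic cylinders} and \cref{lem: s-term of parabolic poincare inequality on p-intrinsic cylinders} for the quotient terms, then Young. The only bookkeeping difference is that, because of the residual Poincar\'e term, the paper first packages everything with the exponent $\beta=\min\{p-1-\tfrac{\alpha p}{n+2},\tfrac12\}$ and applies Young with exponents $\bigl(\tfrac{1}{1-\beta\theta},\tfrac{1}{\beta\theta}\bigr)$ (and $\bigl(\tfrac{p}{p-\beta},\tfrac{p}{\beta}\bigr)$ for the quadratic term), not $\bigl(\tfrac{1}{1-\theta},\tfrac{1}{\theta}\bigr)$; this is the obstacle you flag yourself and it is resolvable.

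The genuine divergence, and the weak point of your proposal, is the closing step. The paper keeps the Young remainder as $\varepsilon\la^{p}$ and absorbs it \emph{directly} into the left-hand side via the equality in the fourth condition of \eqref{eq: p-phase condition}, which lives on the same cylinder $Q_{\rho,\la}(z_0)$ as the left-hand side: $\varepsilon\la^{p}\leq \tfrac{2\varepsilon}{\delta}\miint_{Q_{\rho,\la}(z_0)}[H(z,|Du|)+H(z,|F|)]\,dz$, so no iteration over radii is needed at all. You instead convert $\la^{p}$ into an average over the larger cylinder $Q_{2\rho,\la}(z_0)$ and then try to absorb via intermediate radii and \cref{iter_lemma}. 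As written this does not close: you state the leftover as $c\,\miint_{Q_{2\rho,\la}(z_0)}H(z,|Du|)\,dz$ with a structural constant $c\geq 1$, whereas \cref{iter_lemma} requires the coefficient of $h(\varrho_2)$ to be strictly below $1$. To repair it you must (i) retain the free small parameter from Young so the coefficient is $\varepsilon$, and convert $\la^{p}$ via the fourth condition into an average over $Q_{\rho_2,\la}$ (not $Q_{2\rho,\la}$) so the term really is $\vartheta h(\rho_2)$; (ii) route the Caccioppoli factor $(\rho_2/(\rho_2-\rho_1))^{q}$ into the $A$-term of the iteration lemma \emph{before} applying Young, otherwise the absorbable piece blows up as $\rho_2-\rho_1\to 0$; (iii) make sure the quadratic term is also kept in product form amenable to Young (as in the paper's estimate with exponent $\beta/p$), since estimating it outright by $c\la^{p}$ would again produce a non-absorbable constant; and (iv) justify that \cref{lem:p-term of parabolic poincare inequality on p-intrinsic cylinders}--\cref{lem : lemma 5.2} are available at the intermediate radii, which holds because the third condition of \eqref{eq: p-phase condition} is assumed for all $\tau\in(\rho,c_v(2\rho)]$. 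The paper's same-cylinder absorption avoids all of these extra checks, which is why it dispenses with the iteration lemma here.
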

\begin{proof}
From the energy estimate,  \cref{lem : the Caccioppoli inequality}, we have
\begin{align*}
\miint{Q_{\rho, \lambda}(z_0)}H(z,|Du|)\;dz
& \leq c\miint{Q_{2\rho, \lambda}(z_0)}  H\left(z,\frac{|u-(u)_{z_0, 2\rho, \lambda}|}{2\rho}\right) dz\\
&\quad + c\la^{p-2}\miint{Q_{2\rho, \lambda}(z_0)}\frac{|u-(u)_{z_0, 2\rho, \lambda}|^2}{(2\rho)^2}\;dz +c\miint{Q_{2\rho, \lambda}(z_0)} H(z,|F|) \;dz.
\end{align*}  
Now applying \cref{lem : Lemma 5.1} and  \cref{lem : lemma 5.2}, we estimate the right-hand side of the above inequality as follows:
\begin{align*}
&\miint{Q_{2\rho, \lambda}(z_0)}  H\left(z,\frac{|u-(u)_{z_0, 2\rho, \lambda}|}{2\rho}\right) dz \leq c \la^{(1-\theta)p} \miint{Q_{2\rho,\lambda}(z_0)} H(z, |Du|)^{\theta}\, dz\\
& \qquad \qquad + \la^{(1-\theta)p} \miint{Q_{2\rho,\lambda}(z_0)} \left(\inf_{Q_{2\rho, \lambda}(z_0)}a(\tilde{z})^{\theta}\frac{\left|u-(u)_{z_0 ; 2 \rho,\lambda}\right|^{\theta p}}{(2 \rho)^{\theta p}}+ \inf _{Q_{2 \rho,\lambda}\left(z_0\right)} b(\tilde{z})^\theta \frac{\left|u-(u)_{z_0 ; 2 \rho,\lambda}\right|^{\theta q}}{(2 \rho)^{\theta q}} \right) dz.   
\end{align*}
The rest of the proof follows from \cite[Lemma 4.5]{2023_Gradient_Higher_Integrability_for_Degenerate_Parabolic_Double-Phase_Systems} by applying \cref{lem:p-term of parabolic poincare inequality on p-intrinsic cylinders}, \cref{lem: s-term of parabolic poincare inequality on p-intrinsic cylinders}, Young's inequality and the fourth condition of \eqref{eq: p-phase condition}. For the sake of completeness, we include it here.

Indeed, by \cref{lem:p-term of parabolic poincare inequality on p-intrinsic cylinders} and \cref{lem: s-term of parabolic poincare inequality on p-intrinsic cylinders}, we have
\begin{align}\label{EQQ3.4}
&\miint{Q_{2\rho,\lambda}(z_0)} H\left(z,\frac{|u-(u)_{z_0, 2\rho, \lambda}|}{2\rho}\right) dz\nonumber\\
&\quad \leq c\lambda^{p(1-\beta \theta)}\left(\miint{Q_{2\rho, \lambda}(z_0)}H(z, |Du|)^{\theta}\, dz\right)^{\beta}+c\lambda^{p(1-\beta \theta)}\left(\miint{Q_{2\rho, \lambda}(z_0)}H(z, |F|)\,dz\right)^{\theta \beta}
\end{align}
and 
\begin{align}\label{Equation3.5}
\la^{p-2}\miint{Q_{2\rho, \lambda}(z_0)}\frac{|u-(u)_{z_0, 2\rho, \lambda}|^2}{(2\rho)^2}\;dz&\leq c\lambda^{p-\beta}\left(\miint{Q_{2\rho, \lambda}(z_0)}H(z, |Du|)^{\theta}\, dz\right)^{\frac{\beta}{\theta p}}\nonumber\\
&\quad +c\lambda^{p-\beta}\left(\miint{Q_{2\rho, \lambda}(z_0)}H(z, |F|)\, dz\right)^{\frac{\beta}{p}}    
\end{align}
for $$\beta:=\min\left\{p-1-\frac{\alpha p}{n+2}, \frac{1}{2}\right\}.$$ Now applying Young's inequality in \eqref{EQQ3.4} with $\left(\frac{1}{1-\beta \theta}, \frac{1}{\beta \theta}\right)$ and in \eqref{Equation3.5} with $\left(\frac{p}{p-\beta}, \frac{p}{\beta}\right)$, we get
\begin{align*}
&\miint{Q_{2\rho,\lambda}(z_0)} H\left(z,\frac{|u-(u)_{z_0, 2\rho, \lambda}|}{2\rho}\right) dz \\
&\qquad \leq \varepsilon \lambda^p+c(\varepsilon)\left[\left(\miint{Q_{2\rho, \lambda}(z_0)}H(z, |Du|)^{\theta}\, dz\right)^{\frac{1}{\theta}} + \miint{Q_{2\rho, \lambda}(z_0)}H(z, |F|)\, dz\right].    
\end{align*}
Finally, using the fourth condition in \eqref{eq: p-phase condition}, we obtain the desired estimate.
\end{proof}

\subsection{Reverse H\"{o}lder inequality for $q$-phase} In this subsection, we prove reverse H\"{o}lder inequality for $q$-phase. In the $q$-intrinsic case, we consider the following assumptions:\begin{equation}\label{eq: q-phase condition}
\left\{
\begin{aligned}
&Ka(z_0)\lambda^p \leq b(z_0)\lambda^q,\\
& b(z_0)\geq \frac{\delta}{2} \quad \text{and}\quad \frac{b(z)}{3}\leq b(z_0)\leq 3b(z) \ \, \text{for all}\,\,\, z\in J_{\rho, \la}(z_0),\\
&\miint{J_{\tau,\lambda}(z_0)} \left[H(z,|Du|)+H(z,|F|)\right] dz <b(z_0)\lambda^q \ \, \text{for every }\tau\in(\rho,c_v(2\rho)],\\
&\miint{J_{\rho,\lambda}(z_0)} \left[H(z,|Du|)+H(z,|F|)\right] dz =b(z_0)\lambda^q.
\end{aligned}
\right.
\end{equation}
We note that, from the second and third conditions above, we have
\begin{align*}
\miint{J_{\tau, \lambda}(z_0)}b(z_0)\left(|Du|+|F|\right)^q\, dz\leq \miint{J_{\tau, \lambda}(z_0)}3 b(z)\left(|Du|+|F|\right)^q\, dz< c\, b(z_0)\lambda^q.
\end{align*}
This implies that
\begin{align}\label{EQQ3.5}
\miint{J_{\tau, \lambda}(z_0)}\left(|Du|+|F|\right)^q\, dz \leq c\lambda^q.    
\end{align}
\begin{lemma} \label{lem:q-phase1}
Let $u$ be a weak solution to \eqref{eq: main equation}. Then there exists a constant $c=c(\textnormal{\texttt{data}}) \geq 1$ such that for any $J_{4\rho,\lambda}(z_0)\subset \Omega_T$ with \eqref{eq: q-phase condition}, $\tau \in [2\rho,4\rho]$ and $\theta \in (q-1/p,1]$, 
\begin{align*}
&\miint{J_{\tau,\lambda}(z_0)} \left[ \widetilde{H}(z,|Du|)+\widetilde{H}(z,|F|)\right] d z \\
&\quad \leq c \lambda^{-1+\frac{q}{p}} \miint{J_{\tau,\lambda}(z_0)}a(z)^{\frac{p-1}{p}}(|D u|+|F|)^{p-1}\; dz +c  \miint{J_{\tau,\lambda}(z_0)} b(z)(|Du|+|F|)^{q-1}\; dz \\
&\qquad +c\omega_a(\tau) \miint{J_{\tau,\lambda}(z_0)} (|Du|+|F|)^{p-1} \; dz.
\end{align*}
\end{lemma}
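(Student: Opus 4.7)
The plan is to mirror the template of \cref{lem:p-phase1}, with the $q$-intrinsic assumption \eqref{eq: q-phase condition} taking over the role played by the $p$-intrinsic one; the novelty is how this condition converts the $a$-weighted piece of $\widetilde H$ into the $a^{(p-1)/p}$-type structure present on the right-hand side. First I would use the elementary bound $|Du|^{r}+|F|^r\leq c(|Du|+|F|)^r$ ($r\geq 1$) together with the definition $\widetilde H(z,\kappa)=a(z)\kappa^{p-1}+b(z)\kappa^{q-1}$ to reduce the estimate to a separate control of $a(z)(|Du|+|F|)^{p-1}$ and $b(z)(|Du|+|F|)^{q-1}$; the latter already appears as the second term on the right-hand side, so no work is required there.

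For the $a$-weighted piece, I would exploit the uniform continuity \eqref{eq:assumption of a} to split
\[
a(z)\leq\inf_{J_{\tau,\lambda}(z_0)}a+\omega_a(c\tau)\qquad\text{for every }z\in J_{\tau,\lambda}(z_0),
\]
where the constant $c$ absorbs the parabolic diameter of $J_{\tau,\lambda}(z_0)$; this diameter is comparable to $\tau$ thanks to $\lambda\geq 1$, $q\geq 2$, and the lower bound $b(z_0)\geq\delta/2$ in \eqref{eq: q-phase condition}, so after relabeling the modulus of continuity we may simply write $\omega_a(\tau)$. Integrating the remainder against $(|Du|+|F|)^{p-1}$ already produces the third term of the right-hand side.

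The decisive step is to handle the contribution $(\inf_{J_{\tau,\lambda}(z_0)} a)\miint{J_{\tau,\lambda}(z_0)}(|Du|+|F|)^{p-1}\,dz$. I would factor $\inf a=(\inf a)^{1/p}(\inf a)^{(p-1)/p}$, bound $(\inf a)^{(p-1)/p}\leq a(z)^{(p-1)/p}$ pointwise by monotonicity of $x\mapsto x^{(p-1)/p}$, and, crucially, invoke the $q$-intrinsic assumption $Ka(z_0)\lambda^p\leq b(z_0)\lambda^q$ together with $b\in L^\infty(\Omega_T)$ to deduce
\[
\Bigl(\inf_{J_{\tau,\lambda}(z_0)} a\Bigr)^{1/p}\leq a(z_0)^{1/p}\leq K^{-1/p}\|b\|_{L^\infty(\Omega_T)}^{1/p}\lambda^{(q-p)/p}\leq c\lambda^{-1+\frac{q}{p}}.
\]
Integration then delivers the first term of the right-hand side, and summing the $a$- and $b$-contributions yields the claim.

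The genuine obstacle is that the pointwise weight $a(z)$ cannot be bounded by any constant multiple of $a(z)^{(p-1)/p}$ when $a$ is small, since in the $q$-phase $a$ need not be bounded away from zero; therefore the small prefactor $(\inf a)^{1/p}$ must be extracted globally across the cylinder rather than pointwise, and the $q$-intrinsic condition $Ka(z_0)\lambda^p\leq b(z_0)\lambda^q$ is precisely what makes this global factor of order $\lambda^{-1+q/p}$, producing the correct scaling.
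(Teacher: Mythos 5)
Your proposal is correct and follows essentially the same route as the paper: split $a(z)\leq\inf_{J_{\tau,\lambda}(z_0)}a+\omega_a(\tau)$ (the $\omega_a$ remainder giving the third term), factor $\inf a=(\inf a)^{1/p}(\inf a)^{(p-1)/p}\leq a(z_0)^{1/p}\,a(z)^{(p-1)/p}$, and use the $q$-intrinsic inequality $Ka(z_0)\lambda^p\leq b(z_0)\lambda^q$ from \eqref{eq: q-phase condition} together with $b\in L^\infty(\Omega_T)$ to extract the prefactor $c\lambda^{-1+\frac{q}{p}}$, which is exactly the paper's key step. The only deviation is that you keep the pointwise weight $b(z)$ as is (it already coincides with the second right-hand-side term), whereas the paper first splits $b(z)\leq\inf b+c[b]_\alpha\tau^\alpha$ and then absorbs the resulting $\tau^\alpha(|Du|+|F|)^{q-1}$ contribution into the left-hand side using $b\gtrsim\delta$ and the smallness requirement $c(\delta)\tau^\alpha\leq\tfrac12$; your shortcut is valid and even dispenses with that restriction on $\tau$.
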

\begin{proof}
First, using the uniform continuity of $a(\cdot)$ and H\"{o}lder continuity of $b(\cdot),$ we observe that
\begin{align*}
&\miint{J_{\tau,\lambda}(z_0)} \left[ \widetilde{H}(z,|Du|)+\widetilde{H}(z,|F|)\right] dz\\
&\leq c\miint{J_{\tau,\lambda}(z_0)} \inf_{\tilde{z}\in J_{\tau,\lambda}(z_0)}a(\tilde{z}) \, (|Du|+|F|)^{p-1} \; dz+c \omega_a(\tau) \miint{J_{\tau,\lambda}(z_0)} \left(|Du|^{p-1}+|F|^{p-1}\right) dz\\
&\quad +c\miint{J_{\tau,\lambda}(z_0)}\inf_{\tilde{z}\in J_{\tau,\lambda}(z_0)}b(\tilde{z}) \, (|Du|+|F|)^{q-1}\;dz+c\tau^\alpha \miint{J_{\tau,\lambda}(z_0)} \left(|Du|^{q-1}+|F|^{q-1}\right) dz\\
& \leq c\miint{Q_{\tau,\lambda}(z_0)} \inf_{\tilde{z}\in J_{\tau,\lambda}(z_0)}a(\tilde{z}) \, (|Du|+|F|)^{p-1} \; dz+c \omega_a(\tau) \miint{J_{\tau,\lambda}(z_0)} \left(|Du|^{p-1}+|F|^{p-1}\right) dz\\
&\quad +c\miint{J_{\tau,\lambda}(z_0)}\inf_{\tilde{z}\in J_{\tau,\lambda}(z_0)}b(\tilde{z}) \, (|Du|+|F|)^{q-1}\;dz+c(\delta) \tau^\alpha \miint{J_{\tau,\lambda}(z_0)} \left[\widetilde{H}(z,|Du|)+\widetilde{H}(z,|F|) \right] dz
\end{align*}
for some constant $c \geq 1$ depending on $p$, $q$, $[b]_\alpha$ and $\delta$.
We consider $\tau>0$ so small that $\displaystyle c (\delta) \tau^\alpha \leq \frac{1}{2}$. Thus, we obtain
\begin{align*}
&\miint{J_{\tau,\lambda}(z_0)} \left[\widetilde{H}(z,|Du|)+\widetilde{H}(z,|F|) \right]dz\\
& \leq c\miint{J_{\tau,\lambda}(z_0)} \inf_{\tilde{z}\in J_{\tau,\lambda}(z_0)}a(\tilde{z}) \, (|Du|+|F|)^{p-1} \; dz+c\miint{J_{\tau,\lambda}(z_0)}\inf_{\tilde{z}\in J_{\tau,\lambda}(z_0)}b(\tilde{z}) \, (|Du|+|F|)^{q-1}\;dz\\
&\quad +c\omega_a(\tau) \miint{J_{\tau,\lambda}(z_0)} \left(|Du|^{p-1}+|F|^{p-1} \right) dz.
\end{align*}
Using the first condition of \eqref{eq: q-phase condition}, we get
\begin{align*}
   &\miint{J_{\tau,\lambda}(z_0)} \left[\widetilde{H}(z,|Du|)+\widetilde{H}(z,|F|) \right]dz\\ 
   &\leq K^{-\frac{1}{p}}b(z_0)^{\frac{1}{p}}\lambda^{-1+\frac{q}{p}}\miint{J_{\tau,\lambda}(z_0)} a(z)^{\frac{p-1}{p}} \, (|Du|+|F|)^{p-1} \; dz+c\miint{J_{\tau,\lambda}(z_0)}b(z) \, (|Du|+|F|)^{q-1}\;dz\\
   &\quad +c\omega_a(\tau) \miint{J_{\tau,\lambda}(z_0)} \left(|Du|+|F|\right)^{p-1} dz.
\end{align*}
This completes the proof.
\end{proof}
 \begin{lemma} \label{lem:q-term of parabolic poincare inequality on q-intrinsic cylinders_1}
    Let $u$ be a weak solution to \eqref{eq: main equation}. Then for $J_{4\rho,\lambda}(z_0)\subset \Omega_T$ with \eqref{eq: q-phase condition}, $\tau \in [2\rho,4\rho]$ and $\theta \in (\frac{q-1}{p},1]$, there exists a constant $c=c(\textnormal{\texttt{data}}) \geq 1$ such that 
    \begin{align*}
        \miint{J_{\tau,\lambda}(z_0)}\frac{|u-(u)^q_{z_0;\tau,\lambda}|^{\theta q}}{\tau^{\theta q}}\, dz \leq c \miint{J_{\tau,\lambda}(z_0)}H(z,|Du|)^{\theta}\, dz
         +c\left(\miint{J_{\tau,\lambda}(z_0)}H(z,|F|) \, d z\right)^{\theta}.
    \end{align*} 
\end{lemma}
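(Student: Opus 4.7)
The plan is to mirror the structure of the $p$-phase analog \cref{lem:p-term of parabolic poincare inequality on p-intrinsic cylinders}, now using the $q$-intrinsic scaling $S/\tau^2=\lambda^{2-q}/b(z_0)$ on $J_{\tau,\lambda}(z_0)$. I would begin by applying the parabolic Poincar\'e inequality \cref{lem : parabolic poincare inequality in Section 3} with $m=q$, which is admissible because $\theta>(q-1)/p$ forces $\theta q>1$. This yields
\begin{align*}
\miint{J_{\tau,\lambda}(z_0)}\frac{|u-(u)^q_{z_0;\tau,\lambda}|^{\theta q}}{\tau^{\theta q}}\,dz &\leq c\miint{J_{\tau,\lambda}(z_0)}|Du|^{\theta q}\,dz\\
&\quad + c\left(\frac{\lambda^{2-q}}{b(z_0)}\miint{J_{\tau,\lambda}(z_0)}[\widetilde{H}(z,|Du|)+\widetilde{H}(z,|F|)]\,dz\right)^{\!\theta q}.
\end{align*}
The pure gradient term on the right is controlled immediately by $c\miint{J_{\tau,\lambda}(z_0)}H(z,|Du|)^\theta\,dz$ via the pointwise inequality $|Du|^{\theta q}\leq (6/\delta)^{\theta} b(z)^\theta |Du|^{\theta q}\leq (6/\delta)^\theta H(z,|Du|)^\theta$, using the lower bound $b(z)\geq b(z_0)/3\geq \delta/6$ from the second condition of \eqref{eq: q-phase condition}.

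For the time term I would feed in the decomposition of \cref{lem:q-phase1}, which splits $\miint[\widetilde{H}(z,|Du|)+\widetilde{H}(z,|F|)]$ into three pieces: the $\lambda^{-1+q/p}$-weighted $a$-piece, the pure $b$-piece, and an $\omega_a(\tau)$-remnant. For the $a$-piece, the identity $a^{(p-1)/p}(|Du|+|F|)^{p-1}=\bigl(a^\theta(|Du|+|F|)^{\theta p}\bigr)^{(p-1)/(\theta p)}$ (legal because $\theta\geq (q-1)/p\geq (p-1)/p$), combined with Jensen's inequality on the concave exponent and the split-and-peel trick leveraging $\miint a(z)(|Du|+|F|)^p\lesssim b(z_0)\lambda^q$ from the fourth condition of \eqref{eq: q-phase condition}, forces the exponents of $\lambda$ to telescope to zero while the exponent of $b(z_0)$ stays negative and hence controlled by $b(z_0)\geq \delta/2$. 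This produces $c\miint a^\theta(|Du|+|F|)^{\theta p}\,dz$, which by subadditivity of $(\cdot)^\theta$ and Jensen is bounded by $c\miint H(z,|Du|)^\theta\,dz+c(\miint H(z,|F|)\,dz)^\theta$. The $b$-piece, after using $b(z)\leq 3b(z_0)$, reduces to the direct $q$-analog of term $\mathrm{I}$ in the $p$-phase proof; the same H\"older plus split-and-peel argument, now combined with the bound $\miint(|Du|+|F|)^{\theta q}\leq c\lambda^{\theta q}$ that follows from the fourth condition via $b(z)\geq b(z_0)/3$, closes this contribution.

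The main obstacle I expect is the $\omega_a(\tau)$-remnant. A $b$-weighted H\"older inequality $\miint(|Du|+|F|)^{p-1}\leq c\bigl(\miint b(z)(|Du|+|F|)^q\bigr)^{(p-1)/q}$ followed by the fourth condition and careful collection of powers from the $(\lambda^{2-q}/b(z_0))^{\theta q}$ prefactor leaves a bound of the form $c\,\omega_a(\tau)^{\theta q}\lambda^{\theta q(p+1-q)}$, where the exponent $p+1-q$ lies in $(0,1]$ thanks to \eqref{def_pq}. Because $\lambda\geq 1$, this is dominated by $c\lambda^{\theta q}\leq c(\miint[H(z,|Du|)+H(z,|F|)]\,dz)^\theta$ via the fourth condition, and subadditivity of $(\cdot)^\theta$ splits the result into the two target RHS contributions; the passage from $(\miint H(z,|Du|)\,dz)^\theta$ to $\miint H(z,|Du|)^\theta\,dz$ is the delicate point, and it is handled by invoking the paper's convention $H_1=H+1$ (so that $\miint H_1^\theta\geq 1$ absorbs the bounded $\omega_a$-dependent constant) together with the smallness regime on $\tau$ already adopted in the proof of \cref{lem:q-phase1}. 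Summing the contributions from the three pieces and the pure gradient term yields the claim.
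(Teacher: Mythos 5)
Your overall architecture is the paper's: parabolic Poincar\'e (\cref{lem : parabolic poincare inequality in Section 3}) on $J_{\tau,\lambda}(z_0)$ with the scaling $\lambda^{2-q}/b(z_0)$, then the decomposition of $\widetilde H$ from \cref{lem:q-phase1} into the $a$-piece, the $b$-piece and the $\omega_a(\tau)$-remnant, and your treatment of the pure gradient term, the $a$-piece and the $b$-piece matches the paper's estimates (I) and (II). The genuine gap is in the remnant (the paper's term $\mathrm{III}$). You bound it entirely by a power of $\lambda$, namely $c\,\omega_a(\tau)^{\theta q}\lambda^{\theta q(p+1-q)}\le c\lambda^{\theta q}$, and then try to convert $\lambda^{\theta q}$ back into the right-hand side via the intrinsic coupling. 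But that conversion can only produce $\bigl(\,\miint{J_{\tau,\lambda}(z_0)}[H(z,|Du|)+H(z,|F|)]\,dz\bigr)^{\theta}$, i.e.\ the average raised to the power $\theta$; for the $Du$ part this is $\bigl(\miint{J_{\tau,\lambda}(z_0)}H(z,|Du|)\,dz\bigr)^{\theta}$, which by Jensen \emph{dominates} $\miint{J_{\tau,\lambda}(z_0)}H(z,|Du|)^{\theta}\,dz$ and can exceed it by an arbitrarily large factor (take $H(\cdot,|Du|)$ concentrated on a small subset). So the target inequality, which needs the exponent $\theta$ \emph{inside} the integral for the gradient term, does not follow; and this is not a cosmetic point, since the later absorption arguments (e.g.\ in \cref{lem : Lemma 3.10} and \cref{lem : lemma 3.12}) rely precisely on the power sitting inside. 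Your proposed remedy — invoking $H_1=H+1$ so that averages are at least $1$, plus smallness of $\tau$ — only controls multiplicative constants; it does not bridge the Jensen gap.

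The fix, which is what the paper does, is to never collapse the remnant to a pure power of $\lambda$: first bring the outer exponent inside by Jensen, using $p-1\le\theta q$ (valid since $\theta>(q-1)/p\ge(p-1)/q$), to get $\lambda^{(2-q)\theta q}\bigl(\miint{J_{\tau,\lambda}(z_0)}(|Du|+|F|)^{\theta q}\,dz\bigr)^{p-1}$; then peel off only $p-2$ of these factors with \eqref{EQQ3.5}, so that the $\lambda$-powers combine to $\lambda^{(p-q)\theta q}\le1$ while exactly one factor $\miint{J_{\tau,\lambda}(z_0)}(|Du|+|F|)^{\theta q}\,dz$ survives; finally insert $b(z)\ge b(z_0)/3\ge\delta/6$ to dominate it by $c\miint{J_{\tau,\lambda}(z_0)}\bigl(H(z,|Du|)+H(z,|F|)\bigr)^{\theta}\,dz$, after which the final H\"older step moves the power outside only for the $F$-term. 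A secondary, minor point: the upper bounds you use on averages over $J_{\tau,\lambda}(z_0)$ for $\tau\in[2\rho,4\rho]$ (e.g.\ $\miint a(z)(|Du|+|F|)^p\,dz\lesssim b(z_0)\lambda^q$ and \eqref{EQQ3.5}) come from the \emph{third} condition of \eqref{eq: q-phase condition}, not the fourth (which is an equality at radius $\rho$ only and serves the opposite, lower-bound direction); the needed inequalities do hold, but the attribution should be corrected.
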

\begin{proof}
From \cref{eq : parabolic poincare inequality in Section 3} of \cref{lem : parabolic poincare inequality in Section 3}, we have
\begin{align*}
\miint{J_{\tau,\lambda}(z_0)}\frac{|u-(u)^q_{z_0;\tau,\lambda}|^{\theta q}}{\tau^{\theta q}}\; dz &\leq c\miint{J_{\tau,\lambda}(z_0)} |Du|^{\theta q} \; dz + c\underbrace{\left(\lambda^{2-q}\miint{J_{\tau,\lambda}(z_0)}b(z) (|Du|+|F|)^{q-1}\;dz \right)^{\theta q}}_{\mathrm{I}}\\
&\quad + c\underbrace{\left(\lambda^{1-q+\frac{q}{p}}\miint{J_{\tau,\lambda}(z_0)} a(z)^{\frac{p-1}{p}}(|Du|+|F|)^{p-1}\; dz \right)^{\theta q}}_{\mathrm{II}}\\
&\quad + c\underbrace{\lambda^{(2-q)\theta q}\left(\miint{J_{\tau,\lambda}(z_0)} (|Du|+|F|)^{p-1}\; dz \right)^{\theta q}}_{\mathrm{III}}.
\end{align*}
We estimate $\mathrm{I}, \mathrm{II}, \mathrm{III}$ as follows.

\noindent\textbf{Estimate of $\mathrm{I}$:} Using the second condition of \eqref{eq: q-phase condition}, that is $\frac{b(z)}{3}\leq b(z_0)\leq 3 b(z),$ and H\"{o}lder's inequality, we get
\begin{align*}
&\left(\lambda^{2-q}\miint{J_{\tau,\lambda}(z_0)}b(z) (|Du|+|F|)^{q-1}\;dz \right)^{\theta q}\\
&\leq c \lambda^{(2-q)\theta q} b(z_0)^{\theta} \left(\miint{J_{\tau, \lambda}(z_0)}b(z)^{\frac{q-1}{q}}\left(|Du|+|F|\right)^{q-1}\right)^{\theta q}\\
&\leq c\lambda^{(2-q)\theta q}b(z_0)^{\theta}\left(\miint{J_{\tau, \lambda}(z_0)}b(z)^{\theta}(|Du|+|F|)^{\theta q}\, dz\right)^{q-1}\\
&\leq c\lambda^{(2-q)\theta q}b(z_0)^{\theta}b(z_0)^{\theta (q-2)}\lambda^{\theta q(q-2)} \miint{J_{\tau, \lambda}(z_0)}b(z)^{\theta}(|Du|+|F|)^{\theta q}\, dz.
\end{align*}

\noindent\textbf{Estimate of $\mathrm{II}$:} Since $q(p-1)-p\geq 0,$ using H\"{o}lder's inequality and the third condition of \eqref{eq: q-phase condition}, we get
\begin{align*}
&\left(\lambda^{1-q+\frac{q}{p}}\miint{J_{\tau,\lambda}(z_0)} a(z)^{\frac{p-1}{p}}(|Du|+|F|)^{p-1}\; dz \right)^{\theta q}\\
&\leq c\lambda^{(1-q+\frac{q}{p})\theta q}\left(\miint{J_{\tau,\lambda}(z_0)} a(z)^\theta(|Du|+|F|)^{\theta p}\; dz \right)^{\frac{q(p-1)}{p}}\\
&=c\lambda^{(1-q+\frac{q}{p})\theta q}\left(\miint{J_{\tau,\lambda}(z_0)} a(z)^\theta(|Du|+|F|)^{\theta p}\; dz \right)^{\frac{q(p-1)-p}{p}} \miint{J_{\tau,\lambda}(z_0)} a(z)^\theta(|Du|+|F|)^{\theta p}\; dz \\
&\overset{\eqref{eq: q-phase condition}}{\leq} c \lambda^{\left(\frac{q(1-p)}{p}+\frac{q(p-1)}{p}\right)\theta q} \miint{J_{\tau,\lambda}(z_0)} a(z)^\theta(|Du|+|F|)^{\theta p}\; dz=c\miint{J_{\tau,\lambda}(z_0)} a(z)^\theta(|Du|+|F|)^{\theta p}\; dz.
\end{align*}

\noindent\textbf{Estimate of $\mathrm{III}$:} Here, we use the second and third conditions of \eqref{eq: q-phase condition}. Indeed, 
\begin{align*}
\lambda^{(2-q)\theta q}\left(\miint{J_{\tau,\lambda}(z_0)} (|Du|+|F|)^{p-1}\; dz \right)^{\theta q}&\leq \lambda^{(2-q)\theta q}\left(\miint{J_{\tau,\lambda}(z_0)} (|Du|+|F|)^{\theta q}\; dz \right)^{p-1}\\
&\overset{\eqref{eq: q-phase condition}}{\leq}\frac{\lambda^{(p-q)\theta q}}{b(z_0)^{\theta}} \miint{J_{\tau, \lambda}(z_0)}b(z_0)^{\theta}(|Du|+|F|)^{\theta q}\, dz\\
&\overset{\eqref{eq: q-phase condition}}{\leq} c\left(\frac{2}{\delta}\right)^\theta \miint{J_{\tau, \lambda}(z_0)}b(z)^{\theta}(|Du|+|F|)^{\theta q}\, dz.
\end{align*}
Finally, using H\"{o}lder's inequality and combining the above estimates, we obtain the desired estimate in the statement. This completes the proof.
\end{proof}
\begin{lemma} \label{lem: q-term of parabolic poincare inequality on q-intrinsic cylinders_2}
Let $u$ be a weak solution to \eqref{eq: main equation}. Then there exists a constant $c=c(\textnormal{\texttt{data}}) \geq 1$ such that for any $J_{4\rho,\lambda}(z_0)\subset \Omega_T$ with \eqref{eq: q-phase condition}, $\tau \in [2\rho,4\rho]$ and $\theta \in ((q-1)/p,1]$, we have
\begin{align*}
&\miint{J_{\tau,\lambda}(z_0)} \inf_{\tilde{z}\in J_{\tau,\lambda}(z_0)}a(\tilde{z})^\theta\frac{|u-(u)^q_{z_0;\tau,\lambda}|^{\theta p}}{\tau^{\theta p}}\; dz\\
&\quad \leq c \miint{J_{\tau,\lambda}(z_0)}(H(z,|Du|))^{\theta}\; dz +c\left(\miint{J_{\tau,\lambda}(z_0)}H(z,|F|)\; d z\right)^{\theta}.
\end{align*}
\begin{proof}
 From \cref{eq : parabolic poincare inequality in Section 3} of \cref{lem : parabolic poincare inequality in Section 3} and \cref{lem:q-phase1}, we find
\begin{align*}
&\miint{J_{\tau,\lambda}(z_0)}\inf_{\tilde{z}\in J_{\tau, \lambda}(z_0)}a(\tilde{z})^{\theta}\frac{|u-(u)^q_{z_0;\tau,\lambda}|^{\theta p}}{\tau^{\theta p}}\; dz\\
&\leq c\miint{J_{\tau,\lambda}(z_0)} \inf_{\tilde{z}\in J_{\tau, \lambda}(z_0)}a(\tilde{z})^{\theta}|Du|^{\theta p} \; dz + c\underbrace{\inf_{\tilde{z}\in J_{\tau, \lambda}(z_0)}a(\tilde{z})^{\theta}\left(\lambda^{2-q}\miint{J_{\tau,\lambda}(z_0)}b(z) (|Du|+|F|)^{q-1}\;dz \right)^{\theta p}}_{\mathrm{I}}\\
&\quad + c\underbrace{\inf_{\tilde{z}\in J_{\tau, \lambda}(z_0)}a(\tilde{z})^{\theta}\left(\lambda^{1-q+\frac{q}{p}}\miint{J_{\tau,\lambda}(z_0)} a(z)^{\frac{p-1}{p}}(|Du|+|F|)^{p-1}\; dz \right)^{\theta p}}_{\mathrm{II}}\\
&\quad + c\underbrace{\inf_{\tilde{z}\in J_{\tau, \lambda}(z_0)}a(\tilde{z})^{\theta}\lambda^{(2-q)\theta p}\left(\miint{J_{\tau,\lambda}(z_0)} (|Du|+|F|)^{p-1}\; dz \right)^{\theta p}}_{\mathrm{III}}.
\end{align*} 

\noindent \textbf{Estimate of $\mathrm{I}$:}   We use the first and second conditions of  \eqref{eq: q-phase condition}, \eqref{EQQ3.5} and $b(z)\leq ||b||_{L^{\infty}(\Omega_T)}$ to estimate
\begin{align*}
 &\inf_{\tilde{z}\in J_{\tau, \lambda}(z_0)}a(\tilde{z})^{\theta}\left(\lambda^{2-q}\miint{J_{\tau,\lambda}(z_0)}b(z) (|Du|+|F|)^{q-1}\;dz \right)^{\theta p}\\
 &\overset{\eqref{eq: q-phase condition}}{\leq} K^{-\theta}b(z_0)^\theta \lambda^{(q-p)\theta} \lambda^{(2-q)\theta p}||b||_{L^{\infty}}\left(\miint{J_{\tau, \lambda}(z_0)}\left(|Du|+|F|\right)^{\theta q}\,dz\right)^{\frac{(q-1)p}{q}}\\
 &=c(K, ||b||_{L^{\infty}}, \theta)\lambda^{(q-p+2p-pq)\theta}\left(\miint{J_{\tau, \lambda}(z_0)}\left(|Du|+|F|\right)^{\theta q}\,dz\right)^{\frac{(q-1)p-q}{q}} \miint{J_{\tau, \lambda}(z_0)}\left(|Du|+|F|\right)^{\theta q}\,dz\\
 &\overset{\eqref{EQQ3.5}}{\leq} c(K, ||b||_{L^{\infty}}, \theta)\lambda^{(q-p+2p-pq+(q-1)p-q)\theta} \miint{J_{\tau, \lambda}(z_0)}\left(|Du|+|F|\right)^{\theta q}\,dz\\
 &=c(K, ||b||_{L^{\infty}}, \theta) \miint{J_{\tau, \lambda}(z_0)}\left(|Du|+|F|\right)^{\theta q}\,dz\\
 &\overset{\eqref{eq: q-phase condition}}{\leq}\frac{c(K, ||b||_{L^{\infty}}, \theta)}{b(z_0)^{\theta}} \miint{J_{\tau, \lambda}(z_0)}b(z)^{\theta}\left(|Du|+|F|\right)^{\theta q}\,dz\\
 &\leq c(K, ||b||_{L^{\infty}}, \theta, \delta)\miint{J_{\tau, \lambda}(z_0)}(H(z, |Du|)+H(z,|F|))^{\theta}\, dz.
\end{align*}
Here we note that, since $2\leq p\leq q,$
\begin{align*}
   \frac{p+q}{pq} = \frac{1}{p}+\frac{1}{q} \leq 1 \implies p+q\leq pq \implies q\leq p(q-1).
\end{align*}
\textbf{Estimate of $\mathrm{II}$:} We use the first and third conditions of \eqref{eq: q-phase condition} and H\"{o}lder's inequality to deduce
\begin{align*}
&\inf_{\tilde{z}\in J_{\tau, \lambda}(z_0)}a(\tilde{z})^{\theta}\left(\lambda^{1-q+\frac{q}{p}}\miint{J_{\tau,\lambda}(z_0)} a(z)^{\frac{p-1}{p}}(|Du|+|F|)^{p-1}\; dz \right)^{\theta p}\\
&\leq K^{-\theta}b(z_0)^{\theta}\lambda^{(q-p)\theta}\lambda^{\left(1-q+\frac{q}{p}\right)\theta p}\left(\miint{J_{\tau, \lambda}(z_0)}a(z)^{\theta}(|Du|+|F|)^{\theta p}\,dz\right)^{p-1}\\
&=c\lambda^{(q-p)\theta}\lambda^{\left(1-q+\frac{q}{p}\right)\theta p}\left(\miint{J_{\tau, \lambda}(z_0)}a(z)^{\theta}(|Du|+|F|)^{\theta p}\,dz\right)^{p-2} \miint{J_{\tau, \lambda}(z_0)}a(z)^{\theta}(|Du|+|F|)^{\theta p}\,dz\\
&\overset{\eqref{eq: q-phase condition}}{\leq} c(K, \theta, ||b||_{L^{\infty}})\lambda^{(q-p+p-pq+q+pq-2q)\theta} \miint{J_{\tau, \lambda}(z_0)}a(z)^{\theta}(|Du|+|F|)^{\theta p}\,dz\\
&\leq c(K, \theta, ||b||_{L^{\infty}}) \miint{J_{\tau, \lambda}}(H(z, |Du|)+H(z, |F|))^{\theta}\, dz\\
&\leq c \left[ \miint{J_{\tau, \lambda}(z_0)}H(z, |Du|)^{\theta}\,dz+\left(\miint{J_{\tau, \lambda}(z_0)}H(z, |F|)\,dz\right)^{\theta}\right].
\end{align*}

\noindent\textbf{Estimate of $\mathrm{III}$:} Similarly, using H\"{o}lder's inequality and \eqref{EQQ3.5}, we can estimate
\begin{align*}
&\inf_{\tilde{z}\in J_{\tau, \lambda}(z_0)}a(\tilde{z})^{\theta}\lambda^{(2-q)\theta p}\left(\miint{J_{\tau,\lambda}(z_0)} \omega_a(\tau) (|Du|+|F|)^{p-1}\; dz \right)^{\theta p}\\
&\leq c\lambda^{(2-q)\theta p}\inf_{\tilde{z}\in J_{\tau, \lambda}(z_0)}a(\tilde{z})^{\theta}\left(\miint{J_{\tau,\lambda}(z_0)} (|Du|+|F|)^{\theta p}\; dz\right)^{p-1}\\
&\leq c\lambda^{(2-q)\theta p}\left(\miint{J_{\tau,\lambda}(z_0)} (|Du|+|F|)^{ q}\; dz\right)^{\frac{(p-2)\theta p}{q}} \miint{J_{\tau,\lambda}(z_0)} \inf_{\tilde{z}\in J_{\tau, \lambda}(z_0)}a(\tilde{z})^{\theta}(|Du|+|F|)^{\theta p}\; dz\\
&\overset{\eqref{EQQ3.5}}{\leq} c\lambda^{(2-q)\theta p}\lambda^{(p-2)\theta p} \miint{J_{\tau,\lambda}(z_0)} a(z)^{\theta}(|Du|+|F|)^{\theta p}\; dz\\
&\leq c\lambda^{(p-q)\theta p}\left[ \miint{J_{\tau, \lambda}(z_0)}H(z, |Du|)^{\theta}\,dz+\left(\miint{J_{\tau, \lambda}(z_0)}H(z, |F|)\,dz\right)^{\theta}\right].
\end{align*}
Combining the three estimates above, we conclude the proof of this lemma.
\end{proof}
\end{lemma}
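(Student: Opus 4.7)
The plan is to mirror the strategy of \cref{lem: s-term of parabolic poincare inequality on p-intrinsic cylinders} with the roles of $a$ and $b$, and of $p$ and $q$, interchanged. The starting point is the parabolic Poincar\'e inequality \cref{lem : parabolic poincare inequality in Section 3} applied on the $q$-intrinsic cylinder $J_{\tau,\lambda}(z_0)$ with $R=\tau$, $S=\lambda^{2-q}\tau^2/b(z_0)$, and $m=p$; this is admissible because the hypothesis $\theta>(q-1)/p$ yields $\theta m=\theta p>q-1\geq 1\geq 1/p$. The inequality produces a gradient term $\miint_{J_{\tau,\lambda}}|Du|^{\theta p}\,dz$ together with a tail term of the form
\[
\left(\frac{\lambda^{2-q}}{b(z_0)}\miint_{J_{\tau,\lambda}}[\widetilde{H}(z,|Du|)+\widetilde{H}(z,|F|)]\,dz\right)^{\theta p}.
\]
Multiplying the resulting inequality by $\displaystyle\inf_{J_{\tau,\lambda}}a(\tilde{z})^{\theta}$ converts the gradient term into the desired integrand, since pointwise $a(\tilde{z})^\theta|Du|^{\theta p}\leq(a(z)|Du|^p)^\theta\leq H(z,|Du|)^\theta$ up to a comparability constant.

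Next, I would invoke \cref{lem:q-phase1} to decompose the tail into three pieces, arising respectively from the $\lambda^{-1+q/p}a^{(p-1)/p}(|Du|+|F|)^{p-1}$-contribution, the $b(z)(|Du|+|F|)^{q-1}$-contribution, and the $\omega_a(\tau)(|Du|+|F|)^{p-1}$-contribution. For each of these, after raising to the power $\theta p$ and multiplying by the $\inf a^\theta$ factor, I would apply H\"older's inequality to collapse the integral to one of the form $\miint a^\theta(|Du|+|F|)^{\theta p}$ or $\miint b^\theta(|Du|+|F|)^{\theta q}$, and then use the $q$-phase conditions of \eqref{eq: q-phase condition}. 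Specifically, the comparison $Ka(z_0)\lambda^p\leq b(z_0)\lambda^q$ lets me trade factors of $a(z_0)$ and $\lambda$ against each other, the third condition bounds $\miint H(z,|Du|)+H(z,|F|)$ by $c\,b(z_0)\lambda^q$, and the auxiliary identity \eqref{EQQ3.5} supplies the $L^q$-control $\miint(|Du|+|F|)^q\leq c\lambda^q$ needed to interpolate the $(q-1)$- and $(p-1)$-powers.

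The main obstacle will be the $a^{(p-1)/p}$-piece. After H\"older's inequality it takes the form
\[
\inf_{\tilde{z}\in J_{\tau,\lambda}(z_0)}a(\tilde{z})^\theta\,\lambda^{(1-q+q/p)\theta p}\left(\miint_{J_{\tau,\lambda}(z_0)}a(z)^\theta(|Du|+|F|)^{\theta p}\,dz\right)^{p-1},
\]
and I must absorb both the excess $(p-2)$-th power and the $\lambda$- and $a(z_0)$-prefactors. Writing $(\,\cdot\,)^{p-1}=(\,\cdot\,)^{p-2}\cdot(\,\cdot\,)$, using Jensen to bound the inner average by $(c\,b(z_0)\lambda^q)^\theta$ via the third $q$-phase condition, and tracking the arithmetic shows that the accumulated power of $\lambda$ equals $\theta(p-q)\leq 0$ while the coefficient prefactor is controlled by $\|a\|_{L^\infty(\Omega_T)}^\theta\|b\|_{L^\infty(\Omega_T)}^{\theta(p-2)}$. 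The algebraic identity $q\leq p(q-1)$, valid because $p\geq 2$, is what makes this accounting close. The $b(z)$-piece is handled exactly as in the $\mathrm{I}$-estimate of \cref{lem:q-term of parabolic poincare inequality on q-intrinsic cylinders_1}, and the $\omega_a(\tau)$-piece is analogous, with the smallness of $\omega_a$ for sufficiently small $\rho$ providing additional room.
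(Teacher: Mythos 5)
Your proposal is correct and follows essentially the same route as the paper: the parabolic Poincar\'e inequality of \cref{lem : parabolic poincare inequality in Section 3} with $m=p$ multiplied through by $\inf a(\tilde z)^\theta$, the decomposition of the tail via \cref{lem:q-phase1} into the $b$-, $a^{(p-1)/p}$- and $\omega_a$-pieces, and the same tools (H\"older/Jensen, the conditions in \eqref{eq: q-phase condition}, and \eqref{EQQ3.5}) to close each piece. The only deviation, bounding $\inf a(\tilde z)^\theta$ by $\|a\|_{L^\infty(\Omega_T)}^\theta$ in the $a^{(p-1)/p}$-piece so that the accumulated $\lambda$-exponent is $\theta(p-q)\le 0$, instead of trading $a(z_0)$ for $b(z_0)\lambda^{q-p}$ via the first condition of \eqref{eq: q-phase condition} as the paper does to make the exponent exactly zero, is equally valid bookkeeping.
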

\begin{lemma}\label{lem : Lemma 3.10}
    Let $u$ be a weak solution to \eqref{eq: main equation}. Then for $J_{c_v(2 \rho),\lambda}(z_0) \subset \Omega_T$ with \eqref{eq: q-phase condition}, there exists a constant $c=c(\textnormal{\texttt{data}}) \geq 1$ such that
    $$
    S(u)^q_{z_0 ; 2 \rho,\lambda}=\sup _{I^q_{2 \rho,\lambda}(t_0)} \dashint_{B_{2 \rho}(x_0)} \frac{\left|u-(u)^q_{z_0 ; 2 \rho,\lambda}\right|^2}{(2 \rho)^2}\; dx \leq c \lambda^2 .
    $$
\end{lemma}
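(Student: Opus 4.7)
The plan is to mirror the proof of Lemma \ref{lem : Lemma 5.1}, adapting every step to the $q$-intrinsic geometry of $J_{\rho,\lambda}(z_0)$. Fix $2\rho\leq \rho_1 < \rho_2 \leq 4\rho$ and apply the Caccioppoli estimate (Lemma \ref{lem : the Caccioppoli inequality}) on the nested pair $J_{\rho_1,\lambda}(z_0)\subset J_{\rho_2,\lambda}(z_0)$. The time half-length of $J_{\rho,\lambda}$ is $\tau_\rho=\lambda^{2-q}\rho^2/b(z_0)$, so on the Caccioppoli LHS the factor $1/\tau_{\rho_1}=b(z_0)\lambda^{q-2}/\rho_1^2$ emerges naturally, producing the term $b(z_0)\lambda^{q-2}\,S(u)^q_{z_0;\rho_1,\lambda}$. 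The target is the iteration inequality
\begin{equation*}
S(u)^q_{z_0;\rho_1,\lambda} \leq \tfrac{1}{2}\, S(u)^q_{z_0;\rho_2,\lambda} + c\left[\frac{\rho_2^q}{(\rho_2-\rho_1)^q} + \frac{\rho_2^4}{(\rho_2-\rho_1)^4}\right]\lambda^2,
\end{equation*}
from which Lemma \ref{iter_lemma} (applied with $r=2\rho$, $R=4\rho$) yields the claim.

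For the dominant right-hand side contribution $\miint{J_{\rho_2,\lambda}(z_0)} H\left(z,|u-(u)^q_{z_0;\rho_2,\lambda}|/(\rho_2-\rho_1)\right)dz$, I would split via $H(z,\kappa)=a(z)\kappa^p+b(z)\kappa^q$ and further decompose each coefficient into its infimum over $J_{\rho_2,\lambda}(z_0)$ plus a continuity remainder. Uniform continuity of $a(\cdot)$ gives $a(z)\leq \inf_{J_{\rho_2,\lambda}}a(\tilde z)+\omega_a(c\rho)$ (the parabolic diameter of $J_{\rho_2,\lambda}$ being $\lesssim \rho$ because $q\geq 2$, $\lambda\geq 1$ and $b(z_0)\geq \delta/2$), while H\"older continuity of $b(\cdot)$ gives $b(z)\leq \inf_{J_{\rho_2,\lambda}}b(\tilde z)+[b]_\alpha(2\rho)^\alpha$. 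The $\inf a$ piece is controlled by Lemma \ref{lem: q-term of parabolic poincare inequality on q-intrinsic cylinders_2} at $\theta=1$ and the $\inf b$ piece by Lemma \ref{lem:q-term of parabolic poincare inequality on q-intrinsic cylinders_1} at $\theta=1$; both reduce to $\miint{J_{\rho_2,\lambda}(z_0)}[H(z,|Du|)+H(z,|F|)]\,dz$, which the fourth condition of \eqref{eq: q-phase condition} bounds by $b(z_0)\lambda^q$. The continuity remainders $\omega_a(c\rho)\miint{J_{\rho_2,\lambda}(z_0)} |u-(u)^q_{z_0;\rho_2,\lambda}|^p/(2\rho)^p\,dz$ and $[b]_\alpha(2\rho)^\alpha\miint{J_{\rho_2,\lambda}(z_0)} |u-(u)^q_{z_0;\rho_2,\lambda}|^q/(2\rho)^q\,dz$ are processed by Gagliardo--Nirenberg (Lemma \ref{lem : lemma 2.1}), exactly as in the $[b]_\alpha$-step of \cite[Lemma 4.3]{2023_Gradient_Higher_Integrability_for_Degenerate_Parabolic_Double-Phase_Systems}; the structural restriction \eqref{def_pq} makes the H\"older-derived exponents land correctly, and for the $\omega_a$-term I may take $\rho$ small enough that $\omega_a(c\rho)$ is arbitrarily small.

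The remaining Caccioppoli contribution $\miint{J_{\rho_2,\lambda}(z_0)} |u-(u)^q_{z_0;\rho_2,\lambda}|^2/(S-\tau)\,dz$ is handled exactly as the analogous term in the proof of Lemma \ref{lem : Lemma 5.1}, yielding a bound of the shape $c\,b(z_0)\lambda^{q-1}\bigl(S(u)^q_{z_0;\rho_2,\lambda}\bigr)^{1/2}$. Gathering all contributions, dividing by $b(z_0)\lambda^{q-2}\geq (\delta/2)\lambda^{q-2}$, and invoking Young's inequality to absorb every $S(u)^q_{z_0;\rho_2,\lambda}$-factor with sub-unit power into a $\tfrac{1}{2}$-coefficient yields the target iteration inequality. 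The main obstacle I foresee is the careful tracking of $\lambda$-powers: after division by $b(z_0)\lambda^{q-2}$, the $b$-dominated right-hand side $b(z_0)\lambda^q$ collapses cleanly to order $\lambda^2$, but the $a$-controlled right-hand side $a(z_0)\lambda^p$ reduces to order $\lambda^2$ only by virtue of the first condition $Ka(z_0)\lambda^p\leq b(z_0)\lambda^q$ of \eqref{eq: q-phase condition}, which supplies $a(z_0)\lambda^p/(b(z_0)\lambda^{q-2})\leq \lambda^2/K$. This is the decisive place where the $q$-phase hypothesis enters.
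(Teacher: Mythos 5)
Your overall architecture is the same as the paper's: Caccioppoli on nested $q$-intrinsic cylinders, control of the resulting $H$-term through the $q$-intrinsic Poincar\'e-type estimates (\cref{lem:q-term of parabolic poincare inequality on q-intrinsic cylinders_1} and \cref{lem: q-term of parabolic poincare inequality on q-intrinsic cylinders_2} with $\theta=1$) together with the second and third conditions of \eqref{eq: q-phase condition}, a Gagliardo--Nirenberg treatment of the $L^2$-term producing $c\lambda\bigl(S(u)^q_{z_0;\rho_2,\lambda}\bigr)^{1/2}$, and finally Young's inequality plus \cref{iter_lemma}. (Two harmless deviations: the paper does not split $b$ into infimum plus H\"older remainder at all, since the comparability $b(z)\le 3b(z_0)$ from \eqref{eq: q-phase condition} makes this unnecessary; and it is the third, not the fourth, condition of \eqref{eq: q-phase condition} that bounds the averages over $J_{\rho_2,\lambda}$ with $\rho_2>\rho$.)

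The one step that does not go through as you wrote it is the $\omega_a$-remainder $\omega_a(c\rho)\miint{J_{\rho_2,\lambda}(z_0)}|u-(u)^q_{z_0;\rho_2,\lambda}|^p/\rho_2^p\,dz$. It cannot be treated ``exactly as the $[b]_\alpha$-step'' of the $p$-intrinsic case (\cref{lem : Lemma 5.1}): that step relies on the power-law modulus $\rho^\alpha$, the stopping-time relation \eqref{estimate on rho alpha}, and the gap bound \eqref{def_pq}, none of which have an analogue for a mere modulus of continuity $\omega_a$; and taking $\rho$ small so that $\omega_a(c\rho)$ is small does not by itself conclude, because the integral it multiplies must still be bounded. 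The paper handles this term by returning to \cref{lem : parabolic poincare inequality in Section 3} combined with \cref{lem:q-phase1} and estimating the four resulting pieces by $c\lambda^q$ via \eqref{EQQ3.5}, the conditions in \eqref{eq: q-phase condition}, and the elementary inequality $2p-pq+p^2-p\le q$. A quicker repair, available precisely because you are in the $q$-phase, is H\"older's inequality plus \cref{lem:q-term of parabolic poincare inequality on q-intrinsic cylinders_1} (or \eqref{EQQ3.5}): $\miint{J_{\rho_2,\lambda}(z_0)}|u-(u)^q_{z_0;\rho_2,\lambda}|^p/\rho_2^p\,dz\le \bigl(\miint{J_{\rho_2,\lambda}(z_0)}|u-(u)^q_{z_0;\rho_2,\lambda}|^q/\rho_2^q\,dz\bigr)^{p/q}\le c\lambda^p\le c\lambda^q$, which is exactly the sharper bound recorded in the paper's remark following this lemma. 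With that fix, the rest of your scheme, including the role of $Ka(z_0)\lambda^p\le b(z_0)\lambda^q$ in taming the $a$-contributions (it enters through \cref{lem: q-term of parabolic poincare inequality on q-intrinsic cylinders_2}), matches the paper's proof.
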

\begin{proof}
Let $2\rho \leq \rho_1<\rho_2 \leq 4\rho$. By \cref{lem : the Caccioppoli inequality}, there exists a constant $c$ depending on $n,p,q,\nu$ and $L$ such that 
\begin{align}
\lambda^{q-2}S(u)^q_{z_0;\rho_1,\lambda}&\leq \frac{c\rho_2^q}{(\rho_2-\rho_1)^q}\miint{J_{\rho_2,\lambda}(z_0)}\left( a(z)\frac{|u-(u)^q_{z_0;\rho_2,\lambda}|^p}{\rho^p_2}+b(z)\frac{|u-(u)^q_{z_0;\rho_2,\lambda}|^q}{\rho^q_2} \right) dz \nonumber\\
&\quad + \frac{c\rho_2^2 \lambda^{q-2}}{(\rho_2-\rho_1)^2}\miint{J_{\rho_2,\lambda}(z_0)}\frac{|u-(u)^q_{z_0;\rho_2,\lambda}|^2}{\rho_2^2}\; dz +c\miint{J_{\rho_2,\lambda}(z_0)} H(z,|F|)\; dz.\label{eq : second equation in lemma 5.1}
\end{align}
Firstly, using \cref{lem:q-term of parabolic poincare inequality on q-intrinsic cylinders_1} with $\theta=1$ and $b(z)\leq ||b||_{L^{\infty}}$, we get
\begin{align*}
\miint{J_{\rho_2,\lambda}(z_0)}b(z)\frac{|u-(u)_{z_0;\rho_2,\lambda}|^q}{\rho^q_2} \, dz \leq c b(z_0) \miint{J_{\tau, \lambda}(z_0)} [H(z,|Du|)+H(z, |F|)]\,dz
\overset{\eqref{eq: q-phase condition}}{\leq} c||b||^2_{L^{\infty}(\Omega_T)}\lambda^q.   
\end{align*}
Next, we estimate
\begin{align*} 
\miint{J_{\tau, \lambda}(z_0)}a(z)\frac{|u-(u)_{z_0, \rho_2, \lambda}|^p}{\rho^p_2}\,dz&\leq \miint{J_{\tau, \lambda}(z_0)}\inf_{\tilde{z}\in J_{\tau, \lambda}(z_0)}a(\tilde{z})\frac{|u-(u)_{z_0, \rho_2, \lambda}|^p}{\rho^p_2}\,dz\\
&\quad + \omega_{a}(\tau)\miint{J_{\tau, \lambda}(z_0)}\frac{|u-(u)_{z_0, \rho_2, \lambda}|^p}{\rho^p_2}\,dz.
\end{align*}
Now the first term to the above inequality can be estimated by \cref{lem: q-term of parabolic poincare inequality on q-intrinsic cylinders_2} with $\theta =1.$ Indeed, using the third condition of \eqref{eq: q-phase condition}, we get
\begin{align*}
 \miint{J_{\tau, \lambda}(z_0)}\inf_{\tilde{z}\in J_{\tau, \lambda}(z_0)}a(\tilde{z})\frac{|u-(u)_{z_0, \rho_2, \lambda}|^p}{\rho^p_2}\,dz \leq b(z_0)\lambda^q.  
\end{align*}
Now we estimate the second term. Again, from \cref{eq : parabolic poincare inequality in Section 3} and \cref{lem:q-phase1}, we have
\begin{align}\label{EQQQ3.9}
        &\miint{J_{\tau,\lambda}(z_0)}\omega_a(\tau)\frac{|u-(u)_{z_0;\tau,\lambda}|^{p}}{\tau^{p}}\; dz\\
        &\quad \leq c\underbrace{\miint{J_{\tau,\lambda}(z_0)} \omega_a(\tau)|Du|^{p} \; dz}_{\mathrm{I}} + c\omega_a(\tau)\underbrace{\left(\lambda^{2-q}\miint{J_{\tau,\lambda}(z_0)}b(z) (|Du|+|F|)^{q-1}\;dz \right)^{p}}_{\mathrm{II}}\nonumber\\
        &\quad\qquad + c\omega_a(\tau)\underbrace{\left(\lambda^{1-q+\frac{q}{p}}\miint{J_{\tau,\lambda}(z_0)} a(z)^{\frac{p-1}{p}}(|Du|+|F|)^{p-1}\; dz \right)^{p}}_{\mathrm{III}}\nonumber\\
        &\quad\qquad + c\omega_a(\tau)\underbrace{\lambda^{(2-q) p}\left(\miint{J_{\tau,\lambda}(z_0)} (|Du|+|F|)^{p-1}\; dz \right)^{p}}_{\mathrm{IV}}\nonumber.
    \end{align} 
\textbf{Estimate of $\mathrm{I}$:} We simply use H\"{o}lder's inequality and \eqref{EQQ3.5}:
\begin{align*}
\miint{J_{\tau, \lambda}(z_0)}\omega_{a}(\tau)|Du|^p\,dz\leq c \left(\miint{J_{\tau, \lambda}(z_0)}|Du|^q\,dz\right)^{\frac{p}{q}}\overset{\eqref{EQQ3.5}}{\leq} c\lambda^p \leq c\lambda^q.
\end{align*}
\textbf{Estimate of $\mathrm{II}$:} We use H\"{o}lder's inequality, \eqref{EQQ3.5} and $b(z)\leq ||b||_{L^{\infty}}$ to deduce
\begin{align*}
 &\left(\lambda^{2-q}\miint{J_{\tau,\lambda}(z_0)}b(z) (|Du|+|F|)^{q-1}\;dz \right)^{p}\\
 &\quad \leq  \lambda^{(2-q)p}||b||_{L^{\infty}}^p\left(\miint{J_{\tau, \lambda}(z_0)}\left(|Du|+|F|\right)^{q}\,dz\right)^{\frac{(q-1)p}{q}} \overset{\eqref{EQQ3.5}}{\leq} c\lambda^p\leq c\lambda^q.
\end{align*}
\textbf{Estimate of $\mathrm{III}$:} Using H\"{o}lder's inequality and third condition of \eqref{eq: q-phase condition}, we get
\begin{align*}
&\left(\lambda^{1-q+\frac{q}{p}}\miint{J_{\tau,\lambda}(z_0)} a(z)^{\frac{p-1}{p}}(|Du|+|F|)^{p-1}\; dz \right)^{p}\\
&\leq \lambda^{\left(1-q+\frac{q}{p}\right)p}\left(\miint{J_{\tau, \lambda}(z_0)}a(z)(|Du|+|F|)^{ p}\,dz\right)^{p-1} \leq c\lambda^{\left(1-q+\frac{q}{p}\right)p}\lambda^{q(p-1)}=c\lambda^p\leq c\lambda^q.
\end{align*}
\textbf{Estimate of $\mathrm{IV}$:} Again, we simply use H\"{o}lder's inequality and \eqref{EQQ3.5} to obtain
\begin{align*}
    &\lambda^{(2-q) p}\left(\miint{J_{\tau,\lambda}(z_0)} (|Du|+|F|)^{p-1}\; dz \right)^{ p}\\
    &\leq \lambda^{(2-q)p}\left(\miint{J_{\tau,\lambda}(z_0)} (|Du|+|F|)^{q}\; dz\right)^{\frac{(p-1)p}{q}}\overset{\eqref{EQQ3.5}}{\leq} c\lambda^{2p-pq+p^2-p}\leq c\lambda^{q}.
\end{align*}
In the last inequality above,  we used
\begin{align*}
    0\geq (p+1)(p-q)=p^2-pq+p-q=p^2-pq+2p-p-q\implies 2p-pq+p^2-p\leq q.
\end{align*}
Now we estimate the second term of \eqref{eq : second equation in lemma 5.1}. Using \cref{lem : lemma 2.1} with $\sigma =2, s=q, r=2, \vartheta=\frac{1}{2},$ \cref{lem:q-term of parabolic poincare inequality on q-intrinsic cylinders_1} and \cref{eq: q-phase condition}, we estimate the following
\begin{align*}
&\miint{J_{\rho_2,\lambda}(z_0)}\frac{|u-(u)^q_{z_0;\rho_2,\lambda}|^2}{\rho_2^2}\; dz \\
&\leq c\dashint_{I_{\rho_2}(t_0)}\left(\dashint_{B_{\rho_2}(x_0)}\left(\frac{|u-(u)^q_{z_0;\rho_2,\lambda}|^q}{\rho_2^q}+|Du|^q\right)\,dx\right)^{\frac{1}{q}}dt \, \left(S(u)^q_{z_0, \rho_2, \lambda}\right)^{\frac{1}{2}}\\
&\leq c (\delta)\lambda \left(S(u)^q_{z_0, \rho_2, \lambda}\right)^{\frac{1}{2}}.
\end{align*}
Note that, in the last step of the above estimate we also use the second condition of \eqref{eq: q-phase condition}, specifically, $b(z_0)\geq \frac{\delta}{2}$ and $\frac{b(z)}{3}\leq b(z_0)\leq 3b(z)$. Since the third condition of \eqref{eq: q-phase condition} implies that
\begin{align*}
    \miint{J_{\rho_2, \lambda}(z_0)}H(z, |F|)\, dz \apprle \lambda^q,
\end{align*}
combining all the estimates above, we have
\begin{align*}
    S(u)^q_{z_0, \rho_1, \lambda}\leq c\frac{\rho^q_2}{(\rho_2-\rho_1)^q}\lambda^2+c\frac{\rho^2_2}{(\rho_2-\rho_1)^2}\lambda \left(S(u)^q_{z_0, \rho_2, \lambda}\right)^{\frac{1}{2}}.
\end{align*}
Now applying Young's inequality in the second term on the right-hand side above, we obtain
\begin{align*}
S(u)^q_{z_0, \rho_1, \lambda}\leq \frac{1}{2} S(u)^q_{z_0, \rho_2, \lambda}+c\left[\frac{\rho^q_2}{(\rho_2-\rho_1)^q}+\frac{\rho^4_2}{(\rho_2-\rho_1)^4}\right] \lambda^2.
\end{align*}
Now we use iteration  lemma, \cref{iter_lemma} to complete the proof.
\end{proof}
\begin{remark}
It is interesting to note that the term in \eqref{EQQQ3.9} admits a sharper estimate. Indeed, we have
\begin{align*}
\miint{J_{\tau,\lambda}(z_0)}\omega_a(\tau)\frac{|u-(u)_{z_0;\tau,\lambda}|^{p}}{\tau^{p}}\; dz\leq c(\delta)\omega_a(\tau)\lambda^p.    
\end{align*}
This shows that an improved bound can be derived. Regarding the estimate of the fourth term (see Estimate of $\mathrm{IV}$ above), we observe that the condition $p\leq q$ implies $p(1-q+p)\leq p,$ which subsequently yields the bound $c(\delta)\lambda^p$.
\end{remark}
\begin{lemma}\label{Lem : lemma 3.11}
    Let $u$ be a weak solution to \eqref{eq: main equation}. Then for $J_{c_v(2 \rho),\lambda}\left(z_0\right) \subset \Omega_T$ with \eqref{eq: q-phase condition}, there exist constants $c=c(\textnormal{\texttt{data}}) \geq 1$ and $\theta_0=\theta_0(n, p, q, s) \in(0,1)$ such that for any $\theta \in\left(\theta_0, 1\right)$, we have
    $$
    \begin{aligned}
    &\miint{J_{2 \rho,\lambda}\left(z_0\right)} H\left(z,\frac{\left|u-(u)_{z_0 ; 2 \rho,\lambda}\right|}{2 \rho}\right) dz \\
    &\leq c \miint{J_{2 \rho,\lambda}\left(z_0\right)}\left(\inf_{\tilde{z} \in J_{2\rho, \lambda}(z_0)}a(\tilde{z})^{\theta}\frac{\left|u-(u)_{z_0 ; 2 \rho,\lambda}\right|^{\theta p}}{(2 \rho)^{\theta p}}+\inf_{J_{2\rho, \lambda}(z_0)}a(\tilde{z})^{\theta}|D u|^{\theta p}\right) dz\;\\
    &\qquad \times \lambda^{(q-p)(1-\theta)}\left[S(u)_{z_0 ; 2 \rho,\lambda}\right]^{\frac{(1-\theta) p}{2}} \\
    &\quad +c \miint{J_{2 \rho,\lambda}\left(z_0\right)}\left( \frac{\left|u-(u)_{z_0 ; 2 \rho,\lambda}\right|^{\theta q}}{(2 \rho)^{\theta q}}+|D u|^{\theta q}\right) dz \;\left(S(u)^q_{z_0 ; 2 \rho,\lambda}\right)^{\frac{(1-\theta) q}{2}}\\
    &\quad + \omega_a(2\rho)\miint{J_{2\rho, \lambda}(z_0)}\frac{\left|u-(u)_{z_0 ; 2 \rho,\lambda}\right|^p}{(2 \rho)^p}\,dz.
    \end{aligned}
    $$
\end{lemma}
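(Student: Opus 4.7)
The plan is to mirror the strategy of \cref{lem : lemma 5.2}, adapted to the $q$-intrinsic geometry. I would first decompose the integrand $H\bigl(z, |u-(u)_{z_0;2\rho,\lambda}|/(2\rho)\bigr)$ using the pointwise bounds
\[
a(z) \leq \inf_{J_{2\rho,\lambda}(z_0)} a(\tilde z) + \omega_a(2\rho), \qquad b(z) \leq \inf_{J_{2\rho,\lambda}(z_0)} b(\tilde z) + [b]_\alpha (2\rho)^\alpha,
\]
coming from \eqref{eq:assumption of a} and the H\"older continuity of $b$. In contrast to the $p$-intrinsic case, here the $\omega_a(2\rho)$-contribution cannot be absorbed into the left-hand side, because the $q$-phase conditions \eqref{eq: q-phase condition} offer no lower bound on $a(z_0)$; it is therefore retained as the final stand-alone term on the right-hand side of the conclusion.

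Next, to each resulting ``infimum'' piece I would apply \cref{lem : lemma 2.1} spatially on each time slice and then integrate in time over $I^q_{2\rho,\lambda}(t_0)$. For the $\inf a(\tilde z)\cdot |u-(u)_{z_0;2\rho,\lambda}|^p/(2\rho)^p$ piece I choose $\sigma = p$, $s = \theta p$, $r = 2$, which produces the desired $\theta p$-integrand multiplied by $[\sup_{I^q_{2\rho,\lambda}} \dashint |u-(u)_{z_0;2\rho,\lambda}|^2/(2\rho)^2\,dx]^{(1-\theta)p/2}$. For the $\inf b(\tilde z)\cdot |u-(u)_{z_0;2\rho,\lambda}|^q/(2\rho)^q$ piece I choose $\sigma = q$, $s = \theta q$, $r = 2$, and use the second condition of \eqref{eq: q-phase condition} to replace $\inf b$ by a harmless constant $\sim b(z_0)$; this yields the $\theta q$-integrand together with $\bigl(S(u)^q_{z_0;2\rho,\lambda}\bigr)^{(1-\theta)q/2}$. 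The prefactor $\lambda^{(q-p)(1-\theta)}$ in the first term of the conclusion arises when rewriting the $L^2$-supremum taken on the $q$-intrinsic cylinder $J_{2\rho,\lambda}$ (time-length $\sim \rho^2 b(z_0)^{-1}\lambda^{2-q}$) in terms of the natural $p$-type quantity $S(u)_{z_0;2\rho,\lambda}$ (time-length $\sim \rho^2 a(z_0)^{-1}\lambda^{2-p}$). Finally, the residual H\"older-oscillation piece $[b]_\alpha(2\rho)^\alpha |u-(u)_{z_0;2\rho,\lambda}|^q/(2\rho)^q$ is controlled via another application of \cref{lem : lemma 2.1}, with the restriction $q \leq p + 2\alpha/(n+2)$ from \eqref{def_pq} used to trade $\rho^\alpha$ against the $q$-intrinsic $\lambda$-scaling.

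The main obstacle will be the precise tracking of the $\lambda$-exponents while converting between the $q$- and $p$-intrinsic suprema in the first term, and arranging the factor $\inf a(\tilde z)^{1-\theta}$ so that no spurious $a(z_0)$-dependence is introduced (which would be fatal in the degenerate $q$-regime where $a(z_0)$ may vanish). A secondary difficulty is ensuring that the H\"older-oscillation contribution coming from $b$ is absorbed into the stated $\theta q$-structure with no leftover $\omega_a$-type residue; for this I will invoke the quantitative smallness $\miint{J_{\tau,\lambda}(z_0)}\bigl[H(z,|Du|)+H(z,|F|)\bigr]\,dz \apprle b(z_0)\lambda^q$ from \eqref{eq: q-phase condition} repeatedly to turn constants $b(z_0)$ into the natural scale $\lambda$.
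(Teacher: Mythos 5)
Your skeleton agrees with the paper's: split $H$ via $a(z)\le \inf_{J_{2\rho,\lambda}(z_0)}a(\tilde z)+\omega_a(2\rho)$, keep the $\omega_a(2\rho)$-term because the $q$-phase gives no lower bound on $a$, and apply \cref{lem : lemma 2.1} slice-wise with $(\sigma,s,r,\vartheta)=(p,\theta p,2,\theta)$ resp.\ $(q,\theta q,2,\theta)$. The genuine gap is in the first term. The Gagliardo--Nirenberg step leaves the weight $\inf_{J_{2\rho,\lambda}(z_0)}a(\tilde z)^{1-\theta}$ together with the supremum over the $q$-intrinsic time interval, i.e.\ $\bigl(S(u)^q_{z_0;2\rho,\lambda}\bigr)^{(1-\theta)p/2}$ (this is also the quantity needed downstream, where the lemma is combined with \cref{lem : Lemma 3.10}). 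Your claim that the prefactor $\lambda^{(q-p)(1-\theta)}$ ``arises when rewriting'' this supremum in terms of the $p$-intrinsic quantity is not a valid mechanism: a supremum over a shorter time interval is trivially dominated by one over a longer interval, and no power of $\lambda$ can be extracted from such a comparison. The factor actually comes from the leftover weight, via the first condition of \eqref{eq: q-phase condition}: $\inf a(\tilde z)^{1-\theta}\le a(z_0)^{1-\theta}\le \bigl(K^{-1}b(z_0)\lambda^{q-p}\bigr)^{1-\theta}\le c\,\lambda^{(q-p)(1-\theta)}$ using $b\le\|b\|_{L^\infty(\Omega_T)}$. You instead list the handling of $\inf a(\tilde z)^{1-\theta}$ as an unresolved ``main obstacle'' and worry that an $a(z_0)$-dependence would be fatal where $a(z_0)$ may vanish; that concern is backwards, since only a lower bound on $a(z_0)$ would be problematic, while the upper bound above is exactly the intended (one-line) step. (In fact, since $\lambda\ge1$ and $q\ge p$, even the crude bound $\inf a(\tilde z)^{1-\theta}\le\max\{1,\|a\|_{L^\infty(\Omega_T)}\}$ suffices, but some such step must be supplied; your proposal leaves it open and offers a substitute that produces nothing.)

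A secondary, fixable deviation concerns the $b$-part: the second condition of \eqref{eq: q-phase condition} gives $b(z)\le 3b(z_0)$ on $J_{2\rho,\lambda}(z_0)$, so no oscillation term $[b]_\alpha(2\rho)^\alpha$ appears at all; the paper just applies \cref{lem : lemma 2.1} to $b(z_0)\,|u-(u)^q_{z_0;2\rho,\lambda}|^q/(2\rho)^q$ and absorbs $b(z_0)\le\|b\|_{L^\infty(\Omega_T)}$ into the constant. Your splitting into $\inf b$ plus $[b]_\alpha(2\rho)^\alpha$, followed by trading $\rho^\alpha$ against the $\lambda$-scaling through $q\le p+\frac{2\alpha}{n+2}$, is a $p$-phase device that is unnecessary here and is not shown to land on the stated right-hand side; if you keep the splitting, the clean repair is $b(z_0)\ge\delta/2$, which yields $[b]_\alpha(2\rho)^\alpha\le c(\delta,[b]_\alpha)\,b(z_0)$ so the oscillation piece is estimated exactly like the main $b$-term.
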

\begin{proof} Using the second condition of \eqref{eq: q-phase condition} and uniform continuity of $a(\cdot)$, we have
\begin{align*}
&\miint{J_{2 \rho,\lambda}\left(z_0\right)} H\left(z,\frac{\left|u-(u)_{z_0 ; 2 \rho,\lambda}\right|}{2 \rho}\right) d z \leq c\miint{J_{2 \rho,\lambda}\left(z_0\right)} a(z)\frac{|u-(u)_{z_0, 2\rho, \lambda}|^p}{(2\rho)^p}+ b(z_0) \frac{|u-(u)_{z_0, 2\rho, \lambda}|^q}{(2\rho)^q}\, dz\\
&\leq c\underbrace{\miint{J_{2 \rho,\lambda}\left(z_0\right)} \inf_{\tilde{z}\in J_{2\rho, \lambda}(z_0)}a(\tilde{z})\frac{|u-(u)_{z_0, 2\rho, \lambda}|^p}{(2\rho)^p}\,dz}_{\mathrm{I}}+c\underbrace{\miint{J_{2\rho, \lambda}(z_0)}b(z_0) \frac{|u-(u)_{z_0, 2\rho, \lambda}|^q}{(2\rho)^q}\, dz}_{\mathrm{II}}\\
&\quad +c\omega_a(2\rho)\miint{J_{2 \rho,\lambda}\left(z_0\right)} \frac{|u-(u)_{z_0, 2\rho, \lambda}|^p}{(2\rho)^p}\,dz.
\end{align*}
We estimate the terms $\mathrm{I}$ and $\mathrm{II}$ below.

\noindent\textbf{Estimate of $\mathrm{I}$:} We use \cref{lem : lemma 2.1} with $\sigma=p, s=\theta p, \vartheta=\theta, r=2$ to obtain
\begin{align*}
&\miint{J_{2 \rho,\lambda}\left(z_0\right)} \inf_{\tilde{z}\in J_{2\rho, \lambda}(z_0)}a(\tilde{z})\frac{|u-(u)_{z_0, 2\rho, \lambda}|^p}{(2\rho)^p}\,dz\\
&\leq c \left(\miint{J_{2\rho, \lambda}(z_0)}\inf_{\tilde{z}\in J_{2\rho, \lambda}(z_0)} a(\tilde{z})^{\theta}\frac{|u-(u)_{z_0, 2\rho, \lambda}|^{\theta p}}{(2\rho)^{\theta p}}+\inf_{\tilde{z}\in J_{2\rho, \lambda}(z_0)} a(\tilde{z})^{\theta}|Du|^{\theta p}\, dz\right)\\
&\qquad \times \inf_{\tilde{z}\in J_{2\rho, \lambda}(z_0)} a(\tilde{z})^{1-\theta}[S(u)_{z_0, 2\rho, \lambda}]^{\frac{(1-\theta)p}{2}}\\
&\leq c\left(\miint{J_{2\rho, \lambda}(z_0)}\inf_{\tilde{z}\in J_{2\rho, \lambda}(z_0)} a(\tilde{z})^{\theta}\frac{|u-(u)_{z_0, 2\rho, \lambda}|^{\theta p}}{(2\rho)^{\theta p}}+\inf_{\tilde{z}\in J_{2\rho, \lambda}(z_0)} a(\tilde{z})^{\theta}|Du|^{\theta p}\, dz\right)\\
&\qquad \times \lambda^{(q-p)(1-\theta)}\left(S(u)^q_{z_0, 2\rho, \lambda}\right)^{\frac{(1-\theta)p}{2}}.
\end{align*}
\textbf{Estimate of $\mathrm{II}$:} Similarly, using \cref{lem : lemma 2.1} with $\sigma=q, s=\theta q, \vartheta=\theta, r=2,$ we get
\begin{align*}
&\miint{J_{2\rho, \lambda}(z_0)}b(z_0) \frac{|u-(u)_{z_0, 2\rho, \lambda}|^q}{(2\rho)^q}\, dz\\
&\quad \leq c\miint{J_{2 \rho,\lambda}\left(z_0\right)}\left( \frac{\left|u-(u)_{z_0 ; 2 \rho,\lambda}\right|^{\theta q}}{(2 \rho)^{\theta q}}+|D u|^{\theta q}\,\right) dz \;\left(S(u)^q_{z_0 ; 2 \rho,\lambda}\right)^{\frac{(1-\theta) q}{2}}.
\end{align*}
This completes the proof.
\end{proof}
Now we are ready to prove the reverse H\"{o}lder inequality for $q$-phase.

\begin{lemma}\label{lem : lemma 3.12}
    Let $u$ be a weak solution to \eqref{eq: main equation}. Then for $J_{c_v(2 \rho),\lambda}\left(z_0\right) \subset \Omega_T$ with \eqref{eq: q-phase condition}, there exist constants $c=c(\textnormal{\texttt{data}}) \geq 1$ and $\theta_0=\theta_0(n,p,q) \in (0,1)$ such that for any $\theta\in (\theta_0,1),$
    \begin{align*}
        \miint{J_{\rho,\lambda}(z_0)} H(z,|Du|)\; dz\leq c \left(\miint{J_{2\rho,\lambda}(z_0)} [H(z,|Du|)]^\theta\;dz\right)^\frac{1}{\theta}+c\miint{J_{2\rho,\lambda}(z_0)} H(z,|F|)\; dz.
    \end{align*}
\end{lemma}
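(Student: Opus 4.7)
The plan is to mirror the strategy used for Lemma~\ref{lem : lemma 5.3} in the $p$-phase, but now carried out on $q$-intrinsic cylinders, using the $q$-phase analogues Lemmas~\ref{lem:q-term of parabolic poincare inequality on q-intrinsic cylinders_1}, \ref{lem: q-term of parabolic poincare inequality on q-intrinsic cylinders_2}, \ref{lem : Lemma 3.10}, and \ref{Lem : lemma 3.11}. First, I would apply the Caccioppoli inequality of \cref{lem : the Caccioppoli inequality} to the pair $J_{\rho,\lambda}(z_0) \subset J_{2\rho,\lambda}(z_0)$. Because the time half-width of $J_{2\rho,\lambda}(z_0)$ is $\tau = \lambda^{2-q}(2\rho)^2/b(z_0)$, the middle term produced by Caccioppoli carries a factor $b(z_0)\lambda^{q-2}$ in front of $\fint |u-(u)^q_{z_0;2\rho,\lambda}|^2/(2\rho)^2\,dz$. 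The outcome is a bound of $\fint_{J_{\rho,\lambda}} H(z,|Du|)\,dz$ by three pieces: a term $\fint_{J_{2\rho,\lambda}} H(z,|u-(u)^q|/(2\rho))\,dz$, the weighted $L^2$-term, and $\fint H(z,|F|)\,dz$.

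Next, I would insert \cref{Lem : lemma 3.11} into the first Caccioppoli term, yielding three groups: an $a$-weighted $\theta p$-type group multiplied by $\lambda^{(q-p)(1-\theta)}(S(u)^q_{z_0;2\rho,\lambda})^{(1-\theta)p/2}$, a $\theta q$-type group multiplied by $(S(u)^q_{z_0;2\rho,\lambda})^{(1-\theta)q/2}$, and the $\omega_a(2\rho)$ remainder $\omega_a(2\rho)\fint |u-(u)^q|^p/(2\rho)^p\,dz$. To control the sup terms I would invoke \cref{lem : Lemma 3.10}, which gives $S(u)^q_{z_0;2\rho,\lambda}\le c\lambda^2$, converting the sup factors into explicit powers of $\lambda$. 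The two integral groups are then estimated via \cref{lem: q-term of parabolic poincare inequality on q-intrinsic cylinders_2} (for the $\theta p$-type) and \cref{lem:q-term of parabolic poincare inequality on q-intrinsic cylinders_1} (for the $\theta q$-type), producing bounds in terms of $\fint H(z,|Du|)^\theta\,dz$ and $\fint H(z,|F|)\,dz$, each weighted by powers of $\lambda$ that balance out thanks to the phase identity $Ka(z_0)\lambda^p \le b(z_0)\lambda^q$ in \eqref{eq: q-phase condition}$_1$. The $\omega_a(2\rho)$-term and the Caccioppoli $L^2$-term are handled by a Gagliardo--Nirenberg interpolation, exactly as in the estimate of $\mathrm{III}$--$\mathrm{IV}$ in \cref{lem : Lemma 3.10}, again via \cref{lem:q-phase1}.

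The combined estimate will take the schematic form
\begin{align*}
\miint{J_{\rho,\lambda}(z_0)} H(z,|Du|)\,dz &\le c\,b(z_0)\lambda^{q(1-\beta\theta)}\left(\miint{J_{2\rho,\lambda}(z_0)} H(z,|Du|)^\theta\,dz\right)^{\!\beta} \\
&\quad + c\,b(z_0)\lambda^{q-\beta}\left(\miint{J_{2\rho,\lambda}(z_0)} H(z,|F|)\,dz\right)^{\!\beta/q} + c\miint{J_{2\rho,\lambda}(z_0)} H(z,|F|)\,dz,
\end{align*}
for a suitable $\beta\in(0,1)$ (e.g. $\beta=\min\{q-1-\alpha q/(n+2),1/2\}$). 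Applying Young's inequality with conjugate exponents $(1/(1-\beta\theta),1/(\beta\theta))$ and $(q/(q-\beta),q/\beta)$ converts the mixed terms into $\varepsilon\,b(z_0)\lambda^q$ plus $c(\varepsilon)\bigl[\bigl(\fint H^\theta\bigr)^{1/\theta}+\fint H(z,|F|)\bigr]$. Finally, the key identity $b(z_0)\lambda^q = \fint_{J_{\rho,\lambda}(z_0)}[H(z,|Du|)+H(z,|F|)]\,dz$ from \eqref{eq: q-phase condition}$_4$ and the inclusion $J_{\rho,\lambda}\subset J_{2\rho,\lambda}$ let me replace $\varepsilon\,b(z_0)\lambda^q$ by $\varepsilon c\,\fint_{J_{2\rho,\lambda}}[H(z,|Du|)+H(z,|F|)]\,dz$; then the iteration lemma (\cref{iter_lemma}) applied on the scale $\rho_1,\rho_2\in[\rho,2\rho]$ gives the reverse H\"older estimate.

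The main obstacle will be bookkeeping the many powers of $\lambda$: unlike the $p$-phase, one must exploit the conversion $\lambda^{q-p}\le c\,b(z_0)/(Ka(z_0))$ multiple times to turn $p$-scaling factors into $q$-scaling factors, and to ensure the final Young exponent is admissible; one also must choose $\rho$ small enough so that $c(\delta)\omega_a(2\rho)\le 1/2$ to absorb the uniform-continuity contribution at the outset, mirroring the threshold used in \cref{lem:q-phase1}. Once these two technical issues are dispatched, the rest of the argument proceeds in complete parallel with the proof of \cref{lem : lemma 5.3}.
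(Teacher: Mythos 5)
Your outline is correct and follows essentially the same route as the paper: Caccioppoli (\cref{lem : the Caccioppoli inequality}) on $J_{\rho,\lambda}\subset J_{2\rho,\lambda}$, then \cref{Lem : lemma 3.11} combined with the sup bound $S(u)^q_{z_0;2\rho,\lambda}\le c\lambda^2$ of \cref{lem : Lemma 3.10}, the two $q$-phase Poincar\'e estimates (\cref{lem:q-term of parabolic poincare inequality on q-intrinsic cylinders_1}, \cref{lem: q-term of parabolic poincare inequality on q-intrinsic cylinders_2}) for the two oscillation groups, Gagliardo--Nirenberg for the $L^2$-term, Young's inequality, and absorption of $\varepsilon\lambda^q$ via the stopping-time identity in \eqref{eq: q-phase condition}. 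Two remarks on the closing details. First, for the Caccioppoli $L^2$-term you must use \cref{lem : lemma 2.1} at the $\theta$-level ($s=\theta q$) together with \cref{lem:q-term of parabolic poincare inequality on q-intrinsic cylinders_1}, as the paper does, so that a positive power of the averages survives and Young can extract $\varepsilon\lambda^q$; handling it ``exactly as in \cref{lem : Lemma 3.10}'' (i.e.\ $c\lambda\,(S(u)^q)^{1/2}\le c\lambda^2$) would only give a non-absorbable $c\lambda^q$ --- your schematic inequality with exponent $\beta/q$ indicates you intend the refined version, which is the right one. Second, the paper closes more directly than you do: since $b(z_0)\ge\delta/2$, the equality $\fint_{J_{\rho,\lambda}}[H(z,|Du|)+H(z,|F|)]\,dz=b(z_0)\lambda^q$ gives $\varepsilon\lambda^q\le \tfrac{2\varepsilon}{\delta}\fint_{J_{\rho,\lambda}}H(z,|Du|)\,dz+\tfrac{2\varepsilon}{\delta}\fint_{J_{\rho,\lambda}}H(z,|F|)\,dz$, whose gradient part sits on the \emph{smaller} cylinder and is absorbed into the left-hand side at once, so no iteration is needed; your variant, which enlarges to $J_{2\rho,\lambda}$ and then invokes \cref{iter_lemma} on intermediate radii, can be made to work, but only after rerunning the chain for arbitrary $\rho\le\rho_1<\rho_2\le2\rho$ and choosing the Young parameter depending on $(\rho_2-\rho_1)/\rho_2$ so that the factor multiplying $\fint_{J_{\rho_2,\lambda}}H(z,|Du|)\,dz$ stays below a fixed $\vartheta<1$ --- a dispensable complication.
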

\begin{proof}
From the energy estimate, \cref{lem : the Caccioppoli inequality}, we have
\begin{align*}
\miint{J_{\rho, \lambda}(z_0)}H(z,|Du|)\;dz
& \leq c\underbrace{\miint{J_{2\rho, \lambda}(z_0)}  H\left(z,\frac{|u-(u)^q_{z_0, 2\rho, \lambda}|}{2\rho}\right) dz}_{\mathrm{I}}\\
&\quad + c \underbrace{\la^{q -2}\miint{J_{2\rho, \lambda}(z_0)}\frac{|u-(u)^q_{z_0, 2\rho, \lambda}|^2}{(2\rho)^2}\;dz}_{\mathrm{II}} +c\miint{J_{2\rho, \lambda}(z_0)} H(z,|F|) \;dz.   
\end{align*}
\textbf{Estimate of $\mathrm{I}$:} It follows from  \cref{Lem : lemma 3.11} and \cref{lem : Lemma 3.10} that
\begin{align*}
&\miint{J_{2\rho, \lambda}(z_0)}  H\left(z,\frac{|u-(u)^q_{z_0, 2\rho, \lambda}|}{2\rho}\right) dz \\
&\leq c\lambda^{(1-\theta)q} \miint{J_{2\rho, \tau}(z_0)}\left(\inf_{\tilde{z} \in J_{2\rho, \lambda}(z_0)}a(\tilde{z})^{\theta}\frac{\left|u-(u)^q_{z_0 ; 2 \rho,\lambda}\right|^{\theta p}}{(2 \rho)^{\theta p}}+ \frac{\left|u-(u)^q_{z_0 ; 2 \rho,\lambda}\right|^{\theta q}}{(2 \rho)^{\theta q}} \right) dz \\
&\quad +\lambda^{(1-\theta)q}\miint{J_{2\rho, \lambda}(z_0)}\left(\inf_{\tilde{z}\in J_{2\rho, \tau}(z_0)}a(\tilde{z})^{\theta}|Du|^{\theta p}+|Du|^{\theta q}\right) dz+\omega_a(2\rho)\miint{J_{2\rho, \lambda}(z_0)}\frac{\left|u-(u)^q_{z_0 ; 2 \rho,\lambda}\right|^p}{(2 \rho)^p}\,dz.
\end{align*}
To estimate the first term of the above inequality, we use \cref{lem: q-term of parabolic poincare inequality on q-intrinsic cylinders_2} and  \cref{lem:q-term of parabolic poincare inequality on q-intrinsic cylinders_1}. Thus, we obtain
\begin{align*}
&\miint{J_{2\rho, \lambda}(z_0)}  H\left(z,\frac{|u-(u)_{z_0, 2\rho, \lambda}|}{2\rho}\right) dz\\
&\leq c \lambda^{(1-\theta)q}\left[c \miint{J_{2\rho,\lambda}(z_0)}H(z,|Du|)^{\theta}\; dz +c\left(\miint{J_{2\rho,\lambda}(z_0)}H(z,|F|)\; d z\right)^{\theta}\right] \\
&\quad +\omega_a(2\rho)\miint{J_{2\rho, \lambda}(z_0)}\frac{\left|u-(u)_{z_0 ; 2 \rho,\lambda}\right|^p}{(2 \rho)^p}\,dz.
\end{align*}
Moreover, from the estimates of  \cref{lem : Lemma 3.10}, we also have
\begin{align*}
    \omega_a(2\rho)\miint{J_{2\rho, \lambda}(z_0)}\frac{\left|u-(u)_{z_0 ; 2 \rho,\lambda}\right|^p}{(2 \rho)^p}\,dz \leq c \omega_a(2\rho)\lambda^q\leq \varepsilon_1 \lambda^q\,\,\,\, \text{for small}\,\,\, \rho>0.
\end{align*}
Next, an application of Young's inequality with $\left(\frac{1}{\theta}, \frac{1}{1-\theta}\right)$ gives
\begin{align*}
  &\lambda^{(1-\theta)q}\left[c \miint{J_{2\rho,\lambda}(z_0)} H(z,|Du|)^{\theta}\; dz +c\left(\miint{J_{2\rho,\lambda}(z_0)}H(z,|F|)\; d z\right)^{\theta}\right]\\
  &\quad \leq \varepsilon_2\lambda^q + c(\varepsilon_2)\left[\left(\miint{J_{2\rho, \lambda}(z_0)}H(z, |Du|)^{\theta}\, dz \right)^{\frac{1}{\theta}}+ \miint{J_{2\rho, \lambda}(z_0)}H(z, |F|)\, dz\right].
\end{align*}
Thus, combining the above estimates, we finally get
\begin{align*}
 \miint{J_{2\rho, \lambda}(z_0)}  H\left(z,\frac{|u-(u)_{z_0, 2\rho, \lambda}|}{2\rho}\right)\; dz \leq \varepsilon \lambda^q&+ c\left(\miint{J_{2\rho, \lambda}(z_0)}H(z, |Du|)^{\theta}\, dz \right)^{\frac{1}{\theta}}\\
 &+ c\miint{J_{2\rho, \lambda}(z_0)}H(z, |F|)\, dz,  
\end{align*}
where $\varepsilon = \max\left\{\varepsilon_1, \varepsilon_2\right\}.$

\noindent\textbf{Estimate of $\mathrm{II}$:} Choosing $\sigma =2, s=\theta q, r=2,\vartheta=\frac{1}{2},$ in \cref{lem : lemma 2.1}, we get
\begin{align*}
&\lambda^{q-2}\miint{J_{2\rho,\lambda}(z_0)}\frac{|u-(u)^q_{z_0;2\rho,\lambda}|^2}{(2\rho)^2}\; dz \\
&\leq c\lambda^{q-2} \dashint_{I_{2\rho}(t_0)}\left(\dashint_{B_{2\rho}(x_0)}\left(\frac{|u-(u)^q_{z_0;2\rho,\lambda}|^{\theta q}}{(2\rho)^{\theta q}}+|Du|^{\theta q}\right)\,dx\right)^{\frac{1}{\theta q}} dt \, \left(S(u)^q_{z_0, 2\rho, \lambda}\right)^{\frac{1}{2}}\\
&\leq c\lambda^{q-1} \left[\miint{J_{\tau,\lambda}(z_0)}H(z,|Du|)^{\theta}\, dz
         +\left(\miint{J_{\tau,\lambda}(z_0)}H(z,|F|) \, d z\right)^{\theta}\right]^{\frac{1}{\theta q}}.
\end{align*}
Note that, in the last step, we used \cref{lem : Lemma 3.10}, \cref{lem:q-term of parabolic poincare inequality on q-intrinsic cylinders_1} and the second condition of \eqref{eq: q-phase condition}. Now applying Young's inequality with $\left(q, \frac{q}{q-1}\right)$, we get
\begin{align*}
\lambda^{q-2}\miint{J_{2\rho,\lambda}(z_0)}\frac{|u-(u)_{z_0;2\rho,\lambda}|^2}{(2\rho)^2}\; dz \leq \varepsilon \lambda^q + c\left(\miint{J_{2\rho, \lambda}(z_0)}H(z, |Du|)^{\theta}\, dz \right)^{\frac{1}{\theta}}+ c\miint{J_{2\rho, \lambda}(z_0)}H(z, |F|)\, dz.
\end{align*}
Hence, combining all the above estimates and absorbing $\varepsilon \lambda^q$ by the last condition of \eqref{eq: q-phase condition} in the left-hand side, we complete the proof.
\end{proof}
\subsection{Reverse H\"{o}lder inequality for $(p, q)$-phase} In this subsection, we prove reverse H\"{o}lder inequality for $(p,q)$-phase. In the $(p, q)$-intrinsic case, we consider the following assumptions:                                                  
\begin{equation}\label{Eq 3.8}    
    \left\{
    \begin{aligned}
        &Ka(z_0)\lambda^p \leq b(z_0)\lambda^q,\\
        &\frac{a(z)}{3}\leq a(z_0)\leq 3a(z) \quad \text{and}\quad \frac{b(z)}{3}\leq b(z_0)\leq 3b(z) \ \, \text{for all}\,\,\, z\in G_{\rho, \la}(z_0),\\
        &\miint{G_{\tau,\lambda}(z_0)} [H(z,|Du|)+H(z,|F|)] \; dz <a(z_0)\lambda^p+b(z_0)\lambda^q \ \, \text{for every }\tau\in(\rho,c_v(2\rho)],\\
        &\miint{G_{\rho,\lambda}(z_0)} [H(z,|Du|)+H(z,|F|)] \; dz =a(z_0)\lambda^p+b(z_0)\lambda^q,
    \end{aligned}
    \right.
\end{equation}
or
\begin{equation}\label{Eq 3.9}    
    \left\{
    \begin{aligned}
        &Ka(z_0)\lambda^p \geq b(z_0)\lambda^q,\\
        &\frac{a(z)}{3}\leq a(z_0)\leq 3a(z) \quad \text{and}\quad \frac{b(z)}{3}\leq b(z_0)\leq 3b(z) \ \, \text{for all}\,\,\, z\in G_{\rho, \la}(z_0),\\
        &\miint{G_{\tau,\lambda}(z_0)} [H(z,|Du|)+H(z,|F|)] \; dz <a(z_0)\lambda^p+b(z_0)\lambda^q \ \, \text{for every }\tau\in(\rho,c_v(2\rho)],\\
        &\miint{G_{\rho,\lambda}(z_0)} [H(z,|Du|)+H(z,|F|)] \; dz =a(z_0)\lambda^p+b(z_0)\lambda^q.
    \end{aligned}
    \right.
\end{equation}
\begin{remark}\label{REM3.1}
Note that the second assumption of the above conditions \eqref{Eq 3.8}--\eqref{Eq 3.9} is related to the strict positivity of the coefficients, i.e.,
$$\inf_{z\in G_{\tau, \lambda}(z_0)}a(z)>0 \quad \text{and} \quad \inf_{z\in G_{\tau, \lambda}(z_0)}b(z)>0$$
in the $(p, q)$-intrinsic case. However, in contrast to the $p$-intrinsic or the $q$-intrinsic case, we cannot guarantee a strictly positive uniform lower bound for these quantities (see Subsection \ref{deduction of pq case} for further details). Therefore, in this case, it is tempting not to impose any restriction on the simultaneous smallness of the competing coefficients $a(\cdot)$ and $b(\cdot)$. On the other hand, if $0<b(\cdot)<\varepsilon$ for some $\varepsilon >0,$ from assumption $a(\cdot)+b(\cdot)\geq \delta,$ we get $a(\cdot)\geq \delta-\varepsilon.$ Heuristically speaking, when $b(\cdot)$ is sufficiently small, the system exhibits $p$-growth behavior, and similarly, when $a(\cdot)$ is sufficiently small, it exhibits $q$-growth behavior. However, the conditions in \eqref{Eq 3.8}--\eqref{Eq 3.9} cannot be directly reduced to either the $p$-intrinsic or $q$-intrinsic case and require separate treatment.
\end{remark}

 %Before proceeding with the following lemmas, we make some comments on the condition $a(z_0)\geq \frac{\delta}{6}.$ Note that, $\delta \leq a(z)+b(z)\leq 3a(z_0)+3b(z_0).$ Now both $3a(z_0)$ and $3b(z_0)$ can not be less than $\delta.$ One of then must be bigger than $c\delta$ for some $c\leq 1.$ If $b(z_0)\geq c\delta,$ then we arrive in the $q$-phase. The other option is $a(z_0)\geq c\delta$ for some $c\leq 1.$

%(i) Note that if $a(z_0)=0,$ then from $\delta \leq a(z)+b(z)\leq 3a(z_0)+3b(z_0),$ we have $b(z_0)\geq \frac{\delta}{3},$ which leads us to the $q$-phase. We also note that $b(z_0)\neq 0.$ Otherwise, from $Ka(z_0)\lambda^p\leq 0,$ which is a contradiction to the assumption $a(z)\geq 0.$ Thus we conclude from \eqref{Eq 3.9} that $b(z)\neq 0,$ but it can be as small as possible. Thus in this case, essentially we have $a(z_0), b(z_0)>0,$ but they can be very small individually. They can not be very small together, then it would contradict that $a(z_0)+b(z_0)\geq\frac{\delta}{3}.$ Therefore, in this case we assume $\inf_{\tilde{z}\in G_{\tau, \lambda}(z_0)}a(\tilde{z})>0.$

%(ii) We note that, if $a(z_0)\geq 0,$ then, in this case $b(z_0)\lambda^q \leq a(z_0)\lambda^p+b(z_0)\lambda^q\leq \left(\frac{1}{K}+1\right)b(z_0)\lambda^q \leq 2b(z_0)\lambda^q.$ Now to prove $a(z_0)\geq 0,$, since by assumption $a(z)\geq 0,$ we can use $\frac{1}{3}a(z)\leq a(z_0)\leq 3a(z).$
%\end{remark}
\begin{lemma}\label{Lemma 3.13}
Let $u$ be a weak solution to \eqref{eq: main equation}. Then there exists a constant $c=c(\textnormal{\texttt{data}}) \geq 1$ such that for any $G_{4\rho,\lambda}(z_0)\subset \Omega_T$ with \eqref{Eq 3.8}--\eqref{Eq 3.9}, $\tau \in [2\rho,4\rho]$ and $\theta \in ((q-1)/p,1]$, we have
\begin{align}\label{Eq 3.10}
&\miint{G_{\tau,\lambda}(z_0)} \frac{|u-(u)^{(p,q)}_{z_0;\tau,\lambda}|^{\theta p}}{\tau^{\theta p}}\; dz \leq c \miint{G_{\tau,\lambda}(z_0)} \left(|Du|+|F|\right)^{\theta p}\, dz
\end{align}
and 
\begin{align}\label{Eq: 3.11}
&\miint{G_{\tau,\lambda}(z_0)} \frac{|u-(u)^{(p,q)}_{z_0;\tau,\lambda}|^{\theta q}}{\tau^{\theta q}}\; dz \leq c \miint{G_{\tau,\lambda}(z_0)} \left(|Du|+|F|\right)^{\theta q}\, dz.
\end{align}
\end{lemma}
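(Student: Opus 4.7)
The plan is to apply \cref{lem : parabolic poincare inequality in Section 3} directly on the $(p,q)$-intrinsic cylinder $G_{\tau,\lambda}(z_0)$, specializing it to $m=p$ for \eqref{Eq 3.10} and $m=q$ for \eqref{Eq: 3.11}. The key feature of this cylinder is the time-to-space ratio $S/R^{2}=\lambda^{2}/\Lambda$ with $\Lambda=a(z_0)\lambda^{p}+b(z_0)\lambda^{q}$, which simultaneously balances the $p$- and $q$-contributions rather than favoring either phase, and this balance is exactly what lets both inequalities have a right-hand side that is a single integral of $(|Du|+|F|)^{\theta m}$.

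First I would split the parabolic Poincar\'e bound into the gradient average $c\miint_{G_{\tau,\lambda}}|Du|^{\theta m}\,dz$, which is immediately controlled by $c\miint_{G_{\tau,\lambda}}(|Du|+|F|)^{\theta m}\,dz$, and the ``gluing'' contribution $c\bigl(\tfrac{\lambda^{2}}{\Lambda}\miint_{G_{\tau,\lambda}}[\widetilde{H}(z,|Du|)+\widetilde{H}(z,|F|)]\,dz\bigr)^{\theta m}$. For this second piece, the second condition of \eqref{Eq 3.8}--\eqref{Eq 3.9} gives the pointwise comparabilities $a(z)\leq 3a(z_0)$ and $b(z)\leq 3b(z_0)$ on $G_{\tau,\lambda}(z_0)$, so that $\widetilde{H}(z,|Du|+|F|)\leq 3a(z_0)(|Du|+|F|)^{p-1}+3b(z_0)(|Du|+|F|)^{q-1}$. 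Since the hypothesis $\theta>(q-1)/p$ forces $\theta m\geq \theta p>q-1\geq p-1$, H\"older's inequality lets me bound $\miint(|Du|+|F|)^{p-1}\,dz$ and $\miint(|Du|+|F|)^{q-1}\,dz$ by powers of $X:=\miint_{G_{\tau,\lambda}}(|Du|+|F|)^{\theta m}\,dz$.

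The two intrinsic identities $\lambda^{2}a(z_0)/\Lambda\leq \lambda^{2-p}$ and $\lambda^{2}b(z_0)/\Lambda\leq \lambda^{2-q}$, immediate from $\Lambda=a(z_0)\lambda^{p}+b(z_0)\lambda^{q}$, then reduce the gluing contribution to a bound of the form $c\lambda^{(2-p)\theta m}X^{(p-1)}+c\lambda^{(2-q)\theta m}X^{(q-1)}$. Since $\lambda\geq 1$ and $2\leq p\leq q$, the $\lambda$-exponents are non-positive, so the $\lambda$-factors themselves are harmless; what remains is to handle the superlinear powers $X^{p-1}$ and $X^{q-1}$ and collapse them to $cX$.

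The main obstacle is precisely this final absorption. I expect to handle it by invoking the fourth (equality) condition of \eqref{Eq 3.8}--\eqref{Eq 3.9}, namely $\Lambda=\miint_{G_{\rho,\lambda}}H(z,|Du|+|F|)\,dz$, together with the case dichotomy \eqref{Eq 3.8} vs.\ \eqref{Eq 3.9} that determines which of $a(z_0)\lambda^{p}$ or $b(z_0)\lambda^{q}$ controls $\Lambda$. A carefully chosen Young's inequality, interpolating between $X$ and the equality constraint (so that the extra factor of $\lambda^{\theta m}$ produced can be paid for by the matching $\Lambda$-term), will absorb the $X^{p-1}$ and $X^{q-1}$ terms into $cX$, yielding \eqref{Eq 3.10}--\eqref{Eq: 3.11}. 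The balanced character of the $(p,q)$-intrinsic setup is what makes this possible, in contrast to the single-phase intrinsic regimes of the previous subsections where an extra $\lambda$-term genuinely survives on the right-hand side.
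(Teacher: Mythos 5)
Your overall skeleton (parabolic Poincar\'e from \cref{lem : parabolic poincare inequality in Section 3} on $G_{\tau,\lambda}(z_0)$, the comparabilities $a(z)\le 3a(z_0)$, $b(z)\le 3b(z_0)$, and H\"older using $\theta p>q-1$) coincides with the paper's, but the closing step --- the only delicate point --- is both left unproven and pinned on a mechanism that cannot work here. The inequalities \eqref{Eq 3.10}--\eqref{Eq: 3.11} contain no $\lambda$- or $\Lambda$-term and no quantity into which an $\varepsilon$-piece could be absorbed, so ``Young's inequality \ldots paid for by the matching $\Lambda$-term'' has nothing to pay with; and the fourth (equality) condition is not the relevant one for this lemma (it is used later to absorb $\varepsilon\Lambda$ in the reverse H\"older estimate) --- what is actually needed is the third condition on the larger cylinders. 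More seriously, your early simplification $\lambda^{2}a(z_0)/\Lambda\le\lambda^{2-p}$, $\lambda^{2}b(z_0)/\Lambda\le\lambda^{2-q}$ reduces the gluing term to $c\lambda^{(2-p)\theta m}X^{p-1}+c\lambda^{(2-q)\theta m}X^{q-1}$ with $X=\miint{G_{\tau,\lambda}(z_0)}(|Du|+|F|)^{\theta m}\,dz$, and collapsing these to $cX$ requires $X\lesssim\lambda^{\theta m}$. Under \eqref{Eq 3.8} this follows from \eqref{bound by q} (since $\theta p\le\theta q\le q$), and under \eqref{Eq 3.9} it follows from \eqref{bound by lambda-p} for $m=p$; but for \eqref{Eq: 3.11} under \eqref{Eq 3.9} it fails: there only the $p$-energy is controlled, $\theta q$ may exceed $p$, and since $b(z_0)$ has no positive lower bound in the $(p,q)$-phase (see \cref{REM3.1}) one only gets $\miint{G_{\tau,\lambda}(z_0)}(|Du|+|F|)^{q}\,dz\lesssim a(z_0)\lambda^{p}/b(z_0)$, which can be arbitrarily larger than $\lambda^{q}$. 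Hence $\lambda^{(2-q)\theta q}X^{q-2}$ is not bounded and the $X^{q-1}$ term cannot be collapsed; your intermediate bound is already too lossy in that case.

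The repair is exactly the paper's bookkeeping: do not discard the ratios $a(z_0)\lambda^{p}/\Lambda$, $b(z_0)\lambda^{q}/\Lambda$. In case \eqref{Eq 3.9} with outer exponent $\theta q$, for the $a$-term one interpolates through the $\theta p$-level: bound $p-2$ copies of $\miint{G_{\tau,\lambda}(z_0)}(|Du|+|F|)^{\theta p}dz$ by $c\lambda^{\theta p}$ via \eqref{bound by lambda-p} and convert the remaining copy with the power-mean inequality $\bigl(\miint{G_{\tau,\lambda}(z_0)}f^{\theta p}dz\bigr)^{q/p}\le\miint{G_{\tau,\lambda}(z_0)}f^{\theta q}dz$, the surviving prefactor being $\bigl(a(z_0)\lambda^{p}/\Lambda\bigr)^{\theta q}\le1$; for the $b$-term one keeps $b(z)$, bounds $q-2$ copies of $\miint{G_{\tau,\lambda}(z_0)}b(z)^{\theta}(|Du|+|F|)^{\theta q}dz$ by $c\Lambda^{\theta}$ via Jensen and the third condition, retains one copy bounded by $cb(z_0)^{\theta}\miint{G_{\tau,\lambda}(z_0)}(|Du|+|F|)^{\theta q}dz$, and observes that the accumulated coefficient equals $\bigl(b(z_0)\lambda^{q}/\Lambda\bigr)^{2\theta}\le1$ --- i.e.\ the possible smallness of $b(z_0)$ is precisely what compensates the possibly huge $q$-integral. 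No Young's inequality and no use of the fourth condition occur anywhere in this argument.
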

\begin{proof}
We observe from \eqref{eq : parabolic poincare inequality in Section 3} of \cref{lem : parabolic poincare inequality in Section 3} that
\begin{align}\label{EQQ3.12}
    &\miint{G_{\tau, \lambda}(z_0)}\frac{|u-(u)^{(p,q)}_{z_0, \tau, \lambda}|^{\theta p}}{\tau^{\theta p}}\,dz\nonumber\\
    &\leq c\miint{G_{\tau, \lambda}(z_0)}|Du|^{\theta p}\, dz \nonumber\\
    &\quad + c\left(\frac{\lambda^2}{a(z_0)\lambda^p+b(z_0)\lambda^q}\miint{G_{\tau, \lambda}(z_0)}a(z)\left(|Du|^{p-1}+|F|^{p-1}\right)+b(z)\left(|Du|^{q-1}+|F|^{q-1}\right) dz\right)^{\theta p}\nonumber\\
    &\leq c\miint{G_{\tau, \lambda}(z_0)}|Du|^{\theta p}\, dz+ c\underbrace{\left(\frac{\lambda^2}{a(z_0)\lambda^p+b(z_0)\lambda^q}\miint{G_{\tau, \lambda}(z_0)}a(z)\left(|Du|+|F|\right)^{p-1} dz\right)^{\theta p}}_{\mathrm{I}}\nonumber\\
    &\quad +c\underbrace{\left(\frac{\lambda^2}{a(z_0)\lambda^p+b(z_0)\lambda^q}\miint{G_{\tau, \lambda}(z_0)}b(z)\left(|Du|+|F|\right)^{q-1} dz\right)^{\theta p}}_{\mathrm{II}}.
\end{align}
\textbf{Case I:} In this case, we estimate $\mathrm{I}$ and $\mathrm{II}$ of \eqref{EQQ3.12} considering the assumption \eqref{Eq 3.8}. First we observe, since $b(z_0)\neq 0,$ from the first, second and the third conditions of \eqref{Eq 3.8}, we get
\begin{align*}
 \miint{G_{\tau, \lambda}(z_0)} b(z_0)\left(|Du|+|F|\right)^q\,dz \leq c \miint{G_{\tau, \lambda}(z_0)} b(z)\left(|Du|+|F|\right)^q\,dz&\leq c \left(a(z_0)\lambda^p+b(z_0)\lambda^q\right)\\
 &\leq c\, b(z_0)\lambda^q.
\end{align*}
Thus, we have
\begin{align}\label{bound by q}
    \miint{G_{\tau, \lambda}(z_0)}\left(|Du|+|F|\right)^q\, dz \leq c\lambda^q.
\end{align}

\noindent\textbf{Estimate of $\mathrm{I}$:} Using the second condition of \eqref{Eq 3.8} and H\"{o}lder's inequality, we obtain
\begin{align*}
    &\left(\frac{\lambda^2}{a(z_0)\lambda^p+b(z_0)\lambda^q}\miint{G_{\tau, \lambda}(z_0)}a(z)\left(|Du|+|F|\right)^{p-1}\,dz\right)^{\theta p}\\
    &\leq \left(\frac{\lambda^2}{a(z_0)\lambda^p+b(z_0)\lambda^q}\right)^{\theta p}a(z_0)^{\theta}\left(\miint{G_{\tau, \lambda}(z_0)}a(z)^{\frac{p-1}{p}}\left(|Du|+|F|\right)^{p-1}\,dz\right)^{\theta p}\\
    &\leq \left(\frac{\lambda^2}{a(z_0)\lambda^p+b(z_0)\lambda^q}\right)^{\theta p}a(z_0)^{\theta}\left(\miint{G_{\tau, \lambda}(z_0)}a(z)^\theta \left(|Du|+|F|\right)^{\theta p}\,dz\right)^{p-1}\\
    &\leq c\left(\frac{\lambda^2}{a(z_0)\lambda^p+b(z_0)\lambda^q}\right)^{\theta p}(a(z_0)\lambda^p+b(z_0)\lambda^q)^{\theta (p-2)}a(z_0)^{2\theta} \miint{G_{\tau, \lambda}(z_0)} \left(|Du|+|F|\right)^{\theta p}\,dz.
\end{align*}
Now using $a(z_0)\leq \frac{1}{K} b(z_0)\lambda^{q-p}$ from the assumption \eqref{Eq 3.8}, we have
\begin{align*}
 &\left(\frac{\lambda^2}{a(z_0)\lambda^p+b(z_0)\lambda^q}\right)^{\theta p}(a(z_0)\lambda^p+b(z_0)\lambda^q)^{\theta (p-2)}a(z_0)^{2\theta}\\
 &\quad \leq \left(\frac{\lambda^p}{a(z_0)\lambda^p+b(z_0)\lambda^q}\right)^{2\theta}b(z_0)^{2\theta}\lambda^{2\theta(q-p)}=\left(\frac{b(z_0)\lambda^{q}}{a(z_0)\lambda^p+b(z_0)\lambda^q}\right)^{2\theta} \leq 1.
\end{align*}
Thus we finally obtain
\begin{align*}
\left(\frac{\lambda^2}{a(z_0)\lambda^p+b(z_0)\lambda^q}\miint{G_{\tau, \lambda}(z_0)}a(z)\left(|Du|+|F|\right)^{p-1}\,dz\right)^{\theta p} \leq c \miint{G_{\tau, \lambda}(z_0)} \left(|Du|+|F|\right)^{\theta p}\,dz.    
\end{align*}

\noindent\textbf{Estimate of $\mathrm{II}$:} Similarly, using the second condition of \eqref{Eq 3.8} and H\"{o}lder's inequality, we get
\begin{align*}
&\left(\frac{\lambda^2}{a(z_0)\lambda^p+b(z_0)\lambda^q}\miint{G_{\tau, \lambda}(z_0)}b(z)\left(|Du|+|F|\right)^{q-1}\,dz\right)^{\theta p}\\
&\leq \left(\frac{\lambda^2 b(z_0)}{a(z_0)\lambda^p+b(z_0)\lambda^q}\right)^{\theta p}\left(\miint{G_{\tau, \lambda}(z_0)}\left(|Du|+|F|\right)^q\, dz\right)^{\frac{(q-2)\theta p}{q}} \miint{G_{\tau, \lambda}(z_0)}\left(|Du|+|F|\right)^{\theta p}\, dz\\
&\leq \left(\frac{\lambda^2 b(z_0)}{a(z_0)\lambda^p+b(z_0)\lambda^q}\right)^{\theta p} \lambda^{(q-2)\theta p} \miint{G_{\tau, \lambda}(z_0)}\left(|Du|+|F|\right)^{\theta p}\, dz\\
&\leq \miint{G_{\tau, \lambda}(z_0)}\left(|Du|+|F|\right)^{\theta p}\, dz.
\end{align*}
Thus, combining the above estimates, we obtain \eqref{Eq 3.10}.

\noindent\textbf{Case II:} In this case, we estimate $\mathrm{I}$ and $\mathrm{II}$ of \eqref{EQQ3.12} considering the assumption \eqref{Eq 3.9}. Since $a(z_0)\neq 0,$ using the first, second and the third assumptions of \eqref{Eq 3.9}, we get
\begin{align*}
    a(z_0)\miint{G_{\tau, \lambda}(z_0)}\left(|Du|+|F|\right)^p\,dz\leq 3 \miint{G_{\tau, \lambda}(z_0)}a(z)\left(|Du|+|F|\right)^p\,dz&\leq a(z_0)\lambda^p+b(z_0)\lambda^q\\&\leq c a(z_0)\lambda^p,
\end{align*}
and thus we conclude that
\begin{align}\label{bound by lambda-p}
    \miint{G_{\tau, \lambda}(z_0)}\left(|Du|+|F|\right)^p\,dz \leq c \lambda^p.
\end{align}
The estimates in this case are similar to those in Case I.

\noindent \textbf{Estimate of $\mathrm{I}$:} Using the second condition of \eqref{Eq 3.9}, \eqref{bound by lambda-p} and H\"{o}lder's inequality, we obtain
\begin{align*}
    &\left(\frac{\lambda^2}{a(z_0)\lambda^p+b(z_0)\lambda^q}\miint{G_{\tau, \lambda}(z_0)}a(z)\left(|Du|+|F|\right)^{p-1}\,dz\right)^{\theta p}\\
    & \leq c \left(\frac{a(z_0)\lambda^2}{a(z_0)\lambda^p+b(z_0)\lambda^q}\right)^{\theta p}\lambda^{\theta p (p-2)} \miint{G_{\tau, \lambda}(z_0)}\left(|Du|+|F|\right)^{\theta p}\, dz\\
    &=c\left(\frac{a(z_0)\lambda^p}{a(z_0)\lambda^p+b(z_0)\lambda^q}\right)^{\theta p} \miint{G_{\tau, \lambda}(z_0)}\left(|Du|+|F|\right)^{\theta p}\, dz \leq c\miint{G_{\tau, \lambda}(z_0)}\left(|Du|+|F|\right)^{\theta p}\, dz.
\end{align*}
\textbf{Estimate of $\mathrm{II}$:} Similarly, we estimate
\begin{align*}
 &\left(\frac{\lambda^2}{a(z_0)\lambda^p+b(z_0)\lambda^q}\miint{G_{\tau, \lambda}(z_0)}b(z)\left(|Du|+|F|\right)^{q-1}\,dz\right)^{\theta p}\\
 &\leq c \left(\frac{\lambda^2 b(z_0)}{a(z_0)\lambda^p+b(z_0)\lambda^q}\right)^{\theta p}\left(\miint{G_{\tau, \lambda}(z_0)}\left(|Du|+|F|\right)^{p}\,dz\right)^{\theta (q-2)} \miint{G_{\tau, \lambda}(z_0)}\left(|Du|+|F|\right)^{\theta p}\,dz\\
 &\leq c \left(\frac{\lambda^q b(z_0)}{a(z_0)\lambda^p+b(z_0)\lambda^q}\right)^{\theta p} \miint{G_{\tau, \lambda}(z_0)}\left(|Du|+|F|\right)^{\theta p}\,dz \leq \miint{G_{\tau, \lambda}(z_0)}\left(|Du|+|F|\right)^{\theta p}\,dz.
\end{align*}
Thus, combining the above estimates, we obtain \eqref{Eq 3.10}.

Next we prove the estimate \eqref{Eq: 3.11}. Again, it follows from \eqref{eq : parabolic poincare inequality in Section 3} of \cref{lem : parabolic poincare inequality in Section 3} that
\begin{align}\label{EQQ3.14}
&\miint{G_{\tau, \lambda}(z_0)}\frac{|u-(u)^{(p,q)}_{z_0, \tau, \lambda}|^{\theta q}}{\tau^{\theta q}}\,dz\nonumber\\
&\quad \leq c\miint{G_{\tau, \lambda}(z_0)}|Du|^{\theta q}\, dz+ c\underbrace{\left(\frac{\lambda^2}{a(z_0)\lambda^p+b(z_0)\lambda^q}\miint{G_{\tau, \lambda}(z_0)}a(z)\left(|Du|+|F|\right)^{p-1}\,dz\right)^{\theta q}}_{\mathrm{I}}\nonumber\\
&\qquad +c\underbrace{\left(\frac{\lambda^2}{a(z_0)\lambda^p+b(z_0)\lambda^q}\miint{G_{\tau, \lambda}(z_0)}b(z)\left(|Du|+|F|\right)^{q-1}\,dz\right)^{\theta q}}_{\mathrm{II}}.
\end{align}

\noindent \textbf{Case I:} In this case, we estimate $\mathrm{I}$ and $\mathrm{II}$ of \eqref{EQQ3.14} considering the assumption \eqref{Eq 3.8}.

\noindent \textbf{Estimate of $\mathrm{I}$:} Using the second condition of \eqref{Eq 3.8}, H\"{o}lder's inequality and \eqref{bound by q}, we see that
\begin{align*}
&\left(\frac{\lambda^2}{a(z_0)\lambda^p+b(z_0)\lambda^q}\miint{G_{\tau, \lambda}(z_0)}a(z)\left(|Du|+|F|\right)^{p-1}\,dz\right)^{\theta q}\\
&\leq c\left(\frac{\lambda^2 a(z_0)}{a(z_0)\lambda^p+b(z_0)\lambda^q}\right)^{\theta q}\left(\miint{G_{\tau, \lambda}(z_0)}\left(|Du|+|F|\right)^{\theta q}\,dz\right)^{p-1}\\
&\leq c \left(\frac{\lambda^2 a(z_0)}{a(z_0)\lambda^p+b(z_0)\lambda^q}\right)^{\theta q}\left(\miint{G_{\tau, \lambda}(z_0)}\left(|Du|+|F|\right)^q\, dz\right)^{\theta(p-2)} \miint{G_{\tau, \lambda}(z_0)}\left(|Du|+|F|\right)^{\theta q}\, dz\\
&\leq c\left(\frac{a(z_0)\lambda^p}{a(z_0)\lambda^p+b(z_0)\lambda^q}\right)^{\theta q} \miint{G_{\tau, \lambda}(z_0)}\left(|Du|+|F|\right)^{\theta q}\, dz \\
&\leq c \miint{G_{\tau, \lambda}(z_0)}\left(|Du|+|F|\right)^{\theta q}\, dz.
\end{align*}
\textbf{Estimate of $\mathrm{II}$:} Similarly, we can estimate
\begin{align*}
&\left(\frac{\lambda^2}{a(z_0)\lambda^p+b(z_0)\lambda^q}\miint{G_{\tau, \lambda}(z_0)}b(z)\left(|Du|+|F|\right)^{q-1}\,dz\right)^{\theta q}\\
&\leq c\left(\frac{\lambda^2 b(z_0)}{a(z_0)\lambda^p+b(z_0)\lambda^q}\right)^{\theta q}\left(\miint{G_{\tau, \lambda}(z_0)}\left(|Du|+|F|\right)^q\,dz\right)^{\theta (q-2)} \miint{G_{\tau, \lambda}(z_0)}\left(|Du|+|F|\right)^{\theta q}\,dz\\
&\leq c \left(\frac{\lambda^q b(z_0)}{a(z_0)\lambda^p+b(z_0)\lambda^q}\right)^{\theta q} \miint{G_{\tau, \lambda}(z_0)}\left(|Du|+|F|\right)^{\theta q}\,dz\\
&\leq c \miint{G_{\tau, \lambda}(z_0)}\left(|Du|+|F|\right)^{\theta q}\,dz.
\end{align*}
Combining the above estimates completes the proof of \eqref{Eq: 3.11}.

\noindent \textbf{Case II:} We estimate $\mathrm{I}$ and $\mathrm{II}$ of \eqref{EQQ3.14} by considering the assumption \eqref{Eq 3.9}.

\noindent\textbf{Estimate of $\mathrm{I}$:} Using \eqref{bound by lambda-p}, H\"{o}lder's inequality and the second condition of \eqref{Eq 3.9}, we get
\begin{align*}
&\left(\frac{\lambda^2}{a(z_0)\lambda^p+b(z_0)\lambda^q}\miint{G_{\tau, \lambda}(z_0)}a(z)\left(|Du|+|F|\right)^{p-1}\,dz\right)^{\theta q}\\
&\leq c\left(\frac{a(z_0)\lambda^2}{a(z_0)\lambda^p+b(z_0)\lambda^q}\right)^{\theta q}\left(\miint{G_{\tau, \lambda}(z_0)}\left(|Du|+|F|\right)^{\theta p}\,dz\right)^{\frac{(p-1)q}{p}}\\
&=c\left(\frac{a(z_0)\lambda^2}{a(z_0)\lambda^p+b(z_0)\lambda^q}\,\right)^{\theta q}\left(\miint{G_{\tau, \lambda}(z_0)}\left(|Du|+|F|\right)^{\theta p}\,dz\right)^{\frac{(p-2)q}{p}}\left(\miint{G_{\tau, \lambda}(z_0)}\left(|Du|+|F|\right)^{\theta p}\,dz\right)^{\frac{q}{p}}\\
&\leq c\left(\frac{a(z_0)\lambda^2}{a(z_0)\lambda^p+b(z_0)\lambda^q}\right)^{\theta q}\lambda^{\theta q(p-2)} \miint{G_{\tau, \lambda}(z_0)}\left(|Du|+|F|\right)^{\theta q} dz\\
&=c\left(\frac{a(z_0)\lambda^p}{a(z_0)\lambda^p+b(z_0)\lambda^q}\right) \miint{G_{\tau, \lambda}(z_0)}\left(|Du|+|F|\right)^{\theta q}\,dz\\
&\leq \miint{G_{\tau, \lambda}(z_0)}\left(|Du|+|F|\right)^{\theta q} dz.
\end{align*}

\noindent \textbf{Estimate of $\mathrm{II}$:} Similarly, we estimate
\begin{align*}
&\left(\frac{\lambda^2}{a(z_0)\lambda^p+b(z_0)\lambda^q}\miint{G_{\tau, \lambda}(z_0)}b(z)\left(|Du|+|F|\right)^{q-1}\,dz\right)^{\theta q}\\
&\leq c\left(\frac{\lambda^2}{a(z_0)\lambda^p+b(z_0)\lambda^q}\right)^{\theta q}b(z_0)^{\theta}\left(\miint{G_{\tau, \lambda}(z_0)}b(z)^{\frac{q-1}{q}}\left(|Du|+|F|\right)^{q-1}\, dz\right)^{\theta q}\\
&\leq c \left(\frac{\lambda^2}{a(z_0)\lambda^p+b(z_0)\lambda^q}\right)^{\theta q}b(z_0)^{\theta}\left(\miint{G_{\tau, \lambda}(z_0)}b(z)^{\theta}\left(|Du|+|F|\right)^{\theta q}dz\right)^{q-1}\\
&\leq c \left(\frac{\lambda^2}{a(z_0)\lambda^p+b(z_0)\lambda^q}\right)^{\theta q}(a(z_0)\lambda^p+b(z_0)\lambda^q)^{\theta (q-2)}b(z_0)^{2\theta} \miint{G_{\tau, \lambda}(z_0)} \left(|Du|+|F|\right)^{\theta q}dz.
\end{align*}
As in the previous case, using the first condition of \eqref{Eq 3.9}, i.e., $Ka(z_0)\lambda^p\geq b(z_0)\lambda^q,$ we obtain
\begin{align*}
 \left(\frac{\lambda^2}{a(z_0)\lambda^p+b(z_0)\lambda^q}\right)^{\theta q}(a(z_0)\lambda^p+b(z_0)\lambda^q)^{\theta (q-2)}b(z_0)^{2\theta} \leq 1.   
\end{align*}
It follows that
\begin{align*}
    \left(\frac{\lambda^2}{a(z_0)\lambda^p+b(z_0)\lambda^q}\miint{G_{\tau, \lambda}(z_0)}b(z)\left(|Du|+|F|\right)^{q-1}\,dz\right)^{\theta q} \leq c \miint{G_{\tau, \lambda}(z_0)} \left(|Du|+|F|\right)^{\theta q} dz.
\end{align*}
Thus, combining the above estimates, we obtain \eqref{Eq: 3.11} for this case.
\end{proof}
Next, we prove the following:
\begin{lemma}\label{Lemma 3.14}
Let $u$ be a weak solution to \eqref{eq: main equation}. Then for $G_{c_v(2 \rho),\lambda}(z_0) \subset \Omega_T$ with \eqref{Eq 3.8}--\eqref{Eq 3.9}, there exists a constant $c=c(\textnormal{\texttt{data}}) \geq 1$ such that
\begin{align*}
S(u)^{(p,q)}_{z_0 ; 2 \rho,\lambda}=\sup _{I^{(p,q)}_{2 \rho,\lambda}(t_0)} \dashint_{B_{2 \rho}(x_0)} \frac{\left|u-(u)^{(p,q)}_{z_0 ; 2 \rho,\lambda}\right|^2}{(2 \rho)^2}\; dx \leq c \lambda^2 .
\end{align*}
\end{lemma}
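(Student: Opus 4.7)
My plan is to mirror the iteration arguments of \cref{lem : Lemma 5.1} and \cref{lem : Lemma 3.10}, now carried out on $(p,q)$-intrinsic cylinders. Fix $2\rho\leq \rho_1<\rho_2\leq 4\rho$ and apply \cref{lem : the Caccioppoli inequality} with the identifications $U_{R,S}=G_{\rho_2,\lambda}(z_0)$ and $U_{r,\tau}=G_{\rho_1,\lambda}(z_0)$, so that $R-r=\rho_2-\rho_1$ and
\begin{align*}
S-\tau=\frac{\lambda^2(\rho_2^2-\rho_1^2)}{\Lambda}\ \geq\ \frac{\lambda^2(\rho_2-\rho_1)^2}{\Lambda}.
\end{align*}
The time-scaling of $G_{\rho_1,\lambda}$ turns the supremum on the left-hand side into $\tfrac{\Lambda}{\lambda^2}\,S(u)^{(p,q)}_{z_0;\rho_1,\lambda}$, while the right-hand side splits into the energy-type block
\begin{align*}
\mathrm{E}:=\miint{G_{\rho_2,\lambda}(z_0)} H\!\left(z,\frac{|u-(u)^{(p,q)}_{z_0;\rho_2,\lambda}|}{\rho_2-\rho_1}\right) dz,
\end{align*}
an $L^2$-type block, and the source $\miint{G_{\rho_2,\lambda}(z_0)} H(z,|F|)\,dz\leq \Lambda$, the last bound coming directly from the third condition of \eqref{Eq 3.8}--\eqref{Eq 3.9}.

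To bound $\mathrm{E}$, I split $H$ along its $a$- and $b$-components, pass to infima using the coefficient comparability in the second line of \eqref{Eq 3.8}--\eqref{Eq 3.9}, and apply \eqref{Eq 3.10}--\eqref{Eq: 3.11} of \cref{Lemma 3.13} with $\theta=1$. This reduces $\mathrm{E}$ to
\begin{align*}
\mathrm{E}\apprle \frac{\rho_2^q}{(\rho_2-\rho_1)^q}\left[a(z_0)\miint{G_{\rho_2,\lambda}(z_0)}(|Du|+|F|)^p\,dz+b(z_0)\miint{G_{\rho_2,\lambda}(z_0)}(|Du|+|F|)^q\,dz\right],
\end{align*}
and the same comparability combined with the third condition of \eqref{Eq 3.8}--\eqref{Eq 3.9} yields $\mathrm{E}\apprle \rho_2^q(\rho_2-\rho_1)^{-q}\Lambda$. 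For the $L^2$-block I invoke \cref{lem : lemma 2.1} with $\sigma=r=2$, $\vartheta=1/2$ and $s=p$ under the regime \eqref{Eq 3.9} or $s=q$ under the regime \eqref{Eq 3.8}; slicing in time, applying H\"older's inequality, and combining with \eqref{Eq 3.10}/\eqref{Eq: 3.11} and the uniform bound \eqref{bound by lambda-p} or \eqref{bound by q} (each valid in its respective regime) gives
\begin{align*}
\miint{G_{\rho_2,\lambda}(z_0)}\frac{|u-(u)^{(p,q)}_{z_0;\rho_2,\lambda}|^2}{\rho_2^2}\,dz \apprle \lambda\,\bigl(S(u)^{(p,q)}_{z_0;\rho_2,\lambda}\bigr)^{1/2}.
\end{align*}
Together with the lower bound on $S-\tau$, this block contributes $\tfrac{\Lambda\rho_2^2}{\lambda(\rho_2-\rho_1)^2}\bigl(S(u)^{(p,q)}_{z_0;\rho_2,\lambda}\bigr)^{1/2}$ to the right-hand side.

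Multiplying the resulting Caccioppoli estimate by $\lambda^2/\Lambda$ and absorbing the square-root contribution via Young's inequality produces the iteration inequality
\begin{align*}
S(u)^{(p,q)}_{z_0;\rho_1,\lambda}\leq \frac{1}{2}S(u)^{(p,q)}_{z_0;\rho_2,\lambda}+c\lambda^2\left[\frac{\rho_2^q}{(\rho_2-\rho_1)^q}+\frac{\rho_2^4}{(\rho_2-\rho_1)^4}+1\right],
\end{align*}
after which \cref{iter_lemma} on the interval $[2\rho,4\rho]$ closes the argument. The main obstacle, signalled already in \cref{REM3.1}, is that unlike in the pure $p$- or $q$-intrinsic settings neither $\miint(|Du|+|F|)^p\apprle \lambda^p$ nor $\miint(|Du|+|F|)^q\apprle \lambda^q$ is available unconditionally on $(p,q)$-cylinders: the two regimes \eqref{Eq 3.8} and \eqref{Eq 3.9} must be handled in parallel with the exponent in \cref{lem : lemma 2.1} chosen case-dependently, and one has to verify that the common prefactor $\lambda$ in the $L^2$ estimate is attained in both regimes so that a single $\Lambda$-homogeneous iteration emerges.
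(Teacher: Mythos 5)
Your proposal is correct and follows essentially the same route as the paper: Caccioppoli on the $(p,q)$-intrinsic cylinder, the $H$-block estimated via the coefficient comparability together with \cref{Lemma 3.13} at $\theta=1$ and the third condition of \eqref{Eq 3.8}--\eqref{Eq 3.9}, the $L^2$-block via \cref{lem : lemma 2.1} with $s=q$ under \eqref{Eq 3.8} (using \eqref{Eq: 3.11} and \eqref{bound by q}) and $s=p$ under \eqref{Eq 3.9} (using \eqref{Eq 3.10} and \eqref{bound by lambda-p}), then Young's inequality and \cref{iter_lemma}. The case-dependent choice of exponent that you flag as the main obstacle is exactly how the paper resolves it, so no gap remains.
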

\begin{proof}
Let $2\rho \leq \rho_1<\rho_2 \leq 4\rho$. By \cref{lem : the Caccioppoli inequality}, there exists a constant $c$ depending on $n,p,q,\nu$ and $L$ such that 
\begin{align}
&\left(a(z_0)\lambda^{p-2}+b(z_0)\lambda^{q-2}\right)S(u)^{(p,q)}_{z_0;\rho_1,\lambda}\nonumber\\
&\leq \frac{c\rho_2^q}{(\rho_2-\rho_1)^q}\miint{G_{\rho_2,\lambda}(z_0)}\left( a(z)\frac{|u-(u)^{(p,q)}_{z_0;\rho_2,\lambda}|^p}{\rho^p_2}+b(z)\frac{|u-(u)^{(p,q)}_{z_0;\rho_2,\lambda}|^q}{\rho^q_2} \right) dz \nonumber\\
&\quad + \frac{c\rho_2^2 \left(a(z_0)\lambda^{p-2}+b(z_0)\lambda^{q-2}\right)}{(\rho_2-\rho_1)^2}\miint{G_{\rho_2,\lambda}(z_0)}\frac{|u-(u)^{(p,q)}_{z_0;\rho_2,\lambda}|^2}{\rho_2^2}\; dz\nonumber\\\label{eq : 3.12}
&\quad +c\miint{G_{\rho_2,\lambda}(z_0)} H(z,|F|)\; dz.
\end{align}
Using \cref{Lemma 3.13} with $\theta =1$, we estimate the first term of \eqref{eq : 3.12} as follows:
\begin{align*}
 &\miint{G_{\rho_2,\lambda}(z_0)}\left( a(z)\frac{|u-(u)^{(p,q)}_{z_0;\rho_2,\lambda}|^p}{\rho^p_2}+b(z)\frac{|u-(u)^{(p,q)}_{z_0;\rho_2,\lambda}|^q}{\rho^q_2} \right) dz\\
 &\overset{\eqref{Eq 3.8}\,\, \text{or} \,\, \eqref{Eq 3.9}}{\leq} c\miint{G_{\rho_2,\lambda}(z_0)} \left(a(z_0)\frac{|u-(u)^{(p,q)}_{z_0;\rho_2,\lambda}|^p}{\rho^p_2}+b(z_0)\frac{|u-(u)^{(p,q)}_{z_0;\rho_2,\lambda}|^q}{\rho^q_2} \right) dz\\
 &\overset{\cref{Lemma 3.13}}{\leq} c\miint{G_{\rho_2,\lambda}(z_0)}a(z_0)\left(|Du|+|F|\right)^p+b(z_0)\left(|Du|+|F|\right)^q\,dz\\
 &\overset{\eqref{Eq 3.8}\,\, \text{or} \,\, \eqref{Eq 3.9}}{\leq} c \miint{G_{\rho_2,\lambda}(z_0)}a(z)\left(|Du|+|F|\right)^p+b(z)\left(|Du|+|F|\right)^q\,dz\overset{\eqref{Eq 3.8}\,\, \text{or} \,\, \eqref{Eq 3.9}}{\leq}c\left(a(z_0)\lambda^p+b(z_0)\lambda^q\right).
\end{align*}
For the case \eqref{Eq 3.8},  using \cref{lem : lemma 2.1} with $\sigma =2, s=q, r=2, \vartheta=\frac{1}{2},$ we get
\begin{align*}
   \miint{G_{\rho_2,\lambda}(z_0)}\frac{|u-(u)^{(p,q)}_{z_0;\rho_2,\lambda}|^2}{\rho_2^2}\; dz \leq c\left(\miint{G_{\rho_2, \lambda}(z_0)}\left(\frac{|u-(u)^{(p,q)}_{z_0;\rho_2,\lambda}|^q}{\rho_2^q}+|Du|^q\right)dz\right)^{\frac{1}{q}}\left(S(u)^{(p,q)}_{z_0, \rho_2, \lambda}\right)^{\frac{1}{2}}.
\end{align*}
Now using \eqref{Eq: 3.11} of \cref{Lemma 3.13} and \eqref{bound by q}, we obtain
\begin{align*}
\left(\miint{G_{\rho_2, \lambda}(z_0)}\left(\frac{|u-(u)^{(p,q)}_{z_0;\rho_2,\lambda}|^q}{\rho_2^q}+|Du|^q\right)dz\right)^{\frac{1}{q}} \leq c \lambda.    
\end{align*}
On the other hand, for the case \eqref{Eq 3.9}, similarly,  we derive
\begin{align*}
   \miint{G_{\rho_2,\lambda}(z_0)}\frac{|u-(u)_{z_0;\rho_2,\lambda}|^2}{\rho_2^2}\; dz \leq c\left(\miint{G_{\rho_2, \lambda}(z_0)}\left(\frac{|u-(u)^{(p,q)}_{z_0;\rho_2,\lambda}|^p}{\rho_2^p}+|Du|^p\right)dz\right)^{\frac{1}{p}}\left(S(u)^{(p,q)}_{z_0, \rho_2, \lambda}\right)^{\frac{1}{2}}.
\end{align*}
Now using \eqref{Eq 3.10} of \cref{Lemma 3.13} and \eqref{bound by lambda-p}, we obtain
\begin{align*}
    \left(\miint{G_{\tau, \lambda}(z_0)}\left(\frac{|u-(u)_{z_0;\rho_2,\lambda}|^p}{\rho_2^p}+|Du|^p\right)dz\right)^{\frac{1}{p}} \leq c \lambda.
\end{align*}
Thus, together with the above estimates, we get
    \begin{align*}
     \miint{G_{\rho_2,\lambda}(z_0)}\frac{|u-(u)_{z_0;\rho_2,\lambda}|^2}{\rho_2^2}\; dz \leq \lambda \left(S(u)^{(p,q)}_{z_0, 2\rho, \lambda}\right)^{\frac{1}{2}}.
    \end{align*}
    Now applying Young's inequality and iteration lemma, \cref{iter_lemma}, we complete the proof.
\end{proof}
\begin{lemma}\label{Lemma 3.15}
Let $u$ be a weak solution to \eqref{eq: main equation}. Then for $G_{c_v(2 \rho),\lambda}\left(z_0\right) \subset \Omega_T$ with \eqref{Eq 3.8}--\eqref{Eq 3.9}, there exist constants $c=c(\textnormal{\texttt{data}}) \geq 1$ and $\theta_0=\theta_0(n, p, q) \in(0,1)$ such that for any $\theta \in\left(\theta_0, 1\right)$, we have
\begin{align}\label{EQQ3.15}
&\miint{G_{2 \rho,\lambda}\left(z_0\right)} H\left(z,\frac{\left|u-(u)^{(p, q)}_{z_0 ; 2 \rho,\lambda}\right|}{2 \rho}\right) d z \nonumber\\
&\leq c\left(a(z_0)\lambda^p+b(z_0)\lambda^q\right)^{1-\theta}\left[\miint{G_{2 \rho,\lambda}\left(z_0\right)} H(z, |Du|)^{\theta}\, dz+\left(\miint{G_{2\rho, \lambda}(z_0)}H(z, |F|)\, dz\right)^{\theta}\right]. 
\end{align}
\end{lemma}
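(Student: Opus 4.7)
The plan mirrors the structure of the $p$-phase analogue, \cref{lem : lemma 5.2}, and the $q$-phase analogue, \cref{Lem : lemma 3.11}, but exploits the additional symmetry of the $(p,q)$-intrinsic setting: the second condition in \eqref{Eq 3.8}--\eqref{Eq 3.9} simultaneously guarantees two-sided comparability of $a(\cdot)$ and $b(\cdot)$ with their values at $z_0$, so no modulus-of-continuity term will survive. I would first split the integrand via
\begin{align*}
\miint{G_{2\rho,\lambda}(z_0)} H\left(z, \frac{|u - (u)^{(p,q)}_{z_0;2\rho,\lambda}|}{2\rho}\right) dz &\leq 3 a(z_0) \miint{G_{2\rho,\lambda}(z_0)} \frac{|u-(u)^{(p,q)}_{z_0;2\rho,\lambda}|^p}{(2\rho)^p}\, dz \\
&\quad + 3 b(z_0) \miint{G_{2\rho,\lambda}(z_0)} \frac{|u-(u)^{(p,q)}_{z_0;2\rho,\lambda}|^q}{(2\rho)^q}\, dz,
\end{align*}
where \eqref{Eq 3.8} or \eqref{Eq 3.9} is used to eliminate the $z$-dependence of the coefficients.

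To each of these two averaged integrals I would apply \cref{lem : lemma 2.1} with $(\sigma, s, r, \vartheta) = (p, \theta p, 2, \theta)$ and $(\sigma, s, r, \vartheta) = (q, \theta q, 2, \theta)$, respectively, followed by time-integration. The dimensional condition in \cref{lem : lemma 2.1} reduces in both cases to $\theta \geq n/(n+2)$, so by choosing $\theta_0 := \max\{n/(n+2),(q-1)/p\}$ both applications are admissible. This yields an estimate of the form
\begin{align*}
\miint{G_{2\rho,\lambda}(z_0)} \frac{|u - (u)^{(p,q)}_{z_0;2\rho,\lambda}|^p}{(2\rho)^p}\, dz \leq c \miint{G_{2\rho,\lambda}(z_0)} \left(\frac{|u - (u)^{(p,q)}_{z_0;2\rho,\lambda}|^{\theta p}}{(2\rho)^{\theta p}} + |Du|^{\theta p}\right) dz \,\bigl(S(u)^{(p,q)}_{z_0;2\rho,\lambda}\bigr)^{\frac{(1-\theta)p}{2}}
\end{align*}
and its $q$-analogue. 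Invoking \cref{Lemma 3.14} then furnishes $\bigl(S(u)^{(p,q)}_{z_0;2\rho,\lambda}\bigr)^{(1-\theta)p/2} \leq c\lambda^{(1-\theta)p}$ (and the corresponding $\lambda^{(1-\theta)q}$ for the $q$-part), while \cref{Lemma 3.13} converts the remaining $L^{\theta p}$ and $L^{\theta q}$ oscillation integrals into $\miint (|Du|+|F|)^{\theta p}\,dz$ and $\miint (|Du|+|F|)^{\theta q}\,dz$, respectively.

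The concluding step is the algebraic repackaging of the prefactors. I would write $a(z_0)\lambda^{(1-\theta)p} = a(z_0)^{\theta}(a(z_0)\lambda^p)^{1-\theta} \leq a(z_0)^{\theta}(a(z_0)\lambda^p + b(z_0)\lambda^q)^{1-\theta}$ and, symmetrically, $b(z_0)\lambda^{(1-\theta)q} \leq b(z_0)^{\theta}(a(z_0)\lambda^p + b(z_0)\lambda^q)^{1-\theta}$, then push the $\theta$-powers of $a(z_0), b(z_0)$ back inside the integrals using $a(z_0)\leq 3a(z)$ and $b(z_0)\leq 3b(z)$, and combine them with $|Du|^{\theta p}, |Du|^{\theta q}$ to recover $(a(z)|Du|^p)^{\theta} + (b(z)|Du|^q)^{\theta} \leq 2\,H(z,|Du|)^{\theta}$. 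A single application of Jensen's inequality, $\miint H(z,|F|)^{\theta}\, dz \leq \bigl(\miint H(z,|F|)\,dz\bigr)^{\theta}$, handles the source term and produces the stated right-hand side. The main technical obstacle lies precisely in this bookkeeping: the exponents $\theta$ and $1-\theta$ must be split across $a(z_0), b(z_0)$ and the spatial integrands in just the right way to collapse everything into the unified factor $(a(z_0)\lambda^p + b(z_0)\lambda^q)^{1-\theta}$, which is exactly the quantity the fourth condition of \eqref{Eq 3.8}--\eqref{Eq 3.9} will absorb into the left-hand side of the subsequent reverse H\"older argument.
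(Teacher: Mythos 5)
Your proposal is correct and follows essentially the same route as the paper: split $H$ into the $a(z_0)$-weighted $p$-part and $b(z_0)$-weighted $q$-part via the comparability condition, apply \cref{lem : lemma 2.1} with $(\sigma,s,r,\vartheta)=(p,\theta p,2,\theta)$ and $(q,\theta q,2,\theta)$, control the oscillation and sup terms by \cref{Lemma 3.13} and \cref{Lemma 3.14}, and finish with the same exponent bookkeeping $a(z_0)\lambda^{(1-\theta)p}\leq a(z_0)^{\theta}(a(z_0)\lambda^p+b(z_0)\lambda^q)^{1-\theta}$ (and its $b$-analogue) plus H\"older/Jensen for the $F$-term. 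The only cosmetic difference is that you freeze the coefficients at $z_0$ at the outset, while the paper keeps $a(z)$, $b(z)$ and invokes the comparability inside the estimates.
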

\begin{proof}
 We use \cref{lem : lemma 2.1} with $\sigma=p, s=\theta p, \vartheta= \theta, r=2$ to deduce
 \begin{align*}
    &\miint{G_{2\rho, \lambda}(z_0)}a(z)\frac{\left|u-(u)^{(p,q)}_{z_0 ; 2 \rho,\lambda}\right|^p}{(2 \rho)^p}\, dz\\
     &\quad \leq c\left(\miint{G_{2\rho, \lambda}(z_0)}a(z_0)^{\theta}\frac{\left|u-(u)^{(p,q)}_{z_0 ; 2 \rho,\lambda}\right|^{\theta p}}{(2 \rho)^{\theta p}}+a(z_0)^{\theta }|Du|^{\theta p}\, dz\right)a(z_0)^{1-\theta}\left(S(u)^{(p,q)}_{z_0, 2\rho, \lambda}\right)^{\frac{(1-\theta)p}{2}}.
 \end{align*}
 Now we use \cref{Lemma 3.13} and \cref{Lemma 3.14} to get
 \begin{align*}
&\left(\miint{G_{2\rho, \lambda}(z_0)}a(z_0)^{\theta}\frac{\left|u-(u)^{(p,q)}_{z_0 ; 2 \rho,\lambda}\right|^{\theta p}}{(2 \rho)^{\theta p}}+a(z_0)^{\theta }|Du|^{\theta p}\, dz\right)a(z_0)^{1-\theta}\left(S(u)^{(p,q)}_{z_0, 2\rho, \lambda}\right)^{\frac{(1-\theta)p}{2}}\\
&\quad \leq c (a(z_0)\lambda^p)^{1-\theta}\miint{G_{2\rho, \lambda}(z_0)}a(z)^{\theta}\left(|Du|+|F|\right)^{\theta p}\, dz.
 \end{align*}
 Hence, it follows that
 \begin{align*}
 \miint{G_{2\rho, \lambda}(z_0)}a(z)\frac{\left|u-(u)^{(p,q)}_{z_0 ; 2 \rho,\lambda}\right|^p}{(2 \rho)^p}\, dz\leq c (a(z_0)\lambda^p)^{1-\theta}\miint{G_{2\rho, \lambda}(z_0)}a(z)^{\theta}\left(|Du|+|F|\right)^{\theta p}\, dz.    
 \end{align*}
 Similarly, we have
 \begin{align*}
 \miint{G_{2\rho, \lambda}(z_0)}b(z)\frac{\left|u-(u)^{(p,q)}_{z_0 ; 2 \rho,\lambda}\right|^q}{(2 \rho)^q}\, dz\leq c (b(z_0)\lambda^q)^{1-\theta}\miint{G_{2\rho, \lambda}(z_0)}b(z)^{\theta}\left(|Du|+|F|\right)^{\theta q}\, dz.        
 \end{align*}
 Combining the above two estimates and using H\"{o}lder's inequality, we get the desired estimate \eqref{EQQ3.15}.
\end{proof}
\begin{lemma}
    Let $u$ be a weak solution to \eqref{eq: main equation}. Then for $G_{c_v(2 \rho),\lambda}\left(z_0\right) \subset \Omega_T$ with \eqref{Eq 3.8}--\eqref{Eq 3.9}, there exist constants $c=c(\textnormal{\texttt{data}}) \geq 1$ and $\theta_0=\theta_0(n,p,q) \in (0,1)$ such that for any $\theta\in (\theta_0,1),$
    \begin{align*}
        &\miint{G_{\rho,\lambda}(z_0)} [H(z,|Du|)+H(z, |F|)]\; dz\\
        &\qquad \leq c \left(\miint{G_{2\rho,\lambda}(z_0)} [H(z,|Du|)]^\theta\;dz\right)^\frac{1}{\theta}+c\miint{G_{2\rho,\lambda}(z_0)} H(z,|F|)\; dz.
    \end{align*}
\end{lemma}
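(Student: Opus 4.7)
The plan is to follow the blueprint of \cref{lem : lemma 5.3} and \cref{lem : lemma 3.12}, starting from the Caccioppoli estimate \cref{lem : the Caccioppoli inequality} applied on the pair $G_{\rho,\lambda}(z_0)\subset G_{2\rho,\lambda}(z_0)$. Setting $\Lambda:=a(z_0)\lambda^{p}+b(z_0)\lambda^{q}$ and recalling that the time-scale of $G_{\rho,\lambda}(z_0)$ is $\lambda^{2}\rho^{2}/\Lambda$, this yields
\begin{equation*}
\miint{G_{\rho,\lambda}(z_0)}H(z,|Du|)\,dz\leq c\,\mathcal{I}_{1}+c\,\mathcal{I}_{2}+c\miint{G_{2\rho,\lambda}(z_0)}H(z,|F|)\,dz,
\end{equation*}
with
\begin{equation*}
\mathcal{I}_{1}:=\miint{G_{2\rho,\lambda}(z_0)}H\!\left(z,\tfrac{|u-(u)^{(p,q)}_{z_0;2\rho,\lambda}|}{2\rho}\right)dz,\qquad \mathcal{I}_{2}:=\tfrac{\Lambda}{\lambda^{2}}\miint{G_{2\rho,\lambda}(z_0)}\tfrac{|u-(u)^{(p,q)}_{z_0;2\rho,\lambda}|^{2}}{(2\rho)^{2}}\,dz.
\end{equation*}
Writing $\mathcal{R}:=\bigl(\miint{G_{2\rho,\lambda}(z_0)}H(z,|Du|)^{\theta}\,dz\bigr)^{1/\theta}+\miint{G_{2\rho,\lambda}(z_0)}H(z,|F|)\,dz$, the objective is to dominate each of $\mathcal{I}_{1}$ and $\mathcal{I}_{2}$ by $\varepsilon\Lambda+c_{\varepsilon}\mathcal{R}$ and then to absorb $\varepsilon\Lambda$ via the fourth identity of \eqref{Eq 3.8}/\eqref{Eq 3.9}.

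For $\mathcal{I}_{1}$, \cref{Lemma 3.15} gives $\mathcal{I}_{1}\leq c\,\Lambda^{1-\theta}\bigl[\miint{G_{2\rho,\lambda}(z_0)} H(z,|Du|)^{\theta}\,dz+(\miint{G_{2\rho,\lambda}(z_0)} H(z,|F|)\,dz)^{\theta}\bigr]$. Two applications of Young's inequality with conjugate exponents $(\tfrac{1}{1-\theta},\tfrac{1}{\theta})$, combined with Jensen's inequality $\bigl(\miint{G_{2\rho,\lambda}(z_0)} H(z,|F|)^{\theta}\,dz\bigr)^{1/\theta}\leq \miint{G_{2\rho,\lambda}(z_0)} H(z,|F|)\,dz$ for the source contribution, produce the desired bound $\mathcal{I}_{1}\leq\varepsilon\Lambda+c_{\varepsilon}\mathcal{R}$.

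For $\mathcal{I}_{2}$, a case split paralleling \cref{Lemma 3.13}--\cref{Lemma 3.14} is required. Under \eqref{Eq 3.8} (so $\Lambda\simeq b(z_0)\lambda^{q}$), I would apply the Gagliardo--Nirenberg inequality \cref{lem : lemma 2.1} with $\sigma=2$, $s=\theta q$, $r=2$, $\vartheta=\tfrac{1}{2}$: the spatial factor is controlled by \eqref{Eq: 3.11} of \cref{Lemma 3.13}, the temporal factor by $S(u)^{(p,q)}_{z_0;2\rho,\lambda}\leq c\lambda^{2}$ from \cref{Lemma 3.14}, and the resulting integral $\miint{G_{2\rho,\lambda}(z_0)}(|Du|+|F|)^{\theta q}\,dz$ is converted into $c\,b(z_0)^{-\theta}\miint{G_{2\rho,\lambda}(z_0)}(H(z,|Du|)+H(z,|F|))^{\theta}\,dz$ using $b(z)\simeq b(z_0)$ together with $b(z)(|Du|+|F|)^{q}\lesssim H(z,|Du|)+H(z,|F|)$. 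The outcome is $\mathcal{I}_{2}\leq c\,b(z_0)^{(q-1)/q}\lambda^{q-1}Y^{1/(\theta q)}$, where $Y:=\miint{G_{2\rho,\lambda}(z_0)}(H(z,|Du|)+H(z,|F|))^{\theta}\,dz$, whence Young's inequality with $(q/(q-1),q)$ delivers $\mathcal{I}_{2}\leq\varepsilon\,b(z_0)\lambda^{q}+c_{\varepsilon}Y^{1/\theta}$. The sub-additivity $(A+B)^{1/\theta}\leq c(A^{1/\theta}+B^{1/\theta})$ applied to $Y^{1/\theta}$, together with Jensen on the $F$-tail, then gives $\mathcal{I}_{2}\leq \varepsilon\Lambda+c_{\varepsilon}\mathcal{R}$. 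The complementary regime \eqref{Eq 3.9} (where $\Lambda\simeq a(z_0)\lambda^{p}$) is treated identically, with $s=\theta p$, \eqref{Eq 3.10} of \cref{Lemma 3.13}, the bound $a(z)(|Du|+|F|)^{p}\lesssim H(z,|Du|)+H(z,|F|)$, and Young's inequality with $(p/(p-1),p)$.

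Combining these bounds, choosing $\varepsilon$ sufficiently small, and using the fourth identity $\Lambda=\miint{G_{\rho,\lambda}(z_0)}[H(z,|Du|)+H(z,|F|)]\,dz$ to absorb the $\varepsilon\Lambda$ term into the left-hand side yields the required estimate for $\miint{G_{\rho,\lambda}(z_0)}H(z,|Du|)\,dz$; the full statement follows by adding the trivial bound $\miint{G_{\rho,\lambda}(z_0)}H(z,|F|)\,dz\leq 2^{n+2}\miint{G_{2\rho,\lambda}(z_0)}H(z,|F|)\,dz$ to both sides. The principal difficulty, flagged in \cref{REM3.1}, is that in the $(p,q)$-intrinsic geometry neither $a(z_0)$ nor $b(z_0)$ admits a positive uniform lower bound; this is exactly what forces the case split above and dictates the tuning of the Gagliardo--Nirenberg exponent to the dominant coefficient in each regime, so that Young's inequality hits $\Lambda$ precisely and the residual term can be absorbed.
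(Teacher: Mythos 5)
Your proposal is correct and follows essentially the same route as the paper: the Caccioppoli estimate of \cref{lem : the Caccioppoli inequality}, then \cref{Lemma 3.15} plus Young's inequality for the Poincar\'e-type term, and \cref{lem : lemma 2.1} combined with \cref{Lemma 3.13}--\cref{Lemma 3.14} plus Young's inequality for the parabolic term, with the resulting $\varepsilon\left(a(z_0)\lambda^p+b(z_0)\lambda^q\right)$ absorbed via the fourth condition of \eqref{Eq 3.8}--\eqref{Eq 3.9}. The only (harmless) organizational difference is that the paper splits $\frac{a(z_0)\lambda^p+b(z_0)\lambda^q}{\lambda^2}\miint{G_{2\rho,\lambda}(z_0)}\frac{|u-(u)^{(p,q)}_{z_0;2\rho,\lambda}|^2}{(2\rho)^2}\,dz$ into its $a$- and $b$-parts ($\mathrm{J}_1$, $\mathrm{J}_2$) and treats each with its natural exponent $\theta p$, respectively $\theta q$, so no case distinction between \eqref{Eq 3.8} and \eqref{Eq 3.9} is needed at that stage, whereas you keep the term whole and tune the Gagliardo--Nirenberg exponent to the dominant coefficient in each regime; both give the same $\varepsilon\Lambda+c_\varepsilon(\cdots)$ bound.
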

\begin{proof}
    From the energy estimate, \cref{lem : the Caccioppoli inequality}, we have
\begin{align*}
&\miint{G_{\rho, \lambda}(z_0)}H(z,|Du|)\;dz\\
& \leq c \underbrace{\miint{G_{2\rho, \lambda}(z_0)}  H\left(z,\frac{|u-(u)^{(p,q)}_{z_0, 2\rho, \lambda}|}{2\rho}\right) dz}_{\mathrm{I}}+ c\underbrace{\frac{a(z_0)\lambda^p+b(z_0)\lambda^q}{\lambda^2}\miint{G_{2\rho, \lambda}(z_0)}\frac{|u-(u)^{(p,q)}_{z_0, 2\rho, \lambda}|^2}{(2\rho)^2}\;dz}_{\mathrm{II}}\\
&\quad +c\miint{G_{2\rho, \lambda}(z_0)} H(z,|F|) \;dz.
\end{align*}
\textbf{Estimate of $\mathrm{I}$:} From \cref{Lemma 3.15}, applying Young's inequality with $\left(\frac{1}{1-\theta}, \frac{1}{\theta}\right)$ yields
\begin{align*}
&\miint{G_{2\rho, \lambda}(z_0)}  H\left(z,\frac{|u-(u)_{z_0, 2\rho, \lambda}|}{2\rho}\right) dz\\
&\leq c\left(a(z_0)\lambda^p+b(z_0)\lambda^q\right)^{1-\theta}\left[\miint{G_{2 \rho,\lambda}\left(z_0\right)} H(z, |Du|)^{\theta}\, dz+\left(\miint{G_{2\rho, \lambda}(z_0)}H(z, |F|)\, dz\right)^{\theta}\right]\\
&\leq \varepsilon \left(a(z_0)\lambda^p+b(z_0)\lambda^q\right)+ c(\varepsilon) \left[\left(\miint{G_{2 \rho,\lambda}\left(z_0\right)} H(z, |Du|)^{\theta}\, dz\right)^{\frac{1}{\theta}}+\miint{G_{2\rho, \lambda}(z_0)}H(z, |F|)\, dz \right].
\end{align*}
\textbf{Estimate of $\mathrm{II}$:} We estimate $\mathrm{II}$ by considering it in two separate parts. We proceed as follows:
\begin{align*}
   \underbrace{ a(z_0)\lambda^{p-2}\miint{G_{2\rho, \lambda}(z_0)}\frac{|u-(u)_{z_0, 2\rho, \lambda}|^2}{(2\rho)^2}\;dz}_{\mathrm{J}_1} +\underbrace{b(z_0)\lambda^{q-2}\miint{G_{2\rho, \lambda}(z_0)}\frac{|u-(u)_{z_0, 2\rho, \lambda}|^2}{(2\rho)^2}\;dz}_{\mathrm{J}_2}.
\end{align*}
First, we estimate $\mathrm{J}_1$ by using \cref{lem : lemma 2.1} with $\sigma =2, s=\theta p, \vartheta=\frac{1}{2}, r=2.$ Indeed, 
\begin{align*}
 \mathrm{J}_1=&\lambda^{p-2}\miint{G_{2\rho, \lambda}(z_0)}a(z_0)\frac{|u-(u)^{(p,q)}_{z_0, 2\rho, \lambda}|^2}{(2\rho)^2}\;dz\\
 &\overset{\cref{lem : lemma 2.1}}{\leq} c \lambda^{p-2} \left(\miint{G_{2\rho, \lambda}(z_0)}a(z_0)^{\theta}\frac{|u-(u)_{z_0, 2\rho, \lambda}|^{\theta p}}{(2\rho)^{\theta p}}+a(z_0)^{\theta}|Du|^{\theta p}\,dz\right)^{\frac{1}{\theta p}}a(z_0)^{\frac{p-1}{p}}\left(S(u)^{(p,q)}_{z_0, 2\rho, \lambda}\right)^{\frac{1}{2}}\\
 &\overset{\cref{Lemma 3.14}}{\leq} c \left(a(z_0)\lambda^p\right)^{\frac{p-1}{p}} \left(\miint{G_{2\rho, \lambda}(z_0)}a(z_0)^{\theta}\left(|Du|+|F|\right)^{\theta p}\,dz\right)^{\frac{1}{\theta p}}\\
 &\qquad\leq c \left(a(z_0)\lambda^p\right)^{\frac{p-1}{p}}\left(\miint{G_{2\rho, \lambda}(z_0)}a(z)^{\theta}\left(|Du|+|F|\right)^{\theta p}\,dz\right)^{\frac{1}{\theta p}}.
\end{align*}
Now applying Young's inequality with $(p, \frac{p}{p-1}),$ we obtain
\begin{align*}
    \mathrm{J}_1 &\leq \varepsilon a(z_0)\lambda^{p}+ c(\varepsilon)\left(\miint{G_{2\rho, \lambda}(z_0)}a(z)^{\theta}\left(|Du|+|F|\right)^{\theta p}\,dz\right)^{\frac{1}{\theta }}\\
    &\leq \varepsilon  a(z_0)\lambda^p+ c(\varepsilon)\left[\left(\miint{G_{2 \rho,\lambda}\left(z_0\right)} H(z, |Du|)^{\theta}\, dz\right)^{\frac{1}{\theta}}+\miint{G_{2\rho, \lambda}(z_0)}H(z, |F|)\, dz\right].
\end{align*}
Similarly, we can estimate $\mathrm{J}_2$ as follows:
\begin{align*}
    \mathrm{J}_2 \leq \varepsilon b(z_0)\lambda^q + c(\varepsilon)\left[\left(\miint{G_{2 \rho,\lambda}\left(z_0\right)} H(z, |Du|)^{\theta}\, dz\right)^{\frac{1}{\theta}}+\miint{G_{2\rho, \lambda}(z_0)}H(z, |F|)\, dz\right].
\end{align*}
Thus, combining the above estimates gives
\begin{align*}
 &\frac{a(z_0)\lambda^p+b(z_0)\lambda^q}{\lambda^2}\miint{G_{2\rho, \lambda}(z_0)}\frac{|u-(u)_{z_0, 2\rho, \lambda}|^2}{(2\rho)^2}\;dz \\
 &\quad \leq \varepsilon (a(z_0)\lambda^p+b(z_0)\lambda^q)+c\left[\left(\miint{G_{2 \rho,\lambda}\left(z_0\right)} H(z, |Du|)^{\theta}\, dz\right)^{\frac{1}{\theta}}+\miint{G_{2\rho, \lambda}(z_0)}H(z, |F|)\, dz\right] . 
\end{align*}
Finally, using the fourth condition of \eqref{Eq 3.8}-\eqref{Eq 3.9} and absorbing $\varepsilon (a(z_0)\lambda^p+b(z_0)\lambda^q)$ to the left-hand side, we complete the proof of this lemma.
\end{proof}

\section{Gradient higher integrability}\label{Sec:Higher_Integrability} In this section, our goal is to prove \cref{main_thm}. Note that \cref{main_thm} is stated in terms of $H_1$, as $H_1$ exhibits $(p,q)$-growth, as established in \cref{lem : bounds of H_1}. However, so far, the reverse Hölder inequalities have only been derived for $H$. Extending these inequalities to $H_1$ is straightforward, which is the content of our next lemma. Below, we present the proof of the $p$-intrinsic case, while the $q$-intrinsic and $(p,q)$-intrinsic cases follow in the same way.

\begin{lemma}[Reverse H\"{o}lder inequality in terms of $H_1$]\label{lem: reverse holder H_1}
Let $u$ be a weak solution to \eqref{eq: main equation}. Then for $Q_{c_v(2 \rho),\lambda}\left(z_0\right) \subset \Omega_T$ with \eqref{eq: p-phase condition}, there exist constants $c=c(\textnormal{\texttt{data}}) \geq 1$ and $\theta_0=\theta_0(n,p,q) \in (0,1)$ such that for any $\theta\in (\theta_0,1),$
\begin{align}\label{r holder h_1}
\miint{Q_{\rho,\lambda}(z_0)} H_1(z,|Du|)\; dz \leq c \left(\miint{Q_{2\rho,\lambda}(z_0)} [H_1(z,|Du|)]^\theta\;dz\right)^\frac{1}{\theta}+c\miint{Q_{2\rho,\lambda}(z_0)} H_1(z,|F|)\; dz.
\end{align}
\end{lemma}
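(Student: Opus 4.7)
The proof should be essentially an immediate corollary of \cref{lem : lemma 5.3}, together with the elementary fact that $H_1(z,\kappa) = H(z,\kappa) + 1 \geq 1$ pointwise. The plan is to write $H_1 = H+1$, separate the constant $1$ from the averaged $H$ term on the left, bound the $H$ average by the already-established reverse H\"older inequality, and then pay the additive $1$ using $\miint{Q_{2\rho,\lambda}(z_0)} H_1(z,|F|)\,dz \geq 1$.

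More precisely, the first step is the identity
\begin{equation*}
\miint{Q_{\rho,\lambda}(z_0)} H_1(z,|Du|)\, dz = \miint{Q_{\rho,\lambda}(z_0)} H(z,|Du|)\, dz + 1.
\end{equation*}
Next, apply \cref{lem : lemma 5.3} to control the first term on the right by a constant multiple of $\bigl(\miint{Q_{2\rho,\lambda}(z_0)} H(z,|Du|)^\theta\,dz\bigr)^{1/\theta} + \miint{Q_{2\rho,\lambda}(z_0)} H(z,|F|)\,dz$. Since $H(z,\kappa) \leq H_1(z,\kappa)$ and $\theta \in (0,1)$, monotonicity of the integrand yields
\begin{equation*}
\left(\miint{Q_{2\rho,\lambda}(z_0)} H(z,|Du|)^\theta\,dz\right)^{1/\theta} \leq \left(\miint{Q_{2\rho,\lambda}(z_0)} H_1(z,|Du|)^\theta\,dz\right)^{1/\theta}
\end{equation*}
and likewise $\miint H(z,|F|)\,dz \leq \miint H_1(z,|F|)\,dz$.

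The remaining additive $1$ is absorbed using $H_1(z,|F|) \geq 1$, which forces $\miint{Q_{2\rho,\lambda}(z_0)} H_1(z,|F|)\, dz \geq 1$. Combining these observations yields \eqref{r holder h_1} with a constant $c+1$ (still depending only on \textnormal{\texttt{data}}) and the same threshold $\theta_0(n,p,q)$ inherited from \cref{lem : lemma 5.3}. The $q$-intrinsic and $(p,q)$-intrinsic analogs proceed verbatim by invoking \cref{lem : lemma 3.12} and the corresponding reverse H\"older bound in the $(p,q)$-phase instead. There is no real obstacle here; the lemma is a cosmetic strengthening needed only so that the subsequent stopping-time and Vitali arguments in \cref{Sec:Higher_Integrability} can be run with a quantity ($H_1$) that is uniformly bounded below by $1$.
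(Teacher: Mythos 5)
Your proposal is correct and coincides with the paper's own argument: the paper likewise adds $1$ to both sides of the reverse H\"older inequality of \cref{lem : lemma 5.3} and uses $0\leq H(z,\kappa)\leq H_1(z,\kappa)$ (equivalently $H_1\geq 1$) to absorb the extra constant and pass to $H_1$ on the right-hand side. The only difference is presentational, in that you make the absorption of the additive $1$ via $\miint{Q_{2\rho,\lambda}(z_0)} H_1(z,|F|)\,dz\geq 1$ explicit, which the paper leaves implicit.
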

\begin{proof}
 From \cref{lem : lemma 5.3}, we have 
 \begin{align*}
\miint{Q_{\rho,\lambda}(z_0)} H(z,|Du|)\; dz \leq c \left(\miint{Q_{2\rho,\lambda}(z_0)} [H(z,|Du|)]^\theta\;dz\right)^\frac{1}{\theta}+c\miint{Q_{2\rho,\lambda}(z_0)} H(z,|F|)\; dz.
\end{align*}
Adding $1$ on both sides of the above inequality and using the fact that $$0\leq H(z, |Du|)\leq H_1(z, |Du|),$$ we get the desired estimate \eqref{r holder h_1}.
\end{proof}
\subsection{Stopping time argument}\label{subsec:stopping_time}
Recall that $$H_1(z,\kappa) = a(z)\kappa^p+b(z)\kappa^q+1$$ and we set
\begin{equation}    \label{eq: definition of lambda_0}
    \lambda_0^2:=\miint{Q_{2r}(z_0)} [H_1(z,|Du|)+H_1(z,|F|)] \; dz
\end{equation}
and
\begin{equation}    \label{eq: definition of Lambda_0}
    \Lambda_0:=\sup_{z\in Q_{2r}(z_0)}a(z)\lambda_0^p+\sup_{z\in Q_{2r}(z_0)}b(z)\lambda_0^q+1.
\end{equation}
Also, for a fixed $\delta>0$ satisfying $\delta \leq a(z)+b(z),$ we set
\begin{align}\label{defn of K}
    K:=2+\frac{120[b]_{\alpha}}{\delta} \left(\frac{1}{2|B_1|}\iint_{\Omega_T} [H(z,|Du|)+H(z,|F|)]\, dz\right)^{\frac{\gamma}{p}}
\end{align}
with $$\gamma:=\frac{\alpha p}{n+2}.$$ 
We define $c_v\geq c_*(\texttt{data})$ by 
\begin{equation}    \label{eq: definition of c_nu}
    c_v:=c_*(\texttt{data})K,
\end{equation}
where $c_*(\texttt{data})$ will be determined later.
For $\Lambda_1:=\Lambda+1, $ we define the superlevel sets
\begin{align}\label{Defn E_lambda}
E(\Lambda_1)=\left\{z \in \Omega_T:\, H_1\left(z, |Du(z)|\right)>\Lambda_1\right\}
\end{align}
and
\begin{align}\label{Defn Phi_lambda}
\Phi(\Lambda_1)=\left\{z \in \Omega_T:\, H_1 \left(z, |F(z)|\right)>\Lambda_1\right\}.
\end{align}
For $E(\Lambda_1),\; \Phi(\Lambda_1)$ and $\rho\in [r,2r]$, we write
\begin{align}\label{defn_E(Lambda, rho)}
E(\Lambda_1,\rho):=E(\Lambda_1)\cap Q_{\rho}(z_0)=\left\{z \in Q_{\rho}(z_0):\, H_1(z, |Du(z)|)>\Lambda_1)\right\}
\end{align}
and
\begin{align*}
\Phi(\Lambda_1,\rho):=\Phi(\Lambda_1)\cap Q_\rho(z_0)=\left\{z \in Q_{\rho}(z_0):\, H_1 (z, |F(z)|)>\Lambda_1)\right\}.
\end{align*}

Now, we discuss the stopping time argument. We want to mention that our plan differs slightly from that in \cite[Section 5.1]{2023_Gradient_Higher_Integrability_for_Degenerate_Parabolic_Double-Phase_Systems}. Here, we need to treat the $p$-intrinsic and $q$-intrinsic cases independently. To be  more precise, we obtain a stopping time radius $\rho_{\mathfrak z}>0$ from the $p$-intrinsic case and another stopping time radius $\rho_{q;\mathfrak z}>0$ from the $q$-intrinsic case. Finally, we find a smaller radius $\rho_{p,q;\mathfrak z}< \min\{\rho_{\mathfrak z}, \rho_{q;\mathfrak z}\}$ as a stopping time for the $(p,q)$-intrinsic case. 

Let $r\leq r_1< r_2\leq 2r$, and let
\begin{equation}    \label{eq: definition of Lambda}
    \Lambda_1 > \left(\frac{4c_v r}{r_2-r_1}\right)^\frac{q(n+2)}{2}\Lambda_0.
\end{equation}
For each $\mathfrak{z} \in E(\Lambda_1,r_1)$, we take $\lambda_\mathfrak{z}>0$ such that 
\begin{equation}    \label{eq: definition of lambda z}
\Lambda_1=a(\mathfrak{z})\lambda_\mathfrak{z}^p+b(\mathfrak{z})\lambda_\mathfrak{z}^q+1.
\end{equation}
For $\mathfrak{z} \in E(\Lambda_1,r_1)$ and $K>1$, we have the following possibilities:
\begin{align}\label{possibilities depending on lambda zeta}
    (1)\,\,\,\, Ka(\mathfrak z)\lambda^p_{\mfz}\geq b(\mfz)\lambda^q_{\mfz}, \quad (2)\,\,\,\,Ka(\mathfrak z)\lambda^p_{\mfz}\leq b(\mfz)\lambda^q_{\mfz}.
\end{align}
\subsubsection{Existence of $p$-intrinsic case \eqref{eq: p-phase condition}} We consider the first possibility of \eqref{possibilities depending on lambda zeta}. We claim that 
\begin{equation}    \label{eq: lambda bigger than 1}
    \lambda_\mathfrak{z}>\left(\frac{4c_v r}{r_2-r_1}\right)^\frac{n+2}{2}\lambda_0\; (\geq 1).
\end{equation}
Suppose not, then we have
\begin{align*}    
\Lambda_1=a(\mathfrak{z})\lambda_\mathfrak{z}^p+b(\mathfrak{z})\lambda_\mathfrak{z}^q+1
&\leq \left(\frac{4c_v r}{r_2-r_1}\right)^\frac{q(n+2)}{2}(a(\mathfrak{z})\lambda_0^p+b(\mathfrak{z})\lambda_0^q+1)\\
&\leq \left(\frac{4c_v r}{r_2-r_1}\right)^\frac{q(n+2)}{2}\Lambda_0,
\end{align*}
which contradicts \eqref{eq: definition of Lambda}. Thus, for $\tau\in[(r_2-r_1)/(2c_v),r_2-r_1)$, we have 
\begin{align}\nonumber
    &\miint{Q_{\tau,\lambda_\mathfrak{z}}(\mathfrak{z})} [H_1(z,|Du|)+H_1(z,|F|)]\; dz\\ \nonumber
    &\quad \leq a(\mathfrak{z})\lambda_\mathfrak{z}^{p-2}\left(\frac{2r}{\tau}\right)^{n+2}\miint{Q_{2r}(z_0)} [H_1(z,|Du|)+H_1(z,|F|)]\; dz\\ \label{eq : basic inequality 1 in section 5}
    &\quad \leq \left(\frac{4c_v r}{r_2-r_1}\right)^{n+2} a(\mathfrak{z})\lambda_\mathfrak{z}^{p-2} \lambda_0^2 \overset{\eqref{eq: lambda bigger than 1}}{<}a(\mathfrak{z})\lambda_{\mathfrak{z}}^p <a(\mathfrak{z})\lambda_{\mathfrak{z}}^p+2.
\end{align}
The facts $\mathfrak{z}\in E(\Lambda_1,r_1)$ and \eqref{eq: definition of lambda z} imply that $\mathfrak{z}\in E(a(\mathfrak{z})\lambda_\mathfrak{z}^p+1,r_1)$. We define a continuous function $\Phi: (0, r_2-r_1]\to \mathbb{R}$ as
\begin{align*}
  \Phi (\tau)=  \miint{Q_{\tau,\lambda_\mathfrak{z}}(\mathfrak{z})} [H_1(z,|Du|)+H_1(z,|F|)]\; dz.
\end{align*}
By the Lebesgue differentiation theorem, we have 
\begin{align*}
    \lim_{\tau \to 0}\Phi(\tau)=H_1(\mathfrak z, |Du(\mathfrak z)|)+H_1(\mathfrak z, |F(\mathfrak z)|)>a(\mathfrak z)\lambda^p_{\mathfrak z}+2.
\end{align*}
Therefore, by intermediate value property of continuous function, there exists $\rho_\mathfrak{z}\in (0,(r_2-r_1)/(2c_v))$ such that 
\begin{equation}    \label{eq: choice condition 1 of rho}
    \miint{Q_{\rho_\mathfrak{z},\lambda_\mathfrak{z}}(\mathfrak{z})} [H_1(z,|Du|)+H_1(z,|F|)]\; dz=a(\mathfrak{z})\lambda_\mathfrak{z}^p+2
\end{equation}
holds. Moreover, from \eqref{eq : basic inequality 1 in section 5} we conclude
\begin{equation}    \label{eq: choice condition 2 of rho}
    \miint{Q_{\tau,\lambda_\mathfrak{z}}(\mathfrak{z})} [H_1(z,|Du|)+H_1(z,|F|)]\; dz< a(\mathfrak{z})\lambda_\mathfrak{z}^p+2
\end{equation}
for every $\tau\in(\rho_\mathfrak{z},r_2-r_1)$. From \eqref{eq: choice condition 2 of rho} and \eqref{eq: choice condition 1 of rho}, we get the third and fourth conditions of \eqref{eq: p-phase condition}.
Now for this $\rho_{\mfz}>0,$ we have four possibilities:
\begin{itemize}
\item [(i)] $\,\,\,a(\mathfrak{z})\geq 2\omega_a(\rho_{\mathfrak{z}}),\,\,\, b(\mathfrak{z})\geq 2[b]_{\alpha}\left(10 \rho_{\mathfrak{z}}\right)^{\alpha},$\\
\item [(ii)]  $\,\,\, a(\mathfrak{z})\geq 2\omega_a(\rho_{\mathfrak{z}}),\,\,\, b(\mathfrak{z})\leq 2[b]_{\alpha}\left(10 \rho_{\mathfrak{z}}\right)^{\alpha},$\\
\item [(iii)]  $ \,\,\, a(\mathfrak{z})\leq 2\omega_a(\rho_{\mathfrak{z}}),\,\,\, b(\mathfrak{z})\geq 2[b]_{\alpha}\left(10 \rho_{\mathfrak{z}}\right)^{\alpha},$\\
\item [(iv)]  $\,\,\, a(\mathfrak{z})\leq 2\omega_a(\rho_{\mathfrak{z}}),\,\,\, b(\mathfrak{z})\leq 2[b]_{\alpha}\left(10 \rho_{\mathfrak{z}}\right)^{\alpha}.$\\
\end{itemize}
We show that the case (ii) implies $p$-intrinsic case \eqref{eq: p-phase condition}. For small $\rho_{\mathfrak{z}},$ using the second condition of (ii), we get
\begin{align*}
    \delta \leq a(\mathfrak{z})+b(\mathfrak{z})\leq a(\mathfrak{z})+2[b]_{\alpha}(10 \rho_{\mathfrak{z}})^{\alpha}<a(\mathfrak{z})+\frac{\delta}{2},
\end{align*}
we conclude that $a(\mathfrak{z})\geq \frac{\delta}{2}$ which implies first part of the second condition of \eqref{eq: p-phase condition} replacing the center point. We note that, the smallness of  $\rho_{\mathfrak{z}}$ will be guaranteed later, for instance, we can choose large $c_v$ given by \eqref{eq: definition of c_nu} to get $\rho_{\mathfrak{z}}$ as small as we want. Moreover, from the first condition of (ii), for any $z \in Q_{\rho_{\mathfrak{z}}, \lambda_{\mathfrak{z}}}(\mathfrak{z})$ we have
\begin{align*}
    a(z)\leq |a(z)-a(\mathfrak{z})|+a(\mathfrak{z})\leq \omega_a(\rho_{\mathfrak{z}})+a(\mathfrak{z})\leq \frac{1}{2}a(\mathfrak{z})+a(\mathfrak{z})\leq 3 a(\mathfrak{z}).
\end{align*}
Similarly, we get $a(\mathfrak{z})\leq 3 a(z)$ for any $z \in  Q_{\rho_{\mathfrak{z}}, \lambda_{\mathfrak{z}}}(\mathfrak{z}).$ This implies the second part of the second condition of \eqref{eq: p-phase condition} replacing the center and the radius by $\mathfrak z$ and $\rho_{\mathfrak z}$ respectively. 

It is easy to see from the condition $\delta \leq a(\mathfrak{z})+b(\mathfrak{z})$ that (iv) cannot occur.  Also, (iii) is not possible for the following reason: in this case $a(\mathfrak{z})\leq 2\omega_a(\rho_{\mathfrak{z}})$ and for small $\rho_{\mathfrak{z}},$ it can be assumed that $\omega_a(\rho_{\mathfrak{z}})<\frac{\delta}{4K}$. Thus, we have $a(\mathfrak{z}) < \frac{\delta}{2K}$. From this, we see that $b(\mathfrak{z}) \geq \frac{2K-1}{2K}\delta$, and hence $$\frac{(2K-1)\delta}{2K}\lambda_{\mathfrak{z}}^q \leq b(\mathfrak{z})\lambda_{\mathfrak{z}}^q\leq Ka(\mathfrak{z})\lambda_{\mathfrak{z}}^p\leq 2K\omega_a(\rho_{\mathfrak{z}})\lambda_{\mathfrak{z}}^p.$$ 
Since for small $\rho_{\mathfrak{z}} >0,$ we have $\omega_a(\rho_{\mathfrak{z}})< \frac{\delta}{4K}, $ then we arrive at $$\frac{(2K-1)\delta}{2K}< \frac{\delta}{2}\lambda_{\mathfrak{z}}^{p-q}\leq \frac{\delta}{2},$$ which is a contradiction as this gives $K<1.$ This completes the existence of  $p$-intrinsic case.

\subsubsection{Existence of $q$-intrinsic case \eqref{eq: q-phase condition}} 
We consider the second possibility of \eqref{possibilities depending on lambda zeta}. Since \eqref{eq: lambda bigger than 1} holds for $\tau\in[(r_2-r_1)/(2c_v),r_2-r_1)$, we have 
\begin{align*}\nonumber
    &\miint{J_{\tau,\lambda_\mathfrak{z}}(\mathfrak{z})} [H_1(z,|Du|)+H_1(z,|F|)]\; dz\\ \nonumber
    &\quad \leq b(\mathfrak{z})\lambda_\mathfrak{z}^{q-2}\left(\frac{2r}{\tau}\right)^{n+2}\miint{Q_{2r}(z_0)} [H_1(z,|Du|)+H_1(z,|F|)]\; dz\\ \label{eq : basic inequality 1 in section 5}
    &\quad \leq \left(\frac{4c_v r}{r_2-r_1}\right)^{n+2} b(\mathfrak{z})\lambda_\mathfrak{z}^{q-2} \lambda_0^2 \overset{\eqref{eq: lambda bigger than 1}}{<}b(\mathfrak{z})\lambda_{\mathfrak{z}}^q<b(\mathfrak{z})\lambda_{\mathfrak{z}}^q+2.
\end{align*}
The facts $\mathfrak{z}\in E(\Lambda_1,r_1)$ and \eqref{eq: definition of lambda z} imply that $\mathfrak{z}\in E(b(\mathfrak{z})\lambda_\mathfrak{z}^q+1,r_1)$. By the Lebesgue differentiation theorem, there exists $\rho_{q; \, \mathfrak{z}}\in (0,(r_2-r_1)/(2c_v))$ such that 
\begin{equation}    \label{eq: choice condition 1 of q phase}
    \miint{J_{\rho_{q; \mathfrak{z}},\lambda_\mathfrak{z}}(\mathfrak{z})} [H_1(z,|Du|)+H_1(z,|F|)]\; dz=b(\mathfrak{z})\lambda_\mathfrak{z}^q+2
\end{equation}
and
\begin{equation}    \label{eq: choice condition 2 of q phase}
    \miint{J_{\tau,\lambda_\mathfrak{z}}(\mathfrak{z})} [H_1(z,|Du|)+H_1(z,|F|)]\; dz< b(\mathfrak{z})\lambda_\mathfrak{z}^q+2
\end{equation}
for every $\tau\in(\rho_{q; \mathfrak{z}},r_2-r_1).$ From \eqref{eq: choice condition 1 of q phase} and \eqref{eq: choice condition 2 of q phase}, we get the third and fourth conditions of \eqref{eq: q-phase condition} by replacing the center and radius with $\mathfrak{z}$ and $\rho_{q; \mathfrak{z}}.$

Now that the radius $\rho_{q; \mfz}>0$ is defined, we have four possibilities.
\begin{itemize}
\item [(i)] $\,\,\,a(\mathfrak{z})\geq 2\omega_a(\rho_{q; \mathfrak{z}}),\,\,\, b(\mathfrak{z})\geq 2[b]_{\alpha}\left(10 \rho_{q; \mathfrak{z}}\right)^{\alpha},$\\
\item [(ii)]  $\,\,\, a(\mathfrak{z})\geq 2\omega_a(\rho_{q; \mathfrak{z}}),\,\,\, b(\mathfrak{z})\leq 2[b]_{\alpha}\left(10 \rho_{q; \mathfrak{z}}\right)^{\alpha},$\\
\item [(iii)]  $ \,\,\, a(\mathfrak{z})\leq 2\omega_a(\rho_{q; \mathfrak{z}}),\,\,\, b(\mathfrak{z})\geq 2[b]_{\alpha}\left(10 \rho_{q; \mathfrak{z}}\right)^{\alpha},$\\
\item [(iv)]  $\,\,\, a(\mathfrak{z})\leq 2\omega_a(\rho_{q; \mathfrak{z}}),\,\,\, b(\mathfrak{z})\leq 2[b]_{\alpha}\left(10 \rho_{q; \mathfrak{z}}\right)^{\alpha}.$\\
\end{itemize}
Now we show that (iii) implies $q$-intrinsic case \eqref{eq: q-phase condition}. Since, for small $\rho_{q; \mathfrak{z}}$ we may assume $\omega_a(\rho_{q; \mathfrak{z}})\leq  \frac{\delta}{4},$ from the first condition of (iii), we get
\begin{align*}
    \delta \leq a(\mathfrak{z})+b(\mathfrak{z})\leq 2\omega_a(\rho_{q; \mathfrak{z}})+b(\mathfrak{z})\leq \frac{\delta}{2}+b(\mathfrak{z}),
\end{align*}
which gives us the lower bound of $b(z_0)$ in \eqref{eq: q-phase condition} replacing the center with $\mathfrak{z}.$ The second condition of (iii) implies the second part of the second condition of \eqref{eq: q-phase condition}. Indeed, for any $z \in Q_{10 \rho_{q; \mathfrak{z}}}(\mathfrak{z}),$ we have
\begin{align*}
b(z)\leq |b(z)-b(\mathfrak{z})|+b(\mathfrak{z})&\leq [b]_{\alpha}(10 \rho_{q; \mathfrak{z}})^{\alpha}+b(\mathfrak{z})\\
&\leq \frac{b(\mathfrak{z})}{2}+b(\mathfrak{z})\leq 2 b(\mathfrak{z}).
\end{align*}
We note that (iv) cannot occur because $a(\mfz)+b(\mfz)\geq \delta>0.$ We show (ii) cannot occur. First, we note that using the second condition of (ii), we get $a(\mathfrak{z})\geq \frac{\delta}{2}$ as before. It follows from \eqref{eq: choice condition 1 of q phase} that

\begin{align*}
b(\mathfrak{z})\lambda^q_{\mathfrak{z}}=\miint{J_{\rho_{q; \mathfrak{z}},\lambda_\mathfrak{z}}(\mathfrak{z})} [H(z,|Du|)+H(z,|F|)]\; dz < \left(\frac{K \delta}{120[b]_{\alpha}}\right)^{\frac{n+2}{\alpha}}\frac{1}{|J_{\rho_{q; \mathfrak{z}},\lambda_\mathfrak{z}}|}
=\left(\frac{K \delta}{120[b]_{\alpha}}\right)^{\frac{n+2}{\alpha}}\frac{b(\mathfrak{z})\lambda^{q-2}_{\mathfrak z}}{\rho^{n+2}_{q; \mathfrak z}}.
\end{align*}

This implies
\begin{align*}
    \rho^{\alpha}_{q; \mathfrak{z}}\lambda^q_{\mathfrak{z}} < \frac{K \delta}{120[b]_{\alpha}}\lambda^{q-\frac{2\alpha}{n+2}}_{\mathfrak{z}},
\end{align*}
and  using the range of $q$, given in \eqref{def_pq}, i.e., $q\leq p+\frac{2\alpha}{n+2}$, we get
\begin{align}\label{estimate on rho alpha}
    \rho^{\alpha}_{q; \mathfrak{z}}\lambda^q_{\mathfrak{z}}< \frac{K}{40[b]_{\alpha}}\frac{\delta}{3}\lambda^{p}_{\mathfrak{z}}.
\end{align}
Now using second possibility of \eqref{possibilities depending on lambda zeta} and second condition of (ii), we conclude
\begin{align}\label{EQQ4.17}
    Ka(\mathfrak{z})\lambda^p_{\mathfrak{z}}\leq b(\mathfrak{z})\lambda^q_{\mathfrak{z}}\leq 2[b]_{\alpha}(10\rho_{q; \mathfrak{z}})^{\alpha}\lambda^q_{\mathfrak{z}}< \frac{K10^\alpha}{20}\frac{\delta}{3}\lambda^p_{\mathfrak{z}}.
\end{align}
On the other hand, we have
\begin{align}\label{EQQ4.18}
    K a(\mathfrak{z})\lambda^p_{\mathfrak{z}}\geq \frac{K\delta}{2}\lambda^p_{\mathfrak{z}}.
\end{align}
Combining the inequalities \eqref{EQQ4.17} and \eqref{EQQ4.18} gives
\begin{align*}
    \frac{K \delta}{2}\lambda^p_{\mathfrak{z}}< \frac{K\delta}{6}\lambda^p_{\mathfrak{z}},
\end{align*}
which is a contradiction.

Note that so far we have not considered the possibilities
\begin{align*}
 \,\,\,a(\mathfrak{z})\geq 2\omega_a(\rho_{\mathfrak{z}}),\,\,\, b(\mathfrak{z})\geq 2[b]_{\alpha}\left(10 \rho_{\mathfrak{z}}\right)^{\alpha},\quad \text{and} \quad a(\mathfrak{z})\geq 2\omega_a(\rho_{q; \mathfrak{z}}),\,\,\, b(\mathfrak{z})\geq 2[b]_{\alpha}\left(10 \rho_{q; \mathfrak{z}}\right)^{\alpha},  
\end{align*}
appearing in $p$- and $q$-intrinsic cases and now these possibilities are useful to define $(p, q)$-intrinsic case. Define $$\widetilde \rho_{ \mathfrak z}=\min\{\rho_{\mathfrak z}, \rho_{q; \mathfrak z}\}.$$ From the above two possibilities, we have 
\begin{align*}
a(\mathfrak{z})\geq 2\omega_a(\widetilde \rho_{\mathfrak{z}}),\quad  b(\mathfrak{z})\geq 2[b]_{\alpha}\left(10 \widetilde \rho_{\mathfrak{z}}\right)^{\alpha}.   
\end{align*}

\subsubsection{Existence of $(p,q)$-intrinsic case \eqref{Eq 3.8}--\eqref{Eq 3.9}} \label{deduction of pq case}
In this case, we consider, any of the possibilities (1)
or (2) of \eqref{possibilities depending on lambda zeta}, i.e., 
\begin{align*}
 Ka(\mathfrak z)\lambda^p_{\mfz}\geq b(\mfz)\lambda^q_{\mfz}, \quad \text{or}\quad Ka(\mathfrak z)\lambda^p_{\mfz}\leq b(\mfz)\lambda^q_{\mfz}   
\end{align*}
and 
\begin{align}\label{eq: pq phase assmp on ab}
a(\mathfrak{z})\geq 2\omega_a(\widetilde \rho_{\mathfrak{z}}),\quad b(\mathfrak{z})\geq 2[b]_{\alpha}\left(10 \widetilde \rho_{ \mathfrak{z}}\right)^{\alpha}.   
\end{align}
Since $G_{\tau, \lambda_{\mathfrak z}}(\mathfrak z)\subset Q_{\tau, \lambda_{\mathfrak z}}(\mathfrak z)$ and $G_{\tau, \lambda_{\mathfrak z}}(\mathfrak z)\subset J_{\tau, \lambda_{\mathfrak z}}(\mathfrak z),$ from \eqref{eq: choice condition 2 of rho} and \eqref{eq: choice condition 2 of q phase}, we obtain
\begin{align*}
    \miint{G_{\tau, \lambda_{\mathfrak z}}(\mathfrak z)}H(z, |Du|)+H(z, |F|)\, dz&\leq \frac{a(\mathfrak z)\lambda^p_{\mathfrak z}+b(\mathfrak z)\lambda^q_{\mathfrak z}}{a(\mathfrak z)\lambda^p_{\mathfrak z}}\miint{Q_{\tau, \lambda_{\mathfrak z}}(\mathfrak z)}H(z, |Du|)+H(z, |F|)\, dz\\
    &< a(\mathfrak z)\lambda^p_{\mathfrak z}+b(\mathfrak z)\lambda^q_{\mathfrak z}\quad \text{for every}\quad \tau \in (\rho_{\mathfrak z}, r_2-r_1).
\end{align*}
This gives
\begin{align*}
\miint{G_{\tau, \lambda_{\mathfrak z}}(\mathfrak z)}H_1(z, |Du|)+H_1(z, |F|)\, dz < a(\mathfrak z)\lambda^p_{\mathfrak z}+b(\mathfrak z)\lambda^q_{\mathfrak z}+2 \quad \text{for every}\quad \tau \in (\rho_{\mathfrak z}, r_2-r_1).   
\end{align*}
On the other hand, we have
\begin{align*}
    \miint{G_{\tau, \lambda_{\mathfrak z}}(\mathfrak z)}H(z, |Du|)+H(z, |F|)\, dz&\leq \frac{a(\mathfrak z)\lambda^p_{\mathfrak z}+b(\mathfrak z)\lambda^q_{\mathfrak z}}{b(\mathfrak z)\lambda^q_{\mathfrak z}}\miint{J_{\tau, \lambda_{\mathfrak z}}(\mathfrak z)}H(z, |Du|)+H(z, |F|)\, dz\\
    &< a(\mathfrak z)\lambda^p_{\mathfrak z}+b(\mathfrak z)\lambda^q_{\mathfrak z}\quad \text{for every}\quad \tau \in (\rho_{q; \mathfrak z}, r_2-r_1).
\end{align*}
Similarly, this gives
\begin{align*}
\miint{G_{\tau, \lambda_{\mathfrak z}}(\mathfrak z)}H_1(z, |Du|)+H_1(z, |F|)\, dz < a(\mathfrak z)\lambda^p_{\mathfrak z}+b(\mathfrak z)\lambda^q_{\mathfrak z}+2 \quad \text{for every}\quad \tau \in (\rho_{q; \mathfrak z}, r_2-r_1). 
\end{align*}
Since $\mathfrak z \in E(\Lambda_1, r_1)$ and $\Lambda_1=a(\mathfrak z)\lambda^p_{\mathfrak z}+b(\mathfrak z)\lambda^q_{\mathfrak z}+1,$ from the above two assertions, we get $\rho_{p, q; \mathfrak z}\in (0, \widetilde \rho_{\mathfrak z}]$ such that
\begin{equation}    \label{eq: choice condition 1 of pq phase}
    \miint{G_{\rho_{p,q; \mathfrak{z}},\lambda_\mathfrak{z}}(\mathfrak{z})} [H_1(z,|Du|)+H_1(z,|F|)]\; dz=a(\mathfrak{z})\lambda^p_{z}+b(\mathfrak{z})\lambda_\mathfrak{z}^q+2
\end{equation}
and
\begin{equation}    \label{eq: choice condition 2 of pq phase}
    \miint{G_{\tau,\lambda_\mathfrak{z}}(\mathfrak{z})} [H_1(z,|Du|)+H_1(z,|F|)]\; dz< a(\mathfrak{z})\lambda^p_{z}+b(\mathfrak{z})\lambda_\mathfrak{z}^q+2
\end{equation}
for every $\tau\in(\rho_{p, q; \mathfrak z},r_2-r_1).$

Also, from \eqref{eq: pq phase assmp on ab}, we get $a(z)\leq 3 a(\mathfrak z)$ for all $z \in Q_{\widetilde \rho_{\mathfrak z}, \lambda_{\mathfrak z}}(\mathfrak z)$ and $b(z)\leq 3 b(\mathfrak z)$ for all $z\in J_{\widetilde \rho_{\mathfrak z}, \lambda_{\mathfrak z}}(\mathfrak z).$ Since, by definition, $G_{\rho_{p, q; \mathfrak z}, \lambda_{\mathfrak z}}(\mathfrak z)\subset Q_{\widetilde \rho_{\mathfrak z}, \lambda_{\mathfrak z}}(\mathfrak z)\cap J_{\widetilde \rho_{q; \mathfrak z}, \lambda_{\mathfrak z}}(\mathfrak z),$ we get the second assumption of \eqref{Eq 3.8}-\eqref{Eq 3.9} by replacing the center and the radius by $\mathfrak z$ and $\rho_{p,q; \mathfrak z}.$

\subsection{Vitali covering lemma}\label{subsec:Vitali}

In Section \ref{Sec:Reverse_H} and Subsection \ref{subsec:stopping_time}, we considered three distinct intrinsic cases to obtain reverse H\"{o}lder inequalities.
In this subsection, we establish a Vitali type covering lemma for the following parabolic intrinsic cylinders: For each $\mathfrak{z}\in E(\Lambda,r_1)$, we denote the intrinsic cylinder of $\mathfrak{z}$ by

\begin{align*}
\displaystyle
\mathcal{Q}(\mathfrak{z})=\left\{\begin{array}{l}
Q_{2\rho_\mathfrak{z},\lambda_\mathfrak{z}}(\mathfrak{z})\quad \,\,\,\,\,\,\,\text{in $p$-intrinsic case},\\
J_{2\rho_{q; \mathfrak{z}},\lambda_\mathfrak{z}}(\mathfrak{z})\quad \,\,\,\,\,\,\text{in $q$-intrinsic case},\\
G_{2\rho_{p, q;\mathfrak{z}},\lambda_\mathfrak{z}}(\mathfrak{z})\quad \,\text{in $(p, q)$-intrinsic case}.
\end{array}\right.
\end{align*}
Let us define
\begin{align*}
\mathcal{F}=\left\{\mathcal{Q}(\mathfrak{z})\, : \,\mathfrak{z}\in E(\Lambda_1,r_1)\right\}\quad \text{and} \quad \ell(\mathcal{Q}(\mathfrak{z}))=\left\{
    \begin{array}{l }
        2\rho_\mathfrak{z} \quad\,\,\,\,\,\, \text{in $p$-intrinsic case},\\
        2\rho_{q; \mathfrak{z}} \quad\,\,\, \text{in $q$-intrinsic case},\\
        2\rho_{p,q;\mathfrak{z}} \quad \text{in $(p,q)$-intrinsic case}.
    \end{array}
\right.
\end{align*} 

Also, for $\mathfrak{z}, \mathfrak{w}\in E(\Lambda_1, r_1),$ there exist $\lambda_i\geq1$ and $\rho_i>0$ such that
\begin{align*}
    \Lambda_1=a(i)\lambda^p_{i}+b(i)\lambda^q_{i}+1, \quad \text{where} \ \, i\in \left\{\mathfrak{z}, \mathfrak{w}\right\},
\end{align*}
and 
\begin{equation}\label{eq : mean integral of H equals to lambda_p}
    \miint{Q_{\rho_{i},\lambda_i}(i)}\left[H(z,|Du|)+H(z,|F|)\right]\; dz=a(i)\lambda_i^p.
\end{equation}
% \begin{equation}\label{eq : mean integral of H equals to lambda_q}
%     \miint{J_{\rho_{q; i},\lambda_i}(z_i)}\left[H(z,|Du|)+H(z,|F|)\right]\; dz=b(i)\lambda_i^q,
% \end{equation}
% \begin{equation}\label{eq : mean integral of H equals to lambda_pq}
%     \miint{G_{\rho_{p,q; i},\lambda_i}(z_i)}\left[H(z,|Du|)+H(z,|F|)\right]\; dz=a(i)\lambda_i^p+b(i)\lambda^q_i.
% \end{equation}}
We note from the previous subsection that $l(\mathcal{Q}(\mathfrak{z}))\in (0, R)$ by setting $R:=\frac{r_2-r_1}{c_v}$, where $c_v$ is given by \eqref{eq: definition of c_nu}. Now we consider the following subcollection of $\mathcal{F},$
\begin{align*}
  \mathcal{F}_j:=\left\{\mathcal{Q}(\mathfrak{z})\in \mathcal{F} : \frac{R}{2^j}< \ell({\mathcal{Q}(\mathfrak{z})})\leq \frac{R}{2^{j-1}}\right\}, \,\,\,\, j\in \mathbb{N}.
\end{align*}
We construct disjoint subcollections $\mathcal{I}_j \subset \mathcal{F}_j$ for $j \in \NN$ as follows. Let $\mathcal{I}_1$ be the maximal disjoint  subcollection in $\mathcal{F}_1.$ We note from the following
\begin{align*}
    \lim_{\La_1\to \infty}\La_1|E(\Lambda_1)|\leq \lim_{\La_1 \to \infty}\iint_{E(\Lambda_1)}H_1(z, |D u|)\, dz=0
\end{align*}
that $|E(\Lambda_1)|<\infty.$ 
Moreover, in the same way as in \eqref{eq : basic inequality 1 in section 5} and \eqref{eq: choice condition 1 of rho}, we have 
\begin{equation}    \label{eq : relation between lambda_z and lambda_0}
    \lambda_\mathfrak{z}\leq \left(\frac{2r}{\rho_\mathfrak{z}}\right)^{\frac{n+2}{2}}\lambda_0.
\end{equation}
Using $|E(\La_1, r_1)|\leq |E(\La_1)|< \infty$ and \eqref{eq : relation between lambda_z and lambda_0}, we conclude that the number of cylinders in $\mathcal{I}_1$ is finite. Suppose we have already chosen $\mathcal{I}_j \subset \mathcal{F}_j$ for $j=1,..,k-1.$ Then we construct $\mathcal{I}_k$ as
\begin{align*}
    \mathcal{I}_k=\left\{\mathcal{Q}(\mathfrak{z})\in \mcf_k \, : \, \mathcal{Q}(\mathfrak{z})\cap \mathcal{Q}(\mathfrak{w})=\emptyset\,\,\, \text{for every}\,\,\, \mathcal{Q}(\mathfrak{w}) \in \bigcup_{j=1}^{k-1}\mathcal{I}_j \right\}. 
\end{align*}
Therefore, 
$$
\mathcal{I}=\bigcup_{j=1}^{\infty}\mathcal{I}_j
$$
would be the maximal collection of pairwise disjoint cylinders in $\mcf.$ Now to prove Vitali's covering lemma, we need to show:
\begin{itemize}
\item[(i)] For any $\mathcal{Q}(\mathfrak{z}) \in \mcf,$ there exists $\mathcal{Q}(\mathfrak{w}) \in \mathcal{I}$ such that $\mathcal{Q}(\mathfrak{z}) \cap \mathcal{Q}(\mathfrak{w}) \neq \emptyset.$
\item[(ii)] There exists a universal constant $c_v>1$ such that $\mathcal{Q}(\mathfrak{z}) \subset c_v \mathcal{Q}(\mathfrak{w})$ where 
$$
c_v\mathcal{Q}(\mathfrak{w})=\left\{
    \begin{array}{l c}
        Q_{c_v(2\rho_\mathfrak{w}),\lambda_\mathfrak{w}}(\mathfrak{w})\,\,\,\,\,\, \quad \text{in $p$-intrinsic case},\\
        J_{c_v(2\rho_{q;\mathfrak{w}}),\lambda_\mathfrak{w}}(\mathfrak{w})\,\,\,\,\, \quad \text{in $q$-intrinsic case},\\
        G_{c_v(2\rho_{p,q;\mathfrak{w}}),\lambda_\mathfrak{w}}(\mathfrak{w}) \quad \text{in $(p,q)$-intrinsic case}.
    \end{array}
    \right.
$$
\end{itemize}
To show the first assertion, fix some $Q(\mathfrak{z}) \in \mcf.$ Then $Q(\mathfrak{z}) \in \mcf_j$ for some $j \in \NN.$ Using the maximality of $\mathcal{I}_j,$ we find that there exists $\mathcal{Q}(\mathfrak{w})\in \cup_{i=1}^{j}\mathcal{I}_i$ such that $\mathcal{Q}(\mathfrak{z})\cap \mathcal{Q}(\mathfrak{w})\neq \emptyset.$ 

To show the second assertion, we first observe that for any $\mathcal{Q}(\mathfrak{w}) \in \mathcal{I}_i,$ with $i\leq j,$ we have
\begin{align}\label{radius compare}
    l(\mathcal{Q}(\mathfrak{z}))\leq 2l(\mathcal{Q}(\mathfrak{w})).
\end{align}
This is due to the fact that
$\frac{R}{2^j}\leq \frac{R}{2^i}<\ell(\mathcal Q(\mathfrak w)) \leq \frac{R}{2^{i-1}}$ for $i\leq j$, and $\frac{R}{2^j}<\ell(\mathcal Q(\mathfrak z)) \leq \frac{R}{2^{j-1}}$. 

In the rest of the subsection, we show the existence of the Vitali constant $c_v>1$ such that $\mathcal{Q}(\mathfrak{z}) \subset c_v \mathcal{Q}(\mathfrak{w})$ by considering the following nine distinct cases depicted in the table below.
\begin{table}[ht]
\centering
\begin{tabular}{|c|c c c|}
\hline
\diagbox{$\mathcal{Q}(\mathfrak{z})$}{$\mathcal{Q}(\mathfrak{w})$} & $Q_{2\rho_{\mathfrak{w}},\lambda_{\mathfrak{w}}}(\mathfrak{w})$ & $J_{2\rho_{q;\mathfrak{w}},\lambda_{\mathfrak{w}}}(\mathfrak{w})$ & $G_{2\rho_{p,q; \mathfrak{w}},\lambda_{\mathfrak{w}}}(\mathfrak{w})$\\ 
\hline
$Q_{2\rho_{\mathfrak{z}},\lambda_\mathfrak{z}}(\mathfrak{z})$ & (1-1) & (1-2) & (1-3) \\
        
$J_{2\rho_{q;\mathfrak{z}},\lambda_\mathfrak{z}}(\mathfrak{z})$ & (2-1) & (2-2) & (2-3) \\
        
$G_{2\rho_{p,q; \mathfrak{z}},\lambda_\mathfrak{z}}(\mathfrak{z})$ & (3-1) & (3-2) & (3-3) \\
\hline
\end{tabular}
\medskip

\caption{The combinations of $\mathcal{Q}(\mathfrak{z})$ and $\mathcal{Q}(\mathfrak{w})$.}
\label{tab : 9 Cases} 
\end{table}

\noindent \textit{Claim:} $\lambda_{\mathfrak{z}}$ and $\lambda_{\mathfrak{w}}$ are comparable.

\noindent Since $G_{2\rho_{p, q; (\cdot)}, \lambda_{(\cdot)}}(\cdot)\subset Q_{2\rho_{ (\cdot)}, \lambda_{(\cdot)}}(\cdot)$ and $G_{2\rho_{p, q; (\cdot)}, \lambda_{(\cdot)}}(\cdot)\subset J_{2\rho_{q; (\cdot)}, \lambda_{(\cdot)}}(\cdot),$ it is enough to consider cases (1-1), (1-2), (2-1), (2-2). Let $\lambda_{\mathfrak{z}}\leq \lambda_{\mathfrak{w}}.$ We will show that $\lambda_{\mathfrak{w}}\leq (2K)^{\frac{1}{p}}\lambda_{\mathfrak{z}}.$

\noindent \textit{Case (1-1):} First, we note from the first assertion that $Q_{2\rho_{\mathfrak{z}}, \lambda_{\mathfrak{z}}}(\mathfrak{z})\cap Q_{2\rho_{\mathfrak{w}}, \lambda_{\mathfrak{w}}}(\mathfrak{w})\neq \emptyset.$ Using the H\"{o}lder continuity of $b(\cdot)$ and \eqref{radius compare}, we get
\begin{align*}
    |b(\mathfrak{z})-b(\mathfrak{w})|\leq [b]_{\alpha}\left(2\rho^{\alpha}_{\mathfrak{z}}+2\rho^{\alpha}_{\mathfrak{w}}\right)\leq 6[b]_{\alpha}\rho^{\alpha}_{\mathfrak{w}},
\end{align*}
and from the uniform continuity of $a(\cdot),$ we have
\begin{align*}
    |a(\mathfrak{z})-a(\mathfrak{w})|\leq \omega_a(\rho_{\mathfrak{z}})+\omega_a(\rho_{\mathfrak{w}}).
\end{align*}
On the contrary, assume $\lambda_{\mathfrak{w}}>(2K)^{\frac{1}{p}}\lambda_{\mathfrak{z}}.$ Moreover, using \eqref{eq : mean integral of H equals to lambda_p}, we can derive
\begin{align}\label{estimate on rho alpha2.0}
    \rho^{\alpha}_{\mathfrak w}\lambda^q_{\mathfrak w}\leq \frac{K}{40 [b]_{\alpha}}\frac{\delta}{3}\lambda^p_{\mathfrak w}
\end{align}
similar to the estimate \eqref{estimate on rho alpha}.
Now we use the counter assumption, \eqref{estimate on rho alpha2.0}, $a(\mathfrak{w})\geq \frac{\delta}{2},$ and  $\lambda_{\mathfrak{z}}\leq \lambda_{\mathfrak{w}}$ to obtain
\begin{align*}
\Lambda_1=a(\mathfrak{z})\lambda^p_{\mathfrak{z}}+b(\mathfrak{z})\lambda^q_{\mathfrak{z}}+1&\leq |a(\mathfrak{z})-a(\mathfrak{w})|\lambda^p_{\mathfrak{z}}+a(\mathfrak{w})\lambda^p_{\mathfrak{z}}+|b(\mathfrak{z})-b(\mathfrak{w})|\lambda^q_{\mathfrak{z}}+b(\mathfrak{w})\lambda^q_{\mathfrak{z}}+1\\
&\leq \left(\omega_{a}(\rho_{\mathfrak{z}})+\omega_{a}(\rho_{\mathfrak{w}})\right)\lambda^p_{\mathfrak{z}}+ a(\mathfrak{w})\lambda^p_{\mathfrak{z}}+ 6[b]_{\alpha}\rho^{\alpha}_{\mathfrak{w}}\frac{\lambda^q_{\mathfrak{w}}}{(2K)^{q/p}}+b(\mathfrak{w})\lambda^q_{\mathfrak{w}}+1\\ &\overset{\eqref{estimate on rho alpha2.0}}{\leq} \frac{K \delta}{10}\lambda^p_{\mathfrak{z}}+ a(\mathfrak{w})\lambda^p_{\mathfrak{z}}+\frac{1}{(2K)^{q/p-1}}\frac{\delta}{20}\lambda^p_{\mathfrak{w}}+b(\mathfrak{w})\lambda^q_{\mathfrak{w}}+1\\
&< \frac{K}{5}a(\mathfrak{w})\frac{\lambda^p_{\mathfrak{w}}}{2K}+a(\mathfrak{w})\frac{\lambda^p_{\mathfrak{w}}}{2K}+\frac{1}{(2K)^{q/p-1 }10}a(\mathfrak{w})\lambda^p_{\mathfrak{w}}+b(\mathfrak{w})\lambda^q_{\mathfrak{w}}+1\\
&=\left(\frac{1}{10}+\frac{1}{2K}+\frac{1}{(2K)^{q/p-1}10}\right)a(\mathfrak{w})\lambda^p_{\mathfrak{w}}+b(\mathfrak{w})\lambda^q_{\mathfrak{w}}+1\\
&<\frac{7}{10}a(\mathfrak{w})\lambda^p_{\mathfrak{w}}+b(\mathfrak{w})\lambda^q_{\mathfrak{w}}+1<a(\mathfrak{w})\lambda^p_{\mathfrak{w}}+b(\mathfrak{w})\lambda^q_{\mathfrak{w}}+1=\Lambda_1,
\end{align*}
which is a contradiction.

\noindent \textit{Case (2-1):} Since, by \eqref{radius compare} we have $\rho_{q; \mathfrak{z}}\leq 2\rho_{\mathfrak{w}},$ we can conclude this case by following the same argument as above.

\noindent \textit{Case (2-2):} Following the previous argument together with \eqref{radius compare} and $b(\mathfrak{w})\geq \frac{\delta}{2},$, we obtain 

\begin{align*}
\Lambda_1=a(\mathfrak{z})\lambda^p_{\mathfrak{z}}+b(\mathfrak{z})\lambda^q_{\mathfrak{z}}+1&\leq |a(\mathfrak{z})-a(\mathfrak{w})|\lambda^p_{\mathfrak{z}}+a(\mathfrak{w})\lambda^p_{\mathfrak{z}}+|b(\mathfrak{z})-b(\mathfrak{w})|\lambda^q_{\mathfrak{z}}+b(\mathfrak{w})\lambda^q_{\mathfrak{z}}+1\\
&<\left(\omega_{a}(\rho_{q; \mathfrak{z}})+\omega_{a}(\rho_{q; \mathfrak{w}})\right)\lambda^p_{\mathfrak{z}}+ a(\mathfrak{w})\lambda^p_{\mathfrak{z}}+ 6[b]_{\alpha}\rho^{\alpha}_{q;\mathfrak{w}}\frac{\lambda^q_{\mathfrak{w}}}{(2K)^{q/p}}+b(\mathfrak{w})\frac{\lambda^q_{\mathfrak{w}}}{(2K)^{\frac{q}{p}}}+1\\ &\overset{\eqref{estimate on rho alpha}}{\leq} \frac{K \delta}{10}\lambda^p_{\mathfrak{z}}+ a(\mathfrak{w})\lambda^p_{\mathfrak{z}}+\frac{1}{(2K)^{q/p-1}}\frac{\delta}{20}\lambda^p_{\mathfrak{w}}+b(\mathfrak{w})\frac{\lambda^q_{\mathfrak{w}}}{(2K)^{\frac{q}{p}}}+1\\
&< \frac{K}{5}b(\mathfrak{w})\frac{\lambda^p_{\mathfrak{w}}}{2K}+a(\mathfrak{w})\lambda^p_{\mathfrak{w}}+\frac{1}{(2K)^{q/p-1}}\frac{1}{10}b(\mathfrak{w})\lambda^p_{\mathfrak{w}}+b(\mathfrak{w})\frac{\lambda^q_{\mathfrak{w}}}{(2K)^{\frac{q}{p}}}+1\\
&=a(\mathfrak{w})\lambda^p_{\mathfrak{w}}+\left(\frac{1}{10}+\frac{1}{(2K)^{\frac{q}{p}}}+\frac{1}{(2K)^{q/p-1}10}\right)b(\mathfrak{w})\lambda^q_{\mathfrak{w}}+1\\
&<a(\mathfrak{w})\lambda^p_{\mathfrak{w}}+\frac{7}{10}b(\mathfrak{w})\lambda^q_{\mathfrak{w}}+1<a(\mathfrak{w})\lambda^p_{\mathfrak{w}}+b(\mathfrak{w})\lambda^q_{\mathfrak{w}}+1=\Lambda_1,
\end{align*}
which yields a contradiction. 

\noindent \textit{Case (1-2):} Since, by \eqref{radius compare} we have $\rho_{\mathfrak{z}}\leq 2\rho_{q, \mathfrak{w}},$ we can conclude this case by following the same argument as above.

Note that \eqref{estimate on rho alpha} is proved in the $p$-intrinsic case, but the same assertion can be proved for the $q$-intrinsic case as well.

Assuming $\lambda_{\mathfrak{w}}\leq \lambda_{\mathfrak{z}},$ by an argument similar to above, we can show $\lambda_{\mathfrak{z}}\leq (2K)^{\frac{1}{p}}\lambda_{\mathfrak{w}}$ in all the above cases. This completes the proof of the comparability of the scaling factors.

\subsubsection{Space inclusion} The inclusion of the space part follows from properties $\mathcal{Q}(\mathfrak{z})\cap \mathcal{Q}(\mathfrak{w})\neq \emptyset$ and \eqref{radius compare}. We only show case (1-1) in \cref{tab : 9 Cases} and other cases follow similarly. Let $\mathfrak{z}=(x_{\mathfrak{z}}, t_{\mathfrak{z}})$ and $\mathfrak{w}=(x_{\mathfrak{w}}, t_{\mathfrak{w}})$ and we have $B_{2\rho_{\mathfrak{z}}}(x_{\mathfrak{z}})\cap B_{2\rho_{\mathfrak{w}}}(x_{\mathfrak{w}})\neq \emptyset.$ For $x \in B_{2\rho_{\mathfrak{z}}}(x_{\mathfrak{z}}),$ we have

\begin{align*}
    |x-x_{\mathfrak{w}}|\leq |x-x_{\mathfrak{z}}|+|x_{\mathfrak{z}}-x_{\mathfrak{w}}|\leq 2\rho_{\mathfrak{z}}+2\rho_{\mathfrak{z}}+2\rho_{\mathfrak{w}}=4\rho_{\mathfrak z}+2\rho_{\mathfrak w}\overset{\eqref{radius compare}}{\leq} 10\rho_{\mathfrak{w}}\leq c_v (2\rho_{\mathfrak{w}}). 
\end{align*}
Choosing $c_*(\texttt{data})\geq \frac{5}{K},$ we have the conclusion that $x\in B_{2c_{v}\rho_{\mathfrak{w}}}(x_{\mathfrak w}).$
\subsubsection{Time inclusion} For the time part, we need to consider several cases.

\noindent \textit{Case (1-1):} In this case, we have
\begin{align*}
    I_{2\rho_{\mathfrak{z}}, \lambda_{\mathfrak{z}}}(t_{\mathfrak{z}})\cap I_{2\rho_{\mathfrak{w}}, \lambda_{\mathfrak{w}}}(t_{\mathfrak{w}})\neq \emptyset.
\end{align*}
Let $t \in I_{2\rho_{\mathfrak{z}}, \lambda_{\mathfrak{z}}}(t_{\mathfrak{z}}).$ Note that for $c_v= c_*(\texttt{data})K \geq c_*(\texttt{data}),$ we have $\omega_a(\rho_{\mfz})+\omega_a(\rho_{\mfw})\leq \frac{\delta}{2}.$ Using this and $a(\mathfrak{z})\geq \frac{\delta}{2},$ we have
\begin{align}\label{EQQ4.23}
a(\mfw)\lambda^p_{\mfw}&\leq |a(\mfw)-a(\mfz)|\lambda^p_{\mfw}+a(\mfz)\lambda^p_{\mfw}\nonumber\\
&\leq \left(\omega_a(\rho_{\mfz})+\omega_a(\rho_{\mfw})\right)\lambda^p_{\mfw}+a(\mfz)\lambda^p_{\mfw}\leq \frac{\delta}{2}\lambda^p_{\mfw}+a(\mfz)\lambda^p_{\mfw}\leq 2 a(\mfz)\lambda^p_{\mfw}\leq 4K a(\mfz)\lambda^p_{\mfz}.
\end{align}

Then, using \eqref{EQQ4.23} we get
\begin{align}\label{EQQ4.24}
    |t-t_{\mathfrak{w}}|\leq |t-t_{\mathfrak{z}}|+|t_{\mathfrak{z}}-t_{\mathfrak{w}}|&\leq \frac{2\lambda^{2}_{\mathfrak{z}}(2\rho_{\mathfrak{z}})^2}{\lambda^p_{\mfz}a(\mathfrak{z})}+\frac{\lambda^{2}_{\mathfrak{w}}(2\rho_{\mathfrak{w}})^2}{\lambda^p_{\mfw}a(\mathfrak{w})}\nonumber\\
    &\leq\frac{2\lambda^2_{\mfz}(2 \rho_{\mfz})^2(4K)}{a(\mfw)\lambda^p_{\mfw}}+\frac{\lambda^{2}_{\mathfrak{w}}(2\rho_{\mathfrak{w}})^2}{\lambda^p_{\mfw}a(\mathfrak{w})}.
\end{align}
Using $\lambda_{\mathfrak z} \leq (2K)^{\frac{1}{p}}\lambda_{\mathfrak w}$ and $\rho_{\mathfrak z}\leq 2\rho_{\mathfrak w}$ in \eqref{EQQ4.24}, we obtain
\begin{align*}
 \frac{2\lambda^2_{\mfz}(2 \rho_{\mfz})^2(4K)}{a(\mfw)\lambda^p_{\mfw}}+\frac{\lambda^{2}_{\mathfrak{w}}(2\rho_{\mathfrak{w}})^2}{\lambda^p_{\mfw}a(\mathfrak{w})}&\leq \frac{2(2K)^{2/p}\lambda^2_{\mfw}4(2\rho_{\mfw})^24K}{\lambda^p_{\mfw}a(\mfw)}+\frac{\lambda^2_{\mfw}(2\rho_{\mfw})^2}{\lambda^p_{\mfw}a(\mfw)}\\
 &\leq \left(64K^2+1\right)\frac{\lambda^2_{\mfw}(2\rho_{\mfw})^2}{\lambda^p_{\mfw}a(\mfw)}\leq \frac{\lambda^2_{\mfw}(2c_v\rho_{\mfw})^2}{\lambda^p_{\mfw}a(\mfw)}
\end{align*}
if \begin{align}\label{c_1}c_*(\texttt{data})\geq \frac{\sqrt{64K^2+1}}{K}.\end{align}

\noindent \textit{Case (1-2):} In this case, we have
\begin{align*}
    I_{2\rho_{\mathfrak{z}}, \lambda_{\mathfrak{z}}}(t_{\mathfrak{z}})\cap I^q_{2\rho_{q, \mathfrak{w}}, \lambda_{\mathfrak{w}}}(t_{\mathfrak{w}})\neq \emptyset.
\end{align*}
Here we first show that
\begin{align}\label{important claim}
    b(\mfw)\lambda^q_{\mfw}\apprle a(\mfz)\lambda^p_{\mfz}.
\end{align}
Indeed, we have
\begin{align*}
    b(\mfw)\lambda^q_{\mfw}\leq |b(\mfw)-b(\mfz)|\lambda^q_{\mfw}+b(\mfz)\lambda^q_{\mfw}&\leq 6[b]_{\alpha}\rho^{\alpha}_{q; \mfw}\lambda^q_{\mfw}+b(\mfz)\lambda^q_{\mfw}\\
    &\overset{\eqref{estimate on rho alpha}}{\leq} \frac{K \delta}{20}\lambda^p_{\mfw}+(2K)^{\frac{q}{p}}b(\mfz)\lambda^q_{\mfz}\\ 
    &\overset{\eqref{eq: p-phase condition}}{\leq} \frac{2K^2}{10}a(\mfz)\lambda^p_{\mfz}+(2K)^{\frac{q}{p}}b(\mfz)\lambda^q_{\mfz}\\
    &\overset{\eqref{eq: p-phase condition}}{\leq} \frac{2K^2}{10}a(\mfz)\lambda^p_{\mfz}+(2K)^{\frac{q}{p}}Ka(\mfz)\lambda^p_{\mfz}\\
    &\leq \left(\frac{K^2}{5}+(2K)^{\frac{q}{p}+1}\right)a(\mfz)\lambda^p_{\mfz}
    \leq (K^3+(2K)^3)a(\mfz)\lambda^p_{\mfz}=9K^3a(\mfz)\lambda^p_{\mfz}.
\end{align*}
This proves the claim \eqref{important claim}.
Let $t \in I_{2\rho_{\mathfrak{z}}, \lambda_{\mathfrak{z}}}(t_{\mathfrak{z}}).$ Then, using \eqref{important claim}, we have 
\begin{align*}
    |t-t_{\mathfrak{w}}|\leq |t-t_{\mathfrak{z}}|+|t_{\mathfrak{z}}-t_{\mathfrak{w}}|&\leq \frac{2\lambda^{2}_{\mathfrak{z}}(2\rho_{\mathfrak{z}})^2}{a(\mathfrak{z})\lambda^p_{\mfz}}+\frac{\lambda^{2}_{\mathfrak{w}}(2\rho_{q;\mathfrak{w}})^2}{b(\mathfrak{w})\lambda^q_{\mfw}}\\
    &\leq \frac{2\cdot 9K^3\lambda^2_{\mfz}(2\rho_{\mfz})^2}{b(\mfw)\lambda^q_{\mfw}}+\frac{\lambda^{2}_{\mathfrak{w}}(2\rho_{q;\mathfrak{w}})^2}{b(\mathfrak{w})\lambda^q_{\mfw}}\\
    &\leq\frac{8\cdot 9K^3(2K)^{2/p}\lambda^2_{\mfw}(2\rho_{q;\mfw})^2}{b(\mfw)\lambda^q_{\mfw}}+\frac{\lambda^{2}_{\mathfrak{w}}(2\rho_{q;\mathfrak{w}})^2}{b(\mathfrak{w})\lambda^q_{\mfw}}\\
    &\leq\left(8\cdot 9K^3(2K)+1\right)\frac{\lambda^{2}_{\mathfrak{w}}(2\rho_{q;\mathfrak{w}})^2}{b(\mathfrak{w})\lambda^q_{\mfw}}\leq \frac{\lambda^{2}_{\mathfrak{w}}(2c_v\rho_{q;\mathfrak{w}})^2}{b(\mathfrak{w})\lambda^q_{\mfw}}
\end{align*}
if \begin{align}\label{c_2}c_*(\texttt{data} )\geq \frac{\sqrt{144K^4+1}}{K}.\end{align}

\noindent \textit{Case (1-3):} In this case, we have
\begin{align*}
    I_{2\rho_{\mathfrak{z}}, \lambda_{\mathfrak{z}}}(t_{\mathfrak{z}})\cap I^{(p,q)}_{2\rho_{q, \mathfrak{w}}, \lambda_{\mathfrak{w}}}(t_{\mathfrak{w}})\neq \emptyset.
\end{align*}
Let $z_0\in \mathcal{Q}(\mfz)\cap \mathcal{Q}(\mfw).$ Then from \eqref{eq: p-phase condition}
 and \eqref{Eq 3.8}--\eqref{Eq 3.9}, we have
 \begin{align*}
     \frac{a(z_0)}{3}\leq a(\mfz)\leq 3a(z_0), \frac{a(z_0)}{3}\leq a(\mfw)\leq 3a(z_0),\,\, \text{and}\,\,\, \frac{b(z_0)}{3}\leq b(\mfw)\leq 3b(z_0).
 \end{align*}
 Thus, using the above estimate and \eqref{important claim}, we deduce
 \begin{align*}
a(\mathfrak{w})\lambda^p_{\mathfrak{w}}+b(\mathfrak{w})\lambda^q_{\mathfrak{w}}\leq 9(2K)a(\mfz)\lambda^p_{\mfz}+9K^3 a(\mfz)\lambda^p_{\mfz}\leq (3K)^3 a(\mfz)\lambda^p_{\mfz}.   
 \end{align*}
Let $t \in I_{2\rho_{\mathfrak{z}}, \lambda_{\mathfrak{z}}}(t_{\mathfrak{z}}).$ Now we have
\begin{align*}
 |t-t_{\mathfrak{w}}|\leq |t-t_{\mathfrak{z}}|+|t_{\mathfrak{z}}-t_{\mathfrak{w}}|&\leq \frac{2\lambda^{2}_{\mathfrak{z}}(2\rho_{\mathfrak{z}})^2}{a(\mathfrak{z})\lambda^p_{\mfz}}+\frac{\lambda^2_{\mathfrak{w}}(2\rho_{p,q; \mathfrak{w}})^2}{a(\mathfrak{w})\lambda^p_{\mathfrak{w}}+b(\mathfrak{w})\lambda^q_{\mathfrak{w}}}\\
 &\leq \frac{8\lambda^2_{\mfz}(3K)^3(2\rho_{p,q;\mfw})^2}{a(\mathfrak{w})\lambda^p_{\mathfrak{w}}+b(\mathfrak{w})\lambda^q_{\mathfrak{w}}}+\frac{\lambda^2_{\mathfrak{w}}(2\rho_{p,q; \mathfrak{w}})^2}{a(\mathfrak{w})\lambda^p_{\mathfrak{w}}+b(\mathfrak{w})\lambda^q_{\mathfrak{w}}}\\
 &\leq \frac{8(2K)^{2/p}\lambda^2_{\mfw}(3K)^3(2\rho_{p,q;\mfw})^2}{a(\mathfrak{w})\lambda^p_{\mathfrak{w}}+b(\mathfrak{w})\lambda^q_{\mathfrak{w}}}+\frac{\lambda^2_{\mathfrak{w}}(2\rho_{p,q; \mathfrak{w}})^2}{a(\mathfrak{w})\lambda^p_{\mathfrak{w}}+b(\mathfrak{w})\lambda^q_{\mathfrak{w}}}\\
 &\leq \left(16K(3K)^3+1\right)\frac{\lambda^2_{\mathfrak{w}}(2\rho_{p,q; \mathfrak{w}})^2}{a(\mathfrak{w})\lambda^p_{\mathfrak{w}}+b(\mathfrak{w})\lambda^q_{\mathfrak{w}}}\leq \frac{\lambda^2_{\mathfrak{w}}(2c_v\rho_{p,q; \mathfrak{w}})^2}{a(\mathfrak{w})\lambda^p_{\mathfrak{w}}+b(\mathfrak{w})\lambda^q_{\mathfrak{w}}}
\end{align*}
if 
\begin{align}\label{c_3}c_*(\texttt{data})\geq \frac{\sqrt{432K^4+1}}{K}.\end{align}

\noindent \textit{Case (2-1):} In this case, we have
\begin{align*}
 I^q_{2\rho_{q;\mathfrak{z}}, \lambda_{\mathfrak{z}}}(t_{\mathfrak{z}})\cap I_{2\rho_{ \mathfrak{w}}, \lambda_{\mathfrak{w}}}(t_{\mathfrak{w}})\neq \emptyset.   
\end{align*}
We first claim that
\begin{align}\label{important claim 2}
    a(\mfw)\lambda^p_{\mfw}\apprle b(\mfz)\lambda^q_{\mfz}.
\end{align}
Indeed, for $c_v=c_*(\texttt{data})K\geq c_*(\texttt{data})$, we see that
\begin{align*}
    a(\mfw)\lambda^p_{\mfw}\leq |a(\mfw)-a(\mfz)|\lambda^p_{\mfw}+a(\mfz)\lambda^p_{\mfw}&\leq \left(\omega_a(\rho_{\mfz})+\omega_a(\rho_{\mfw})\right)\lambda^p_{\mfw}+a(\mfz)\lambda^p_{\mfw}\\
    &\leq\frac{\delta}{2}\lambda^p_{\mfw}+(2K)a(\mfz)\lambda^p_{\mfz}\\
    &\leq (2K)b(\mfz)\lambda^p_{\mfz}+(2K)\frac{b(\mfz)\lambda^q_{\mfz}}{K}\leq 4K b(\mfz)\lambda^q_{\mfz}.
\end{align*}
Let $t \in I^q_{2\rho_{q;\mathfrak{z}}, \lambda_{\mathfrak{z}}}(t_{\mathfrak{z}}).$ Then using \eqref{important claim 2}, we get
\begin{align*}
    |t-t_{\mathfrak{w}}|\leq |t-t_{\mathfrak{z}}|+|t_{\mathfrak{z}}-t_{\mathfrak{w}}|&\leq \frac{\lambda^{2}_{\mathfrak{w}}(2\rho_{\mathfrak{w}})^2}{a(\mathfrak{w})\lambda^p_{\mfw}}+\frac{2\lambda^{2}_{\mathfrak{z}}(2\rho_{q;\mathfrak{z}})^2}{b(\mathfrak{z})\lambda^q_{\mfz}}\\
    &\leq \frac{\lambda^{2}_{\mathfrak{w}}(2\rho_{\mathfrak{w}})^2}{a(\mathfrak{w})\lambda^p_{\mfw}}+\frac{8K\lambda^2_{\mfz}(2\rho_{q;\mfz})^2}{a(\mfw)\lambda^p_{\mfw}}\\
    &\leq \frac{\lambda^{2}_{\mathfrak{w}}(2\rho_{\mathfrak{w}})^2}{a(\mathfrak{w})\lambda^p_{\mfw}}+ \frac{32K (2K)^{2/p}\lambda^2_{\mfw}(2\rho_{\mfw})^2}{a(\mfw)\lambda^p_{\mfw}}\\
    &\leq \left(64K^2+1\right)\frac{\lambda^{2}_{\mathfrak{w}}(2\rho_{\mathfrak{w}})^2}{a(\mathfrak{w})\lambda^p_{\mfw}}\leq \frac{\lambda^{2}_{\mathfrak{w}}(2c_v\rho_{\mathfrak{w}})^2}{a(\mathfrak{w})\lambda^p_{\mfw}}
\end{align*}
if
\begin{align}\label{c_4}
    c_*(\texttt{data})\geq \frac{\sqrt{64K^2+1}}{K}.
\end{align}

\noindent \textit{Case (2-2):} In this case, we have
\begin{align*}
 I^q_{2\rho_{q;\mathfrak{z}}, \lambda_{\mathfrak{z}}}(t_{\mathfrak{z}})\cap I^q_{2\rho_{q; \mathfrak{w}}, \lambda_{\mathfrak{w}}}(t_{\mathfrak{w}})\neq \emptyset.    
\end{align*}
We note that, for $z_0 \in \mathcal{Q}(\mfz)\cap \mathcal{Q}(\mfw)$,
\begin{align*}
\frac{b(z_0)}{3}\leq b(\mfz)\leq 3b(z_0) \quad \text{and}\quad \frac{b(z_0)}{3}\leq b(\mfw)\leq 3b(z_0).
\end{align*}
This implies 
\begin{align}\label{EQQ4.27}
    \frac{1}{b(\mfz)\lambda^q_{\mfz}}\leq \frac{9(2K)^{q/p}}{b(\mfw)\lambda^q_{\mfw}}.
\end{align}
Let $t \in I^q_{2\rho_{q;\mathfrak{z}}, \lambda_{\mathfrak{z}}}(t_{\mathfrak{z}}).$ Then, substituting \eqref{EQQ4.27} gives
\begin{align*}
    |t-t_{\mathfrak{w}}|\leq |t-t_{\mathfrak{z}}|+|t_{\mathfrak{z}}-t_{\mathfrak{w}}|&\leq \frac{\lambda^{2}_{\mathfrak{w}}(2\rho_{q;\mathfrak{w}})^2}{b(\mathfrak{w})\lambda^q_{\mfw}}+\frac{2\lambda^{2}_{\mathfrak{z}}(2\rho_{q;\mathfrak{z}})^2}{b(\mathfrak{z})\lambda^q_{\mfz}}\\
    &\leq \frac{\lambda^{2}_{\mathfrak{w}}(2\rho_{q;\mathfrak{w}})^2}{b(\mathfrak{w})\lambda^q_{\mfw}}+ \frac{32(2K)^{\frac{q}{p}+1}\lambda^2_{\mfw}(2\rho_{q;w})^2}{b(\mfw)\lambda^q_{\mfw}}\\
    &\leq \left(256K^3+1\right)\frac{\lambda^{2}_{\mathfrak{w}}(2\rho_{q;\mathfrak{w}})^2}{b(\mathfrak{w})\lambda^q_{\mfw}}\leq \frac{\lambda^{2}_{\mathfrak{w}}(2c_v\rho_{q;\mathfrak{w}})^2}{b(\mathfrak{w})\lambda^q_{\mfw}} 
\end{align*}
if
\begin{align}\label{c_5}
    c_*(\texttt{data})\geq \frac{\sqrt{256K^3+1}}{K}.
\end{align}

\noindent \textit{Case (2-3):} In this case, we have
\begin{align*}
 I^q_{2\rho_{q;\mathfrak{z}}, \lambda_{\mathfrak{z}}}(t_{\mathfrak{z}})\cap I^{(p,q)}_{2\rho_{p,q; \mathfrak{w}}, \lambda_{\mathfrak{w}}}(t_{\mathfrak{w}})\neq \emptyset.       
\end{align*}

We first note that, using \eqref{important claim 2} and \eqref{EQQ4.27}, we get
\begin{align*}
    a(\mfw)\lambda^p_{\mfw}+b(\mfw)\lambda^q_{\mfw}\leq 4K b(\mfz)\lambda^q_{\mfz}+9(2K)^{\frac{q}{p}}b(\mfz)\lambda^q_{\mfz}\leq 40K^2 b(\mfz)\lambda^q_{\mfz}.
\end{align*}
Let $t \in I^q_{2\rho_{q;\mathfrak{z}}, \lambda_{\mathfrak{z}}}(t_{\mathfrak{z}}).$ Then using this above estimate, we get
\begin{align*}
 |t-t_{\mathfrak{w}}|\leq |t-t_{\mathfrak{z}}|+|t_{\mathfrak{z}}-t_{\mathfrak{w}}|&\leq \frac{2\lambda^{2}_{\mathfrak{z}}(2\rho_{q;\mathfrak{z}})^2}{b(\mathfrak{z})\lambda^q_{\mfz}}+\frac{\lambda^2_{\mathfrak{w}}(2\rho_{p,q; \mathfrak{w}})^2}{a(\mathfrak{w})\lambda^p_{\mathfrak{w}}+b(\mathfrak{w})\lambda^q_{w}}\\
 &\leq \frac{160K^2(2K)^{\frac{2}{p}}\lambda^2_{\mfw}(2\rho_{p,q; \mfw})^2}{a(\mathfrak{w})\lambda^p_{\mathfrak{w}}+b(\mathfrak{w})\lambda^q_{w}}+\frac{\lambda^2_{\mathfrak{w}}(2\rho_{p,q; \mathfrak{w}})^2}{a(\mathfrak{w})\lambda^p_{\mathfrak{w}}+b(\mathfrak{w})\lambda^q_{w}}\\
 &\leq (320K^3+1)\frac{\lambda^2_{\mathfrak{w}}(2\rho_{p,q; \mathfrak{w}})^2}{a(\mathfrak{w})\lambda^p_{\mathfrak{w}}+b(\mathfrak{w})\lambda^q_{w}}\leq \frac{\lambda^2_{\mathfrak{w}}(2c_v\rho_{p,q; \mathfrak{w}})^2}{a(\mathfrak{w})\lambda^p_{\mathfrak{w}}+b(\mathfrak{w})\lambda^q_{w}}
\end{align*}
if
\begin{align}\label{c_6}
    c_*(\texttt{data})\geq \frac{\sqrt{320K^3+1}}{K}.
\end{align}

\noindent \textit{Case (3-1):} In this case, we have
\begin{align*}
    I^{(p,q)}_{2\rho_{p,q; \mfz}, \lambda_{\mfz}}(t_{\mfz})\cap I_{2\rho_{\mfw}, \lambda_{\mfw}}(t_{\mfw})\neq \emptyset.
\end{align*}
For $t \in I^{(p,q)}_{2\rho_{p,q; \mfz}, \lambda_{\mfz}}(t_{\mfz})$, we get
\begin{align*}
    |t-t_{\mfw}|\leq |t-t_{\mfz}|+|t_{\mfz}-t_{\mfw}|&\leq  \frac{2 \lambda^2_{\mfz}(2 \rho_{p,q; \mfz})^2}{a(\mfz)\lambda^p_{\mfz}+b(\mfz)\lambda^q_{\mfz}}+\frac{ \lambda^2_{\mfw}(2 \rho_{\mfw})^2}{a(\mfw)\lambda^p_{\mfw}}\\
    &\leq \frac{8 \lambda^2_{\mfz}(2 \rho_{\mfz})^2}{a(\mfz)\lambda^p_{\mfz}}+\frac{ \lambda^2_{\mfw}(2 \rho_{\mfw})^2}{a(\mfw)\lambda^p_{\mfw}}.
\end{align*}
This turns out to be the Case (1-1) with a different constant and following the calculations, we conclude $I^{(p,q)}_{2\rho_{p,q; \mfz}, \lambda_{\mfz}}(t_{\mfz})\subset I_{2c_v\rho_{\mfw}, \lambda_{\mfw}}(t_{\mfw})$ if 
\begin{align}\label{c_7}
    c_*(\texttt{data})\geq \frac{\sqrt{256K^2+1}}{K}.
\end{align}

\noindent \textit{Case (3-2):} In this case, we have
\begin{align*}
    I^{(p,q)}_{2\rho_{p,q; \mfz}, \lambda_{\mfz}}(t_{\mfz})\cap I^q_{2\rho_{q;\mfw}, \lambda_{\mfw}}(t_{\mfw})\neq \emptyset.
\end{align*}
For $t \in I^{(p,q)}_{2\rho_{p,q; \mfz}, \lambda_{\mfz}}(t_{\mfz})$, we get
\begin{align*}
    |t-t_{\mfw}|\leq |t-t_{\mfz}|+|t_{\mfz}-t_{\mfw}|&\leq  \frac{2 \lambda^2_{\mfz}(2 \rho_{p,q; \mfz})^2}{a(\mfz)\lambda^p_{\mfz}+b(\mfz)\lambda^q_{\mfz}}+\frac{ \lambda^2_{\mfw}(2 \rho_{q;\mfw})^2}{b(\mfw)\lambda^q_{\mfw}}\\
    &\leq \frac{8 \lambda^2_{\mfz}(2 \rho_{q;\mfz})^2}{b(\mfz)\lambda^q_{\mfz}}+\frac{ \lambda^2_{\mfw}(2 \rho_{\mfw})^2}{a(\mfw)\lambda^p_{\mfw}}.
\end{align*}
This turns out to be the Case (2-1) with a different constant and following the calculations, we conclude $I^{(p,q)}_{2\rho_{p,q; \mfz}, \lambda_{\mfz}}(t_{\mfz})\subset I^q_{2c_v\rho_{q;\mfw}, \lambda_{\mfw}}(t_{\mfw})$ if 
\begin{align}\label{c_8}
    c_*(\texttt{data})\geq \frac{\sqrt{256K^2+1}}{K}.
\end{align}
\noindent \textit{Case (3-3):} In this case, we have
\begin{align*}
    I^{(p,q)}_{2\rho_{p,q; \mfz}, \lambda_{\mfz}}(t_{\mfz})\cap I^{(p,q)}_{2\rho_{p,q;\mfw}, \lambda_{\mfw}}(t_{\mfw})\neq \emptyset.
\end{align*}
Using the comparability of $a(\cdot)$ and $b(\cdot),$ we observe that
\begin{align*}
    a(\mfw)\lambda^p_{\mfw}+b(\mfw)\lambda^q_{\mfw}\leq 9 (2K)a(\mfz)\lambda^p_{\mfz}+9(2K)^{\frac{q}{p}}b(\mfz)\lambda^q_{\mfz}&\leq 18K^2a(\mfz)\lambda^p_{\mfz}+36K^2b(\mfz)\lambda^q_{\mfz}\\
    &\leq 36K^2 \left(a(\mfz)\lambda^p_{\mfz}+b(\mfz)\lambda^q_{\mfz}\right).
\end{align*}
Let $t\in I^{(p,q)}_{2\rho_{p,q; \mfz}, \lambda_{\mfz}}(t_{\mfz}).$ Using the above estimate, we get
\begin{align*}
    |t-t_{\mfw}|\leq |t-t_{\mfz}|+|t_{\mfz}-t_{\mfw}|&\leq \frac{2 \lambda^2_{\mfz}(2 \rho_{p,q; \mfz})^2}{a(\mfz)\lambda^p_{\mfz}+b(\mfz)\lambda^q_{\mfz}}+\frac{ \lambda^2_{\mfw}(2 \rho_{p,q; \mfw})^2}{a(\mfw)\lambda^p_{\mfw}+b(\mfw)\lambda^q_{\mfw}}\\
    &\leq\frac{8(2K)^{2/p}36K^2\lambda^2_{\mfw}(2 \rho_{p,q; \mfw})^2}{a(\mfw)\lambda^p_{\mfw}+b(\mfw)\lambda^q_{\mfw}}+\frac{ \lambda^2_{\mfw}(2 \rho_{p,q; \mfw})^2}{a(\mfw)\lambda^p_{\mfw}+b(\mfw)\lambda^q_{\mfw}}\\
    &\leq \left(576K^3+1\right)\frac{ \lambda^2_{\mfw}(2 \rho_{p,q; \mfw})^2}{a(\mfw)\lambda^p_{\mfw}+b(\mfw)\lambda^q_{\mfw}}\leq \frac{ \lambda^2_{\mfw}(2 c_v\rho_{p,q; \mfw})^2}{a(\mfw)\lambda^p_{\mfw}+b(\mfw)\lambda^q_{\mfw}}
\end{align*}
if
\begin{align}\label{c_9}
    c_*(\texttt{data})\geq \frac{\sqrt{576K^3+1}}{K}.
\end{align}
Finally, comparing all the lower bounds of  $c_*(\texttt{data}),$ from time inclusion, i.e., \eqref{c_1}, \eqref{c_2}, \eqref{c_3}, \eqref{c_4}, \eqref{c_5}, \eqref{c_6}, \eqref{c_7}, \eqref{c_8}, \eqref{c_9} and from space inclusion, i.e., $4/K,$ we conclude that it is sufficient to take $c_*(\texttt{data})\geq \sqrt{\frac{4\times 576K^4}{K^2}}=48K.$ The above discussion leads to the following lemma.
\begin{lemma}[Vitali covering lemma]\label{vitali lemma}
 Let $E(\Lambda_1, r_1)$  and $c_v$ be given by \eqref{defn_E(Lambda, rho)} and \eqref{eq: definition of c_nu} respectively. Then there exists a collection $\{\mathcal{Q}({z_i})\}_{i\in \mathbb{N}}$ of cylinders that satisfies the following.
\begin{itemize}
\item [(i)] $\cup_{i\in\mathbb{N}}\,\,c_v \mathcal{Q}({z_i})=E(\La_1, r_1)$.
\item[(ii)] $\mathcal{Q}({z_i})\cap \mathcal{Q}({z_j})= \emptyset$ for every $i,j\in \mathbb{N}$ with $i\ne j$.
\end{itemize}   
\end{lemma}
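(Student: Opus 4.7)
The plan is to assemble the construction already prepared in Subsections~\ref{subsec:stopping_time}–\ref{subsec:Vitali} into a genuine Vitali selection. For every $\mathfrak{z}\in E(\Lambda_1,r_1)$ the stopping-time argument produces an intrinsic cylinder $\mathcal{Q}(\mathfrak{z})$ (one of $Q_{2\rho_\mathfrak{z},\lambda_\mathfrak{z}}(\mathfrak{z})$, $J_{2\rho_{q;\mathfrak{z}},\lambda_\mathfrak{z}}(\mathfrak{z})$, $G_{2\rho_{p,q;\mathfrak{z}},\lambda_\mathfrak{z}}(\mathfrak{z})$) whose radius lies in $(0,R)$ with $R=(r_2-r_1)/c_v$. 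I would first group the family $\mathcal{F}=\{\mathcal{Q}(\mathfrak{z}):\mathfrak{z}\in E(\Lambda_1,r_1)\}$ into dyadic scales $\mathcal{F}_j=\{\mathcal{Q}(\mathfrak{z})\in\mathcal{F}:R/2^j\leq \ell(\mathcal{Q}(\mathfrak{z}))<R/2^{j-1}\}$, and then define $\mathcal{I}_1\subset\mathcal{F}_1$ as a maximal pairwise disjoint subfamily, and inductively $\mathcal{I}_k\subset\mathcal{F}_k$ as a maximal subfamily disjoint from $\mathcal{I}_1\cup\cdots\cup\mathcal{I}_{k-1}$. Setting $\mathcal{I}=\bigcup_{j\geq 1}\mathcal{I}_j$ gives property (ii) automatically.

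Countability of $\mathcal{I}$ (and in fact finiteness at each dyadic level $j$) follows from the bound $\lambda_\mathfrak{z}\leq (2r/\rho_\mathfrak{z})^{(n+2)/2}\lambda_0$ established in \eqref{eq : relation between lambda_z and lambda_0}, combined with $|E(\Lambda_1)|<\infty$: each selected cylinder at scale $j$ has a uniform lower bound on its Lebesgue measure, while all of them sit inside $Q_{2r}(z_0)\cap E(\Lambda_1)$, so only finitely many fit. Hence I can enumerate the selected cylinders as $\{\mathcal{Q}(z_i)\}_{i\in\mathbb{N}}$.

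For property (i), I would argue as follows. Given $\mathcal{Q}(\mathfrak{z})\in\mathcal{F}$, it belongs to some $\mathcal{F}_j$, and by maximality of $\mathcal{I}_j$ inside its own scale there must exist $\mathcal{Q}(\mathfrak{w})\in\mathcal{I}_1\cup\cdots\cup\mathcal{I}_j$ with $\mathcal{Q}(\mathfrak{z})\cap\mathcal{Q}(\mathfrak{w})\neq\emptyset$. In particular $\ell(\mathcal{Q}(\mathfrak{w}))\geq\tfrac{1}{2}\ell(\mathcal{Q}(\mathfrak{z}))$, i.e.\ the radii satisfy \eqref{radius compare}. From here I would invoke precisely the analysis carried out in the paragraphs preceding the lemma: the comparability of scaling factors $\lambda_\mathfrak{z}$ and $\lambda_\mathfrak{w}$ (checked in the four Cases (1-1), (1-2), (2-1), (2-2), the remaining combinations following from the inclusions $G_{\cdot,\cdot}\subset Q_{\cdot,\cdot}$ and $G_{\cdot,\cdot}\subset J_{\cdot,\cdot}$), the straightforward spatial inclusion $B_{2\rho_\mathfrak{z}}(x_\mathfrak{z})\subset B_{c_v\cdot 2\rho_\mathfrak{w}}(x_\mathfrak{w})$ from the triangle inequality, and the nine-case time-direction inclusion that yields the final numerical requirement $c_*(\texttt{data})\geq 48K$.

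The main technical obstacle is exactly the time inclusion step, which is the reason nine sub-cases appear in Table~\ref{tab : 9 Cases}: the widths of $p$-, $q$-, and $(p,q)$-intrinsic time intervals are measured with respect to different intrinsic weights ($a(\cdot)\lambda^{p-2}$, $b(\cdot)\lambda^{q-2}$, $\Lambda\lambda^{-2}$), so converting a bound on $|t-t_\mathfrak{w}|$ inherited from $\mathcal{Q}(\mathfrak{z})$ into a bound in terms of $\mathcal{Q}(\mathfrak{w})$'s weight forces, in each case, an auxiliary estimate such as \eqref{important claim} or \eqref{important claim 2}. These in turn use the stopping-time identities \eqref{eq: choice condition 1 of rho}, \eqref{eq: choice condition 1 of q phase}, \eqref{eq: choice condition 1 of pq phase}, the Hölder continuity bound \eqref{estimate on rho alpha} forced by the constraint $q\leq p+2\alpha/(n+2)$, and the choice of $K$ in \eqref{defn of K}. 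Once these are collected, choosing $c_*(\texttt{data})$ at least the maximum of the constants \eqref{c_1}–\eqref{c_9} and $4/K$ (so that $c_v=c_*K$ satisfies all nine threshold inequalities simultaneously) delivers $\mathcal{Q}(\mathfrak{z})\subset c_v\mathcal{Q}(\mathfrak{w})$. Taking the union over $\mathfrak{z}\in E(\Lambda_1,r_1)$ yields property (i) and finishes the proof.
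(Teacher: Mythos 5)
Your proposal is correct and follows essentially the same route as the paper: the dyadic grouping $\mathcal{F}_j$, maximal disjoint subfamilies $\mathcal{I}_j$, countability via $|E(\Lambda_1)|<\infty$ together with \eqref{eq : relation between lambda_z and lambda_0}, and then the inclusion $\mathcal{Q}(\mathfrak{z})\subset c_v\mathcal{Q}(\mathfrak{w})$ through comparability of the scaling factors, the triangle-inequality space inclusion, and the nine-case time inclusion leading to $c_*(\texttt{data})\geq 48K$. This matches the paper's own construction in the subsection preceding the lemma, so no further comment is needed.
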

\subsection{Proof of \cref{main_thm}}\label{subsec:final_proof}
In this subsection, we complete the proof of \cref{main_thm}. First, we derive the following lemma as a consequence of reverse H\"{o}lder inequalities.
\begin{lemma}\label{LEM6.1}
Let $u$ be a weak solution to \eqref{eq: main equation}. Then there exist constants $c=c(\textnormal{\texttt{data}}) \geq 1$ and $\theta_0\in (0,1)$ such that for any $\theta \in (\theta_0, 1),$ we have
\begin{align}\label{EQQ4.19}
\iint_{Q_{2c_v\rho, \lambda}(z_0)}H_1(z, |D u(z)|)\, dz &\leq c\Lambda_1^{1-\theta}\iint_{Q_{2\rho, \lambda}(z_0) \cap E\left(\frac{\Lambda_1}{c}\right)} H_1(z, |D u(z)|)^\theta\, dz\nonumber\\
&\quad + c\iint_{Q_{2\rho, \lambda}(z_0)\cap \Phi \left(\frac{\Lambda_1}{c}\right)} H_1(z, |F(z)|)\,dz,
\end{align}
for the $p$-intrinsic case,
\begin{align}\label{EQQ4.20}
  \iint_{J_{2c_v\rho, \lambda}(z_0)}H_1(z, |D u(z)|)\, dz &\leq c\Lambda_1^{1-\theta}\iint_{J_{2\rho, \lambda}(z_0) \cap E\left(\frac{\Lambda_1}{c}\right)} H_1(z, |D u(z)|)^\theta\, dz\nonumber\\
&\quad + c\iint_{J_{2\rho, \lambda}(z_0)\cap \Phi \left(\frac{\Lambda_1}{c}\right)} H_1(z, |F(z)|)\,dz,  
\end{align}
for the $q$-intrinsic case and 
\begin{align}\label{EQQ4.21}
  \iint_{G_{2c_v\rho, \lambda}(z_0)}H_1(z, |D u(z)|)\, dz &\leq c\Lambda_1^{1-\theta}\iint_{G_{2\rho, \lambda}(z_0) \cap E\left(\frac{\Lambda_1}{c}\right)} H_1(z, |D u(z)|)^\theta\, dz\nonumber\\
&\quad + c\iint_{G_{2\rho, \lambda}(z_0)\cap \Phi \left(\frac{\Lambda_1}{c}\right)} H_1(z, |F(z)|)\,dz,  
\end{align} for the $(p, q)$-intrinsic case,
whenever $Q_{2c_v\rho, \lambda}(z_0), J_{2c_v\rho, \lambda}(z_0), G_{2c_v\rho, \lambda}(z_0) \subset \Omega_T.$
\end{lemma}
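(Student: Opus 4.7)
The three estimates \eqref{EQQ4.19}, \eqref{EQQ4.20}, \eqref{EQQ4.21} admit parallel proofs; only the geometry of the intrinsic cylinders ($Q$, $J$, $G$) and the corresponding reverse H\"older inequality change. I will outline the $p$-phase case \eqref{EQQ4.19}; the $q$- and $(p,q)$-phase versions then follow verbatim upon replacing \cref{lem: reverse holder H_1} with the $H_1$-analogues of \cref{lem : lemma 3.12} and of the last lemma of \cref{Sec:Reverse_H}, each of which is obtained exactly as in \cref{lem: reverse holder H_1} by adding $1$ to both sides.

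The two inputs from the stopping time of \cref{subsec:stopping_time} are the equality \eqref{eq: choice condition 1 of rho} at $\tau=\rho$ and the strict inequality \eqref{eq: choice condition 2 of rho} for $\tau\in(\rho,2c_v\rho]$. In the $p$-phase $b(z_0)\lambda^q \leq K a(z_0)\lambda^p$, so $\Lambda_1 \leq (K+1)(a(z_0)\lambda^p+1)$, and \eqref{eq: choice condition 1 of rho} yields the lower bound $\miint{Q_{\rho,\lambda}(z_0)}[H_1(z,|Du|)+H_1(z,|F|)]\,dz \gtrsim \Lambda_1$, while \eqref{eq: choice condition 2 of rho} at $\tau=2c_v\rho$ yields the upper bound $\iint_{Q_{2c_v\rho,\lambda}(z_0)}H_1(z,|Du|)\,dz \lesssim \Lambda_1\,|Q_{2\rho,\lambda}(z_0)|$. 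Combining the lower bound with the reverse H\"older inequality of \cref{lem: reverse holder H_1} applied on $Q_{\rho,\lambda}$ and using doubling to absorb the lower-order $F$-term which appears on the left, I will obtain
\begin{equation*}
\Lambda_1 \leq c\Bigl(\miint{Q_{2\rho,\lambda}}H_1(z,|Du|)^\theta\,dz\Bigr)^{1/\theta} + c\miint{Q_{2\rho,\lambda}}H_1(z,|F|)\,dz.
\end{equation*}

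The remainder is a standard Gehring-type decomposition. I will introduce a large threshold multiplier $M=M(\textnormal{\texttt{data}})$ and split each of the two averages on the right along the superlevel sets $E(\Lambda_1/M)$ and $\Phi(\Lambda_1/M)$ defined in \eqref{Defn E_lambda}--\eqref{Defn Phi_lambda}. The sub-threshold contributions are bounded by a multiple of $\Lambda_1/M$ and can be absorbed into the left-hand side by choosing $M$ sufficiently large; on $E(\Lambda_1/M)$ the elementary pointwise bound $H_1(z,|Du|)^\theta \leq (\Lambda_1/M)^{\theta-1}H_1(z,|Du|)$ (valid since $\theta - 1 < 0$ and $H_1 > \Lambda_1/M$ there) converts $H_1^\theta$ to $H_1$ at the cost of a factor $\Lambda_1^{\theta-1}$, which, after the $1/\theta$-exponentiation inherent to the reverse H\"older inequality, becomes the $\Lambda_1^{1-\theta}$ factor on the right-hand side of \eqref{EQQ4.19}. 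Combining with the stopping-time upper bound on $\iint_{Q_{2c_v\rho,\lambda}} H_1(z,|Du|)\,dz$ then yields \eqref{EQQ4.19}. The main technical care required is to match the threshold $M$ with the constants produced by the reverse H\"older inequality and by the doubling identity $|Q_{2c_v\rho,\lambda}|=c_v^{n+2}|Q_{2\rho,\lambda}|$; once this is arranged, the $q$- and $(p,q)$-phase proofs are identical, the only differences being that the stopping-time equality at the center radius reads $b(z_0)\lambda^q+1$ in the $q$-phase (comparable to $\Lambda_1$ via the $q$-phase condition $Ka(z_0)\lambda^p\leq b(z_0)\lambda^q$) and $a(z_0)\lambda^p+b(z_0)\lambda^q+1=\Lambda_1$ directly in the mixed phase.
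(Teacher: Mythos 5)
Your plan is correct and follows essentially the same route as the paper's proof: the stopping-time equality and strict inequality from Subsection \ref{subsec:stopping_time}, the $H_1$-reverse H\"older inequality of \cref{lem: reverse holder H_1} (and its $q$- and $(p,q)$-phase analogues), a superlevel-set splitting at a threshold comparable to $\Lambda_1$ with absorption of the sub-threshold contributions, and the intrinsic doubling $|Q_{2c_v\rho,\lambda}(z_0)|=c_v^{n+2}|Q_{2\rho,\lambda}(z_0)|$. The only cosmetic difference is that the paper splits at the level $(a(z_0)\lambda^p+1)/(2c)^{1/\theta}$ and passes to $\Lambda_1$ only at the end via $Ka(z_0)\lambda^p\ge b(z_0)\lambda^q$, whereas you work with $\Lambda_1$ throughout.
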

\begin{proof}
Using the third condition of \eqref{eq: p-phase condition}, we obtain
\begin{align*}
    \left(\miint{Q_{2\rho, \lambda}(z_0)}H_1(z, |D u|)^{\theta}\, dz\right)^{\frac{1-\theta}{\theta}} \leq \left(\miint{Q_{2\rho, \lambda}(z_0)}H_1(z, |D u|)\, dz\right)^{1-\theta}<\left(a(z_0)\la^p+1\right)^{1-\theta}.
\end{align*}
Hence, we have
\begin{align}\label{EQQ6.32}
 \left(\miint{Q_{2\rho, \lambda}(z_0)}H_1(z, |D u|)^{\theta}\,dz\right)^{\frac{1}{\theta}}\leq \left(a(z_0)\la^p+1\right)^{1-\theta}\miint{Q_{2\rho, \lambda}(z_0)}H_1(z, |Du|)^{\theta}\,dz.    
\end{align}
Now we write
\begin{align*}
Q_{2\rho, \lambda}(z_0)= \left(Q_{2\rho, \lambda}(z_0) \cap E\left({\frac{a(z_0)\lambda^p+1}{(2c)^{1/\theta}}}\right)\right) \cup \left(Q_{2\rho, \lambda}(z_0) \setminus E\left({\frac{a(z_0)\lambda^p+1}{(2c)^{1/\theta}}}\right)\right).
\end{align*}
From the definition of $E(\Lambda_1)$ given in \eqref{Defn E_lambda}, the right-hand side of \eqref{EQQ6.32} can be estimated as

\begin{align}\label{EQUATION8.11}
 &\left(\miint{Q_{2\rho, \lambda}(z_0)}H_1(z, |D u(z)|)^{\theta}\,dz\right)^{\frac{1}{\theta}}\nonumber\\
 &\leq\left(a(z_0)\lambda^p+1\right)^{1-\theta}\frac{\left(a(z_0)\lambda^p+1\right)^{\theta}}{2c}\frac{\left|Q_{2\rho, \lambda}(z_0) \setminus E\left({\frac{a(z_0)\lambda^p+1}{(2c)^{1/\theta}}}\right)\right|}{\left|Q_{2\rho, \lambda}(z_0)\right|}\nonumber\\
 &\qquad + \frac{\left(a(z_0)\lambda^p+1\right)^{1-\theta}}{\left|Q_{2\rho, \lambda}(z_0)\right|}\iint_{Q_{2\rho, \lambda}(z_0) \cap E\left({\frac{a(z_0)\lambda^p+1}{(2c)^{1/\theta}}}\right)} H_1(z, |D u(z)|)^\theta\, dz\nonumber\\
 &\leq \frac{a(z_0)\lambda^p+1}{2c} + \frac{\left(a(z_0)\lambda^p+1\right)^{1-\theta}}{\left|Q_{2\rho, \lambda}(z_0)\right|}\iint_{Q_{2\rho, \lambda}(z_0) \cap E\left({\frac{a(z_0)\lambda^p+1}{(2c)^{1/\theta}}}\right)} H_1(z, |D u(z)|)^\theta\, dz.
\end{align}

Similarly, using the definition of $\Phi(\Lambda_1)$ given by \eqref{Defn Phi_lambda}, we also have
\begin{align}\label{EQQ4.26}
&\miint{Q_{2\rho, \lambda}(z_0)}H_1(z, |F|)\,dz\nonumber \\
&\quad \leq \frac{a(z_0)\lambda^p+1}{2c} + \frac{1}{\left|Q_{2\rho, \lambda}(z_0)\right|}\iint_{Q_{2\rho, \lambda}(z_0) \cap \Phi\left({\frac{a(z_0)\lambda^p+1}{(2c)^{1/\theta}}}\right)} H_1(z, |F|)\, dz.
\end{align}
Combining the reverse H\"{o}lder inequality (see \cref{lem: reverse holder H_1}), \eqref{EQUATION8.11} and \eqref{EQQ4.26}, we see that
\begin{align}\label{EQUATION8.12}
    &\miint{Q_{\rho, \lambda}(z_0)} [H_1(z, |D u|)+H_1(z, |F|)]\, dz\nonumber \\
    &\leq c\left(\miint{Q_{2\rho, \lambda}(z_0)}H_1(z, |D u|)^{\theta}\,dz\right)^{\frac{1}{\theta}}+c\miint{Q_{2\rho, \lambda}(z_0)}H_1(z, |F|)\, dz\nonumber\\
    &\leq \frac{a(z_0)\lambda^p+1}{2} + \frac{c \left(a(z_0)\lambda^p+1\right)^{1-\theta}}{\left|Q_{2\rho, \lambda}(z_0)\right|}\iint_{Q_{2\rho, \lambda}(z_0) \cap E\left({\frac{a(z_0)\lambda^p+1}{(2c)^{1/\theta}}}\right)} H_1(z, |D u|)^\theta\, dz\nonumber \\
    &\quad +\frac{c}{\left|Q_{2\rho, \lambda}(z_0)\right|}\iint_{Q_{2\rho, \lambda}(z_0) \cap \Phi\left({\frac{a(z_0)\lambda^p+1}{(2c)^{1/\theta}}}\right)} H_1(z, |F|)\, dz.
\end{align}
Now using the third and fourth conditions \eqref{eq: p-phase condition} in \eqref{EQUATION8.12}, we get
\begin{align*}
&\miint{Q_{2c_v\rho, \lambda}(z_0)} [H_1(z, |D u|)+H_1(z, |F|)]\, dz\\
&\leq \frac{2c \left(a(z_0)\lambda^p+1\right)^{1-\theta}}{\left|Q_{2\rho, \lambda}(z_0)\right|}\iint_{Q_{2\rho, \lambda}(z_0) \cap E\left({\frac{a(z_0)\lambda^p+1}{(2c)^{1/\theta}}}\right)} H_1(z, |D u|)^\theta\, dz\\
&\quad +\frac{2c}{\left|Q_{2\rho, \lambda}(z_0)\right|}\iint_{Q_{2\rho, \lambda}(z_0) \cap \Phi\left({\frac{a(z_0)\lambda^p+1}{(2c)^{1/\theta}}}\right)} H_1(z, |F|)\, dz.
\end{align*}
Thus, we obtain
\begin{align}\label{EQQ6.33}
    \iint_{Q_{2c_v \rho, \lambda}(z_0)}H_1(z, |Du|)\,dz &\leq 2c \left(a(z_0)\lambda^p+1\right)^{1-\theta}c^{n+2}_v\iint_{Q_{2\rho, \lambda}(z_0) \cap E\left({\frac{a(z_0)\lambda^p+1}{(2c)^{1/\theta}}}\right)} H_1(z, |D u|)^\theta\, dz\nonumber\\
    &\quad +2c c^{n+2}_v \iint_{Q_{2\rho, \lambda}(z_0) \cap \Phi\left({\frac{a(z_0)\lambda^p+1}{(2c)^{1/\theta}}}\right)} H_1(z, |F|)\, dz.
\end{align}
Using $Ka(z_0)\lambda^p\geq b(z_0)\lambda^q$ of \eqref{eq: p-phase condition} and $K\geq 1,$ we get $a(z_0)\lambda^p+1 \geq \frac{a(z_0)\lambda^p+b(z_0)\lambda^q+1}{2K}=\frac{\Lambda_1}{2K}.$ Hence we have $\frac{a(z_0)\lambda^p+1}{(2c)^{1/\theta}}\geq \frac{a(z_0)\lambda^p+1}{(2c)^{1/ \theta_0}} \geq \frac{\Lambda_1}{2K(2c)^{1/ \theta_0}}.$ This gives that $E\left({\frac{a(z_0)\lambda^p+1}{(2c)^{1/\theta}}}\right) \subset E\left({\frac{\Lambda_1}{2K(2c)^{1/\theta_0}}}\right).$ On the other hand, we also have $E\left({\frac{a(z_0)\lambda^p+1}{(2c)^{1/\theta}}}\right) \subset E\left(\frac{\Lambda_1}{2K(2c c^{n+2}_v)^{1/\theta_0}}\right).$ Also, similar inclusions hold for $\Phi(\cdot).$ Now by setting $\max\left\{2c, 2cc^{n+2}_v\right\}=:c_1=2cc^{n+2}_v$, we obtain from \eqref{EQQ6.33} that
\begin{align*}
\iint_{Q_{2c_v\rho, \lambda}(z_0)}H_1(z, |D u|)\, dz &\leq c_1\Lambda_1^{1-\theta}\iint_{Q_{2\rho, \lambda}(z_0) \cap E\left(\frac{\Lambda_1}{c}\right)} H_1(z, |D u|)^\theta\, dz\nonumber\\
&\quad + c_1\iint_{Q_{2\rho, \lambda}(z_0)\cap \Phi \left(\frac{\Lambda_1}{c}\right)} H_1(z, |F|)\,dz,
\end{align*}
which completes the proof of \eqref{EQQ4.19}. The estimates in \eqref{EQQ4.20} and \eqref{EQQ4.21} can be obtained similarly.
\end{proof}
Now we are ready to prove \cref{main_thm}. We present a detailed proof for the sake of completeness.
%Next we prove our Higher integrability result in the full range $\frac{2n}{n+2}<p\leq q< \infty.$
%\begin{theorem}
%Let $u$ be a weak solution of \eqref{main_eqn}. Then there exists positive constants $\varepsilon_0$ and $c$ such that 
%\begin{align*}
% \miint{Q_{r}(z_0)} \left(|\nabla u|^p+a(z)|\nabla u|^q\right)^{1+\varepsilon} dz \leq c\left(\miint{Q_{2r}(z_0)}\left(|\nabla u|^p+a(z)|\nabla u|^q\right)dz+1\right)^{1+\frac{\varepsilon q}{(n+2)\mu-n}}   
%\end{align*}
%for every $Q_{2r}(z_0)\subset \Omega_T$ and $\varepsilon \in (0, \varepsilon_0).$
%\end{theorem}
\begin{proof}[Proof of \cref{main_thm}]
Following \cref{vitali lemma}, we have a countable pairwise disjoint collection $\mathcal{I}:=\left\{\mathcal{Q}({z_j})\right\}_{j=1}^{\infty}$ for $z_j \in E\left({\Lambda_1, r_1}\right).$ From \cref{LEM6.1}, there exist constants $c \geq 1$ and $\theta_0\in (0,1)$ such that
\begin{align*}
    \iint_{c_v \mathcal{Q}(z_j)} H_1(z, |D u|)\, dz \leq c\Lambda_1^{1-\theta}\iint_{\mathcal{Q}(z_j) \cap E\left({\frac{\Lambda_1}{c}}\right)}H_1(z, |Du|)^{\theta}\, dz+ c\iint_{\mathcal{Q}(z_j)\cap \Phi \left(\frac{\Lambda_1}{c}\right)} H_1(z, |F|)\,dz
\end{align*}
holds for every $j \in \NN$ and $\theta \in (\theta_0, 1).$ Since the cylinders in $\mathcal{I}$ are pairwise disjoint, we get
\begin{align}\label{EQQ6.34}
    & \iint_{E\left({\Lambda_1, r_1}\right)} H_1(z, |D u|)\, dz  \nonumber \\
    &\quad \leq \sum_{j=1}^{\infty} \iint_{c_v \mathcal{Q}(z_j)} H_1(z, |D u|)\, dz\nonumber \\
    &\quad \leq c\Lambda_1^{1-\theta} \sum_{j=1}^{\infty}\iint_{\mathcal{Q}(z_j) \cap E\left({\frac{\Lambda_1}{c}}\right)}H_1(z, |D u|)^{\theta} dz+c\sum_{j=1}^{\infty}\iint_{\mathcal{Q}(z_j)\cap \Phi \left(\frac{\Lambda_1}{c}\right)} H_1(z, |F|)\,dz\nonumber\\
    &\quad \leq c\Lambda_1^{1-\theta}\iint_{E\left({\frac{\Lambda_1}{c}, r_2}\right)}H_1(z, |D u|)^{\theta} dz+ c\iint_{ \Phi \left(\frac{\Lambda_1}{c}, r_2\right)} H_1(z, |F|)\,dz.
\end{align}
On the other hand, we have
\begin{align}\label{EQQ6.35}
    \iint_{E\left({\frac{\Lambda_1}{c}, r_1}\right)\setminus E\left({\Lambda_1, r_1}\right)}H_1(z, |D u|)\, dz
    &= \iint_{E\left({\frac{\Lambda_1}{c}, r_1}\right)\setminus E\left({\Lambda_1, r_1}\right)}H_1(z, |D u|)^\theta H_1(z, |D u|)^{1-\theta}\, dz\nonumber\\
    &\leq \Lambda_1^{1-\theta}\iint_{E\left({\frac{\Lambda_1}{c}}, r_2\right)} H_1(z, |D u|)^{\theta}\,dz.
\end{align}
Now combining \eqref{EQQ6.34} and \eqref{EQQ6.35}, we conclude that 
\begin{align}\label{EQQ6.36}
    \iint_{E\left({\frac{\Lambda_1}{c}, r_1}\right)}H_1(z, |D u|) \, dz&=\iint_{E\left({\Lambda_1, r_1}\right)}H_1(z, |D u|) \, dz+\iint_{E\left({\frac{\Lambda_1}{c}, r_1}\right)\setminus E\left({\Lambda_1, r_1}\right)}H_1(z, |D u|) \, dz\nonumber\\
    &\leq c\Lambda_1^{1-\theta}\iint_{E\left({\frac{\Lambda_1}{c}, r_2}\right)}H_1(z, |D u|)^\theta dz+ c\iint_{ \Phi \left(\frac{\Lambda_1}{c}, r_2\right)} H_1(z, |F|)\,dz .
\end{align}
For any $k\in \mathbb{N},$ we define the truncation operator as
\begin{align*}
    H_1(z, |D u|)_k =\min \left\{H_1(z, |D u|), k\right\}
\end{align*}
and the superlevel set as 
\begin{align*}
    E^k\left({\Lambda_1, \uprho}\right):=\left\{z\in Q_{\uprho}(z_0) : H_1(z, |D u|)_k > \Lambda_1\right\}.
\end{align*}
From the above definitions, we have 
\begin{align}\label{EQQ6.37}
    E^k\left({\Lambda_1, \uprho}\right)=\begin{cases}
        E\left({\Lambda_1, \uprho}\right)\,\,\,\, &\text{if}\,\,\, \Lambda_1 \leq k,\\
        \emptyset\,\,\,\, &\text{if}\,\,\, \Lambda_1> k.
    \end{cases}
\end{align}
Therefore, we deduce from \eqref{EQQ6.36} and \eqref{EQQ6.37} that 
\begin{align*}
\iint_{E^k\left({\frac{\Lambda_1}{c}, r_1}\right)}H_1(z, |D u|)_k^{1-\theta}H_1(z, |D u|)^\theta dz&\leq \iint_{E\left({\frac{\Lambda_1}{c}, r_1}\right)}H_1(z, |D u|) \, dz \\
&\leq c\Lambda_1^{1-\theta}\iint_{E\left({\frac{\Lambda_1}{c}, r_2}\right)}H_1(z, |D u|)^\theta dz+ c\iint_{ \Phi \left(\frac{\Lambda_1}{c}, r_2\right)} H_1(z, |F|)\,dz .
\end{align*}
Let us denote
\begin{align*}
\Lambda_2=\frac{1}{c}\left(\frac{4 c_v r}{r_2-r_1}\right)^{\frac{q(n+2)}{2}}\Lambda_0.
\end{align*}
Then for any $\Lambda_1 > \Lambda_2,$ we have
\begin{align}\label{EQQ6.38}
\iint_{E^k\left({\Lambda_1, r_1}\right)}&H_1(z, |D u|)_k^{1-\theta}H_1(z, |D u|)^\theta dz\nonumber\\
&\leq c\Lambda_1^{1-\theta}\iint_{E^k\left({\Lambda_1, r_2}\right)}H_1(z, |D u|)^\theta dz+c\iint_{ \Phi \left(\Lambda_1, r_2\right)} H_1(z, |F|)\,dz .
\end{align}
Let $\varepsilon \in (0, 1)$, which is to be determined later. Multiplying the above inequality \eqref{EQQ6.38} with $\Lambda_1^{\varepsilon-1}$ and integrating over $(\Lambda_2, \infty)$, we get
\begin{align}\label{EQQ6.39}
&\int_{\Lambda_2}^{\infty}\Lambda_1^{\varepsilon-1}\iint_{E^k\left({\Lambda_1, r_1}\right)}H_1(z, |D u|)_k^{1-\theta}H_1(z, |D u|)^\theta dz\, d\Lambda_1\nonumber\\
&\quad \leq c\int_{\Lambda_2}^{\infty} \Lambda_1^{\varepsilon-\theta}\iint_{E^k\left({\Lambda_1, r_2}\right)}H_1(z, |D u|)^\theta dz\, d\Lambda_1+ c\int_{\Lambda_2}^{\infty}\Lambda_1^{\varepsilon-1}\iint_{\Phi(\Lambda_1, r_2)}H_1(z, |F|)\,dz \, d\Lambda_1.
\end{align}
Now following \cite{2023_Gradient_Higher_Integrability_for_Degenerate_Parabolic_Double-Phase_Systems} and using Fubini's theorem, we obtain from \eqref{EQQ6.39} that
\begin{align*}
    &\iint_{Q_{r_1}(z_0)} H_1(z, |D u|)_k^{1-\theta+\varepsilon}H_1(z, |D u|)^\theta dz\\ &\leq \Lambda^{\varepsilon}_2\iint_{Q_{2r}(z_0)}H_1(z, |D u|)_k^{1-\theta}H_1(z, |D u|)^\theta dz+ \frac{c\varepsilon}{1+\varepsilon-\theta} \iint_{Q_{r_2}(z_0)}H_1(z, |D u|)_k^{1-\theta+\varepsilon}H_1(z, |D u|)^\theta dz\\
    &\qquad +c \iint_{Q_{2r}(z_0)}H_1(z, |F|)^{1+\varepsilon}\, dz.
\end{align*}
We now choose a small $\varepsilon < \varepsilon_0 < 1$ such that $\frac{c\varepsilon}{1+\varepsilon-\theta} \leq \frac{1}{2}.$ To use \cref{iter_lemma}, we define
\begin{align*}
    h(r)=\iint_{Q_{r}(z_0)} H_1(z, |D u|)_k^{1-\theta+\varepsilon} H_1(z, |D u|)^{\theta} \, dz.
\end{align*}
Considering the choice of $\Lambda_2,$ we have
\begin{align*} 
&\Lambda^{\varepsilon}_2\iint_{Q_{2r}(z_0)}H_1(z, |D u|)_k^{1-\theta}H_1(z, |D u|)^{\theta} \, dz\\
&=\frac{1}{(r_2-r_1)^{\frac{\varepsilon q(n+2)}{2}}}\left[\frac{(4c_v r)^{\frac{\varepsilon q(n+2)}{2}}}{c^{\varepsilon}} \Lambda^{\varepsilon}_0\iint_{Q_{2r}(z_0)}H_1(z, |D u|)_k^{1-\theta}H_1(z, |D u|)^{\theta} \, dz\right]\\
&=:\frac{A}{(r_2-r_1)^{\frac{\varepsilon s(n+2)}{2}}}.
\end{align*}
Now applying \cref{iter_lemma} with $\varrho_1=r_1, \varrho_2=r_2, R=2r, \vartheta=\frac{c\varepsilon}{1+\varepsilon-\theta}, B=0$ and $\gamma=\frac{\varepsilon q(n+2)}{2},$ we get
\begin{align*}
&\iint_{Q_{r}(z_0)} H_1(z, |D u|)_k^{1-\theta+\varepsilon}H_1(z, |D u|)^\theta dz \\
&\apprle\frac{1}{r^{\frac{\varepsilon q(n+2)}{2}}}\frac{(4c_v r)^{\frac{\varepsilon q(n+2)}{2}}}{c^{\varepsilon}} \Lambda^{\varepsilon}_0 \iint_{Q_{2r}(z_0)}H_1(z, |D u|)_k^{1-\theta}H_1(z, |D u|)^\theta dz+ c \iint_{Q_{2r}(z_0)}H_1(z, |F|)^{1+\varepsilon}\,dz.
\end{align*}
Letting $k \to \infty,$ we finally obtain
\begin{align*}
  \iint_{Q_{r}(z_0)} H_1(z, |D u|)^{1+\varepsilon}\, dz \leq c \Lambda^{\varepsilon}_0 \iint_{Q_{2r}(z_0)}H_1(z, |D u|)\,dz+c \iint_{Q_{2r}(z_0)}H_1(z, |F|)^{1+\varepsilon}\,dz.
\end{align*}
Recalling \eqref{eq: definition of lambda_0}--\eqref{eq: definition of Lambda_0} and using $H_1(z, |Du|)\geq 1$ with $q\geq 2,$ we derive
\begin{align*}
    &\miint{Q_{r}(z_0)}H_1(z, |Du|)^{1+\varepsilon}\, dz\\
    &\leq c \left(1+||a||_{L^{\infty}(\Omega_T)}+||b||_{L^{\infty}(\Omega_T)}\right)\lambda^{\varepsilon q}_0\miint{Q_{2r}(z_0)}H_1(z, |Du|)\, dz+ c\miint{Q_{2r}(z_0)}H_1(z, |F|)^{1+\varepsilon}\, dz\\
    &\leq c \left(\miint{Q_{2r}(z_0)}H_1(z, |D u|)\,dz\right)^{1+\frac{\varepsilon q}{2}}+c\left(\miint{Q_{2r}(z_0)}H_1(z, |F|)\, dz\right)^{1+\frac{\varepsilon q}{2}}\\
    &\qquad + c\miint{Q_{2r}(z_0)}H_1(z, |F|)^{1+\varepsilon}\, dz\\
    &\leq c \left(\miint{Q_{2r}(z_0)}H_1(z, |D u|)\,dz\right)^{1+\frac{\varepsilon q}{2}}+c\left(\miint{Q_{2r}(z_0)}H_1(z, |F|)^{1+\varepsilon}\, dz\right)^{\frac{1}{1+\varepsilon}+\frac{\varepsilon}{1+\varepsilon}\cdot \frac{q}{2}}\\
    &\qquad + c\miint{Q_{2r}(z_0)}H_1(z, |F|)^{1+\varepsilon}\, dz\\
    &\leq c \left(\miint{Q_{2r}(z_0)}H_1(z, |D u|)\,dz\right)^{1+\frac{\varepsilon q}{2}}+c\left(\miint{Q_{2r}(z_0)}H_1(z, |F|)^{1+\varepsilon}\, dz\right)^{\frac{q}{2}}, 
\end{align*}
which completes the proof.
\end{proof}

%\vspace{0.5cm}
\subsection*{Conflict of interest statement} The authors declare that they have no conflicts of interest.

%\vspace{0.5cm}
\subsection*{Data availability statement} This manuscript has no associated data.

\subsection*{Acknowledgment} The authors thank the referee for several helpful comments that improved the manuscript.

%\vspace{0.5cm}

\bibliographystyle{plain}
\bibliography{ref}

@article {GG82,
    AUTHOR = {Giaquinta, Mariano and Giusti, Enrico},
     TITLE = {On the regularity of the minima of variational integrals},
   JOURNAL = {Acta Math.},
  FJOURNAL = {Acta Mathematica},
    VOLUME = {148},
      YEAR = {1982},
     PAGES = {31--46},
      ISSN = {0001-5962,1871-2509},
   MRCLASS = {58E20 (35J50 49A34)},
       DOI = {10.1007/BF02392725},
       URL = {https://doi.org/10.1007/BF02392725},
}

@article{KIM2025110738,
title = {Lipschitz truncation method for parabolic double-phase systems and applications},
journal = {Journal of Functional Analysis},
volume = {288},
number = {3},
pages = {110738},
year = {2025},
issn = {0022-1236},
doi = {https://doi.org/10.1016/j.jfa.2024.110738},
url = {https://www.sciencedirect.com/science/article/pii/S0022123624004269},
author = {Wontae Kim and Juha Kinnunen and Lauri Särkiö},
keywords = {Parabolic double-phase systems, Existence theory, Lipschitz truncation method},
abstract = {We discuss a Lipschitz truncation technique for parabolic double-phase problems of p-Laplace type in order to prove energy estimates and uniqueness results for the Dirichlet problem. Moreover, we show existence for a non-homogeneous double-phase problem. The Lipschitz truncation method is based on a Whitney-type covering result and a related partition of unity in the intrinsic geometry for the double-phase problem.}
}

@article{KIM2025113231,
title = {Hölder regularity for degenerate parabolic double-phase equations},
journal = {Journal of Differential Equations},
volume = {434},
pages = {113231},
year = {2025},
issn = {0022-0396},
doi = {https://doi.org/10.1016/j.jde.2025.113231},
url = {https://www.sciencedirect.com/science/article/pii/S0022039625002463},
author = {Wontae Kim and Kristian Moring and Lauri Särkiö},
keywords = {Parabolic double-phase equation, Hölder regularity},
abstract = {We prove that bounded weak solutions to degenerate parabolic double-phase equations of p-Laplace type are locally Hölder continuous. The proof is based on phase analysis and methods for the p-Laplace equation. In particular, the phase analysis determines whether the double-phase equation is locally similar to the p-Laplace or the q-Laplace equation.}
}

@article {kim2025calderonzygmundtypeestimatesingular,
      title={Calder\'on-Zygmund type estimate for the singular parabolic double-phase system},
      journal ={To appear in J. Math. Anal. Appl.},
      author={Wontae Kim},
      YEAR = {2025},
      archivePrefix={arXiv},
      primaryClass={math.AP},
      url={https://arxiv.org/abs/2412.18434}, 
}

@article {kim2024calderonzygmundtypeestimateparabolic,
      title={Calder\'on-Zygmund type estimate for the parabolic double-phase system},
    journal ={To appear in Ann. Sc. Norm. Super. Pisa Cl. Sci.},      
    author={Wontae Kim},
      YEAR = {2025},
      archivePrefix={arXiv},
      primaryClass={math.AP},
      url={https://arxiv.org/abs/2304.10615}, 
}

@article {KS24,
    AUTHOR = {Kim, Wontae and S\"arki\"o, Lauri},
     TITLE = {Gradient higher integrability for singular parabolic
              double-phase systems},
   JOURNAL = {NoDEA Nonlinear Differential Equations Appl.},
  FJOURNAL = {NoDEA. Nonlinear Differential Equations and Applications},
    VOLUME = {31},
      YEAR = {2024},
    NUMBER = {3},
     PAGES = {Paper No. 40, 38},
      ISSN = {1021-9722,1420-9004},
   MRCLASS = {35D30 (35K55 35K65)},
       URL = {https://doi.org/10.1007/s00030-024-00928-5},
}

@article {KO2025,
    AUTHOR = {Kim, Bogi and Oh, Jehan},
     TITLE = {Regularity for double phase functionals with nearly linear growth and two modulating coefficients},
   JOURNAL = {To appear in Adv. Nonlinear Anal.},
  FJOURNAL = {Advances in Nonlinear Analysis},
    VOLUME = {},
      YEAR = {2025},
    NUMBER = {},
     PAGES = {},
      ISSN = {2191-9496},
   MRCLASS = {},
       DOI = {},
       URL = {},
}

@article {S2025,
    AUTHOR = {Sen, Abhrojyoti},
     TITLE = {Gradient higher integrability for degenerate/singular
parabolic multi-phase problems},
   JOURNAL = {J. Geom. Anal.},
  FJOURNAL = {Journal of Geometric Analysis},
    VOLUME = {35},
      YEAR = {2025},
    NUMBER = {6},
     PAGES = {Article No. 170, 95},
      ISSN = {1050-6926},
   MRCLASS = {},
  MRNUMBER = {},
MRREVIEWER = {},
       DOI = {10.1007/s12220-025-01950-4},
       URL = {https://doi.org/10.1007/s12220-025-01950-4},
}

@article {KL2002,
    AUTHOR = {Kinnunen, Juha and Lewis, John L.},
     TITLE = {Very weak solutions of parabolic systems of {$p$}-{L}aplacian
              type},
   JOURNAL = {Ark. Mat.},
  FJOURNAL = {Arkiv f\"or Matematik},
    VOLUME = {40},
      YEAR = {2002},
    NUMBER = {1},
     PAGES = {105--132},
      ISSN = {0004-2080,1871-2487},
   MRCLASS = {35K40 (35B65 35D05 35K55)},
       DOI = {10.1007/BF02384505},
       URL = {https://doi.org/10.1007/BF02384505},
}

@article {DiBF_2_85,
    AUTHOR = {DiBenedetto, Emmanuele and Friedman, Avner},
     TITLE = {Addendum to: ``{H}\"older estimates for nonlinear degenerate
              parabolic systems''},
   JOURNAL = {J. Reine Angew. Math.},
  FJOURNAL = {Journal f\"ur die Reine und Angewandte Mathematik. [Crelle's
              Journal]},
    VOLUME = {363},
      YEAR = {1985},
     PAGES = {217--220},
      ISSN = {0075-4102,1435-5345},
   MRCLASS = {35K65 (35B65)},
       DOI = {10.1515/crll.1985.363.217},
       URL = {https://doi.org/10.1515/crll.1985.363.217},
}

@article {DiBF85,
    AUTHOR = {DiBenedetto, Emmanuele and Friedman, Avner},
     TITLE = {H\"older estimates for nonlinear degenerate parabolic systems},
   JOURNAL = {J. Reine Angew. Math.},
  FJOURNAL = {Journal f\"ur die Reine und Angewandte Mathematik. [Crelle's
              Journal]},
    VOLUME = {357},
      YEAR = {1985},
     PAGES = {1--22},
      ISSN = {0075-4102,1435-5345},
   MRCLASS = {35K65 (35B65)},
       DOI = {10.1515/crll.1985.357.1},
       URL = {https://doi.org/10.1515/crll.1985.357.1},
}

@article {DiBF84,
    AUTHOR = {DiBenedetto, Emmanuele and Friedman, Avner},
     TITLE = {Regularity of solutions of nonlinear degenerate parabolic
              systems},
   JOURNAL = {J. Reine Angew. Math.},
  FJOURNAL = {Journal f\"ur die Reine und Angewandte Mathematik. [Crelle's
              Journal]},
    VOLUME = {349},
      YEAR = {1984},
     PAGES = {83--128},
      ISSN = {0075-4102,1435-5345},
   MRCLASS = {35K55 (35B65)},
}

@Book{1993_Degenerate_parabolic_equations_DiBenedetto,
  author     = {DiBenedetto, Emmanuele},
  publisher  = {Springer-Verlag, New York},
  title      = {Degenerate parabolic equations},
  year       = {1993},
  isbn       = {0-387-94020-0},
  series     = {Universitext},
  doi        = {10.1007/978-1-4612-0895-2},
  pages      = {xvi+387},
  url        = {https://doi.org/10.1007/978-1-4612-0895-2},
}

@article {KO24,
    AUTHOR = {Kim, Bogi and Oh, Jehan},
     TITLE = {Regularity for double phase functionals with two modulating
              coefficients},
   JOURNAL = {J. Geom. Anal.},
  FJOURNAL = {Journal of Geometric Analysis},
    VOLUME = {34},
      YEAR = {2024},
    NUMBER = {5},
     PAGES = {Paper No. 134, 51},
      ISSN = {1050-6926,1559-002X},
   MRCLASS = {35B65 (35J70)},
       DOI = {10.1007/s12220-024-01584-y},
       URL = {https://doi.org/10.1007/s12220-024-01584-y},
}

@Article{2023_Gradient_Higher_Integrability_for_Degenerate_Parabolic_Double-Phase_Systems,
  author   = {Kim, Wontae and Kinnunen, Juha and Moring, Kristian},
  journal  = {Arch. Ration. Mech. Anal.},
  title    = {Gradient higher integrability for degenerate parabolic double-phase systems},
  year     = {2023},
  issn     = {0003-9527,1432-0673},
  number   = {5},
  pages    = {Paper No. 79, 46},
  volume   = {247},
  doi      = {10.1007/s00205-023-01918-0},
  fjournal = {Archive for Rational Mechanics and Analysis},
  url      = {https://doi.org/10.1007/s00205-023-01918-0},
}

@Article{Colombo2015,
  author     = {Colombo, Maria and Mingione, Giuseppe},
  journal    = {Arch. Ration. Mech. Anal.},
  title      = {Regularity for double phase variational problems},
  year       = {2015},
  issn       = {0003-9527},
  number     = {2},
  pages      = {443--496},
  volume     = {215},
  doi        = {10.1007/s00205-014-0785-2},
  fjournal   = {Archive for Rational Mechanics and Analysis},
  url        = {https://doi.org/10.1007/s00205-014-0785-2},
}

@Article{Baroni2018,
  author     = {Baroni, Paolo and Colombo, Maria and Mingione, Giuseppe},
  journal    = {Calc. Var. Partial Differential Equations},
  title      = {Regularity for general functionals with double phase},
  year       = {2018},
  issn       = {0944-2669},
  number     = {2},
  pages      = {Paper No. 62, 48},
  volume     = {57},
  doi        = {10.1007/s00526-018-1332-z},
  fjournal   = {Calculus of Variations and Partial Differential Equations},
  url        = {https://doi.org/10.1007/s00526-018-1332-z},
}

@Article{Baroni2015,
  author     = {Baroni, Paolo and Colombo, Maria and Mingione, Giuseppe},
  journal    = {Nonlinear Anal.},
  title      = {Harnack inequalities for double phase functionals},
  year       = {2015},
  issn       = {0362-546X},
  pages      = {206--222},
  volume     = {121},
  doi        = {10.1016/j.na.2014.11.001},
  fjournal   = {Nonlinear Analysis. Theory, Methods \& Applications. An International Multidisciplinary Journal},
  url        = {https://doi.org/10.1016/j.na.2014.11.001},
}

@Article{Zhikov1995,
  author     = {Zhikov, Vasili\u{\i} V.},
  journal    = {Russian J. Math. Phys.},
  title      = {On {L}avrentiev's phenomenon},
  year       = {1995},
  issn       = {1061-9208},
  number     = {2},
  pages      = {249--269},
  volume     = {3},
  fjournal   = {Russian Journal of Mathematical Physics},
}

@Article{Zhikov1986,
  author     = {Zhikov, V. V.},
  journal    = {Izv. Akad. Nauk SSSR Ser. Mat.},
  title      = {Averaging of functionals of the calculus of variations and elasticity theory},
  year       = {1986},
  issn       = {0373-2436},
  number     = {4},
  pages      = {675--710, 877},
  volume     = {50},
  fjournal   = {Izvestiya Akademii Nauk SSSR. Seriya Matematicheskaya},
}

@Article{Zhikov1997,
  author     = {Zhikov, Vasili\u{\i} V.},
  journal    = {Russian J. Math. Phys.},
  title      = {On some variational problems},
  year       = {1997},
  issn       = {1061-9208},
  number     = {1},
  pages      = {105--116},
  volume     = {5},
  fjournal   = {Russian Journal of Mathematical Physics},
}

@Article{Zhikov1993,
  author     = {Zhikov, Vasili\u{\i} V.},
  journal    = {C. R. Acad. Sci. Paris S\'{e}r. I Math.},
  title      = {Lavrentiev phenomenon and homogenization for some variational problems},
  year       = {1993},
  issn       = {0764-4442},
  number     = {5},
  pages      = {435--439},
  volume     = {316},
  fjournal   = {Comptes Rendus de l'Acad\'{e}mie des Sciences. S\'{e}rie I. Math\'{e}matique},
}

@Article{Colombo2015a,
  author     = {Colombo, Maria and Mingione, Giuseppe},
  journal    = {Arch. Ration. Mech. Anal.},
  title      = {Bounded minimisers of double phase variational integrals},
  year       = {2015},
  issn       = {0003-9527},
  number     = {1},
  pages      = {219--273},
  volume     = {218},
  doi        = {10.1007/s00205-015-0859-9},
  fjournal   = {Archive for Rational Mechanics and Analysis},
  url        = {https://doi.org/10.1007/s00205-015-0859-9},
}

@Article{Esposito2004,
  author     = {Esposito, Luca and Leonetti, Francesco and Mingione, Giuseppe},
  journal    = {J. Differential Equations},
  title      = {Sharp regularity for functionals with {$(p,q)$} growth},
  year       = {2004},
  issn       = {0022-0396},
  number     = {1},
  pages      = {5--55},
  volume     = {204},
  doi        = {10.1016/j.jde.2003.11.007},
  fjournal   = {Journal of Differential Equations},
  url        = {https://doi.org/10.1016/j.jde.2003.11.007},
}

@Article{Fonseca2004,
  author     = {Fonseca, Irene and Mal\'{y}, Jan and Mingione, Giuseppe},
  journal    = {Arch. Ration. Mech. Anal.},
  title      = {Scalar minimizers with fractal singular sets},
  year       = {2004},
  issn       = {0003-9527},
  number     = {2},
  pages      = {295--307},
  volume     = {172},
  doi        = {10.1007/s00205-003-0301-6},
  fjournal   = {Archive for Rational Mechanics and Analysis},
  url        = {https://doi.org/10.1007/s00205-003-0301-6},
}

@Article{DeFilippis2019,
  author   = {De Filippis, C. and Mingione, G.},
  journal  = {St. Petersburg Mathematical J.},
  title    = {A borderline case of {C}alder\'{o}n-{Z}ygmund estimates for nonuniformly elliptic problems},
  year     = {2019},
  issn     = {0234-0852},
  number   = {3},
  pages    = {82--115},
  volume   = {31},
  doi      = {10.1090/spmj/1608},
  fjournal = {Rossi\u{\i}skaya Akademiya Nauk. Algebra i Analiz},
  url      = {https://doi.org/10.1090/spmj/1608},
}

@Article{Baasandorj2020,
  author     = {Baasandorj, Sumiya and Byun, Sun-Sig and Oh, Jehan},
  journal    = {J. Funct. Anal.},
  title      = {Calder\'{o}n-{Z}ygmund estimates for generalized double phase problems},
  year       = {2020},
  issn       = {0022-1236},
  number     = {7},
  pages      = {108670, 57},
  volume     = {279},
  doi        = {10.1016/j.jfa.2020.108670},
  fjournal   = {Journal of Functional Analysis},
  url        = {https://doi.org/10.1016/j.jfa.2020.108670},
}

@Article{Byun2020,
  author     = {Byun, Sun-Sig and Oh, Jehan},
  journal    = {Anal. PDE},
  title      = {Regularity results for generalized double phase functionals},
  year       = {2020},
  issn       = {2157-5045},
  number     = {5},
  pages      = {1269--1300},
  volume     = {13},
  doi        = {10.2140/apde.2020.13.1269},
  fjournal   = {Analysis \& PDE},
  url        = {https://doi.org/10.2140/apde.2020.13.1269},
}

@Article{Byun2017,
  author   = {Byun, Sun-Sig and Oh, Jehan},
  journal  = {Calc. Var. Partial Differential Equations},
  title    = {Global gradient estimates for non-uniformly elliptic equations},
  year     = {2017},
  issn     = {0944-2669},
  number   = {2},
  pages    = {Paper No. 46, 36},
  volume   = {56},
  doi      = {10.1007/s00526-017-1148-2},
  fjournal = {Calculus of Variations and Partial Differential Equations},
  url      = {https://doi.org/10.1007/s00526-017-1148-2},
}

@Article{Colombo2016,
  author     = {Colombo, Maria and Mingione, Giuseppe},
  journal    = {J. Funct. Anal.},
  title      = {Calder\'{o}n-{Z}ygmund estimates and non-uniformly elliptic operators},
  year       = {2016},
  issn       = {0022-1236,1096-0783},
  number     = {4},
  pages      = {1416--1478},
  volume     = {270},
  doi        = {10.1016/j.jfa.2015.06.022},
  fjournal   = {Journal of Functional Analysis},
  url        = {https://doi.org/10.1016/j.jfa.2015.06.022},
}

@Article{Chlebicks2019,
  author     = {Chlebicka, Iwona and Gwiazda, Piotr and Zatorska-Goldstein, Anna},
  journal    = {Ann. Inst. H. Poincar\'{e} C Anal. Non Lin\'{e}aire},
  title      = {Parabolic equation in time and space dependent anisotropic {M}usielak-{O}rlicz spaces in absence of {L}avrentiev's phenomenon},
  year       = {2019},
  issn       = {0294-1449,1873-1430},
  number     = {5},
  pages      = {1431--1465},
  volume     = {36},
  doi        = {10.1016/j.anihpc.2019.01.003},
  fjournal   = {Annales de l'Institut Henri Poincar\'{e} C. Analyse Non Lin\'{e}aire},
  url        = {https://doi.org/10.1016/j.anihpc.2019.01.003},
}

@Article{Hasto_2021,
  author   = {H\"{a}st\"{o}, Peter and Ok, Jihoon},
  journal  = {J. Differential Equations},
  title    = {Higher integrability for parabolic systems with {O}rlicz growth},
  year     = {2021},
  issn     = {0022-0396,1090-2732},
  pages    = {925--948},
  volume   = {300},
  doi      = {10.1016/j.jde.2021.08.012},
  fjournal = {Journal of Differential Equations},
  url      = {https://doi.org/10.1016/j.jde.2021.08.012},
}

@phdthesis{phdthesis,
author = {Bögelein, Verena},
year = {2007},
month = {02},
pages = {},
title = {Regularity results for weak and very weak solutions of higher order parabolic systems}
}

@article {ST16,
    AUTHOR = {Singer, Thomas},
     TITLE = {Existence of weak solutions of parabolic systems with {$p,
              q$}-growth},
   JOURNAL = {Manuscripta Math.},
  FJOURNAL = {Manuscripta Mathematica},
    VOLUME = {151},
      YEAR = {2016},
    NUMBER = {1-2},
     PAGES = {87--112},
      ISSN = {0025-2611,1432-1785},
   MRCLASS = {35K51 (35A01 35A15 35D30 35K59)},
       DOI = {10.1007/s00229-016-0827-1},
       URL = {https://doi.org/10.1007/s00229-016-0827-1},
}
\end{document}